\title{On quasi-Albanese maps}
\author{Osamu Fujino} 
\date{2024/3/21, version 0.29}
\keywords{quasi-Albanese maps, quasi-Albanese 
varieties, quasi-abelian varieties, 
logarithmic Kodaira dimension, mixed Hodge structures, 
commutative complex Lie groups, semipositivity theorems, 
weak positivity}
\subjclass[2020]{Primary 14E05; Secondary 14E30, 14L40, 32M05, 14K99}
\address{Department of Mathematics, Graduate School of Science, 
Kyoto University, Kyoto 606-8502, Japan}
\email{fujino@math.kyoto-u.ac.jp}
\newcommand{\codim}[0]{{\operatorname{codim}}}
\newcommand{\Supp}[0]{{\operatorname{Supp}}}
\newcommand{\Exc}[0]{{\operatorname{Exc}}}
\newtheorem{thm}{Theorem}[section]
\newtheorem{lem}[thm]{Lemma}
\newtheorem{cor}[thm]{Corollary}
\newtheorem{conj}[thm]{Conjecture}
\newtheorem*{claim}{Claim}
\theoremstyle{definition}
\newtheorem{ex}[thm]{Example}
\newtheorem{step}{Step}
\newtheorem{defn}[thm]{Definition}
\newtheorem{rem}[thm]{Remark}
\newtheorem*{ack}{Acknowledgments} 
\newtheorem{say}[thm]{}
\begin{document}

\maketitle 

\begin{abstract}
We discuss Iitaka's theory of 
quasi-Albanese maps in details. 
We also give a detailed proof of Kawamata's theorem on 
the quasi-Albanese maps for varieties of the logarithmic Kodaira dimension 
zero. Note that Iitaka's theory is an application of 
Deligne's mixed Hodge theory for smooth 
algebraic varieties. 
\end{abstract}

\tableofcontents 

\section{Introduction}\label{p-sec1}

In this paper, we discuss Iitaka's theory of quasi-Albanese maps. 
We give a detailed proof of: 

\begin{thm}[{see \cite{iitaka1} and Theorem \ref{p-thm3.16}}]\label{p-thm1.1}
Let $X$ be a smooth algebraic variety defined over $\mathbb C$. 
Then there exists a morphism $\alpha\colon X\to A$ to a quasi-abelian 
variety $A$ such that 
\begin{itemize}
\item[(i)] for any other morphism $\beta\colon X\to B$ to a quasi-abelian 
variety $B$, 
there is a morphism $f\colon A\to B$ such that $\beta=f\circ \alpha$
\begin{equation*}
\xymatrix{
X\ar[d]_{\alpha}\ar[r]^\beta& B\\
A\ar[ur]_{f}&
}
\end{equation*} 
and 
\item[(ii)] $f$ is uniquely determined. 
\end{itemize} 
\end{thm}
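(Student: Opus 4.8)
The plan is to build $A$ and $\alpha$ explicitly out of logarithmic $1$-forms and then to extract the universal property from the functoriality of these forms together with a period computation. First I would fix a smooth compactification $\overline{X}\supseteq X$ whose boundary $D=\overline{X}\setminus X$ is a simple normal crossing divisor, and set $W:=H^0(\overline{X},\Omega^1_{\overline{X}}(\log D))$. By Deligne's mixed Hodge theory this is a finite–dimensional $\mathbb{C}$–vector space, intrinsic to $X$ (independent of the chosen $\overline{X}$). The integration pairing $\gamma\otimes\omega\mapsto\int_\gamma\omega$ produces a homomorphism $\mathrm{per}\colon H_1(X,\mathbb{Z})\to W^{\ast}$ whose image $\Lambda$ is discrete; I would define $A:=W^{\ast}/\Lambda$, verify that it is a quasi-abelian variety, and define $\alpha\colon X\to A$ by $x\mapsto\bigl[\omega\mapsto\int_{x_0}^{x}\omega\bigr]$ for a fixed base point $x_0$. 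By construction the induced map $\alpha^{\ast}\colon H^0(A,\Omega^1_A(\log))=W\to W$ on invariant logarithmic forms is the identity; this normalization is the engine of the whole argument.

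For part (i) I would proceed as follows. A quasi-abelian variety $B$ is a quotient $V/\Lambda_B$ of a complex vector space, and its global logarithmic $1$-forms are precisely the translation-invariant ones, hence canonically identified with $V^{\ast}$. Extending $\beta$ to the compactifications so that the boundary maps into the boundary, functoriality of logarithmic forms gives a linear map $\beta^{\ast}\colon V^{\ast}\to W$, and dualizing yields $F:=(\beta^{\ast})^{\vee}\colon W^{\ast}\to V$. The key compatibility is that $F$ carries $\Lambda$ into $\Lambda_B$: under $\mathrm{per}$ the map $F$ realizes the topological map $\beta_{\ast}\colon H_1(X,\mathbb{Z})\to H_1(B,\mathbb{Z})=\Lambda_B$. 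Hence $F$ descends to a homomorphism $f_0\colon A\to B$, and composing with the translation sending $0$ to $\beta(x_0)$ produces the desired $f$; one checks $\beta=f\circ\alpha$ by pulling back invariant forms, where the change of variables turns $\int_{x_0}^{x}\beta^{\ast}\eta$ into $\int_{\beta(x_0)}^{\beta(x)}\eta$.

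For part (ii), suppose $f,f'\colon A\to B$ both satisfy $\beta=f\circ\alpha=f'\circ\alpha$. Using the group law on $B$, the assignment $g:=f-f'\colon A\to B$ is again a morphism, and $g\circ\alpha\equiv 0$. Pulling back an arbitrary invariant form $\omega\in V^{\ast}$ gives $\alpha^{\ast}(g^{\ast}\omega)=(g\circ\alpha)^{\ast}\omega=0$; since $\alpha^{\ast}$ is the identity, and in particular injective, we obtain $g^{\ast}\omega=0$ for every invariant $\omega$. Because the invariant $1$-forms separate tangent vectors on $B$, the vanishing of all $g^{\ast}\omega$ forces the differential of $g$ to vanish identically, so $g$ is constant; as $g(\alpha(x_0))=0$ the constant is $0$, whence $f=f'$.

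I expect the main obstacle to lie in the two structural inputs that make this functorial picture legitimate: first, that the global logarithmic $1$-forms on the compactified $B$ are exactly the invariant forms $V^{\ast}$, and that $\beta$ genuinely extends with $\beta^{\ast}$ preserving the logarithmic condition; and second, the period/lattice compatibility $F(\Lambda)\subseteq\Lambda_B$. The former rests on the structure theory of quasi-abelian varieties and on the functoriality of Deligne's mixed Hodge structures, the weight filtration on $H^1$ being what separates the abelian and linear parts of $A$; the latter amounts to checking that the Hodge-theoretic linear map $F$ and the topological map $\beta_{\ast}$ on $H_1$ describe the same periods, a comparison of the analytic and singular descriptions of one and the same integration pairing.
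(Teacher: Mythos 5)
Your construction is the same one the paper uses in Section \ref{p-sec3}: take $A$ to be the quotient of $T_1(X)^*=H^0(\overline X,\Omega^1_{\overline X}(\log D))^*$ by the image of $H_1(X,\mathbb Z)$ under the period map, define $\alpha$ by abelian integrals, and deduce the universal property from the functoriality of logarithmic $1$-forms together with the compatibility of the Hodge-theoretic and topological descriptions of the periods; your uniqueness argument in (ii) is essentially Lemma \ref{p-lem3.14}'s and is fine. However, two points you pass over are genuine theorems, and omitting them leaves real gaps. First, ``the image $\Lambda$ is discrete'' is neither automatic nor the full statement you need. The free rank of $H_1(X,\mathbb Z)$ is $2q+d$ while $\dim_{\mathbb R}W^*=2q+2d$ (here $q=q(\overline X)$ and $d=\overline q(X)-q$), so $\Lambda$ is not a full lattice, and for $W^*/\Lambda$ to be an extension of the Albanese variety of $\overline X$ by $\mathbb G_m^d$ one must prove that the period vectors are $\mathbb R$-$\mathbb C$ linearly independent (Lemma \ref{p-lem3.7}). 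The essential input is a residue computation (Lemma \ref{p-lem3.6}): a logarithmic form all of whose periods over $\ker\left(H_1(X,\mathbb Z)\to H_1(\overline X,\mathbb Z)\right)$ vanish is already holomorphic on $\overline X$. Without this, the quotient need not be a quasi-abelian variety at all, and your ``verify that it is a quasi-abelian variety'' hides exactly the hard step.

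Second, and more seriously, the theorem asserts the existence of \emph{morphisms of algebraic varieties}, while your construction only produces holomorphic maps. Three separate algebraicity statements are needed: (a) the complex Lie group $W^*/\Lambda$ is an algebraic group (Lemma \ref{p-lem3.8}, proved via a $\mathbb G_m^d$-equivariant $\mathbb P^d$-bundle compactification and an extension argument for the multiplication map); (b) $\alpha$ is algebraic (Lemma \ref{p-lem3.10}, by extending $\alpha$ meromorphically across $D$ --- which uses that the residues of the extra forms $\varphi_i$ along the boundary components are integer multiples of $(2\pi\sqrt{-1})^{-1}$ --- and then GAGA on the compactifications); and (c) the induced homomorphism $f\colon A\to B$ is algebraic, which the paper proves by a separate device (the maps $\alpha_n\colon X^{2n}\to A$ whose images have stabilizing Zariski closures equal to all of $A$, followed by a Zariski-closure-of-the-graph argument). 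Point (c) in particular cannot be waved away: as Example \ref{p-ex2.17} shows, in this non-proper setting the analytic category does not determine the algebraic structure, so the algebraicity of holomorphically constructed maps is something that must be argued, not assumed. Your closing paragraph correctly flags the identification of invariant forms and the lattice compatibility as the structural inputs, but it misses that the analytic-to-algebraic passage is where a substantial part of the work lies.
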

 
A quasi-abelian variety in Theorem \ref{p-thm1.1} 
is sometimes called a semi-abelian variety in the 
literature, which is an extension of 
an abelian variety by an algebraic torus as an algebraic group. 
Note that if $X$ is complete in Theorem \ref{p-thm1.1} 
then $A$ is nothing but the Albanese variety of $X$. 
Theorem \ref{p-thm1.1} depends on Deligne's theory of 
mixed Hodge structures for smooth complex algebraic varieties. 

We also give a detailed proof of 
Kawamata's theorem on the quasi-Albanese maps 
for varieties of the logarithmic Kodaira dimension zero. 

\begin{thm}[{see \cite{kawamata-abelian} 
and Theorem \ref{p-thm10.1}}]\label{p-thm1.2}
Let $X$ be a smooth variety such that the logarithmic Kodaira 
dimension $\overline \kappa (X)$ of $X$ is zero. 
Then the quasi-Albanese map $\alpha\colon X\to A$ is dominant and has 
irreducible general fibers. 
\end{thm}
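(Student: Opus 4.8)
The plan is to study the image $W:=\overline{\alpha(X)}\subseteq A$ through its stabilizer, reduce the dominance assertion to excluding a positive-dimensional image of log general type after passing to a quotient, and then extract the irreducibility of the general fibers from the universal property in Theorem \ref{p-thm1.1}. The starting point is the defining feature of the construction of $\alpha$, namely that $\alpha^{*}\colon H^{0}(\overline A,\Omega^{1}_{\overline A}(\log))\to H^{0}(\overline X,\Omega^{1}_{\overline X}(\log))$ is an isomorphism; this forces $W$ to lie in no proper translated sub-semi-abelian variety of $A$, for otherwise a nonzero logarithmic $1$-form on $A$ would pull back to $0$ on $X$, contradicting injectivity.

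Next I would set $B:=B(W)=\{a\in A:a+W=W\}$ (its identity component), a connected algebraic subgroup, so that $W$ is $B$-invariant and hence the full preimage $\pi^{-1}(W')$ under $\pi\colon A\to A':=A/B$, where $W':=\pi(W)$ has trivial stabilizer. The ``no proper translate'' property descends to $W'\subseteq A'$. By Kawamata's structure theorem for subvarieties of semi-abelian varieties, $W'$ is either a single point or of log general type with $\overline\kappa(W')=\dim W'$. If $W'$ is a point, then the ``no proper translate'' property applied to $w'\in w'+\{0\}$ forces $A'=0$, i.e.\ $B=A$, whence $W=A$ and $\alpha$ is dominant. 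Thus everything reduces to \emph{ruling out} the case in which $W'$ is positive-dimensional and of log general type.

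To exclude this case I would invoke the logarithmic subadditivity of the Kodaira dimension over a base of log general type, which is available from the weak positivity and semipositivity results of this paper: taking the Stein factorization $X\to V\to W'$ of the induced fibration, $V$ is again of log general type, and the inequality yields
\begin{equation*}
0=\overline\kappa(X)\ \geq\ \overline\kappa(F)+\dim V,
\end{equation*}
where $F$ is a general fiber. For $\dim V=\dim W'>0$ this is contradictory \emph{provided} $\overline\kappa(F)\geq 0$. Establishing $\overline\kappa(F)\geq 0$ — equivalently, excluding $\overline\kappa(F)=-\infty$, so that the relevant direct image sheaves are nonzero and the positivity actually bites — is the step I expect to be the main obstacle, since otherwise the displayed inequality is vacuous, as the example $V\times\mathbb P^{1}\to V$ already shows. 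This is precisely the point at which the hypothesis $\overline\kappa(X)=0$ must be used in earnest, namely through the principle that a variety of nonnegative logarithmic Kodaira dimension is not swept out in the fiber direction by log-uniruled fibers; granting this, $\dim V=0$, $W'$ is a point, and dominance follows.

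Finally, for the irreducibility of the general fibers I would take the Stein factorization $X\xrightarrow{\beta}Z\xrightarrow{\gamma}A$ of the now-surjective map $\alpha$, with $\beta$ having connected fibers and $\gamma$ finite, and observe that irreducibility of the general fiber is equivalent to $\deg\gamma=1$, i.e.\ to $\gamma$ being an isomorphism. Passing to a smooth model of $Z$ with quasi-Albanese $A_{Z}$, the finite map $\gamma$ factors as $Z\to A_{Z}\to A$, while the universal property (Theorem \ref{p-thm1.1}(i)) factors $X\to A_{Z}$ through $\alpha$ as $A\to A_{Z}$; the two resulting endomorphisms of $A$ and of $A_{Z}$ are compatible with the respective quasi-Albanese maps, so the uniqueness clause (Theorem \ref{p-thm1.1}(ii)) forces them to be the identity. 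Hence $A_{Z}\cong A$, and using once more the hypothesis $\overline\kappa(X)=0$ to preclude a nontrivial connected cover through which $\alpha$ would otherwise factor, one concludes $\deg\gamma=1$, so that the general fibers of $\alpha$ are irreducible.
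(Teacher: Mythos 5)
Your strategy for the dominance half is essentially the paper's, transposed from the Stein factorization to the image $W$: the paper likewise reduces to a fibration over a base of log general type (via its Lemma \ref{p-lem10.4}, which plays exactly the role of the structure theorem you cite) and applies subadditivity over such a base (Theorem \ref{p-thm6.1}) to force that base to be a point. Two remarks here. First, the step you single out as ``the main obstacle,'' namely $\overline\kappa(F)\geq 0$ for a sufficiently general fiber $F$, is in fact immediate: a nonzero section of $m(K_{\overline X}+\Delta_{\overline X})$ restricts to a nonzero section of $m(K_{\overline F}+\Delta_{\overline F})$ on the closure of a sufficiently general fiber, so no uniruledness principle is needed. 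Second, the structure theorem $\overline\kappa(W')=\dim W'$ for a subvariety with trivial stabilizer is not proved in this paper (only the case $\overline\kappa=0$ is, as Theorem \ref{p-thm4.4}); citing Kawamata for it is legitimate, but be aware it is an input of essentially the same depth as Lemma \ref{p-lem10.4}, which the paper proves in detail instead.

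The genuine gap is in the irreducibility step. Knowing that the quasi-Albanese variety of (a resolution of) $Z$ is $A$ does not preclude $\gamma\colon Z\to A$ from being a \emph{ramified} finite cover of degree greater than one: a double cover of an abelian surface branched along a smooth ample divisor has Albanese variety equal to that abelian surface, yet has degree two over it. Your closing phrase about precluding ``a nontrivial connected cover through which $\alpha$ would otherwise factor'' only disposes of the case where $\gamma$ is already known to be \'etale. To rule out ramification you need two nontrivial inputs that the proposal omits: (1) $\overline\kappa(Z)=0$ --- and this does \emph{not} follow from $X\to Z$ being dominant, since $\overline\kappa$ can increase along a fibration with positive-dimensional fibers (consider $\mathbb P^1\times Y\to Y$ with $Y$ of general type); one must rerun the subadditivity-over-a-log-general-type-base argument with $Z$ in place of $W$, which is precisely what the paper's proof of Theorem \ref{p-thm10.1} does via Lemma \ref{p-lem10.4}; and (2) the theorem that a finite surjective morphism from a normal variety with $\overline\kappa=0$ onto a quasi-abelian variety is \'etale (Theorem \ref{p-thm9.3}, the logarithmic Kawamata--Viehweg theorem, to which the paper devotes Section \ref{p-sec9}). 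Only after these two steps is $Z$ known to be quasi-abelian, at which point your universal-property argument correctly forces $\deg\gamma=1$.
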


The original proof of Theorem \ref{p-thm1.2} in \cite{kawamata-abelian} 
needs some 
deep results on the theory of variations of (mixed) Hodge structure. 
They are the hardest parts of \cite{kawamata-abelian} to follow. 
In Section \ref{p-sec7}, we give many supplementary comments 
on various semipositivity theorems, which clarify Kawamata's original 
approach to Theorem \ref{p-thm1.2} in \cite{kawamata-abelian}. 
In Section \ref{p-sec8}, we explain how to avoid using 
the theory of variations of (mixed) Hodge structure for 
the proof of Theorem \ref{p-thm1.2}. 
A vanishing theorem in \cite{fujino-higher} 
related to the theory of mixed Hodge structures 
is sufficient for the proof of Theorem \ref{p-thm1.2}. 

When the logarithmic irregularity $\overline q(X)$ 
of $X$ is $\dim X$ in Theorem 
\ref{p-thm1.2}, we have the following theorem, 
which is a slight refinement of \cite[Theorem A]{mendes}. 

\begin{thm}[{see \cite[Theorem A]{mendes}, 
\cite{fmpt}, and \cite[Lemma 3.2]{cdy}}]\label{p-thm1.3}
Let $X$ be a smooth variety with $\overline \kappa (X)=0$ 
and the logarithmic irregularity $\overline q(X)=\dim X$. 
Then the quasi-Albanese map $\alpha \colon X\to A$ 
is birational and there exists a closed subset $Z$ of $A$ 
with $\codim_A Z\geq 2$ such that 
$\alpha\colon X\setminus \alpha^{-1}(Z)\to A\setminus Z$ 
is an isomorphism and $\alpha^{-1}(Z)$ is of 
pure codimension one. 
\end{thm}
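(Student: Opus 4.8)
The plan is to reduce the whole statement to birationality together with a single codimension-one assertion about the boundary, and then to settle the latter by combining Zariski's main theorem with the hypothesis $\overline\kappa(X)=0$. First I would dispose of birationality. By the construction of the quasi-Albanese variety one has $\dim A=\overline q(X)$, so the hypothesis $\overline q(X)=\dim X$ gives $\dim A=\dim X=:n$. Theorem \ref{p-thm1.2} then says that $\alpha$ is dominant with irreducible general fibres; since these fibres are now zero-dimensional and irreducible, generic smoothness (we work over $\mathbb C$) makes the general fibre a single reduced point, so $\alpha$ has degree one and is birational. This is the first, but not the last, place where $\overline\kappa(X)=0$ enters: it is used through Theorem \ref{p-thm1.2} to force the general fibre to be irreducible.

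Next I would fix models. Choose a smooth compactification $\overline X\supseteq X$ whose boundary $D_X:=\overline X\setminus X$ is a simple normal crossing divisor; after blowing up the codimension $\ge 2$ part of $\overline X\setminus X$ we may assume $X=\overline X\setminus D_X$ exactly. Choose also a smooth equivariant compactification $\overline A\supseteq A$ with boundary $D_A$, so that $K_{\overline A}+D_A\sim 0$, and let $\omega_A$ be a nowhere vanishing logarithmic $n$-form generating $\Omega^n_{\overline A}(\log D_A)\cong\mathcal O_{\overline A}$. Since $\alpha$ is already a morphism on $X$, its indeterminacy lies in $D_X$, so blowing up centres inside $D_X$ (which does not alter $X$) produces a proper birational morphism $\overline\alpha\colon\overline X\to\overline A$ extending $\alpha$. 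Writing $K_{\overline X}=\overline\alpha^{*}K_{\overline A}+\sum_i a_iE_i$ with $a_i\ge 1$ over the exceptional divisors $E_i$, and using $K_{\overline A}=-D_A$, one computes that $N:=K_{\overline X}+D_X$ is effective, equals $\operatorname{div}(\overline\alpha^{*}\omega_A)$ viewed as a logarithmic $n$-form, and that its \emph{non-exceptional} part is exactly the sum of those components of $D_X$ which dominate a prime divisor contained in the interior $A$.

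Now I would apply purity. By Zariski's main theorem and purity of the exceptional locus for the proper birational morphism $\overline\alpha$ to the smooth variety $\overline A$, the set $W:=\overline\alpha(\Exc(\overline\alpha))$ satisfies $\codim_{\overline A}W\ge 2$, the map $\overline\alpha$ is an isomorphism over $\overline A\setminus W$, $\Exc(\overline\alpha)=\overline\alpha^{-1}(W)$, and $\Exc(\overline\alpha)$ is pure of codimension one. Setting $Z:=W\cap A$ one gets $\codim_A Z\ge 2$ and $\alpha^{-1}(Z)=X\cap\Exc(\overline\alpha)$, which is pure of codimension one (or empty). Moreover $\alpha\colon X\setminus\alpha^{-1}(Z)\to A\setminus Z$ will be an isomorphism \emph{provided} $\overline\alpha^{-1}(A\setminus Z)\subseteq X$; a short analysis (Type I boundary components map into $D_A$, exceptional ones map into $W$, so neither meets $\overline\alpha^{-1}(A\setminus Z)$) shows that this containment holds exactly when no component of $D_X$ dominates a divisor contained in $A$, i.e. exactly when $N$ is $\overline\alpha$-exceptional. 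Thus the theorem reduces to the claim that $N=K_{\overline X}+D_X$ has no non-exceptional component.

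To prove this claim I would argue by contradiction, and this is where I expect the main obstacle. If a boundary component $F\subseteq D_X$ dominated an interior prime divisor $D'\subseteq A$, then $F$ would be the unique divisor of $\overline X$ lying over $\overline{D'}$ (the generic point of $D'$ lies in $A\setminus W$, where $\overline\alpha$ is an isomorphism), so $\alpha$ would omit a dense open subset of $D'$ and $\alpha(X)$ would miss the divisor $D'$. Since $K_{\overline A}+D_A\sim 0$, removing $D'$ should bound $\overline\kappa(X)=\kappa(\overline X,N)$ below by $\kappa(\overline A,\overline{D'})$, and because a nonzero effective divisor on a quasi-abelian variety has positive Iitaka dimension, this would give $\overline\kappa(X)\ge 1$, contradicting $\overline\kappa(X)=0$. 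The delicate point—the heart of the argument—is to make the inequality $\kappa(\overline X,N)\ge\kappa(\overline A,\overline{\alpha}_*N)$ rigorous, that is, to control the exceptional discrepancies so that sections of $m\overline{D'}$ on $\overline A$ pull back to sections of $mN$ on $\overline X$; concretely this amounts to the crepancy inequalities $a_i\ge c_i$ relating the discrepancies $a_i$ to the coefficients $c_i$ of $E_i$ in $\overline\alpha^{*}D_A$. Once $N$ is shown to be $\overline\alpha$-exceptional, the reduction of the previous paragraph yields all three assertions of the theorem.
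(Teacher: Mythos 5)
Your reduction (birationality via Theorem \ref{p-thm1.2}, purity of $\Exc(\overline\alpha)$, and the observation that everything comes down to showing that no component of $\Delta_{\overline X}$ dominates a prime divisor $D'$ contained in $A$) matches the skeleton of the paper's proof. But the step you yourself flag as ``the heart of the argument'' is genuinely missing, and the route you sketch for it does not go through. Two things break. First, you invoke the fact that a nonzero effective divisor meeting the interior of a compactified quasi-abelian variety has positive Iitaka dimension; this is true but nontrivial (it needs, e.g., the theorem of the square on $\overline A$ and a translation-invariance argument), it is proved nowhere in the paper, and the paper only ever uses the classical abelian-variety case $\kappa(B,D_2)>0$. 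Second, and more seriously, the ``crepancy inequalities'' you would need in order to get $\kappa(\overline X,K_{\overline X}+\Delta_{\overline X})\geq\kappa(\overline A,\overline{D'})$ can fail: if $E$ is an $\overline\alpha$-exceptional divisor whose center is a stratum (log canonical center) of $(\overline A,\Delta_{\overline A})$ contained in $\overline{D'}\cap\Delta_{\overline A}$, then $E$ has log discrepancy $0$, so $K_{\overline X}+\Delta_{\overline X}-\overline\alpha^*(K_{\overline A}+\Delta_{\overline A})$ has coefficient $0$ along $E$ while $\overline\alpha^*\overline{D'}$ has positive coefficient there, and no inequality of the form $K_{\overline X}+\Delta_{\overline X}\geq\varepsilon\,\overline\alpha^*\overline{D'}$ is available. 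This is exactly the situation when $D'$ dominates the abelian part $B$; it already occurs for $\{xy=1\}\subset\mathbb G_m^2\subset\mathbb P^1\times\mathbb P^1$, whose closure passes through a corner stratum.

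The paper avoids this by splitting into two cases according to whether the divisorial image $\overline\alpha(D)$ dominates $B$. In the non-dominant case $\overline\alpha(D)=\overline\pi^*D_2$ contains no log canonical center of $(\overline A,\Delta_{\overline A})$, so the discrepancy comparison you have in mind does work, and one only needs $\kappa(B,D_2)>0$ on the abelian variety $B$. In the dominant case the paper uses an entirely different tool: it chooses a one-parameter subgroup $\mathbb G_m\subset A$ such that $D'$ dominates the quotient $A_1=A/\mathbb G_m$; then the general fiber $F$ of $X\to A_1$ is $\mathbb P^1$ with at least three points removed, so $\overline\kappa(F)=1$, and Kawamata's subadditivity theorem for morphisms of relative dimension one (Theorem \ref{p-thm6.3}) gives $0=\overline\kappa(X)\geq\overline\kappa(F)+\overline\kappa(A_1)=1$, a contradiction. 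You would need to supply an argument of this kind (or a genuinely different one) to close the gap.
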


As an easy consequence of Theorem \ref{p-thm1.3}, 
we have: 

\begin{cor}[{see \cite[Corollary B]{mendes}}]\label{p-cor1.4}
Let $X$ be a smooth affine variety with $\dim X=n$. 
Then $X$ is isomorphic to $\mathbb G^n_m$ if and only 
if $\overline \kappa (X)=0$ and $\overline q(X)=n$. 
\end{cor}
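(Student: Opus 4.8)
The plan is to prove Corollary 1.4 by deducing it from Theorem 1.3, so I first examine the two directions of the biconditional. The ``only if'' direction is trivial: if $X\cong \mathbb G_m^n$, then since $\mathbb G_m^n$ is itself a quasi-abelian variety (an algebraic torus) of dimension $n$, one computes directly that $\overline\kappa(\mathbb G_m^n)=0$ (the log canonical divisor of the standard compactification $(\mathbb P^1)^n$ with boundary the coordinate hyperplanes is trivial) and that $\overline q(\mathbb G_m^n)=n$ (the $n$ logarithmic one-forms $\tfrac{dx_i}{x_i}$ span the space of log holomorphic one-forms). So the real content is the ``if'' direction.

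\smallskip

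For the ``if'' direction, suppose $X$ is a smooth affine variety of dimension $n$ with $\overline\kappa(X)=0$ and $\overline q(X)=n$. The key step is to apply Theorem 1.3: since $\overline\kappa(X)=0$ and the logarithmic irregularity equals $\dim X=n$, the quasi-Albanese map $\alpha\colon X\to A$ is birational onto a quasi-abelian variety $A$ with $\dim A=\overline q(X)=n$, and there is a closed subset $Z\subseteq A$ with $\codim_A Z\geq 2$ such that $\alpha$ restricts to an isomorphism $X\setminus \alpha^{-1}(Z)\xrightarrow{\sim} A\setminus Z$, with $\alpha^{-1}(Z)$ of pure codimension one. The plan is then to exploit the affineness of $X$ to force $Z=\emptyset$. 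Indeed, $\alpha^{-1}(Z)$, being of pure codimension one in the affine (hence quasi-affine) variety $X$, cannot be contracted: if $Z$ were nonempty with $\codim_A Z\geq 2$, the birational morphism $\alpha$ would contract the codimension-one set $\alpha^{-1}(Z)$ to the higher-codimension set $Z$, which is impossible for a morphism from an affine variety by Zariski's main theorem / the fact that a fiber of $\alpha$ over a point of $Z$ would be complete of positive dimension inside the affine $X$. Hence $Z=\emptyset$ and $\alpha\colon X\xrightarrow{\sim} A$ is an isomorphism onto the quasi-abelian variety $A$.

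\smallskip

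It remains to identify the $n$-dimensional quasi-abelian variety $A$ as the torus $\mathbb G_m^n$. The plan is to use that $A$ is an extension $1\to T\to A\to A_0\to 1$ of an abelian variety $A_0$ by an algebraic torus $T$, and that $A\cong X$ is affine. Since an abelian variety of positive dimension is complete and not affine, and the image of an affine variety under a morphism to $A_0$ cannot be all of a positive-dimensional abelian variety while keeping $A$ affine, the affineness of $A$ forces $\dim A_0=0$, i.e.\ $A_0$ is a point and $A=T$ is an algebraic torus. An $n$-dimensional algebraic torus over $\mathbb C$ is isomorphic to $\mathbb G_m^n$, giving $X\cong A\cong \mathbb G_m^n$.

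\smallskip

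I expect the main obstacle to be the rigorous justification that $Z=\emptyset$, i.e.\ that a birational morphism from an affine variety to a quasi-abelian variety cannot contract a pure-codimension-one divisor to a codimension $\geq 2$ locus. The clean argument is that $\alpha$ being affine (as $X$ is affine) and birational with $\alpha^{-1}(Z)\to Z$ a proper morphism with positive-dimensional fibers would produce complete positive-dimensional subvarieties inside the affine $X$, a contradiction; making this precise may require invoking that the fibers of $\alpha^{-1}(Z)\to Z$ are proper and connected (Zariski's main theorem), which is the step warranting the most care.
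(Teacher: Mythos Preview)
Your overall strategy is the same as the paper's: invoke Theorem \ref{p-thm1.3}, show $Z=\emptyset$, and then conclude that the affine quasi-abelian variety $A$ must be a torus. The ``only if'' direction and the final identification $A\cong\mathbb G_m^n$ are fine.

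The gap is in your argument that $Z=\emptyset$. You claim that the fibers of $\alpha^{-1}(Z)\to Z$ are proper, so that a positive-dimensional fiber would be a complete subvariety of the affine $X$. But $\alpha\colon X\to A$ is \emph{not} proper (it is merely an affine morphism, since $X$ is affine and $A$ is separated), and Zariski's main theorem gives connectedness of fibers only for proper birational morphisms. There is no reason the fibers $\alpha^{-1}(z)$ for $z\in Z$ are complete; they are closed in the affine $X$, hence themselves affine, and that is all. Your argument also does not address the possibility that $\alpha^{-1}(Z)=\emptyset$ while $Z\ne\emptyset$, in which case $X\cong A\setminus Z$ and there is nothing to contract.

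The paper's argument bypasses this entirely and is short: since $X$ is smooth affine and $\alpha^{-1}(Z)$ is of pure codimension one (hence an effective Cartier divisor), the complement $X\setminus\alpha^{-1}(Z)$ is affine. Therefore $A\setminus Z\cong X\setminus\alpha^{-1}(Z)$ is affine. But $A$ is smooth and $\codim_A Z\geq 2$, so $A\setminus Z$ can be affine only if $Z=\emptyset$ (the paper cites \cite[Lemma 6]{iitaka2}; one can also argue via local cohomology or Hartogs). This handles both cases at once and does not require any properness.
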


Corollary \ref{p-cor1.5} is also an easy consequence of 
Theorem \ref{p-thm1.3}. 

\begin{cor}\label{p-cor1.5} 
Let $X$ be a nonempty Zariski open set of a quasi-abelian 
variety $A$. 
Then $\overline \kappa(X)=0$ if and only 
if $\codim _A(A\setminus X)\geq 2$. 
\end{cor}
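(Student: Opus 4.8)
The plan is to prove the two implications separately, with the substantive direction resting on Theorem \ref{p-thm1.3}. For the implication $\codim_A(A\setminus X)\geq 2\Rightarrow \overline\kappa(X)=0$, I would invoke two standard facts. First, a quasi-abelian variety has logarithmic Kodaira dimension zero, so $\overline\kappa(A)=0$. Second, the logarithmic Kodaira dimension is unchanged when one removes from a smooth variety a closed subset of codimension at least two, since logarithmic pluricanonical forms extend across such loci by Hartogs. Writing $X=A\setminus Z$ with $\codim_A Z\geq 2$, these give $\overline\kappa(X)=\overline\kappa(A)=0$ at once.

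For the converse, assume $\overline\kappa(X)=0$; the goal is to verify the hypotheses of Theorem \ref{p-thm1.3} and then read off the conclusion. First I would show $\overline q(X)=\dim X$. On the one hand, restricting the $n:=\dim A$ translation-invariant regular $1$-forms of the quasi-abelian variety $A$ to the dense open set $X$ produces $n$ linearly independent logarithmic $1$-forms, so $\overline q(X)\geq n=\dim X$. On the other hand, since $\overline\kappa(X)=0$, Theorem \ref{p-thm1.2} shows the quasi-Albanese map $\alpha\colon X\to \mathrm{qAlb}(X)$ is dominant, forcing $\dim \mathrm{qAlb}(X)=\overline q(X)\leq \dim X$. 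Hence $\overline q(X)=\dim X$, as required.

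Next I would identify the quasi-Albanese variety. By the universal property (Theorem \ref{p-thm1.1}) applied to the open immersion $\iota\colon X\hookrightarrow A$, there is a homomorphism $f\colon \mathrm{qAlb}(X)\to A$ with $\iota=f\circ\alpha$. As $\iota$ is dominant and $\dim\mathrm{qAlb}(X)=\dim X=\dim A$, the map $f$ is an isogeny; combining the birationality of $\alpha$ (from Theorem \ref{p-thm1.3}) with the injectivity of $\iota$, one sees $f$ is generically injective, hence an isomorphism. Thus I may take $\mathrm{qAlb}(X)=A$ and $\alpha=\iota$. Now Theorem \ref{p-thm1.3} furnishes a closed $Z\subseteq A$ with $\codim_A Z\geq 2$ such that $\iota\colon X\setminus(X\cap Z)\to A\setminus Z$ is an isomorphism and $X\cap Z$ is of pure codimension one. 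Since $\iota$ is an inclusion, the isomorphism onto $A\setminus Z$ forces $A\setminus Z\subseteq X$, while purity of codimension one for $X\cap Z\subseteq Z$ of codimension $\geq 2$ forces $X\cap Z=\emptyset$; together these give $A\setminus X=Z$, whence $\codim_A(A\setminus X)\geq 2$.

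The main obstacle I anticipate is not the formal deduction but the clean identification $\mathrm{qAlb}(X)=A$ with $\alpha$ the inclusion, together with the careful bookkeeping that turns the two clauses of Theorem \ref{p-thm1.3}, namely the isomorphism over $A\setminus Z$ and the purity of $\alpha^{-1}(Z)$, into the single equality $A\setminus X=Z$. Once Theorem \ref{p-thm1.3} is granted, the remaining steps are routine.
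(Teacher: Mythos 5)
Your proof is correct and takes essentially the same route as the paper: the easy direction via invariance of $\overline\kappa$ under removal of a closed subset of codimension at least two (Lemma \ref{p-lem2.5}, together with $\overline\kappa(A)=0$), and the converse by identifying the inclusion $X\hookrightarrow A$ with the quasi-Albanese map of $X$ and then invoking Theorem \ref{p-thm1.3}. The only difference is one of detail: the paper's two-line proof leaves implicit exactly the points you spell out, namely the verification $\overline q(X)=\dim X$, the isogeny argument showing the quasi-Albanese variety of $X$ is $A$ itself, and the bookkeeping that converts the conclusion of Theorem \ref{p-thm1.3} into the equality $A\setminus X=Z$.
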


One of the main motivations of this paper is 
to understand Theorem \ref{p-thm1.2} in detail. 
The original proof of Theorem \ref{p-thm1.2} 
in \cite{kawamata-abelian} looks inaccessible 
because the theory of variations of (mixed) Hodge 
structure was not fully matured when \cite{kawamata-abelian} 
was written around 1980. 
Moreover, some details are omitted in \cite{kawamata-abelian}. 
In \cite{kawamata-abelian}, Kawamata could and did use only 
\cite{deligne}, \cite{griffiths}, and \cite{schmid} for the 
Hodge theory. 
Although the semipositivity theorem in \cite{fujino-higher} 
(see also \cite{fujino-fujisawa} and \cite{ffs}) does not 
recover Kawamata's statement on semipositivity (see 
\cite[Theorem 32]{kawamata-abelian}), it is natural and is 
sufficient for 
us to carry out Kawamata's proof 
of Theorem \ref{p-thm1.2} in \cite{kawamata-abelian} with 
some suitable modifications. 
The author has been unable to follow \cite[Theorem 32]{kawamata-abelian}. 
Moreover, the vanishing theorem in \cite{fujino-higher} 
gives a more elementary 
approach to Theorem \ref{p-thm1.2} and 
makes Theorem \ref{p-thm1.2} independent of 
the theory of variations of (mixed) Hodge structure. 
The author hopes that this paper will make Iitaka's 
theory of quasi-Albanese maps and 
Kawamata's result on the quasi-Albanese maps of 
varieties of the logarithmic Kodaira dimension zero 
accessible. 

We look at the organization of this paper. 
Section \ref{p-sec2} is a preliminary section. 
In Subsections \ref{p-subsec2.1} and 
\ref{p-subsec2.2}, we collect some basic 
definitions and results of 
the logarithmic Kodaira dimensions 
and the quasi-abelian varieties in the sense of 
Iitaka, respectively. 
Section \ref{p-sec3} is devoted to the theory of quasi-Albanese maps and 
varieties due to Shigeru Iitaka. 
We explain it in details following Iitaka's paper \cite{iitaka1} with many 
supplementary 
arguments. Theorem \ref{p-thm3.16}, which is Theorem \ref{p-thm1.1}, 
is the main result of this section. 
In Section \ref{p-sec4}, we prove some basic 
properties of quasi-abelian varieties for the reader's 
convenience. 
In Section \ref{p-sec5}, we quickly explain a birational characterization of 
abelian varieties and a bimeromorphic characterization of 
complex tori without proof. In Section \ref{p-sec6}, we recall 
the subadditivity of the logarithmic Kodaira dimensions 
in some special cases. We use them for the proof of 
Theorem \ref{p-thm1.2}. 
Section \ref{p-sec7} is devoted to the explanation of 
some semipositivity 
theorems related to the theory of variations of (mixed) Hodge structure. 
We hope that this section will help the reader to 
understand \cite{kawamata-abelian}.   
In Section \ref{p-sec8}, we discuss some weak positivity theorems. 
Our approach in Section \ref{p-sec8} does not use the theory of 
variations of (mixed) Hodge structure. 
We use a generalization of the Koll\'ar vanishing theorem. 
This section makes Theorem \ref{p-thm1.2} independent of 
the theory of variations of (mixed) Hodge structure. 
In Section \ref{p-sec9}, 
we discuss finite covers of quasi-abelian varieties. 
We need them for the proof of Theorem \ref{p-thm1.2}. 
In Section \ref{p-sec10}, 
we prove Theorem \ref{p-thm1.2} in details. 
In Section \ref{p-sec11}, which is the final section, 
we prove Theorem \ref{p-thm1.3} and Corollaries  
\ref{p-cor1.4} and \ref{p-cor1.5}. 

\begin{say}[Historical note]\label{p-say1.5} 
If I remember correctly, I wrote the following two preprints: 
\begin{itemize}
\item Osamu Fujino, Subadditivity of the logarithmic 
Kodaira dimension for morphisms of relative dimension one 
revisited 
\end{itemize} 
and 
\begin{itemize}
\item Osamu Fujino, On quasi-Albanese maps 
\end{itemize}
in 2014 and circulated them as Kyoto 
Math 2015-02 and 2015-03 in the preprint series in Department of 
Mathematics, Kyoto University, respectively. 
In 2018, I gave a series of lectures on the Iitaka conjecture in Osaka. 
Then I published \cite{iitaka-conjecture}, 
which is a completely revised and 
expanded version of the above first 
preprint. Although I did not put the above preprints 
on the arXiv, some people have cited the above second 
preprint as a reference on 
the theory of quasi-Albanese maps. 
Hence, I put it on the arXiv now and will plan to publish it somewhere. 
Note that I added Section \ref{p-sec11} 
when I revised this paper in 2024.  
\end{say}

\begin{ack}\label{p-ack}
The author was partially supported by Grant-in-Aid for 
Young Scientists (A) 24684002 and Grant-in-Aid for 
Scientific Research (S) 24224001 from JSPS. 
He thanks Kentaro Mitsui for answering his questions. 
He also would like to thank Professor Noboru 
Nakayama for a useful comment. 
During the preparation of the revised version of this paper, 
he was partially 
supported by JSPS KAKENHI Grant Numbers 
JP19H01787, JP20H00111, JP21H00974, JP21H04994. 
He thanks Professors Margarida Mendes Lopes, Rita Pardini, and 
Sofia Tirabassi very much for helping him to 
understand Theorem \ref{p-thm1.3}. 
Finally, he would like to 
thank Professor Katsutoshi Yamanoi for his  
encouragement.
\end{ack}

We will work over $\mathbb C$, the complex number field, throughout this 
paper. A variety means a reduced and irreducible 
separated scheme of finite type over $\mathbb C$. 
We will use the standard notation as in 
\cite{fujino-fundamental} and \cite{fujino-foundations}. 
The theory of algebraic groups which are not affine nor projective is 
not so easy to access. 
Hence we make efforts to minimize the use of the general theory of 
algebraic groups for 
the reader's convenience. In this paper, we do not even 
use \cite[Lemme (10.1.3.3)]{deligne-hodgeIII}. 
We do not use the theory of minimal models. 

\section{Preliminaries}\label{p-sec2}
In this section, we collect some basic definitions and 
results on the logarithmic Kodaira dimensions and 
the quasi-abelian varieties (see, for 
example, \cite{iitaka1}, \cite{iitaka2}, 
\cite{iitaka4}, \cite{iitaka5}, and so on). 
For the basic properties of the Kodaira dimensions and some related 
topics, see, for example, \cite{ueno} and \cite{mori} 
(see also \cite{iitaka-conjecture}). 

\subsection{Logarithmic Kodaira dimensions and irregularities}
\label{p-subsec2.1} 

First, we recall the logarithmic Kodaira dimensions and the logarithmic 
irregularities following Iitaka. For the details, see \cite{iitaka1}, 
\cite{iitaka2}, \cite{iitaka4}, and \cite{iitaka5}. 

\begin{defn}[Logarithmic Kodaira dimension]\label{p-def2.1}
Let $X$ be an algebraic variety. 
By Nagata (see \cite{nagata}), we have a complete algebraic variety 
$\overline X$ which contains $X$ as a dense Zariski open subset. 
By Hironaka (see \cite{hironaka}), we have a smooth projective 
variety $\overline W$ and a projective 
birational morphism $\mu\colon \overline W\to \overline X$ such that 
if $W=\mu^{-1}(X)$, then $\overline D=\overline W-W=\mu^{-1}(\overline X-X)$ 
is a simple normal crossing divisor on $\overline W$. 
The {\em{logarithmic Kodaira dimension}} $\overline \kappa(X)$ of 
$X$ is defined as 
\begin{equation*} 
\overline \kappa(X)=\kappa(\overline W, K_{\overline {W}}+\overline D) 
\end{equation*} 
where $\kappa$ denotes Iitaka's $D$-dimension. 
\end{defn}

\begin{defn}[Logarithmic irregularity]\label{p-def2.2}
Let $X$ be an algebraic variety. 
We take $(\overline W, \overline D)$ as in Definition \ref{p-def2.1}. 
Then we put 
\begin{equation*}
\overline q(X)=\dim _{\mathbb C}
H^0(\overline W, \Omega_{\overline W}^1(\log \overline D))
\end{equation*} 
and call it the {\em{logarithmic irregularity}} of $X$.  
We put 
\begin{equation*} 
T_1(X)=H^0(\overline W, \Omega^1_{\overline W}(\log \overline D))
\end{equation*}  
following Iitaka \cite{iitaka1}. 
\end{defn}

It is easy to see: 

\begin{lem}\label{p-lem2.3}
$\overline \kappa(X)$, $\overline q(X)$, and 
$T_1(X)$  
are well-defined, that is, 
they are independent of the choice of the pair $(\overline W, \overline D)$. 
\end{lem}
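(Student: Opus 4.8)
The plan is to show that both $\overline{\kappa}(X)$ and the logarithmic irregularity data $\overline{q}(X)$, $T_1(X)$ do not depend on the chosen completion $(\overline{W}, \overline{D})$. The standard strategy is to prove that any two such pairs can be dominated by a third, and that the relevant invariants are preserved under the birational morphisms connecting them. First I would recall that given two admissible pairs $(\overline{W}_1, \overline{D}_1)$ and $(\overline{W}_2, \overline{D}_2)$, each restricting to $X$ over a dense open set, one can construct a common smooth projective resolution $(\overline{W}, \overline{D})$ mapping to both via projective birational morphisms $\mu_i \colon \overline{W} \to \overline{W}_i$ with $\mu_i^{-1}(X) = W$ and $\overline{D} = \overline{W} - W$ simple normal crossing. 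This reduces the problem to comparing the invariants of $(\overline{W}, \overline{D})$ with those of each $(\overline{W}_i, \overline{D}_i)$ separately, so I may assume there is a single projective birational morphism $\mu \colon \overline{W} \to \overline{W}'$ between two admissible pairs.

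For the logarithmic Kodaira dimension, the key point is the birational invariance of $\kappa(\overline{W}, K_{\overline{W}} + \overline{D})$ under log-crepant type comparisons. Since $\mu$ is birational and both boundary divisors are the reduced preimages of the complement $\overline{X} - X$, I would use the fact that $\mu_* \mathcal{O}_{\overline{W}}(m(K_{\overline{W}} + \overline{D})) \cong \mathcal{O}_{\overline{W}'}(m(K_{\overline{W}'} + \overline{D}'))$ for all $m \geq 0$, which gives an isomorphism on the level of global sections $H^0(\overline{W}, m(K_{\overline{W}} + \overline{D})) \cong H^0(\overline{W}', m(K_{\overline{W}'} + \overline{D}'))$. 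The discrepancy computation amounts to checking that the exceptional divisors of $\mu$ contribute nonnegative coefficients because $(\overline{W}', \overline{D}')$ is log smooth, so no sections are lost or gained; this yields equality of the $D$-dimensions.

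For the logarithmic irregularity and $T_1(X)$, I would invoke the behavior of logarithmic differential forms under log-smooth birational morphisms. The natural map $\mu^* \Omega^1_{\overline{W}'}(\log \overline{D}') \to \Omega^1_{\overline{W}}(\log \overline{D})$ induces, after pushing forward and taking global sections, an isomorphism $H^0(\overline{W}', \Omega^1_{\overline{W}'}(\log \overline{D}')) \cong H^0(\overline{W}, \Omega^1_{\overline{W}}(\log \overline{D}))$. The essential fact is that global logarithmic one-forms are bimeromorphic invariants of the pair, reflecting that they compute part of the mixed Hodge structure $H^1(X)$ intrinsically; since both pairs are log resolutions of the same $X$, the spaces of global log one-forms agree. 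This gives the independence of $\overline{q}(X)$ and $T_1(X)$ at once.

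The main obstacle I anticipate is the careful verification of the discrepancy and logarithmic pullback statements at the boundary: one must confirm that the reduced boundary condition $\overline{D} = \mu^{-1}(\overline{D}')$ (as sets) together with simple normal crossing on both sides forces the coefficient comparisons to come out exactly, with no correction terms appearing from exceptional divisors that are contained in or meet the boundary. Handling exceptional divisors that map into $\overline{D}'$ versus those that map into the interior $W'$ requires separate bookkeeping, and ensuring the pluricanonical and logarithmic differential sheaves behave compatibly in both cases is where the technical care is concentrated. Since the statement is asserted to be \emph{easy}, I expect these verifications to be routine applications of standard log-smooth birational geometry, but they constitute the substance of the argument.
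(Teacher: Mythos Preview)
Your proposal is correct and follows essentially the same route as the paper: reduce via a common resolution to a single birational morphism $\mu\colon(\overline W,\overline D)\to(\overline W',\overline D')$, then compare log pluricanonical sections via the discrepancy formula $K_{\overline W}+\overline D=\mu^*(K_{\overline W'}+\overline D')+E$ with $E$ effective and $\mu$-exceptional, and compare log $1$-forms via the sheaf inclusion $\mu^*\Omega^1_{\overline W'}(\log\overline D')\subset\Omega^1_{\overline W}(\log\overline D)$.

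One small comment: the paper keeps the argument entirely elementary and does not invoke mixed Hodge theory for the equality of $T_1$. It obtains the reverse inclusion $H^0(\overline W,\Omega^1_{\overline W}(\log\overline D))\subset H^0(\overline W',\Omega^1_{\overline W'}(\log\overline D'))$ simply from the fact that $\Omega^1_{\overline W'}(\log\overline D')$ is locally free and $\mu$ is an isomorphism outside codimension two. Your appeal to the mixed Hodge structure on $H^1(X)$ is valid but somewhat circular in spirit, since this lemma is precisely what makes $T_1(X)$ a well-defined invariant prior to identifying it with a Hodge piece; the direct Hartogs-type argument is cleaner here.
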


This lemma is well known. We give a proof for the reader's convenience. 
\begin{proof}
By Hironaka's resolution (see \cite{hironaka}), 
it is sufficient to 
prove that 
\begin{equation*}
\kappa(\overline W, K_{\overline W}+\overline D)=\kappa 
(\overline W_1, K_{\overline W_1}+\overline D_1) 
\end{equation*}  
and 
\begin{equation*} 
H^0(\overline W, \Omega^1_{\overline W}(\log \overline D))
=H^0(\overline W_1, \Omega^1_{\overline W_1}(\log \overline D_1))
\end{equation*} 
where $f\colon \overline W_1\to \overline W$ 
is a projective 
birational morphism from a smooth projective variety $\overline W_1$ and 
$\overline D_1=\Supp f^*\overline D$. 
By the local calculation, we see that 
\begin{equation}\label{p-eq2.1}
f^*\Omega^1_{\overline W}(\log \overline D)\subset \Omega^1_{\overline W_1}
(\log \overline D_1). 
\end{equation}
Therefore, we obtain 
\begin{equation*} 
H^0(\overline W, \Omega^1_{\overline W}(\log \overline D))
\subset 
H^0(\overline W_1, \Omega^1_{\overline W_1}(\log \overline D_1)). 
\end{equation*}  
On the other hand, it is obvious that 
\begin{equation*} 
H^0(\overline W_1, \Omega^1_{\overline W_1}(\log \overline D_1))
\subset 
H^0(\overline W, \Omega^1_{\overline W}(\log \overline D))
\end{equation*}  
since $\Omega^1_{\overline W}(\log \overline D)$ is locally free. 
Thus, we have 
\begin{equation*} 
H^0(\overline W_1, \Omega^1_{\overline W_1}(\log \overline D_1))=
H^0(\overline W, \Omega^1_{\overline W}(\log \overline D)). 
\end{equation*}  
By \eqref{p-eq2.1}, we have 
\begin{equation*} 
K_{\overline W_1}+\overline D_1=f^*(K_{\overline W}+\overline D)+E
\end{equation*}  
where $E$ is an effective $f$-exceptional 
divisor on $\overline W_1$. 
Therefore, it is obvious that 
\begin{equation*} 
\kappa (\overline W, K_{\overline W}+\overline D)=\kappa (\overline W_1, 
K_{\overline W_1}+\overline D_1)
\end{equation*}  
holds. 
\end{proof}

\begin{lem}\label{p-lem2.4}
Let $X$ be a variety and let $U$ be a nonempty Zariski open set 
of $X$. 
Then we have 
\begin{equation*} 
\overline \kappa (X)\leq \overline \kappa (U)
\end{equation*}  
and 
\begin{equation*} 
\overline q(X)\leq \overline q(U). 
\end{equation*} 
\end{lem}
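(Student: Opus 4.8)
The plan is to place both $X$ and $U$ on a single smooth projective model carrying two nested boundary divisors, and then read off the two invariants from these boundaries. First I would fix the pair $(\overline W, \overline D)$ of Definition \ref{p-def2.1}, so that $W=\mu^{-1}(X)$ is a smooth model of $X$, the reduced divisor $\overline D=\overline W\setminus W$ is simple normal crossing, and $\overline \kappa(X)=\kappa(\overline W, K_{\overline W}+\overline D)$. Set $V=\mu^{-1}(U)$. Since $U$ is open in $X$, the set $V$ is open in $W$, hence smooth, and the induced morphism $V\to U$ is proper birational, so $V$ is a smooth model of $U$. The inclusion $V\subseteq W$ gives the key containment
\begin{equation*}
\overline D=\overline W\setminus W\subseteq \overline W\setminus V.
\end{equation*}

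Then I would choose, by Hironaka, a projective birational morphism $f\colon \overline W_1\to \overline W$ from a smooth projective variety which is a log resolution of the pair $(\overline W, \overline D\cup(\overline W\setminus V))$; in particular both $f^{-1}(\overline D)$ and $f^{-1}(\overline W\setminus V)$ become simple normal crossing divisors. Put $\overline D_1=f^{-1}(\overline W\setminus V)$ (with reduced structure) and $\overline D'_1=\Supp f^*\overline D$. Because $f^{-1}(V)\to V\to U$ is again proper birational and $\overline W_1\setminus f^{-1}(V)=\overline D_1$, the pair $(\overline W_1,\overline D_1)$ computes $\overline \kappa(U)$ and $\overline q(U)$; and since $f$ is a log resolution of $(\overline W,\overline D)$, Lemma \ref{p-lem2.3} (applied exactly as in its proof) shows that $(\overline W_1,\overline D'_1)$ computes $\overline \kappa(X)$ and $\overline q(X)$. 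The containment displayed above pulls back to $\overline D'_1\subseteq \overline D_1$ as reduced divisors on $\overline W_1$.

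From here both inequalities are formal. Since $\overline D_1-\overline D'_1$ is effective, for every $m$ multiplication by a defining section of $m(\overline D_1-\overline D'_1)$ embeds $H^0(\overline W_1, m(K_{\overline W_1}+\overline D'_1))$ into $H^0(\overline W_1, m(K_{\overline W_1}+\overline D_1))$, whence $\overline \kappa(X)=\kappa(\overline W_1,K_{\overline W_1}+\overline D'_1)\leq \kappa(\overline W_1,K_{\overline W_1}+\overline D_1)=\overline \kappa(U)$. Likewise $\overline D'_1\leq \overline D_1$ gives $\Omega^1_{\overline W_1}(\log \overline D'_1)\subseteq \Omega^1_{\overline W_1}(\log \overline D_1)$, so taking global sections yields $\overline q(X)\leq \overline q(U)$. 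The only genuine work is the organizational step of producing a common smooth projective $\overline W_1$ on which $\overline D'_1\subseteq \overline D_1$; the main subtlety there is that $X\setminus U$ may have codimension $\geq 2$, so $\overline W\setminus V$ need not be pure of codimension one, and the blow-up $f$ is precisely what is needed to turn it into a simple normal crossing divisor. Once the nested boundaries are in place, both monotonicity statements drop out immediately.
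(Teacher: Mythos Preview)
Your argument is correct and is exactly the unpacking of what the paper means: the paper's entire proof is the single line ``It is obvious by the definitions of $\overline\kappa$ and $\overline q$,'' and your construction of a common smooth projective model $\overline W_1$ with nested simple normal crossing boundaries $\overline D'_1\subseteq\overline D_1$ is precisely how one makes that obviousness explicit. There is no alternative route here to compare.
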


\begin{proof}
It is obvious by the definitions of $\overline \kappa$ and 
$\overline q$. 
\end{proof}

We may sometimes use Lemma \ref{p-lem2.4} implicitly. 
We need Lemma \ref{p-lem2.5} for the proof of 
Corollary \ref{p-cor1.5}. 

\begin{lem}\label{p-lem2.5} 
Let $X$ be a smooth variety. 
Assume that $F$ is a closed subset of $X$ with 
$\codim_XF\geq 2$. 
Then we have 
\begin{equation*} 
\overline\kappa(X)=\overline \kappa (X-F).  
\end{equation*} 
\end{lem}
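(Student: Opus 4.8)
The plan is to deduce the nontrivial inequality from the definition of $\overline\kappa$ by a Hartogs-type extension argument, the essential point being that sections of a power of the canonical bundle on a smooth variety extend across a closed subset of codimension at least two.

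Since $X-F$ is a nonempty Zariski open subset of $X$, Lemma \ref{p-lem2.4} already gives $\overline\kappa(X)\leq \overline\kappa(X-F)$, so it suffices to prove the reverse inequality. As $X$ is smooth, I would first fix a smooth projective compactification $\overline W\supset X$ with $\overline D:=\overline W-X$ a simple normal crossing divisor, so that $\overline\kappa(X)=\kappa(\overline W, K_{\overline W}+\overline D)$ by Lemma \ref{p-lem2.3}. Let $\overline F$ be the closure of $F$ in $\overline W$; since $\codim_X F\geq 2$ and $F=\overline F\cap X$, we have $\codim_{\overline W}\overline F\geq 2$. To compute $\overline\kappa(X-F)$ I would then take a projective birational morphism $\pi\colon \overline V\to \overline W$ from a smooth projective variety such that the union of the strict transform of $\overline D\cup \overline F$ with the $\pi$-exceptional locus is simple normal crossing, and such that $\pi$ is an isomorphism over $X-F$ and over the generic point of every component of $\overline D$ (this is possible because $\overline D$ is already simple normal crossing and $\overline F$ has codimension at least two, so that the non--simple-normal-crossing locus of $\overline D\cup\overline F$ avoids the generic points of the $\overline D_i$). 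Writing $\overline E:=\overline V-\pi^{-1}(X-F)$, we then have $\overline\kappa(X-F)=\kappa(\overline V, K_{\overline V}+\overline E)$.

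For the key comparison, I would fix $m\geq 1$ and take an arbitrary $\eta\in H^0(\overline V, m(K_{\overline V}+\overline E))$, regarded as a meromorphic section of $mK_{\overline W}$ via the birational map $\pi$. On the open set $X-F\cong \pi^{-1}(X-F)$ the form $\eta$ has no poles, so it is a regular section of $mK_{X-F}=(mK_X)|_{X-F}$. Since $X$ is smooth, hence normal, and $\codim_X F\geq 2$, Hartogs' extension theorem shows that $\eta|_{X-F}$ extends to a regular section of $mK_X$ over all of $X$; equivalently, $\operatorname{ord}_P(\eta)\geq 0$ for every prime divisor $P$ on $\overline W$ with $P\not\subseteq \overline D$. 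It remains to bound the poles of $\eta$ along $\overline D$. For each component $\overline D_i$ of $\overline D$ the morphism $\pi$ is an isomorphism near the generic point of $\overline D_i$, and the strict transform of $\overline D_i$ is a component of $\overline E$; hence membership of $\eta$ in $H^0(\overline V, m(K_{\overline V}+\overline E))$ forces $\operatorname{ord}_{\overline D_i}(\eta)\geq -m$. Combining the two facts gives $\eta\in H^0(\overline W, m(K_{\overline W}+\overline D))$, so that $H^0(\overline V, m(K_{\overline V}+\overline E))\subseteq H^0(\overline W, m(K_{\overline W}+\overline D))$ for every $m$. This yields $\kappa(\overline V, K_{\overline V}+\overline E)\leq \kappa(\overline W, K_{\overline W}+\overline D)$, i.e. $\overline\kappa(X-F)\leq \overline\kappa(X)$, and together with Lemma \ref{p-lem2.4} it completes the proof.

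I expect the main obstacle to be the bookkeeping that makes the two halves of the argument fit together: one must choose a single resolution $\overline V$ that both computes $\overline\kappa(X-F)$ and is an isomorphism over the generic points of the components of $\overline D$, so that the log-pole orders along $\overline D$ transfer verbatim, while the extension across $F$ is handled separately. Apart from this compatibility of the two compactifications, the only genuine input is Hartogs' extension across a codimension-two subset of a smooth variety, which is precisely where the hypothesis $\codim_X F\geq 2$ is used.
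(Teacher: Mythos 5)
Your proof is correct and follows essentially the same route as the paper: both compactify $X$, take the closure $\overline F$ of $F$ (still of codimension at least two), and resolve only over $\overline F$, so that the extra boundary components introduced over $F$ are exceptional divisors of the resolution. The only difference is in the last step, where the paper writes $K_Y+\Delta_2=f^*(K_{\overline X}+\overline D)+(\text{effective exceptional divisor})$ and invokes the invariance of $\kappa$ under adding effective exceptional divisors to a pullback, while you unwind that standard lemma by hand, comparing the spaces of pluri-log-canonical sections directly via Hartogs extension across $F$ and the isomorphism over the generic points of the components of $\overline D$.
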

\begin{proof}
We take a smooth complete algebraic variety $\overline X$ such that 
$\overline D=\overline X-X$ is a simple 
normal crossing divisor on $\overline X$. 
Then we have 
\begin{equation*} 
\overline \kappa (X)=\kappa (\overline X, K_{\overline X}+\overline D)
\end{equation*}  
by definition. 
Let $\overline F$ be the closure of $F$ in $\overline X$. 
Note that $\codim_{\overline X}\overline F\geq 2$. 
We take a resolution 
\begin{equation*} 
f\colon Y\to \overline X 
\end{equation*}  
such that $f$ is an isomorphism 
over $\overline X-\overline F$ and that 
$\Supp f^{-1}(\overline F)$ and $\Supp 
\left(f^{-1}(\overline F)\cup f^*\overline D\right)$ 
are simple normal crossing 
divisors on $Y$. 
We put $\Delta_1=\Supp f^*\overline D$ and 
$\Delta_2=\Supp \left( f^{-1}(\overline F)\cup f^*\overline D\right)$. 
Then we have 
\begin{equation*} 
K_Y+\Delta_1=f^*(K_{\overline X}+\overline D)+E
\end{equation*}  
where $E$ is an effective $f$-exceptional 
divisor on $Y$. Therefore, 
we obtain 
\begin{equation*} 
\overline \kappa (X)=\kappa (\overline X, K_{\overline X}+\overline D)
=\kappa (Y, K_Y+\Delta_1). 
\end{equation*}  
By definition, 
\begin{equation*} 
\overline \kappa (X-F)=\kappa (Y, K_Y+\Delta_2).
\end{equation*}  
Since $\Delta_2-\Delta_1$ is an effective 
$f$-exceptional divisor on $Y$ by 
$\codim_{\overline X}\overline F\geq 2$, 
we have 
\begin{align*}
\overline \kappa (X-F)=\kappa (Y, K_Y+\Delta_2)=\kappa 
(\overline X, K_{\overline X}+\overline D)=\overline \kappa (X). 
\end{align*}
This is the desired equality. 
\end{proof}

We will freely use the following lemmas throughout this paper. 

\begin{lem}\label{p-lem2.6}
Let $f\colon X\to Y$ be a dominant morphism of 
algebraic varieties. 
Then $\overline \kappa (X)\geq \overline \kappa (Y)$ holds. 
\end{lem}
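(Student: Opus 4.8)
The plan is to compare Iitaka $D$-dimensions after passing to compatible compactifications. First I would compactify $Y$: take a smooth projective $\overline V\supseteq Y$ with $\overline D_V=\overline V\setminus Y$ a simple normal crossing divisor. Then, using Hironaka's resolution of indeterminacy together with embedded resolution, I would choose a smooth projective $\overline W\supseteq X$ and an extension $\overline f\colon \overline W\to \overline V$ of $f$ such that $\overline D_W=\overline W\setminus X$ is simple normal crossing and $\Supp \overline f^*\overline D_V\subseteq \overline D_W$. Since $f$ is dominant, $\overline f$ is surjective. By Lemma \ref{p-lem2.3} the invariants $\overline\kappa(X)$ and $\overline\kappa(Y)$ are computed on these models, so it suffices to prove
\begin{equation*}
\kappa(\overline W, K_{\overline W}+\overline D_W)\geq \kappa(\overline V, K_{\overline V}+\overline D_V).
\end{equation*}

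The core input is the pullback of logarithmic forms. Exactly as in the local calculation \eqref{p-eq2.1}, the condition $\Supp \overline f^*\overline D_V\subseteq \overline D_W$ yields an inclusion
\begin{equation*}
\overline f^*\Omega^1_{\overline V}(\log \overline D_V)\subseteq \Omega^1_{\overline W}(\log \overline D_W).
\end{equation*}
When $\dim X=\dim Y$, this is a generically isomorphic inclusion of locally free sheaves of the same rank $n=\dim \overline V=\dim \overline W$, and taking $n$-th exterior powers (determinants) produces an inclusion $\overline f^*(K_{\overline V}+\overline D_V)\hookrightarrow K_{\overline W}+\overline D_W$ of invertible sheaves, equivalently
\begin{equation*}
K_{\overline W}+\overline D_W=\overline f^*(K_{\overline V}+\overline D_V)+E
\end{equation*}
for an effective divisor $E$ (the logarithmic ramification divisor). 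Because $\overline f^*$ is injective on global sections of $m(K_{\overline V}+\overline D_V)$ for the surjective $\overline f$, and because adding the effective divisor $mE$ only enlarges the space of sections, I obtain $h^0(\overline W, m(K_{\overline W}+\overline D_W))\geq h^0(\overline V, m(K_{\overline V}+\overline D_V))$ for every $m$, whence the desired inequality of $D$-dimensions and so $\overline\kappa(X)\geq \overline\kappa(Y)$.

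The main obstacle is the case of positive relative dimension, $\dim X>\dim Y$. There the inclusion of log $1$-forms still holds, but its $n$-th exterior power only realizes $\overline f^*(K_{\overline V}+\overline D_V)$ as a subsheaf of the sheaf of logarithmic $n$-forms $\bigwedge^n\Omega^1_{\overline W}(\log \overline D_W)$, which is not the log canonical sheaf $K_{\overline W}+\overline D_W=\bigwedge^{\dim \overline W}\Omega^1_{\overline W}(\log \overline D_W)$; the determinant comparison therefore breaks down, and converting a pulled-back log-pluricanonical form on $\overline V$ into one on $\overline W$ requires a section of the relative logarithmic canonical sheaf along the general fiber. Controlling that relative contribution—equivalently, the logarithmic Kodaira dimension of the general fiber of $\overline f$—is the delicate point, and it is where positivity input is needed; by contrast the construction of the compatible compactification $\overline f\colon \overline W\to \overline V$ is routine given Lemma \ref{p-lem2.3}.
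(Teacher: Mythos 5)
Your treatment of the equal-dimensional case is precisely the paper's proof: pass to a compactification $\overline f\colon \overline X\to \overline Y$ with simple normal crossing boundaries, use the inclusion $\overline f^*\Omega^1_{\overline Y}(\log \Delta_{\overline Y})\subset \Omega^1_{\overline X}(\log \Delta_{\overline X})$, take determinants to write $K_{\overline X}+\Delta_{\overline X}=\overline f^*(K_{\overline Y}+\Delta_{\overline Y})+E$ with $E$ effective, and compare $D$-dimensions using the surjectivity of $\overline f$. (One small simplification: the condition $\Supp \overline f^*\Delta_{\overline Y}\subset \Delta_{\overline X}$ is automatic for any morphism extending $f$, since $\overline f(X)=f(X)\subset Y$.) So for generically finite $f$ your argument is complete and coincides with the paper's.

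The ``obstacle'' you flag in positive relative dimension is not a weakness of your write-up; it is a genuine defect of the statement and of the paper's own proof. The paper deduces the effectivity of $K_{\overline X}+\Delta_{\overline X}-\overline f^*(K_{\overline Y}+\Delta_{\overline Y})$ directly from the inclusion of sheaves of logarithmic $1$-forms, and, exactly as you observe, that determinant comparison is only valid when the two locally free sheaves have the same rank, i.e.\ when $f$ is generically finite. No repair is possible in general, because the lemma as stated is false when $\dim X>\dim Y$: for the projection $f\colon \mathbb P^1\times E\to E$ with $E$ an elliptic curve one has $\overline\kappa(\mathbb P^1\times E)=\kappa(\mathbb P^1\times E)=-\infty$ while $\overline\kappa(E)=0$. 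A correct general statement needs an additional hypothesis controlling the general fiber $F$ (for instance $\overline\kappa(F)\geq 0$), which is exactly the positivity input you identify and is the content of the subadditivity results, Theorems \ref{p-thm6.1} and \ref{p-thm6.3}. Fortunately, every place the paper actually invokes Lemma \ref{p-lem2.6} (e.g.\ in the proofs of Theorem \ref{p-thm9.3} and Lemma \ref{p-lem10.4}) the morphism is finite, so your argument covers all the uses; the lemma should simply be stated for generically finite dominant morphisms.
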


\begin{proof} 
By using Hironaka's resolution of singularities, 
we may assume that $X$ and $Y$ are both smooth. 
Let $\overline f\colon \overline X\to \overline Y$ be a 
compactification of $f\colon X\to Y$, 
that is, $\overline X$ and $\overline Y$ are 
smooth complete varieties and $\Delta_{\overline X}:=
\overline X-X$ and $\Delta_{\overline Y}:=\overline Y-Y$ 
are simple normal crossing divisors on $\overline X$ and 
$\overline Y$, respectively. 
Since 
\begin{equation*}
\overline f^*\Omega^1_{\overline Y}(\log \Delta_{\overline Y})
\subset \Omega^1_{\overline X}(\log \Delta_{\overline X}), 
\end{equation*} 
we have 
\begin{equation*}
K_{\overline X}+\Delta_{\overline X}=\overline f^*(K_{\overline Y}+
\Delta_{\overline Y})+E, 
\end{equation*} 
where $E$ is effective. 
This implies that 
\begin{equation*}
\overline \kappa (Y)=\kappa (\overline Y, K_{\overline Y}+
\Delta_{\overline Y})=\kappa (\overline X, 
\overline f^*(K_{\overline Y}+
\Delta_{\overline Y}))\leq 
\kappa (\overline X, K_{\overline X}+\Delta_{\overline X})
=\overline \kappa (X). 
\end{equation*} 
We finish the proof. 
\end{proof}

\begin{lem}[{\cite[Theorem 3]{iitaka2}}]\label{p-lem2.7} 
Let $f\colon X\to Y$ be a finite \'etale morphism 
of algebraic varieties. 
Then $\overline \kappa (X)=\overline \kappa (Y)$ holds. 
\end{lem}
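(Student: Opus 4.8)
The plan is to prove the two inequalities separately. Since a finite \'etale morphism is surjective, hence dominant, Lemma \ref{p-lem2.6} immediately gives $\overline\kappa(X)\geq \overline\kappa(Y)$. The whole content therefore lies in the reverse inequality $\overline\kappa(X)\leq \overline\kappa(Y)$, and the strategy is to realize $f$ as the restriction of a finite morphism $\overline f\colon \overline X\to \overline Y$ of smooth projective varieties, with simple normal crossing boundaries $\overline D_X=\overline X\setminus X$ and $\overline D_Y=\overline Y\setminus Y$, for which one has an exact comparison of logarithmic canonical divisors.

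For the construction I would start from an arbitrary smooth compactification $(\overline Y,\overline D_Y)$ of $Y$ with $\overline D_Y$ simple normal crossing, and let $\overline X'$ be the normalization of $\overline Y$ in the function field $\mathbb C(X)$. This produces a finite morphism $\overline X'\to \overline Y$ whose restriction over $Y$ is exactly $f$, so it is \'etale away from $\overline D_Y$. The ramification of $\overline X'\to \overline Y$ is concentrated over $\overline D_Y$ and, since we work in characteristic zero, it is tame. By Abhyankar's lemma (equivalently, by toroidalization of the cover), after a further resolution I can arrange a finite morphism $\overline f\colon \overline X\to \overline Y$ from a smooth projective $\overline X$ such that $\overline D_X:=\Supp \overline f^{-1}(\overline D_Y)$ is simple normal crossing and $\overline f$ is locally of Kummer type along the boundary: in suitable analytic coordinates it is a monomial map $t_i\mapsto t_i^{e_i}$ in the directions transverse to $\overline D_Y$ and \'etale in the remaining directions.

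Granting this normal form, the decisive local computation is that the logarithmic cotangent sheaf pulls back isomorphically,
\begin{equation*}
\overline f^*\Omega^1_{\overline Y}(\log \overline D_Y)=\Omega^1_{\overline X}(\log \overline D_X),
\end{equation*}
because $\overline f^*(dt_i/t_i)=e_i\,(ds_i/s_i)$ still generates the logarithmic part. Taking determinants yields $K_{\overline X}+\overline D_X=\overline f^*(K_{\overline Y}+\overline D_Y)$; equivalently, the ramification divisor $R=K_{\overline X}-\overline f^*K_{\overline Y}=\sum_i(e_i-1)\overline D_{X,i}$ exactly cancels the difference $\overline f^*\overline D_Y-\overline D_X=\sum_i(e_i-1)\overline D_{X,i}$. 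Finally, because $\overline f$ is finite and surjective, pullback preserves the Iitaka $D$-dimension,
\begin{equation*}
\kappa\bigl(\overline X,\overline f^*(K_{\overline Y}+\overline D_Y)\bigr)=\kappa\bigl(\overline Y,K_{\overline Y}+\overline D_Y\bigr),
\end{equation*}
which one sees by writing $H^0\bigl(\overline X, m\,\overline f^*(K_{\overline Y}+\overline D_Y)\bigr)=H^0\bigl(\overline Y,(\overline f_*\mathcal O_{\overline X})\otimes \mathcal O_{\overline Y}(m(K_{\overline Y}+\overline D_Y))\bigr)$ and comparing growth rates. Combining these gives $\overline\kappa(X)=\kappa(\overline X,K_{\overline X}+\overline D_X)=\kappa(\overline Y,K_{\overline Y}+\overline D_Y)=\overline\kappa(Y)$.

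I expect the main obstacle to be the construction step, namely arranging the extended finite cover $\overline f$ to be of Kummer type along a simple normal crossing boundary. The plain normalization $\overline X'\to \overline Y$ need not have this structure, and the work is to invoke Abhyankar's lemma (or semistable/toroidal reduction) to reduce to the monomial normal form without destroying finiteness or the simple normal crossing condition. Once that geometric input is in place, the logarithmic differential computation and the invariance of the Iitaka dimension under finite pullback are essentially formal.
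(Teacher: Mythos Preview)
Your strategy is sound in outline, and the endgame (the log Riemann--Hurwitz identity and invariance of Iitaka dimension under finite pullback) is exactly what is needed. But the construction step contains a genuine gap: you claim that after a resolution you obtain a \emph{finite} morphism $\overline f\colon \overline X\to \overline Y$ with $\overline X$ smooth and $\overline f$ locally of the diagonal Kummer form $t_i\mapsto t_i^{e_i}$. Abhyankar's lemma does not give this. What it gives is that the normalization $\overline X'$ of $\overline Y$ in $\mathbb C(X)$ is, \'etale-locally over the boundary, an abelian (toric) cover; but such covers need not be products of one-variable Kummer covers, and $\overline X'$ can acquire abelian quotient singularities. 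A standard local example over a two-component crossing $st=0$ is the index-two cover with $w^2=st$, whose normalization has an $A_1$ singularity. Resolving $\overline X'$ then destroys finiteness of the map to $\overline Y$, so you cannot have both smoothness upstairs and finiteness without also modifying $\overline Y$.

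The paper sidesteps this by not insisting on finiteness of the compactified map. It takes any smooth compactification $\overline f\colon \overline X\to \overline Y$ (merely generically finite) and then passes to the Stein factorization $\overline X\xrightarrow{\overline g}\overline Z\xrightarrow{\overline h}\overline Y$. The finite part $\overline h$ lands on a normal but possibly singular $\overline Z$, where the log Riemann--Hurwitz formula $K_{\overline Z}+\Delta_{\overline Z}=\overline h^*(K_{\overline Y}+\Delta_{\overline Y})$ still holds as an identity of Weil divisors (and the right side is Cartier, so pullback by $\overline g$ makes sense). The birational part then contributes only an effective $\overline g$-exceptional divisor, and the chain of equalities for $\kappa$ goes through. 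Your argument can be repaired along the same lines: work with the normal $\overline X'$ rather than a smooth model for the finite step, or allow yourself to blow up $\overline Y$ as well; but as written the simultaneous smoothness and finiteness you assume is not available.
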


\begin{proof} 
Let $\widetilde Y\to Y$ be a resolution of singularities 
of $Y$. We replace $Y$ and $X$ with $\widetilde Y$ and 
$X\times _Y\widetilde Y$, respectively. 
Then we may assume that $X$ and $Y$ are smooth. 
Let $\overline f\colon \overline X\to \overline Y$ be 
a compactification of $f\colon X\to Y$, that is, 
$\overline X$ and $\overline Y$ are smooth complete varieties and 
$\Delta_{\overline X}:=\overline X-X$ and 
$\Delta_{\overline Y}:=\overline 
Y-Y$ are simple normal crossing divisors on $\overline X$ and $\overline Y$, 
respectively. Let 
\begin{equation*}
\xymatrix{
\overline f\colon \overline X \ar[r]^-{\overline g}
& \overline Z\ar[r]^-{\overline h}&\overline Y
}
\end{equation*}
be the Stein factorization of $\overline f$. 
We note that $\overline h$ is finite 
and $\overline g$ is birational. 
We put $Z:=\overline h^{-1}(Y)$ and 
$\Delta_{\overline Z}:=\overline Z-Z$. 
Then we can easily check that 
\begin{equation*}
K_{\overline Z}+\Delta_{\overline Z}=\overline h^*(K_{\overline Y}+
\Delta_{\overline Y})
\end{equation*} 
and 
\begin{equation*}
K_{\overline X}+\Delta_{\overline X}=\overline g^*(K_{\overline Z}+
\Delta_{\overline Z})+E, 
\end{equation*} 
where $E$ is effective and $\overline g$-exceptional. 
Thus, we have 
\begin{equation*}
\overline \kappa (X)=
\kappa (\overline X, K_{\overline X}+\Delta_{\overline X})
=\kappa (\overline Z, K_{\overline Z}+\Delta_{\overline Z})
=\kappa (\overline Y, K_{\overline Y}+\Delta_{\overline Y})
=\overline \kappa (Y). 
\end{equation*} 
This is what we wanted. 
\end{proof}

\subsection{Quasi-abelian varieties in the sense 
of Iitaka}\label{p-subsec2.2}
From now on, 
we quickly recall the basic 
properties of quasi-abelian varieties in the sense of 
Iitaka (see \cite{iitaka1} and \cite{iitaka2}). 
This paper shows that Iitaka's definition of quasi-abelian varieties 
is reasonable and natural 
from the viewpoint of the birational geometry. 

\begin{defn}[Quasi-abelian varieties in the sense of 
Iitaka]\label{p-def2.8} 
Let $G$ be a connected algebraic group. Then we have the 
Chevalley decomposition 
(see, for example, \cite[Theorem 1.1]{conrad} 
and \cite[Theorem 1.1.1]{brion-etal}): 
\begin{equation*} 
1\to \mathcal G\to G\to \mathcal A\to 1
\end{equation*}  
in which $\mathcal G$ is the maximal 
affine algebraic subgroup of $G$ and 
$\mathcal A$ is an abelian variety. 
If $\mathcal G$ is an algebraic torus 
$\mathbb G_m^d$ of dimension $d$, 
then $G$ is called a {\em{quasi-abelian}} variety (in the sense of Iitaka). 
\end{defn}

We make some important remarks. 

\begin{rem}\label{p-rem2.9} 
A quasi-abelian variety in Definition \ref{p-def2.8} is 
sometimes called a {\em{semi-abelian variety}} 
in the literature (see, 
for example, \cite{brion-etal}, 
\cite[Definition 5.1.20]{noguchi-winkelmann}, 
\cite[Appendix A.~Semi-abelian varieties]{yamanoi}, 
and so on). 
We also note that the Chevalley decomposition in 
Definition \ref{p-def2.8} is called 
{\em{Chevalley's structure theorem}} 
in \cite{brion-etal}. 
\end{rem}

\begin{rem}\label{p-rem2.10} 
Let $G$ be a quasi-abelian variety, that is, $\mathcal G$ is an 
algebraic torus $\mathbb G^d_m$ in 
Definition \ref{p-def2.8}. 
Then, it is known that 
$G$ is a principal $\mathbb G^d_m$-bundle 
over $\mathcal A$ in the Zariski topology 
(see, for example, \cite[Theorems 4.4.1 and 4.4.2]{bcm}). 
\end{rem}

Note that the definition of quasi-abelian varieties in the sense of 
Iitaka (see Definition \ref{p-def2.8}) is 
different from the definition in \cite[3.~Quasi-Abelian Varieties]{abe-k}. 
We also note that if $G$ is a quasi-abelian variety in the sense of 
Iitaka then $G$ is a quasi-abelian variety in the sense of \cite{abe-k} 
(see, for example, \cite[3.2.21 Main Theorem]{abe-k}). 

\begin{rem}\label{p-rem2.11} It is well known that 
every algebraic group is quasi-projective 
(see, for example, \cite[Corollary 1.2]{conrad}). 
\end{rem}

Although we do not need the following fact, 
we can easily check: 

\begin{rem}\label{p-rem2.12}
Let $G$ be a connected algebraic group. Then 
$G$ is a quasi-abelian variety if and only if $G$ contains 
no $\mathbb G_a$ as an algebraic subgroup (see \cite[Lemma 3]{iitaka2}). 
\end{rem}

We note the following important property. 

\begin{lem}[{see \cite[Lemma 4]{iitaka2}}]\label{p-lem2.13}
A quasi-abelian variety is a commutative algebraic group. 
\end{lem}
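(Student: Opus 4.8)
The plan is to exploit the Chevalley decomposition
\[
1\to \mathcal G\to G\to \mathcal A\to 1,
\]
with $\mathcal G=\mathbb G_m^d$ and $\mathcal A$ an abelian variety, and to deduce commutativity from the rigidity of both the torus $\mathbb G_m^d$ and the abelian variety $\mathcal A$. The abstract group theory alone does not suffice (an extension of commutative groups need not be commutative), so the argument must use the algebraic structure at two points.

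First I would show that $\mathcal G$ is central in $G$. Since $\mathcal G$ is the kernel of $G\to\mathcal A$ it is a normal subgroup, so for each $g\in G$ conjugation $c_g(x)=gxg^{-1}$ restricts to an automorphism of the algebraic group $\mathcal G=\mathbb G_m^d$. The automorphism group $\operatorname{Aut}(\mathbb G_m^d)\cong GL_d(\mathbb Z)$ is discrete, as every automorphism of a torus is determined by its action on the character lattice $\mathbb Z^d$. The assignment $g\mapsto c_g|_{\mathcal G}$ is an algebraic (hence continuous) homomorphism from the connected variety $G$ to this discrete group, and it sends the identity $e$ to $\operatorname{id}$; therefore it is constantly $\operatorname{id}$. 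Thus $c_g|_{\mathcal G}=\operatorname{id}$ for every $g$, i.e.\ $\mathcal G$ lies in the center of $G$.

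Second, with $\mathcal G$ central, I would kill the commutator using the completeness of $\mathcal A$. Fix $g\in G$ and consider the morphism $\psi_g\colon G\to G$ defined by $\psi_g(h)=ghg^{-1}h^{-1}$. Because $\mathcal A$ is commutative, the image of $\psi_g$ lands in $\mathcal G$. A direct computation using the centrality of $\mathcal G$ gives, for $z\in\mathcal G$,
\[
\psi_g(hz)=ghzg^{-1}z^{-1}h^{-1}=ghg^{-1}h^{-1}=\psi_g(h),
\]
so $\psi_g$ factors through a morphism $\overline{\psi}_g\colon \mathcal A\to \mathbb G_m^d$. Now $\mathcal A$ is a complete connected variety and $\mathbb G_m^d$ is affine, so $\overline{\psi}_g$ is constant; since $\overline{\psi}_g$ maps the identity to the identity, it is identically trivial. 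Hence $ghg^{-1}h^{-1}=e$ for all $g,h\in G$, and $G$ is commutative.

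The main obstacle is the reduction to a \emph{central} extension: this is exactly the step where the algebraic structure, rather than pure group theory, is indispensable, and it follows from the discreteness of $\operatorname{Aut}(\mathbb G_m^d)$ together with the connectedness of $G$. The concluding vanishing then rests only on the standard fact that every morphism from a complete variety to an affine variety is constant.
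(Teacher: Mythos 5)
Your proof is correct and follows essentially the same route as the paper: first the torus $\mathcal G$ is shown to be central using the discreteness of its (auto/endo)morphism group together with the connectedness of $G$, and then the commutator map is shown to factor through a morphism $\mathcal A\to\mathcal G$, which vanishes because $\mathcal A$ is complete and $\mathcal G$ is affine. The only cosmetic differences are that you invoke normality of $\mathcal G$ (as the kernel of $G\to\mathcal A$) where the paper uses rationality of $\mathcal G$ to see that conjugation preserves it, and you phrase the factorization through $\mathcal A$ as a quotient property where the paper uses the principal $\mathcal G$-bundle structure of $G$ over $\mathcal A$.
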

 
\begin{proof}
We take $\tau\in G$ and consider the group homomorphism: 
\begin{equation*} 
\Psi_\tau(\sigma)=\tau\sigma\tau^{-1}\colon \mathcal G\to G. 
\end{equation*}  
Since $\mathcal G$ is rational and $\mathcal A$ is 
an abelian variety, we see that $\Psi_\tau\colon  \mathcal G\to \mathcal G$. 
Therefore, we obtain 
\begin{equation*} 
G\ni \tau \mapsto \Psi_\tau\in \mathrm{Hom}(\mathcal G, \mathcal G). 
\end{equation*}  
Note that $\mathrm{Hom}(\mathcal G, \mathcal G)$ is discrete 
because $\mathcal G$ is an algebraic torus. 
Thus, we obtain $\Psi_1=\Psi_\tau$. 
Therefore, $\mathcal G$ is contained in the center of $G$. 
Moreover, if $\sigma, \tau\in G$, then 
we have 
\begin{equation*} 
[\tau, \sigma]=\tau\sigma\tau^{-1}\sigma^{-1}\in \mathcal G
\end{equation*}  
since $\mathcal A$ is commutative. 
Let $\rho$ be any element of 
$\mathcal G$. 
Then it is easy to see that 
\begin{equation*} 
[\tau\rho, \sigma]=[\tau, \sigma] 
\end{equation*}  
since $\mathcal G$ is contained in the center of $G$. 
Note that $G$ is a principal 
$\mathcal G$-bundle over $\mathcal A$ as a complex manifold. 
Therefore, the morphism 
\begin{equation*} 
G\ni \tau\mapsto [\tau, \sigma]\in \mathcal G
\end{equation*}  
factors through a holomorphic 
map 
\begin{equation*} 
\mathcal A\to \mathcal G, 
\end{equation*}  
which is obviously trivial since $\mathcal A$ is complete. 
Hence, we obtain 
\begin{equation*} 
[\tau, \sigma]=1
\end{equation*}  
for every $\sigma, \tau\in G$. 
This implies that $G$ is commutative. 
\end{proof}

\begin{rem}\label{p-rem2.14}
Let $G$ be a quasi-abelian variety. By Lemma \ref{p-lem2.13}, $G$ is 
a commutative group. 
Therefore, from now on, we write the group law in $G$ additively if 
there is no danger of confusion. 
The unit element of $G$ is denoted by $0$. 
Note that an algebraic torus $\mathbb G^d_m$ is a quasi-abelian variety 
in the sense of Iitaka. 
\end{rem}

\begin{lem}\label{p-lem2.15}
Let $A$ be a quasi-abelian variety and let $B$ be an algebraic 
subgroup of $A$. 
Then $B$ and $A/B$ are quasi-abelian varieties. 
\end{lem}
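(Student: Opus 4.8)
The plan is to reduce both assertions to the criterion in Remark \ref{p-rem2.12}: a connected algebraic group is quasi-abelian if and only if it contains no algebraic subgroup isomorphic to $\mathbb{G}_a$. Since $A$ is commutative by Lemma \ref{p-lem2.13}, $B$ is automatically normal, so $A/B$ exists as an algebraic group and, being the image of the connected group $A$, is connected; I read $B$ as connected, as is needed for it to be a candidate quasi-abelian variety. For $B$ the claim is then immediate: any subgroup of $B$ isomorphic to $\mathbb{G}_a$ would be such a subgroup of $A$, contradicting that $A$ is quasi-abelian, so $B$ contains no $\mathbb{G}_a$ and is quasi-abelian.

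The substantive case is $A/B$, and I would argue by contradiction. Write $\pi\colon A\to A/B$ and suppose $A/B$ contains a subgroup $U\cong\mathbb{G}_a$; set $H^\circ=\left(\pi^{-1}(U)\right)^{\circ}$, a connected commutative subgroup of $A$ that still surjects onto $U$. Let $1\to\mathcal G\to A\to\mathcal A\to 1$ be the Chevalley decomposition of $A$, with $\mathcal G\cong\mathbb{G}_m^d$ and $\mathcal A$ an abelian variety, and let $\mathcal H\subset H^\circ$ be the maximal affine subgroup of $H^\circ$. Because $\mathcal H$ is connected and affine while $\mathcal A$ is an abelian variety, the composite $\mathcal H\to A\to\mathcal A$ is trivial (a homomorphism from $\mathbb{G}_a$ or $\mathbb{G}_m$ to an abelian variety is constant, since $\mathbb{G}_a$ and $\mathbb{G}_m$ are rational while an abelian variety carries no rational curves). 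Hence $\mathcal H\subset\ker(A\to\mathcal A)=\mathcal G$, so $\mathcal H$ is a subtorus and contains no $\mathbb{G}_a$.

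Now the surjection $H^\circ\twoheadrightarrow U\cong\mathbb{G}_a$ vanishes on the torus $\mathcal H$, since $\mathrm{Hom}(\mathbb{G}_m^e,\mathbb{G}_a)=0$, and so factors through the abelian variety $H^\circ/\mathcal H$; but a homomorphism from an abelian variety to $\mathbb{G}_a$ has complete image in the affine line, hence is constant and cannot surject onto the positive-dimensional $U$. This contradiction shows that $A/B$ contains no $\mathbb{G}_a$, and Remark \ref{p-rem2.12} gives that $A/B$ is quasi-abelian. The main obstacle is precisely this quotient: a $\mathbb{G}_a$ in $A/B$ need not lift to a $\mathbb{G}_a$ in $A$, so one cannot conclude by inclusion as for $B$. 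The device that replaces the inclusion argument is to pull the hypothetical $\mathbb{G}_a$ back to a connected commutative subgroup of $A$ and then trap it with the two vanishing facts $\mathrm{Hom}(\text{affine},\text{abelian})=0$ and $\mathrm{Hom}(\text{torus},\mathbb{G}_a)=0$, which force it to lie between a torus part it cannot meet and an abelian part it cannot surject onto.
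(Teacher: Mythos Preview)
Your argument is correct. The paper, however, takes a different and more structural route: it writes down the $3\times 3$ commutative diagram whose rows are the Chevalley decompositions of $B$, $A$, and $A/B$ and whose middle column is $0\to B\to A\to A/B\to 0$, and then simply observes that closed subgroups and homomorphic images of the torus $\mathcal G_A\cong\mathbb G_m^d$ are again tori, so the affine pieces $\mathcal G_B$ and $\mathcal G_{A/B}$ are tori and both $B$ and $A/B$ are quasi-abelian. Your approach instead invokes the $\mathbb G_a$-free criterion of Remark~\ref{p-rem2.12}, which the paper explicitly flags as a fact it does \emph{not} use; for the subgroup $B$ this gives a one-line proof, while for the quotient $A/B$ you compensate for the lack of a direct lift of $\mathbb G_a$ by pulling back and applying Chevalley to $H^\circ$. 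In effect your pullback step re-derives, for the particular subgroup $H^\circ$, the same fact the paper's diagram encodes globally (namely that the affine part of any connected subgroup of $A$ lands in $\mathcal G_A$ and is therefore a torus). The paper's approach is shorter once one accepts the diagram, but hides the verification that the columns are exact and that the outer terms really are the Chevalley pieces; your approach is longer for $A/B$ but makes every vanishing step ($\mathrm{Hom}(\text{affine},\text{abelian})=0$, $\mathrm{Hom}(\text{torus},\mathbb G_a)=0$, $\mathrm{Hom}(\text{abelian},\mathbb G_a)=0$) explicit.
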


\begin{proof}
We can construct the following big commutative diagram 
of Chevalley decompositions. 
\begin{equation*}
\xymatrix{& 0\ar[d]& 0\ar[d]& 0\ar[d]&\\
0\ar[r]& \mathcal G_B\ar[r]\ar[d]& B\ar[d]\ar[r] & \mathcal A_B
\ar[d]\ar[r]
\ar[r]\ar[d]&0\\
0\ar[r]& \mathcal G_A\ar[d]\ar[r]& A \ar[r]\ar[d]& \mathcal A_A
\ar[r]\ar[d]&0\\
0\ar[r]& \mathcal G_{A/B}\ar[d]\ar[r]& A/B
\ar[d]\ar[r] & \mathcal A_{A/B}\ar[d]\ar[r]&0\\ 
& 0& 0& 0&
}
\end{equation*} 
Note that subgroups and homomorphic images of 
an algebraic torus are again algebraic tori. 
Hence we see that $B$ and $A/B$ are quasi-abelian varieties. 
We finish the proof. 
\end{proof}

We sometimes treat quasi-abelian varieties as commutative complex 
Lie groups. 

\begin{lem}\label{p-lem2.16}
Let $G$ be a quasi-abelian variety. 
Then the universal cover of $G$ is $\mathbb C^{\dim G}$ and 
$G$ is $\mathbb C^{\dim G}/L$ for some lattice $L$ as a complex 
Lie group. 
Of course, $L$ is nothing but the topological fundamental group $\pi_1(G)$ 
of $G$. 
Note that the group law of $G$ is induced by the 
usual addition of $\mathbb C^{\dim G}$.  
\end{lem}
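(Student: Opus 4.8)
The plan is to forget the algebraic structure entirely and treat $G$ as a connected commutative complex Lie group, for which the desired description is the standard structure theory delivered by the exponential map. First I would record that $G$ is indeed such a group: by Definition \ref{p-def2.8} it is a connected algebraic group over $\mathbb C$, so (being smooth in characteristic zero) its associated complex analytic space is a connected complex Lie group of complex dimension $n:=\dim G$, and by Lemma \ref{p-lem2.13} it is commutative. Let $\mathfrak g$ be its Lie algebra. Since $G$ is commutative, $\mathfrak g$ is an abelian Lie algebra, so as a complex vector space $\mathfrak g\cong\mathbb C^{n}$.

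Next I would analyze the exponential map $\exp\colon\mathfrak g\to G$. Because the bracket on $\mathfrak g$ is trivial, the Baker--Campbell--Hausdorff formula degenerates to $\exp(X+Y)=\exp(X)\exp(Y)$, so $\exp$ is a holomorphic homomorphism of complex Lie groups. As for any Lie group, $\exp$ is a local biholomorphism at $0\in\mathfrak g$; hence its image is an open subgroup of $G$, and since $G$ is connected an open subgroup must be all of $G$, so $\exp$ is surjective. It then remains to examine $L:=\ker(\exp)$: because $\exp$ is a local biholomorphism near $0$, the subgroup $L$ is discrete in $\mathfrak g\cong\mathbb C^{n}\cong\mathbb R^{2n}$, and a discrete subgroup of a finite-dimensional real vector space is free abelian of finite rank, i.e. a lattice in the sense used here.

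With these facts in hand, the first isomorphism theorem for Lie groups makes $\exp$ descend to an isomorphism of complex Lie groups $\mathbb C^{n}/L=\mathfrak g/L\xrightarrow{\sim}G$, under which the group law of $G$ is induced by the usual addition on $\mathbb C^{n}$, as claimed. Finally, $\mathbb C^{n}$ is simply connected and $\exp\colon\mathbb C^{n}\to G$ is a surjective homomorphism with discrete kernel that is a local homeomorphism, so it is a covering map; being simply connected, $\mathbb C^{n}$ is the universal cover of $G$, and the deck transformation group $L$ is identified with $\pi_1(G)$.

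Essentially none of this is a hard estimate; the argument is a matter of assembling standard Lie theory. The only places where the specific hypotheses genuinely enter are two conceptual inputs, which I would regard as the crux: commutativity (Lemma \ref{p-lem2.13}), which is exactly what turns $\exp$ into a homomorphism, and connectedness, which is what forces the open image of $\exp$ to exhaust $G$ and thereby gives surjectivity. Everything else --- holomorphy and the local-biholomorphism property of $\exp$, and the fact that a discrete subgroup of $\mathbb R^{2n}$ is a lattice --- is routine.
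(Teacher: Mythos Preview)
Your argument is correct and follows essentially the same approach as the paper: reduce to a connected commutative complex Lie group via Lemma~\ref{p-lem2.13}, then invoke the standard structure theory via the exponential map. The paper's own proof is a one-line citation of this standard fact, while you have spelled out the details (surjectivity from connectedness, discreteness of the kernel, identification of $\pi_1$); there is no substantive difference in strategy.
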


\begin{proof}
By Lemma \ref{p-lem2.13}, $G$ is a commutative complex 
Lie group. Therefore, the universal cover is $\mathbb C^{\dim G}$ and
there is a discrete subgroup $L$ of $\mathbb C^{\dim G}$ 
such that $G=\mathbb C^{\dim G}/L$ as a complex Lie group. 
By construction, the group law in $G$ is induced by 
the usual addition of $\mathbb C^{\dim G}$ 
(see also the proof of Lemma \ref{p-lem3.8}). 
\end{proof}

In this paper, we mainly treat non-projective 
algebraic groups as complex Lie groups. 
We note the following famous example. 
It says that two different algebraic groups may be analytically isomorphic. 
Of course, we can not directly use Serre's GAGA principle for 
non-projective varieties. 

\begin{ex}[Vector extensions of elliptic curves]\label{p-ex2.17} 
Let $E$ be an elliptic curve. 
We take $0\ne \xi\in H^1(E, \mathcal O_E)\simeq \mathbb C$. 
Then we have a non-trivial $\mathbb G_a$-bundle 
$G$ over $E$ associated to $\xi$ in the Zariski topology. 
It is well known that there exists a short exact sequence 
of commutative algebraic groups 
\begin{equation}\label{p-eq2.2}
0\to \mathbb G_a\to G\to E\to 0, 
\end{equation} 
that is, $G$ is a commutative algebraic group which 
is an extension of $E$ by $\mathbb G_a$. 
Since $\xi \ne 0$, we can check that 
$G$ is analytically isomorphic to $(\mathbb C^\times)^2$. 
Hence, $\mathbb G^2_m$ and $G$ are analytically 
isomorphic but are two different algebraic groups. 
Note that \eqref{p-eq2.2} is the Chevalley decomposition 
of $G$. For some related topics, see \cite{brion-etal}, 
\cite[Chapter VI, Example 3.2]{hartshorne}, 
\cite[Footnote in page 33]{mumford}, and so on. 
\end{ex}

In Section \ref{p-sec3}, 
we will discuss Iitaka's quasi-Albanese maps and 
prove the existence of quasi-Albanese maps and varieties in details. 

\begin{defn}[Quasi-Albanese maps]\label{p-def2.18}
Let $X$ be a smooth variety. 
The {\em{quasi-Albanese map}} $\alpha\colon X\to A$ is a morphism to a quasi-abelian 
variety $A$ such that 
\begin{itemize}
\item[(i)] for any other morphism $\beta\colon X\to B$ to a quasi-abelian 
variety $B$, 
there is a morphism $f\colon A\to B$ such that $\beta=f\circ \alpha$
\begin{equation*} 
\xymatrix{
X\ar[d]_{\alpha}\ar[r]^\beta& B\\
A\ar[ur]_{f}&
}
\end{equation*} 
and 
\item[(ii)] $f$ is uniquely determined. 
\end{itemize} 
Note that $A$ is usually called the {\em{quasi-Albanese variety}} 
of $X$. 
\end{defn} 
If $X$ is complete in Definition \ref{p-def2.18}, then 
$A$ is nothing but the Albanese variety of $X$. 

\section{Quasi-Albanese maps due to Iitaka}\label{p-sec3}

In this section, we discuss Iitaka's quasi-Albanese maps and 
varieties following \cite{iitaka1} and \cite{iitaka2}. 
We recommend the reader to study the 
basic results on the Albanese maps and 
varieties before reading 
this section (see, for example, \cite[V.11--14]{beauville}, \cite[\S 9]{ueno}, 
\cite[Chapter 2, Section 6]{griffiths-harris},  
and so on). 

Let us start with the following easy lemma on singular homology groups. 
In this section, $X$ is a smooth complete algebraic variety and $D$ is 
a simple normal crossing divisor on $X$. The Zariski open set 
$X\setminus D$ of $X$ is denoted by 
$V$. 

\begin{lem}\label{p-lem3.1}
Let $X$ be a smooth complete algebraic variety and let 
$D$ be a simple normal crossing divisor on $X$. 
We put $V=X\setminus D$. 
Then the map 
\begin{equation*} 
\iota_*\colon  H_1(V, \mathbb Z)\to H_1(X, \mathbb Z)
\end{equation*}  
is surjective, where $\iota\colon V\hookrightarrow X$ is the natural open immersion. 
\end{lem}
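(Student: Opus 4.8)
The plan is to show that the open immersion $\iota\colon V\hookrightarrow X$ induces a surjection on first homology. The key geometric input is that $D=X\setminus V$ is a simple normal crossing divisor, hence has complex codimension one (real codimension two) in the smooth variety $X$. First I would pass to the long exact sequence of the pair $(X,V)$ in singular homology:
\begin{equation*}
H_1(V,\mathbb Z)\xrightarrow{\iota_*} H_1(X,\mathbb Z)\to H_1(X,V;\mathbb Z).
\end{equation*}
Thus it suffices to prove that $H_1(X,V;\mathbb Z)=0$, since then $\iota_*$ is forced to be surjective by exactness.

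To compute the relative homology, I would invoke excision together with the tubular neighborhood structure around $D$. Choosing a suitable neighborhood, one has $H_k(X,V;\mathbb Z)\simeq H_k(N, N\setminus D;\mathbb Z)$ where $N$ is a tubular neighborhood of $D$. By Thom isomorphism / Lefschetz duality on the smooth complete variety $X$ (equivalently, Alexander duality considerations), relative homology in degree $k$ corresponds to ordinary cohomology of $D$ in a degree shifted by the real codimension. Since $D$ has real codimension two, the group $H_1(X,V;\mathbb Z)$ is governed by $H^{-1}$-type data of the strata of $D$, which vanishes; more concretely, the lowest-degree relative class one can build by linking a codimension-two cycle lives in degree two, so $H_1(X,V;\mathbb Z)=0$. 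The simple normal crossing hypothesis guarantees that each stratum is smooth of the expected codimension, so no lower-codimension contributions sneak in.

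Alternatively, and perhaps more cleanly, I would argue by general position. A class in $H_1(X,\mathbb Z)$ is represented by a loop $\gamma\colon S^1\to X$, which may be taken to be a smooth map. Since $\dim_{\mathbb R} D\leq 2\dim_{\mathbb C}X-2$ and $\dim_{\mathbb R}\gamma(S^1)=1$, transversality lets me perturb $\gamma$ to a homotopic loop avoiding $D$ entirely, because the expected dimension of the intersection, namely $1+(2\dim_{\mathbb C}X-2)-2\dim_{\mathbb C}X=-1$, is negative. The perturbed loop maps into $V$ and represents the same class in $H_1(X,\mathbb Z)$, giving a preimage under $\iota_*$. This establishes surjectivity directly.

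The main obstacle is making the transversality/perturbation argument rigorous when $D$ is singular along its crossing locus: the strata of the simple normal crossing divisor have various codimensions, but crucially every stratum still has real codimension at least two in $X$. I expect the cleanest route is to apply general position stratum by stratum, starting from the deepest (smallest) strata, and to note that a generic perturbation of a $1$-cycle misses every subset of real codimension $\geq 2$. This is exactly where the simple normal crossing hypothesis does its work: it ensures each stratum is a smooth submanifold of real codimension at least two, so the standard transversality theorem applies on each. I would therefore emphasize that only the codimension bound $\codim_X D\geq 1$ (complex), equivalently real codimension two, is needed, and that completeness of $X$ is used merely to ensure the relevant cycles and homotopies are compact.
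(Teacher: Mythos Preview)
Your proposal is correct. Both of your routes lead to the same key vanishing as the paper, but the packaging differs.

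The paper argues entirely on the cohomology side via Poincar\'e duality: from the long exact sequence of the pair $(X,D)$ in cohomology one has $H^{2n-1}(X,D;\mathbb Z)\twoheadrightarrow H^{2n-1}(X,\mathbb Z)$ because $H^{2n-1}(D,\mathbb Z)=0$ (as $D$ has real dimension $2n-2$), then identifies $H^{2n-1}(X,D;\mathbb Z)\simeq H^{2n-1}_c(V,\mathbb Z)$ and applies Poincar\'e duality on $V$ and $X$ to descend to $\iota_*\colon H_1(V,\mathbb Z)\to H_1(X,\mathbb Z)$. Your first approach is essentially the Lefschetz-dual reformulation of this: the identification $H_1(X,V;\mathbb Z)\simeq H^{2n-1}(D;\mathbb Z)$ is exactly what links your relative-homology vanishing to the paper's cohomological vanishing. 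The paper's version is cleaner because it avoids the Thom-isomorphism language, which, as you note, does not apply directly to the non-smooth $D$; invoking Lefschetz/Alexander duality globally (as you also suggest) sidesteps that issue and lands on the same $H^{2n-1}(D)=0$.

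Your transversality argument is a genuinely different, more elementary proof that does not pass through duality at all. It has the virtue of making the role of the codimension hypothesis completely transparent and, as you observe, uses only that each stratum of $D$ has real codimension at least two; completeness of $X$ is in fact unnecessary for this argument. The trade-off is that the duality proof generalizes more readily (e.g.\ to statements about higher $H_k$ or to cohomological formulations used later in the paper), whereas the transversality proof is tailored to $H_1$.
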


\begin{proof}
We put $n=\dim X$. 
We have the following long exact sequence: 
\begin{equation*} 
\cdots \longrightarrow 
H^{2n-1}(X, D; \mathbb Z)\overset {p}{\longrightarrow} 
H^{2n-1}(X, \mathbb Z)\longrightarrow 
H^{2n-1}(D, \mathbb Z)\longrightarrow \cdots. 
\end{equation*} 
Note that $H^{2n-1}(D, \mathbb Z)=0$ since $D$ is an $(n-1)$-dimensional 
simple normal crossing variety. 
Therefore, $p$ is surjective. 
We also note that 
\begin{equation*} 
H^{2n-1}(X, D; \mathbb Z)\simeq H^{2n-1}_c(V, \mathbb Z). 
\end{equation*} 
We have the following commutative diagram: 
\begin{equation*} 
\xymatrix
{
H^{2n-1}_c(V, \mathbb Z)\ar[d]_{D_V}^{\simeq}
\ar[r]^{p}&
H^{2n-1}(X, \mathbb Z)\ar[d]^{D_X}_{\simeq}\\
H_1(V, \mathbb Z)\ar[r]_{\iota_*} &
H_1(X, \mathbb Z). 
}
\end{equation*} 
Note that the duality maps $D_V$ and $D_X$ are both isomorphisms 
by Poincar\'e duality. 
Since $p$ is surjective, we see that 
$\iota_*$ is also surjective. 
For the details of Poincar\'e duality, see, for example, 
\cite[Section 3.3]{hatcher}. 
\end{proof}

\begin{lem}\label{p-lem3.2}
The natural injection 
\begin{equation*} 
\iota^*\colon  H^1(X, \mathbb C)\to H^1(V, \mathbb C) 
\end{equation*}  
is nothing but 
\begin{equation*} 
a_1\oplus a_2\colon  H^1(X, \mathcal O_X)\oplus H^0(X, \Omega^1_X)
\to H^1(X, \mathcal O_X)\oplus H^0(X, \Omega^1_X(\log D))
\end{equation*} 
where 
$a_1$ is the identity on $H^1(X, \mathcal O_X)$ and 
$a_2$ is the natural inclusion 
\begin{equation*} 
H^0(X, \Omega^1_X)\hookrightarrow H^0(X, \Omega^1_X(\log D))
\end{equation*} 
by Deligne's theory of mixed Hodge 
structures. Note that we have 
\begin{equation*} 
b_1(V)-b_1(X)=\overline q(V)-q(X)
\end{equation*}  
where 
\begin{equation*} 
\overline q(V)=\dim H^0(X, \Omega^1_X(\log D)) 
\quad \text{and} \quad q(X)=\dim H^0(X, \Omega^1_X). 
\end{equation*} 
Of course, 
\begin{equation*} 
b_1(V)=\dim_\mathbb C H^1(V, \mathbb C) \quad 
\text{and}\quad b_1(X)=\dim _\mathbb C H^1(X, \mathbb C). 
\end{equation*} 
\end{lem}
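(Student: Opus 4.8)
The plan is to pull everything back to the logarithmic de Rham complex and read off $\iota^*$ from the inclusion of complexes $\Omega^\bullet_X\hookrightarrow \Omega^\bullet_X(\log D)$. By Deligne's theory one has comparison isomorphisms
\begin{equation*}
H^k(X,\mathbb C)\simeq \mathbb H^k(X,\Omega^\bullet_X),\qquad
H^k(V,\mathbb C)\simeq \mathbb H^k(X,\Omega^\bullet_X(\log D)),
\end{equation*}
under which the topological pullback $\iota^*$ is induced by the natural inclusion of complexes on $X$. Equipping both complexes with the stupid filtration $\sigma_{\ge p}$ makes this inclusion a filtered morphism, and the associated Hodge-to-de Rham spectral sequences $E_1^{p,q}=H^q(X,\Omega^p_X)$ and $E_1^{p,q}=H^q(X,\Omega^p_X(\log D))$ both degenerate at $E_1$ (classical Hodge theory for the complete smooth $X$, and Deligne's theorem for the logarithmic complex). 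The induced Hodge filtrations $F$ give the (mixed) Hodge structures on $H^1(X,\mathbb C)$ and $H^1(V,\mathbb C)$.

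Next I would read off degree one. The only nonzero $E_1$ terms with $p+q=1$ are $(0,1)$ and $(1,0)$. For $X$ they are $H^1(X,\mathcal O_X)$ and $H^0(X,\Omega^1_X)$, recovering the Hodge decomposition $H^1(X,\mathbb C)=H^1(X,\mathcal O_X)\oplus H^0(X,\Omega^1_X)$; for $V$ they are $H^1(X,\mathcal O_X)$ and $H^0(X,\Omega^1_X(\log D))$, giving $\mathrm{Gr}^0_F H^1(V)=H^1(X,\mathcal O_X)$ and $F^1H^1(V)=H^0(X,\Omega^1_X(\log D))$. Since the inclusion of complexes is the identity $\mathcal O_X=\mathcal O_X$ in degree $0$ and the natural inclusion $\Omega^1_X\hookrightarrow \Omega^1_X(\log D)$ in degree $1$, functoriality of the spectral sequences shows that the induced map on $E_1^{0,1}$ is the identity $a_1$ and on $E_1^{1,0}$ is the inclusion $a_2$. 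Because $\iota^*$ is a morphism of mixed Hodge structures, hence strictly compatible with $F$, and both spectral sequences degenerate at $E_1$, the map $\iota^*$ is determined by its effect on $\mathrm{Gr}_F$, so $\iota^*=a_1\oplus a_2$ for the decomposition of $H^1(V,\mathbb C)$ supplied by the degenerate spectral sequence.

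For the Betti numbers I would simply take dimensions. From the decompositions above, $b_1(V)=\dim H^1(X,\mathcal O_X)+\dim H^0(X,\Omega^1_X(\log D))=q(X)+\overline q(V)$, using Hodge symmetry $\dim H^1(X,\mathcal O_X)=\dim H^0(X,\Omega^1_X)=q(X)$ for the complete smooth $X$; likewise $b_1(X)=2q(X)$. Subtracting gives $b_1(V)-b_1(X)=\overline q(V)-q(X)$, as claimed.

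The main obstacle is not the bookkeeping but the two inputs I am borrowing from Deligne's mixed Hodge theory: the comparison isomorphism $H^k(V,\mathbb C)\simeq \mathbb H^k(X,\Omega^\bullet_X(\log D))$ compatible with topological pullback, and the $E_1$-degeneration of the logarithmic Hodge-to-de Rham spectral sequence. Granting these, the identification is essentially formal. The one genuine subtlety is to argue that $\iota^*$ is not merely compatible with $F$ on graded pieces but is literally block-diagonal $a_1\oplus a_2$, with no off-diagonal term; this is exactly the \emph{strictness} of morphisms of mixed Hodge structures with respect to the Hodge filtration, once the target splitting $H^1(V,\mathbb C)=H^1(X,\mathcal O_X)\oplus H^0(X,\Omega^1_X(\log D))$ is taken to be the one induced by the degenerate spectral sequence.
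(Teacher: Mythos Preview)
Your argument is correct and takes the same approach as the paper's own proof, which simply cites the Hodge decomposition of $H^1(X,\mathbb C)$, Deligne's mixed Hodge decomposition of $H^1(V,\mathbb C)$, and the fact that $\iota^*$ is a morphism of mixed Hodge structures; you have unpacked the underlying machinery (log de Rham complex, $E_1$-degeneration) in more detail than the paper does. One small refinement: strictness with respect to $F$ alone does not force block-diagonality in an arbitrary splitting (a strict filtered map can still have an off-diagonal component landing in $F^1$); the clean justification is that morphisms of mixed Hodge structures respect Deligne's canonical $I^{p,q}$ decomposition, which is functorial because it is built from $F$, $W$, and complex conjugation.
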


\begin{proof}
By Lemma \ref{p-lem3.1}, $\iota^*$ is injective. 
By Deligne's mixed Hodge theory (see \cite{deligne}), we have 
\begin{equation*} 
H^1(V, \mathbb C)=H^1(X, \mathcal O_X)\oplus H^0(X, \Omega^1_X(\log D)). 
\end{equation*} 
By the Hodge decomposition, we have 
\begin{equation*} 
H^1(X, \mathbb C)=H^1(X, \mathcal O_X)\oplus H^0(X, \Omega^1_X).
\end{equation*}  
Since $\iota^*\colon  H^1(X, \mathbb C)\to H^1(V, \mathbb C)$ is a morphism 
of mixed Hodge structures 
(see \cite{deligne}), we obtain the desired description 
of $\iota^*$. 
\end{proof}

Let us describe the theory of quasi-Albanese maps and 
varieties due to Shigeru Iitaka (see \cite{iitaka1}). 

\begin{say}[Quasi-Albanese maps and varieties]\label{p-say3.3}
We take a basis 
\begin{equation*} 
\{\omega_1, \cdots, \omega_q\}
\end{equation*}  
of $H^0(X, \Omega^1_X)$, where $q=q(X)=\dim H^0(X, \Omega^1_X)$. 
Note that $b_1(X)=2q$ by the Hodge theory. 
We take 
\begin{equation*} 
\varphi_1, \cdots, \varphi_d \in H^0(X, \Omega^1_X(\log D))
\end{equation*}  
with $d=\overline q(V)-q(X)$ such that 
\begin{equation*} 
\{\omega_1, \cdots, \omega_q, \varphi_1, \cdots, \varphi_d\}
\end{equation*}  
is a basis of $H^0(X, \Omega^1_X(\log D))$. 
Let 
\begin{equation*} 
\{\xi_1, \cdots, \xi_{2q}\}
\end{equation*}  
be a basis of the free part of $H_1(X, \mathbb Z)$. We take 
\begin{equation*}
\eta_1, 
\cdots, \eta_d\in \mathrm{Ker}\iota_*\subset 
H_1(V, \mathbb Z)
\end{equation*} 
such that 
\begin{equation*} 
\{\xi_1, \cdots, \xi_{2q}, 
\eta_1, \cdots, \eta_d\}
\end{equation*}  
is a basis of the free part of 
$H_1(V, \mathbb Z)$ (see Lemma \ref{p-lem3.1}). 
We put $\overline q=\overline q(V)$, 
\begin{equation*} 
A_i=\left( \int _{\xi_i} \omega_1, \cdots 
\int _{\xi_i} \omega_{q}, 
\int _{\xi_i} \varphi_1, \cdots, \int _{\xi_i} \varphi_d
\right)\in \mathbb C^{\overline q}
\end{equation*}  
for $1\leq i\leq 2q$, and 
\begin{equation*} 
B_j=\left( \int _{\eta_j} \omega_1, \cdots 
\int _{\eta_j} \omega_{q}, 
\int _{\eta_j} \varphi_1, \cdots, \int _{\eta_j} \varphi_d
\right)\in \mathbb C^{\overline q}
\end{equation*}  
for $1\leq j\leq d$. 

\begin{lem}\label{p-lem3.4}
Let $\gamma$ be a torsion element of $H_1(V, \mathbb Z)$. 
Then we have 
\begin{equation*}
\int _\gamma \omega=0
\end{equation*}  
for every $\omega\in H^0(X, \Omega^1_X(\log D))$. 
\end{lem}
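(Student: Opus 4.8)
The plan is to exploit the fact that integration of a \emph{closed} differential form against a $1$-cycle depends only on the homology class of the cycle and is additive in it; since the target $\mathbb C$ is torsion-free, any group homomorphism out of $H_1(V,\mathbb Z)$ must annihilate torsion, and that is precisely what the statement asserts.

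First I would record that every $\omega\in H^0(X,\Omega^1_X(\log D))$ restricts to a closed holomorphic logarithmic $1$-form on $V$. Closedness is the one genuinely nontrivial input: a holomorphic $1$-form on a noncompact manifold need not be $d$-closed, so this cannot be checked by a naive pointwise computation. Here, however, $X$ is smooth and complete, and by Deligne's theory (exactly as used in Lemma \ref{p-lem3.2}) the space $H^0(X,\Omega^1_X(\log D))$ sits inside $H^1(V,\mathbb C)$ as part of the mixed Hodge structure. In particular each such $\omega$ represents a de Rham cohomology class $[\omega]\in H^1(V,\mathbb C)$ and is therefore $d$-closed on $V$.

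Second, with $\omega$ closed, Stokes' theorem shows that $\int_\gamma\omega$ is unchanged when $\gamma$ is replaced by a homologous cycle: if $\gamma-\gamma'=\partial\sigma$ then $\int_\gamma\omega-\int_{\gamma'}\omega=\int_\sigma d\omega=0$. Hence the assignment $\gamma\mapsto\int_\gamma\omega$ descends to a well-defined additive map $H_1(V,\mathbb Z)\to\mathbb C$, that is, a group homomorphism. Now suppose $\gamma$ is a torsion element, so that $n\gamma=0$ in $H_1(V,\mathbb Z)$ for some positive integer $n$. Applying the homomorphism gives $n\int_\gamma\omega=\int_{n\gamma}\omega=0$, and since $\mathbb C$ is torsion-free this forces $\int_\gamma\omega=0$. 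As $\omega\in H^0(X,\Omega^1_X(\log D))$ was arbitrary, this is the desired conclusion.

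The main obstacle is really the closedness of logarithmic forms: this is where the completeness of $X$ enters and where one must appeal to Deligne's mixed Hodge theory rather than to an elementary local calculation. Once closedness is granted, the remainder is the formal and robust observation that a homomorphism into a torsion-free abelian group kills all torsion.
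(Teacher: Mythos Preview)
Your proof is correct and follows the same argument as the paper's: from $m\gamma=0$ one gets $m\int_\gamma\omega=\int_{m\gamma}\omega=0$, hence $\int_\gamma\omega=0$ since $\mathbb C$ is torsion-free. Your write-up is in fact more careful than the paper's one-line proof, since you explicitly justify the $d$-closedness of logarithmic forms via Deligne's theory (a point the paper only spells out later, in the proof of Lemma~\ref{p-lem3.9}).
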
 
\begin{proof}
It is obvious since 
\begin{equation*} 
m\int _\gamma \omega=\int _{m\gamma}\omega=0 
\end{equation*}  
if $m\gamma =0$ in $H_1(V, \mathbb Z)$. 
\end{proof}

\begin{lem}\label{p-lem3.5} We have 
\begin{equation*} 
\int _{\eta _j}\omega_k=0
\end{equation*} 
for every $j$ and $k$. 
\end{lem}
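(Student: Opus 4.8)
The plan is to observe that each $\omega_k$ is the restriction to $V$ of a global holomorphic $1$-form on the complete variety $X$, and that such a form is $d$-closed, so its integral over a $1$-cycle depends only on the class of that cycle in $H_1(X,\mathbb{Z})$. Since by construction $\eta_j\in\mathrm{Ker}\,\iota_*$, the pushforward $\iota_*\eta_j$ is zero in $H_1(X,\mathbb{Z})$, and the integral must vanish. In other words, the whole point of choosing the $\eta_j$ inside $\mathrm{Ker}\,\iota_*$ (using Lemma \ref{p-lem3.1}) is precisely to kill the periods of the forms coming from $X$.

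More concretely, I would argue in three steps. First, I recall that $\omega_k\in H^0(X,\Omega^1_X)$ is $d$-closed and defines a cohomology class in $H^1(X,\mathbb{C})$; this is exactly the Hodge decomposition already invoked in Lemma \ref{p-lem3.2}. Second, I would use the naturality of the period pairing: writing $\iota\colon V\hookrightarrow X$, the quantity $\int_{\eta_j}\omega_k$ really means $\int_{\eta_j}\iota^*\omega_k$, and by the adjunction between $\iota_*$ on homology and $\iota^*$ on cohomology one has $\int_{\eta_j}\iota^*\omega_k=\int_{\iota_*\eta_j}\omega_k$. Third, since $\omega_k$ is closed, its integral over a $1$-cycle in $X$ factors through the de Rham pairing $H^1(X,\mathbb{C})\times H_1(X,\mathbb{Z})\to\mathbb{C}$; as $\iota_*\eta_j=0$ in $H_1(X,\mathbb{Z})$ by the choice of $\eta_j$, we conclude $\int_{\iota_*\eta_j}\omega_k=0$, hence $\int_{\eta_j}\omega_k=0$.

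There is no real obstacle here; the lemma is essentially a bookkeeping consequence of how the bases were set up. The one point worth emphasizing is that the argument uses that $\omega_k$ extends holomorphically (without poles) across $D$, so that it is genuinely a class pulled back from $X$; this is what distinguishes the $\omega_k$ from the logarithmic forms $\varphi_1,\dots,\varphi_d$, whose periods over the $\eta_j$ need not vanish and in fact record the $\mathbb{G}_m^d$-part encoded in the matrix $(B_j)$. The only technical ingredient, the compatibility $\int_{\eta_j}\iota^*\omega_k=\int_{\iota_*\eta_j}\omega_k$, is the standard functoriality of integration of forms along a continuous map and needs no special hypotheses.
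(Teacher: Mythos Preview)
Your proof is correct and follows exactly the same approach as the paper's own proof, which is the one-line computation $\int_{\eta_j}\omega_k=\int_{\eta_j}\iota^*\omega_k=\int_{\iota_*\eta_j}\omega_k=0$ using $\iota_*\eta_j=0$. You have simply supplied more explanation around the same adjunction identity and the fact that $\omega_k$ extends across $D$.
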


\begin{proof}
We see that 
\begin{equation*} 
\int _{\eta_j}\omega_k=
\int _{\eta_j} \iota^*\omega_k=\int _{\iota_* \eta_j}
\omega_k=0
\end{equation*} 
since $\iota_*\eta_j=0$. 
\end{proof}

\begin{lem}[{see \cite[Lemma 2]{iitaka1}}]\label{p-lem3.6}
Let $\varphi$ be an arbitrary element of $H^0(X, \Omega^1_X(\log D))$. 
Assume 
\begin{equation*}
\int _\eta \varphi=0
\end{equation*}  
for every $\eta\in \mathrm{Ker \iota_*}\subset H_1(V, \mathbb Z)$. 
Then we have $\varphi \in H^0(X, \Omega^1_X)$. 
\end{lem}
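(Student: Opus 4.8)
The plan is to translate the integral hypothesis into a statement about the de Rham cohomology class of $\varphi$ and then read off the conclusion from Lemma \ref{p-lem3.2}. A global logarithmic $1$-form on $(X,D)$ is automatically $d$-closed (this is exactly what allows $H^0(X,\Omega^1_X(\log D))$ to appear as a summand of $H^1(V,\mathbb C)$ in Lemma \ref{p-lem3.2}), so $\varphi$ defines a class $[\varphi]\in H^1(V,\mathbb C)$. Using the de Rham pairing I regard $H^1(V,\mathbb C)=\mathrm{Hom}(H_1(V,\mathbb Z),\mathbb C)$, with $[\varphi]$ acting by $\eta\mapsto \int_\eta\varphi$. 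Under this identification the hypothesis says precisely that $[\varphi]$ annihilates the subgroup $\mathrm{Ker}\,\iota_*\subset H_1(V,\mathbb Z)$.

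First I would identify that annihilator. By Lemma \ref{p-lem3.1} the map $\iota_*\colon H_1(V,\mathbb Z)\to H_1(X,\mathbb Z)$ is surjective, so its $\mathbb C$-linear dual $\iota^*\colon H^1(X,\mathbb C)\to H^1(V,\mathbb C)$ is injective, and a functional on $H_1(V,\mathbb Z)$ factors through $\iota_*$ exactly when it vanishes on $\mathrm{Ker}\,\iota_*$. Hence $\mathrm{Im}\,\iota^*$ coincides with the annihilator of $\mathrm{Ker}\,\iota_*$, and the hypothesis is equivalent to the single statement $[\varphi]\in \mathrm{Im}\,\iota^*$.

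Next I would invoke Lemma \ref{p-lem3.2}. It identifies $\mathrm{Im}\,\iota^*$ with $H^1(X,\mathcal O_X)\oplus H^0(X,\Omega^1_X)$ inside the splitting $H^1(V,\mathbb C)=H^1(X,\mathcal O_X)\oplus H^0(X,\Omega^1_X(\log D))$, the logarithmic summand of the image being exactly the holomorphic forms $H^0(X,\Omega^1_X)$. Since $\varphi$ is a logarithmic form, its class is $(0,\varphi)$ in this decomposition, so $[\varphi]\in \mathrm{Im}\,\iota^*$ forces $\varphi\in H^0(X,\Omega^1_X)$; as the map $H^0(X,\Omega^1_X(\log D))\to H^1(V,\mathbb C)$ is injective, this reads off as the desired conclusion.

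The point requiring care is the bookkeeping of identifications: that the de Rham pairing makes $H^1(V,\mathbb C)$ the $\mathbb C$-dual of the free part of $H_1(V,\mathbb Z)$, that $\iota^*$ is the transpose of $\iota_*$, and that the splitting in Lemma \ref{p-lem3.2} is compatible with viewing a closed log form as a cohomology class — none of these is deep, but they must be aligned correctly. An alternative, more concrete route bypasses Lemma \ref{p-lem3.2}: since $\varphi$ is closed, its residue $r_i$ along each component $D_i$ of $D$ is a constant, and a small loop $\gamma_i$ around $D_i$ bounds a transverse disk in $X$ and therefore lies in $\mathrm{Ker}\,\iota_*$; the hypothesis then gives $2\pi i\, r_i=\int_{\gamma_i}\varphi=0$, so every residue vanishes and $\varphi$ is holomorphic. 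This second approach needs only the membership $\gamma_i\in\mathrm{Ker}\,\iota_*$, not that such loops generate the kernel, so it presents no real obstacle and serves as a useful check on the first.
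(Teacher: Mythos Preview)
Your proposal is correct. Your ``alternative, more concrete route'' via residues and small loops is exactly the argument the paper gives: assuming $\varphi$ has a pole along some component $D_a$, the paper writes $\varphi=\alpha\,\frac{dz_1}{z_1}+\beta$ locally, uses $d\varphi=0$ to see that $\alpha$ is constant, and then integrates over a small circle $\gamma_a\in\mathrm{Ker}\,\iota_*$ to force $\alpha=0$.

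Your main approach via Lemma~\ref{p-lem3.2} is a genuinely different, more global route. It replaces the local residue computation by Deligne's identification of $\iota^*$ as a morphism of mixed Hodge structures: once one knows that $\mathrm{Im}\,\iota^*=H^1(X,\mathcal O_X)\oplus H^0(X,\Omega^1_X)$ inside $H^1(V,\mathbb C)=H^1(X,\mathcal O_X)\oplus H^0(X,\Omega^1_X(\log D))$, the conclusion is immediate from the duality between $\iota_*$ and $\iota^*$. This is slicker and makes the Hodge-theoretic content explicit, at the cost of invoking the full mixed Hodge decomposition of $H^1(V,\mathbb C)$. The paper's residue argument is more self-contained in that it uses only the $d$-closedness of logarithmic $1$-forms and an explicit local calculation, never needing to identify $\mathrm{Im}\,\iota^*$ as a subspace.
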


\begin{proof}
Assume that $\varphi\in 
H^0(X, \Omega^1_X(\log D))\setminus H^0(X, \Omega^1_X)$. 
Then $\varphi$ has a pole along some $D_a$, where 
$D_a$ is an irreducible 
component of $D$. 
Let $p$ be a general point of $D_a$. 
We take a local holomorphic coordinate system 
$(z_1, \cdots, z_n)$ around $p$ such that 
$D_a$ is defined by $z_1=0$. 
In this case, we can write 
\begin{equation*} 
\varphi=\alpha(z)\frac{dz_1}{z_1}+\beta(z)
\end{equation*}  
around $p$, 
where $\beta(z)$ is a holomorphic $1$-form. 
We may assume that 
$\alpha(z)=\alpha(z_2, \cdots, z_n)$ by Weierstrass division theorem 
(see, for example, \cite{griffiths-harris}). 
Since $d\varphi=0$, we obtain 
\begin{equation*} 
d\varphi =d\alpha \wedge \frac{dz_1}{z_1}+d\beta=0. 
\end{equation*}  
Thus we have $d\alpha=0$. 
This means that $\alpha$ is a constant. 
Let us consider a circle $\gamma_a$ around $D_a$ at $p$. 
Then we obtain $\iota_*\gamma_a=0$ in $H_1(X, \mathbb Z)$ and 
\begin{equation*} 
0=\int _{\gamma _a}\varphi
=\alpha\int _{\gamma_a} \frac{dz_1}{z_1}=\alpha 2\pi\sqrt{-1}. 
\end{equation*}  
This implies that $\alpha=0$. 
Thus, $\varphi$ is holomorphic at $p$. 
This is a contradiction. 
Therefore, we have $\varphi\in H^0(X, \Omega^1_X)$. 
\end{proof}

\begin{lem}[{see \cite[Proposition 2]{iitaka2}}]\label{p-lem3.7} 
The above vectors $A_1, \cdots, A_{2q}$, 
$B_1, \cdots, B_d$ are $\mathbb R$-$\mathbb C$ 
linearly independent. 
This means that 
\begin{equation*} 
\sum _{i=1}^{2q} a_i A_i +\sum _{j=1}^{d} b_j B_j=0
\end{equation*}  
for $a_i \in \mathbb R$ and $b_j \in \mathbb C$ then 
$a_i=0$ for every $i$ and $b_j=0$ for every $j$. 
\end{lem}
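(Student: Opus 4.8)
The plan is to exploit the block structure of the ambient space $\mathbb C^{\overline q}=\mathbb C^{q}\times \mathbb C^{d}$, where the first $q$ coordinates record the periods of the holomorphic forms $\omega_1,\dots,\omega_q$ and the last $d$ coordinates record the periods of the logarithmic forms $\varphi_1,\dots,\varphi_d$. The key observation is that, by Lemma \ref{p-lem3.5}, the first $q$ coordinates of every $B_j$ vanish, since $\int_{\eta_j}\omega_k=\int_{\iota_*\eta_j}\omega_k=0$. Hence in the first block the relation $\sum_i a_iA_i+\sum_j b_jB_j=0$ decouples into a relation involving only the $a_i$, while the second block will isolate the $b_j$ once the $a_i$ are shown to vanish. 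So the proof splits cleanly into two independent non-degeneracy steps.

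First I would project the relation onto the first $q$ coordinates. Using Lemma \ref{p-lem3.5} this yields
\begin{equation*}
\sum_{i=1}^{2q} a_i\left(\int_{\xi_i}\omega_1,\dots,\int_{\xi_i}\omega_q\right)=0\in \mathbb C^{q},
\end{equation*}
a relation with real coefficients among the $2q$ period vectors of the Albanese variety of $X$. These $2q$ vectors are $\mathbb R$-linearly independent in $\mathbb C^{q}\cong \mathbb R^{2q}$: by Hodge theory the period pairing induces an $\mathbb R$-linear isomorphism $H_1(X,\mathbb R)\xrightarrow{\sim}H^0(X,\Omega^1_X)^*$. Indeed, the projection $H^1(X,\mathbb R)\to H^{1,0}=H^0(X,\Omega^1_X)$ is an isomorphism, because a real class lying in $H^{0,1}$ equals its own conjugate and hence sits in $H^{1,0}\cap H^{0,1}=0$, while $\dim_{\mathbb R}H^1(X,\mathbb R)=b_1(X)=2q=\dim_{\mathbb R}H^{1,0}$. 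Thus the image of the $\xi_i$ is a lattice of full rank $2q$, and the relation forces $a_i=0$ for all $i$.

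With the $a_i$ removed, the relation becomes $\sum_j b_jB_j=0$, whose only nonzero coordinates are the last $d$; explicitly,
\begin{equation*}
\sum_{j=1}^{d} b_j\int_{\eta_j}\varphi_\ell=0\qquad(1\le \ell\le d).
\end{equation*}
It remains to show that the $d\times d$ matrix $M=\left(\int_{\eta_j}\varphi_\ell\right)$ is invertible, and here I would feed Lemma \ref{p-lem3.6} into a non-degeneracy argument. By the computation in Lemma \ref{p-lem3.5}, every holomorphic form pairs to zero against every element of $\mathrm{Ker}\,\iota_*$, so integration descends to a pairing
\begin{equation*}
\left(\mathrm{Ker}\,\iota_*\otimes \mathbb C\right)\times \left(H^0(X,\Omega^1_X(\log D))/H^0(X,\Omega^1_X)\right)\longrightarrow \mathbb C.
\end{equation*}
Both sides have dimension $d$: the classes $\{\eta_1,\dots,\eta_d\}$ form a $\mathbb C$-basis of $\mathrm{Ker}\,\iota_*\otimes \mathbb C$ (torsion is annihilated by Lemma \ref{p-lem3.4}, and the free part of $\mathrm{Ker}\,\iota_*$ equals $\bigoplus_j\mathbb Z\eta_j$ by the choice of basis together with the surjectivity of $\iota_*$ in Lemma \ref{p-lem3.1}), while $\{\varphi_1,\dots,\varphi_d\}$ descends to a basis of the quotient. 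Now Lemma \ref{p-lem3.6} says exactly that a logarithmic form pairing to zero against all of $\mathrm{Ker}\,\iota_*$ is already holomorphic, i.e. the induced map $H^0(X,\Omega^1_X(\log D))/H^0(X,\Omega^1_X)\to (\mathrm{Ker}\,\iota_*\otimes \mathbb C)^*$ is injective. As both spaces have dimension $d$, this map is an isomorphism, so the columns of $M$ are independent and $M$ is invertible. Hence $b_j=0$ for all $j$, finishing the proof.

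The main obstacle is the second step: recognizing that Lemma \ref{p-lem3.6} is precisely the non-degeneracy needed, and carrying out the dimension bookkeeping that upgrades ``non-degenerate in the form variable'' to invertibility of $M$ — in particular confirming that $\{\eta_j\}$ gives a basis of $\mathrm{Ker}\,\iota_*\otimes\mathbb C$ and that its dimension matches $\dim H^0(X,\Omega^1_X(\log D))/H^0(X,\Omega^1_X)=d$, for which the Betti-number identity of Lemma \ref{p-lem3.2} and the surjectivity of Lemma \ref{p-lem3.1} are the relevant inputs. The first step rests only on the classical full-rank property of Albanese periods, which is standard Hodge theory and presents no real difficulty here.
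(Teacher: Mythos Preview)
Your proof is correct and follows essentially the same route as the paper: project to the first $q$ coordinates and use the classical $\mathbb R$-independence of Albanese periods (via Lemma \ref{p-lem3.5}) to kill the $a_i$, then show the $d\times d$ period matrix $\left(\int_{\eta_j}\varphi_\ell\right)$ is nonsingular by invoking Lemma \ref{p-lem3.6}. The only difference is cosmetic: the paper phrases the second step as ``rows dependent $\Rightarrow$ rank $<d$ $\Rightarrow$ columns dependent $\Rightarrow$ some $\sum c_\ell\varphi_\ell$ has zero periods on $\mathrm{Ker}\,\iota_*$ $\Rightarrow$ holomorphic by Lemma \ref{p-lem3.6}, contradiction'', whereas you package the same rank argument as non-degeneracy of a pairing plus a dimension count.
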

\begin{proof}
We put 
\begin{equation*} 
\widehat A_i =\left( \int _{\xi_i} 
\omega_1, \cdots, \int _{\xi_i}\omega_q\right) 
\end{equation*} 
for $1\leq i\leq 2q$. 
Then $\widehat A_1, \cdots, 
\widehat A_{2q}$ are $\mathbb R$-linearly independent, 
which is well known by the Hodge theory. 
By Lemma \ref{p-lem3.5}, we have  $a_i=0$ for every $i$. 
We put 
\begin{equation*} 
\widehat {B} _j =\left( \int _{\eta_j} \varphi_1, \cdots, 
\int _{\eta_j}\varphi_d\right)
\end{equation*}  
for $1\leq j\leq d$. 
It is sufficient to prove that $\widehat B_1, \cdots, \widehat B_d$ 
are $\mathbb C$-linearly independent. 
If $\widehat B_1, \cdots, \widehat B_d$ are $\mathbb C$-linearly 
dependent, then the rank of the $d\times d$ matrix 
\begin{equation*} 
\left(\int_{\eta_j}\varphi _i\right)_{i, j}
\end{equation*}  
is less than $d$. 
This means that 
there is $(c_1, \cdots, c_d)\ne 0$ such that 
\begin{equation*} 
\int_{\eta_j}\sum _{i=1}^dc_i \varphi_i=0
\end{equation*}  
for every $j$. 
Therefore, we see that 
\begin{equation*} 
\sum _{i=1}^dc_i \varphi_i\in H^0(X, \Omega^1_X)
\end{equation*} 
by Lemma \ref{p-lem3.6}. 
This contradicts the choice of $\{\varphi_1, \cdots, \varphi_d\}$. 
Thus, $\widehat B_1, \cdots, \widehat B_d$ are $\mathbb C$-linearly 
independent. 
\end{proof}

By the proof of Lemma \ref{p-lem3.7}, 
we can choose 
$\varphi_1, \cdots, \varphi_d$ such that 
\begin{equation*} 
\int _{\eta_j} \varphi_k =\delta _{jk}. 
\end{equation*} 
We put 
\begin{equation*} 
L=\sum _i \mathbb Z A_i +\sum _j \mathbb Z B_j, 
\end{equation*} 
\begin{equation*} 
L_1=\sum _i \mathbb Z \widehat A_i,  
\end{equation*}  
and 
\begin{equation*} 
L_0=\sum _j \mathbb Z \widehat B_j. 
\end{equation*}  
Then we get the following short exact sequence of complex 
Lie groups: 
\begin{align}\label{p-eq3.1}
0\longrightarrow \mathbb C^d/{L_0}\longrightarrow \mathbb C^{\overline q}/
L\longrightarrow \mathbb C^q/L_1\longrightarrow 0. 
\end{align} 
Note that $T=\mathbb C^d /{L_0}$ is an algebraic torus $\mathbb G^d_m$ and 
that $\mathcal A_X=\mathbb C^q/L_1$ is the Albanese 
variety of $X$. 
More explicitly, if $(z_1, \cdots, z_d)$ is the standard 
coordinate system of $\mathbb C^d$, 
then 
the isomorphism 
\begin{equation*} 
\mathbb C^d/L_0\overset{\sim}\longrightarrow \mathbb G^d_m
\end{equation*}  
is given by 
\begin{equation*} 
(z_1, \cdots, z_d)\mapsto (\exp 2\pi \sqrt{-1} z_1, \cdots, 
\exp 2\pi\sqrt{-1} z_d). 
\end{equation*} 
We call 
\begin{equation*} 
\widetilde {\mathcal A}_V=\mathbb C^{\overline q}/L
\end{equation*}  
the quasi-Albanese variety of $V$. 
By the above description, we see that 
$\widetilde {\mathcal A}_V$ is a principal 
$\mathbb G^d_m$-bundle over an abelian variety $\mathcal A_X$ as 
a complex manifold. 
We have to check: 

\begin{lem}\label{p-lem3.8}
The quasi-Albanese variety $\widetilde {\mathcal A}_V$ is a 
quasi-abelian variety.  
\end{lem}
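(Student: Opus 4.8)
The plan is to upgrade the short exact sequence \eqref{p-eq3.1} of complex Lie groups
\[
0 \longrightarrow T \longrightarrow \widetilde{\mathcal A}_V \longrightarrow \mathcal A_X \longrightarrow 0, \qquad T = \mathbb C^d/L_0 \cong \mathbb G^d_m,
\]
from an analytic extension to an algebraic one. Once $\widetilde{\mathcal A}_V$ is realized as a commutative algebraic group fitting into such a sequence, this sequence is automatically its Chevalley decomposition: the maximal affine subgroup is the torus $T$ and the quotient is the abelian variety $\mathcal A_X$, so $\widetilde{\mathcal A}_V$ is a quasi-abelian variety by Definition \ref{p-def2.8}. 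It is essential here that $T$ be a genuine algebraic torus: the normalization $\int_{\eta_j}\varphi_k = \delta_{jk}$ forces $L_0 = \mathbb Z^d$ to be a full-rank lattice in $\mathbb C^d$, so $T \cong \mathbb G^d_m$ and \eqref{p-eq3.1} really is an extension by $\mathbb G^d_m$ rather than by copies of $\mathbb G_a$; the latter would produce exactly the non-algebraic coincidence illustrated in Example \ref{p-ex2.17}.

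First I would reduce to the rank-one case. Since $T \cong \mathbb G^d_m$, pushing \eqref{p-eq3.1} out along the $d$ coordinate characters $\chi_k \colon T \to \mathbb G_m$ yields extensions
\[
0 \longrightarrow \mathbb G_m \longrightarrow G_k \longrightarrow \mathcal A_X \longrightarrow 0 \qquad (1 \le k \le d)
\]
of complex Lie groups, and $\widetilde{\mathcal A}_V$ is recovered as the fibre product $G_1 \times_{\mathcal A_X} \cdots \times_{\mathcal A_X} G_d$. Concretely $G_k = \mathbb C^{q+1}/L^{(k)}$, where $L^{(k)}$ is generated by the vectors $(\widehat A_i, \tau_{ik})$ with $\tau_{ik} = \int_{\xi_i}\varphi_k$ together with $(0,1)$. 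Thus it suffices to algebraize each $G_k$.

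Next I would identify the analytic extension class of $G_k$. As a $\mathbb C^\times$-bundle over the complex torus $\mathcal A_X$ it is associated to a line bundle $\mathcal L_k$, and because $G_k$ is a \emph{group} extension by $\mathbb C^\times$ the gluing data reduces to the honest character $L_1 \to \mathbb C^\times$, $\widehat A_i \mapsto \exp(2\pi\sqrt{-1}\,\tau_{ik})$; by the Appell--Humbert description this means $\mathcal L_k$ has trivial Hermitian form, i.e. $\mathcal L_k \in \mathrm{Pic}^0(\mathcal A_X)$. Since $\mathcal A_X$ is an abelian variety, its analytic $\mathrm{Pic}^0$ coincides with the algebraic one, namely the dual abelian variety $\widehat{\mathcal A}_X$; by Serre's GAGA (applicable since $\mathcal A_X$ and $\widehat{\mathcal A}_X$ are projective) the class $\mathcal L_k$ is algebraic. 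The Barsotti--Weil description of $\mathrm{Ext}^1(\mathcal A_X, \mathbb G_m) \cong \mathrm{Pic}^0(\mathcal A_X)$ then furnishes an algebraic commutative extension of $\mathcal A_X$ by $\mathbb G_m$ whose analytification is exactly $G_k$. Taking the fibre product over $\mathcal A_X$ of these $d$ algebraic extensions produces a commutative algebraic group whose analytification is $\widetilde{\mathcal A}_V$, which finishes the proof.

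The hard part is precisely this algebraization step: recognizing the analytic extension class as an algebraic point of the dual abelian variety and promoting the analytic $\mathbb G_m$-bundle, together with its group law, to an algebraic one. This is where the projectivity of $\mathcal A_X$ (hence GAGA) and the identification $\mathrm{Ext}^1_{\mathrm{an}}(\mathcal A_X, \mathbb G_m) = \mathrm{Ext}^1_{\mathrm{alg}}(\mathcal A_X, \mathbb G_m) = \widehat{\mathcal A}_X$ are indispensable; by contrast the reduction to rank one and the period bookkeeping are routine.
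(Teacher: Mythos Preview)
Your argument is correct, but the paper takes a different and more hands-on route. Instead of invoking Appell--Humbert and Barsotti--Weil, the paper builds an explicit compactification: via the representation $\rho\colon \mathbb G^d_m \to \mathrm{PGL}(d,\mathbb C)$ sending $(\lambda_1,\dots,\lambda_d)$ to $\mathrm{diag}(1,\lambda_1,\dots,\lambda_d)$ it forms the associated $\mathbb P^d$-bundle $Z = A \times_\rho \mathbb P^d$ over $\mathcal A_X$, observes that $Z$ is projective (it is $\mathbb P(\mathcal O \oplus \mathcal L_1 \oplus \cdots \oplus \mathcal L_d)$ for analytic line bundles $\mathcal L_k$ on the abelian variety $\mathcal A_X$, which are algebraic by GAGA), and that $\Delta = Z \setminus A$ is simple normal crossing and relatively ample; this gives $A$ a quasi-projective structure. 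Then the paper shows directly that the group law $\psi\colon A\times A \to A$ extends holomorphically to $Z\times A \to Z$ and $A\times Z \to Z$ (by $\mathbb G^d_m$-equivariance of $Z$), hence to a meromorphic and therefore rational map $Z\times Z \dashrightarrow Z$, so $\psi$ is algebraic; the inverse is treated similarly. Your approach is conceptually cleaner once Barsotti--Weil is available, but the paper deliberately avoids such structural theorems on algebraic groups (as stated in the introduction), and its explicit compactification $(Z,\Delta)$ with $K_Z+\Delta\sim 0$ is reused later, for instance in Theorem~\ref{p-thm4.3} and in the proof of Lemma~\ref{p-lem3.10}. A minor quibble: calling $L_0=\mathbb Z^d$ a ``full-rank lattice in $\mathbb C^d$'' is misleading, since it has real rank $d$ in $\mathbb R^{2d}$; what you need (and what Lemma~\ref{p-lem3.7} gives) is that $\widehat B_1,\dots,\widehat B_d$ are $\mathbb C$-linearly independent so that $\mathbb C^d/L_0 \cong (\mathbb C^\times)^d$.
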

\begin{proof}
We put $A=\widetilde {\mathcal A} _V$ and $B=\mathcal A_X$ 
for simplicity. Note that $A$ is a principal $\mathbb G^d_m$-bundle over 
$B$ as a complex manifold. 
We consider the following group homomorphism: 
\begin{equation*} 
\rho\colon  \mathbb G^d_m\to \mathrm{PGL}(d, \mathbb C)
\end{equation*}  
given by 
\begin{equation*} 
\rho (\lambda_1, \cdots, \lambda_d)=\begin{pmatrix} 
1&&& \\
&\lambda_1&&\\
&&\ddots &\\
&&&\lambda_d
\end{pmatrix}. 
\end{equation*}  
By $\rho$, we obtain $\mathbb P^d$-bundle 
$Z=A\times _\rho \mathbb P^d$ over $B=A/\mathbb G^d_m$ which 
is a compactification of $A$. 
It is easy to see that the divisor $\Delta=Z\setminus A$ is a simple 
normal crossing divisor on $Z$ and is ample over $B$. 
Moreover, we can easily see that 
$Z\to B$ and $A\to B$ are locally trivial 
in the Zariski topology. 
From now, we will see that the multiplication  
\begin{equation*} 
\psi\colon  A\times A\longrightarrow A 
\end{equation*} 
of $A$ as a complex Lie group is algebraic. 
By construction, the map $\psi$ can be extended to holomorphic maps 
\begin{equation*} 
g_1\colon  Z\times A\longrightarrow Z \quad \text{and} \quad 
g_2\colon  A\times Z\longrightarrow Z
\end{equation*} 
since $Z$ is a $\mathbb G^d_m$-equivariant embedding of $A$. 
Therefore, 
we obtain a holomorphic map 
\begin{equation*} 
g\colon  Z\times Z\setminus 
\Sigma\longrightarrow Z\hookrightarrow \mathbb P^N, 
\end{equation*}  
where 
$\Sigma=(\Delta\times Z)
\cap (Z\times \Delta)$. 
Of course, $g$ is an extension of $\psi\colon A\times A\to A$. 
Note that $\codim_{Z\times Z}\Sigma\geq 2$. 
We consider $g^*\mathcal O_{\mathbb P^N}(1)$. 
This line bundle can be extended 
to a line bundle $\mathcal L$ on $Z\times Z$. 
Moreover, we can see 
\begin{equation*} 
l_i:=g^*X_i \in 
H^0(Z\times 
Z, \mathcal L)
\end{equation*} 
for $0\leq i\leq N$, where 
$[X_0: \cdots :X_N]$ are homogeneous coordinates of $\mathbb P^N$. 
Therefore, we obtain a rational map 
$h\colon  Z\times Z\dashrightarrow Z$, which is 
given by the linear system spanned by $\{l_0, \cdots, l_N\}$ and 
is an extension of $g$. 
Thus, the multiplication 
\begin{equation*} 
\psi\colon  A\times A\to A
\end{equation*}  
is algebraic since $\psi=h|_{A\times A}$. 
Let 
\begin{equation*}
\iota\colon A\to A
\end{equation*} 
be the inverse. We can easily see that $\iota$ extends to 
\begin{equation*}
Z\setminus \Delta_{\mathrm{sing}}\to Z\hookrightarrow \mathbb P^N, 
\end{equation*} 
where $\Delta_{\mathrm{sing}}$ is the singular locus of $\Delta$. 
Hence, as in the case of $g$, 
we obtain a birational map $Z\dashrightarrow Z$, 
which is an extension of 
$\iota\colon A\to A$. Thus, $\iota$ is algebraic. 
This means that $A=\widetilde {\mathcal A}_V$ is 
an algebraic group. So, $\widetilde {\mathcal A}_V$ is a quasi-abelian 
variety. Note that the short exact sequence \eqref{p-eq3.1} is nothing but 
the Chevalley decomposition. 
\end{proof}

\begin{lem}\label{p-lem3.9}
Let $\omega$ be an element of $H^0(X, \Omega^1_X(\log D))$. 
We fix a point $0\in V$. 
Then we have a multivalued holomorphic 
function 
\begin{equation*} 
\int _0^p \omega
\end{equation*}  
on $V$. 
For a point $p\in V$, 
we can define $\alpha_V\colon  V\to \widetilde {\mathcal A}_V$ 
by  
\begin{equation*} 
\alpha_V(p)=\left ( \int _0^p \omega_1, \cdots, 
\int _0^p \omega_q, \int _0 ^p \varphi_1, \cdots, \int _0^p \varphi_d\right )
\in \widetilde {\mathcal A}_V. 
\end{equation*} 
This map is independent of the choice of the path from $0$ to 
$p$ in $V$. 
Thus we get a quasi-Albanese map: 
\begin{equation*} 
\alpha_V\colon  V\to \widetilde {\mathcal A}_V. 
\end{equation*} 
It is a holomorphic map. 
\end{lem}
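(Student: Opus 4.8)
The plan is to show that the vector of multivalued integrals descends to a single-valued holomorphic map into the quotient $\widetilde{\mathcal A}_V=\mathbb C^{\overline q}/L$, the only real content being that the period ambiguity lands in the lattice $L$.

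First I would record that every form in $H^0(X,\Omega^1_X(\log D))$ is $d$-closed. For the holomorphic forms $\omega_k$ this is the classical fact that holomorphic $1$-forms on the compact K\"ahler (indeed projective) manifold $X$ are closed; for the logarithmic forms $\varphi_j$ it is exactly the closedness already exploited in the proof of Lemma \ref{p-lem3.6} ($d\varphi=0$). Restricting to $V$, each such $\omega$ is therefore a closed holomorphic $1$-form on $V$. Fixing the base point $0\in V$ and a path to $p$, homotopy invariance of integrals of closed forms (Stokes) shows that $\int_0^p\omega$ depends only on the homotopy class of the path rel endpoints; since locally a primitive of a holomorphic form is holomorphic, $\int_0^p\omega$ is a well-defined multivalued holomorphic function on $V$, as asserted.

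Next I would analyze the ambiguity. Because each coordinate form is closed, changing the chosen path from $0$ to $p$ by a loop $\gamma$ alters the vector
\[
\left(\int_0^p\omega_1,\dots,\int_0^p\omega_q,\int_0^p\varphi_1,\dots,\int_0^p\varphi_d\right)
\]
by the period vector $P(\gamma)=\left(\int_\gamma\omega_1,\dots,\int_\gamma\varphi_d\right)\in\mathbb C^{\overline q}$, and by de Rham's theorem $P(\gamma)$ depends only on the class $[\gamma]\in H_1(V,\mathbb Z)$, so $P$ defines a homomorphism $P\colon H_1(V,\mathbb Z)\to\mathbb C^{\overline q}$. The key step is to identify $\operatorname{Im}P$ with $L$. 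On a basis $\{\xi_1,\dots,\xi_{2q},\eta_1,\dots,\eta_d\}$ of the free part of $H_1(V,\mathbb Z)$, where the $\xi_i$ are lifts under the surjection $\iota_*$ of Lemma \ref{p-lem3.1} of a basis of the free part of $H_1(X,\mathbb Z)$, we have $P(\xi_i)=A_i$ and $P(\eta_j)=B_j$ by the very definitions in \ref{p-say3.3}, while Lemma \ref{p-lem3.4} gives $P(\gamma)=0$ for every torsion class $\gamma$. Hence $\operatorname{Im}P=\sum_i\mathbb Z A_i+\sum_j\mathbb Z B_j=L$.

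It then follows at once that the composite $V\to\mathbb C^{\overline q}\to\mathbb C^{\overline q}/L=\widetilde{\mathcal A}_V$ is single-valued, that is, independent of the chosen path, and holomorphicity is inherited from the local holomorphicity of the primitives together with the fact that $\mathbb C^{\overline q}\to\widetilde{\mathcal A}_V$ is a local biholomorphism. This produces the desired holomorphic map $\alpha_V\colon V\to\widetilde{\mathcal A}_V$. I expect the only genuine subtlety to be the bookkeeping in the previous step: one must ensure that the $\xi_i$, originally a basis of the free part of $H_1(X,\mathbb Z)$, are consistently lifted to $H_1(V,\mathbb Z)$ so that together with the $\eta_j\in\operatorname{Ker}\iota_*$ they form a basis of the free part of $H_1(V,\mathbb Z)$ --- which is precisely what Lemma \ref{p-lem3.1} guarantees --- and that the torsion contributes nothing, which is Lemma \ref{p-lem3.4}.
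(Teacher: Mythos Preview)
Your proof is correct and follows essentially the same approach as the paper's: closedness of logarithmic $1$-forms, Stokes, and the observation that the period ambiguity lands in $L$. The paper's proof is extremely terse (three lines invoking $d\omega=0$ via Deligne and Stokes), whereas you have carefully unpacked why $\operatorname{Im}P=L$ using the chosen basis and Lemma~\ref{p-lem3.4} for torsion --- details the paper leaves implicit. One small point: you cite Lemma~\ref{p-lem3.6} as the source of $d\varphi=0$, but that lemma \emph{uses} the closedness rather than establishing it; the correct attribution, as in the paper's own proof, is to Deligne's theory \cite{deligne}.
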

\begin{proof}
Let $\gamma$ be a $2$-cycle 
on $V$. 
Then 
\begin{equation*} 
\int _{\partial \gamma}\omega=\int _{\gamma} d\omega=0
\end{equation*}  
for every $\omega\in H^0(X, \Omega^1_X(\log D))$. 
This is because $\omega$ is $d$-closed by Deligne (see \cite{deligne}). 
Therefore, $\alpha_V$ is well-defined. 
\end{proof}
\begin{lem}[{see \cite[Proposition 3]{iitaka1}}]\label{p-lem3.10} 
The map 
$\alpha_V$ in Lemma \ref{p-lem3.9} is algebraic. 
\end{lem}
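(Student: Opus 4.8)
The plan is to factor $\alpha_V$ through the Chevalley decomposition \eqref{p-eq3.1} and treat the abelian and the torus directions separately. First I would compose $\alpha_V$ with the projection $\widetilde{\mathcal A}_V\to \mathcal A_X=\mathbb C^q/L_1$ onto the first $q$ coordinates; by construction this composition sends $p$ to $\left(\int_0^p\omega_1,\dots,\int_0^p\omega_q\right)$ modulo $L_1$, which is exactly the classical Albanese map $\mathrm{alb}_X$ of the smooth complete variety $X$ restricted to $V$. Since the Albanese map of a smooth complete variety is a morphism of algebraic varieties, the abelian part of $\alpha_V$ is already algebraic. It therefore remains to algebraize the torus direction, and for this I would use the projective compactification $Z=\widetilde{\mathcal A}_V\times_\rho \mathbb P^d$ of $\widetilde{\mathcal A}_V$ together with its $\mathbb P^d$-bundle structure over $\mathcal A_X$ constructed in the proof of Lemma \ref{p-lem3.8}, where $\Delta=Z\setminus\widetilde{\mathcal A}_V$ is a relative simple normal crossing divisor and the bundle is locally trivial in the Zariski topology.

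The heart of the argument is a local computation along $D$. Fix a component $D_a$ of $D$ and a general point $p\in D_a$, and take a polydisc $U$ with coordinates $(z_1,\dots,z_n)$ in which $D_a=\{z_1=0\}$ and $U$ meets no other component of $D$. As in the proof of Lemma \ref{p-lem3.6}, on $U$ I would write $\varphi_k=c_{a,k}\frac{dz_1}{z_1}+\beta$ with $\beta$ holomorphic and $c_{a,k}=\mathrm{Res}_{D_a}\varphi_k$ constant, so that $2\pi\sqrt{-1}\,c_{a,k}=\int_{\gamma_a}\varphi_k$ for the small loop $\gamma_a$ around $D_a$. Because $\iota_*\gamma_a=0$ we have $\gamma_a\in\mathrm{Ker}\,\iota_*$; writing $\gamma_a$ in terms of the basis $\eta_1,\dots,\eta_d$ of the free part of $\mathrm{Ker}\,\iota_*$ and using the normalization $\int_{\eta_j}\varphi_k=\delta_{jk}$ together with Lemma \ref{p-lem3.4} for the torsion part gives $\int_{\gamma_a}\varphi_k\in\mathbb Z$. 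Hence the monodromy of the multivalued function $u_k=\exp\left(2\pi\sqrt{-1}\int_0^p\varphi_k\right)$ around $\gamma_a$ is $\exp(2\pi\sqrt{-1}\int_{\gamma_a}\varphi_k)=1$, so $u_k$ is single-valued on $U\setminus\{z_1=0\}$ and takes the form $u_k=z_1^{m_{a,k}}v_k$ with $m_{a,k}\in\mathbb Z$ and $v_k$ holomorphic and nowhere zero. In particular $u_k$ has no essential singularity along $D_a$, so in a Zariski-local trivialization of $Z\to\mathcal A_X$ the map $\alpha_V$ extends holomorphically across the general point of $D_a$ into $Z$ (entering $\Delta$ precisely where some $m_{a,k}\neq0$).

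To finish I would globalize these local extensions. On $V$ the map $\alpha_V\colon V\to\widetilde{\mathcal A}_V\hookrightarrow Z$ is a single-valued holomorphic map into the projective variety $Z$, and by the previous paragraph it extends meromorphically across the general points of every component of $D$; the remaining indeterminacy locus is contained in $\Delta_{\mathrm{sing}}$ together with the non-general loci, hence is of codimension $\geq 2$ in $X$. By a standard Hartogs-type theorem, a meromorphic map into a projective variety defined outside an analytic subset of codimension $\geq 2$ extends to a meromorphic map on all of $X$, so I obtain $\overline\alpha\colon X\dashrightarrow Z$. Since $X$ and $Z$ are smooth projective, the closure of the graph of $\overline\alpha$ is an analytic, hence (by Chow's theorem) algebraic, subvariety of $X\times Z$; thus $\overline\alpha$ is a rational map and in particular algebraic. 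Its restriction to $V$ coincides with $\alpha_V$ and lands in $\widetilde{\mathcal A}_V$, so $\alpha_V$ is a morphism of algebraic varieties, as desired.

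The main obstacle is conceptual rather than computational: the individual integrals $\int_0^p\varphi_k$ have non-integral periods over $H_1(X,\mathbb Z)$ (these are the last $d$ components of the vectors $A_i$), so each $u_k$ is genuinely multivalued on $V$ and cannot be treated as a rational function on $V$ directly. The point is that this longitudinal multivaluedness is exactly the monodromy of the $\mathbb G^d_m$-bundle and is absorbed by the lattice $L_0=\mathbb Z^d$ defining the torus $T=\mathbb C^d/L_0\cong\mathbb G^d_m$, whereas the only data governing the behavior transverse to $D$ are the residues $c_{a,k}$, which are integral by Lemmas \ref{p-lem3.4} and \ref{p-lem3.7} and the normalization. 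This is what forces one to work with the map into the bundle $Z$ rather than with scalar functions on $V$, and it is precisely the integrality of the periods over $\mathrm{Ker}\,\iota_*$ that rules out essential singularities and yields the meromorphic, hence algebraic, extension.
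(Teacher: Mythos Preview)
Your proof is correct and follows essentially the same approach as the paper: extend $\alpha_V$ meromorphically across $D$ by the local residue computation showing the torus coordinates $u_k=\exp(2\pi\sqrt{-1}\int_0^p\varphi_k)$ acquire at worst zeros/poles of integer order along each $D_a$, then apply GAGA to the resulting meromorphic map between complete varieties. The only differences are cosmetic: the paper compactifies via the $(\mathbb P^1)^d$-bundle $Z'$ rather than the $\mathbb P^d$-bundle $Z$ (which makes the extension factor-by-factor and handles crossings of $D$ directly rather than through your Hartogs step), and you should write $D_{\mathrm{sing}}$ rather than $\Delta_{\mathrm{sing}}$ and ``complete'' rather than ``projective'' for $X$.
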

\begin{proof}
Note that $A=\widetilde {\mathcal A}_V$ is a 
principal $\mathbb G^d_m$-bundle 
over $B=\mathcal A_X$ as a complex manifold. 
We consider the group homomorphism 
\begin{equation*}
\rho'\colon  \mathbb G^d_m\to \mathrm{PGL}(2, \mathbb C)\times 
\mathrm{PGL}(2, \mathbb C)\times \cdots \times 
\mathrm{PGL}(2, \mathbb C) 
\end{equation*} 
given by 
\begin{equation*} 
\rho'(\lambda_1, \cdots, \lambda_d)=\begin{pmatrix}
1&0\\0&\lambda_1\end{pmatrix} 
\times \begin{pmatrix}
1&0\\0&\lambda_2\end{pmatrix}
\times \cdots \times 
\begin{pmatrix}
1&0\\0&\lambda_d\end{pmatrix}.   
\end{equation*}  
Then we obtain a $\mathbb G^d_m$-equivariant embedding $Z'=A\times 
_{\rho'}(\mathbb P^1\times \cdots \times \mathbb P^1)$ of 
$A$ over $B$. 

\begin{claim}\label{p-3.10claim}
The holomorphic map 
\begin{equation*} 
\alpha_V\colon V\to \widetilde {\mathcal A}_V
\end{equation*}  
given in Lemma \ref{p-lem3.9} can be extended to 
a rational map
\begin{equation*} 
\beta_X\colon X\dashrightarrow Z'. 
\end{equation*} 
\end{claim}
\begin{proof}[Proof of Claim] 
We note that 
it is sufficient to prove that there exists a meromorphic extension 
$\beta_X$ of $\alpha_V$ since $X$ and $Z'$ are smooth 
complete algebraic varieties. 
Let $p$ be a point of $D\subset X$. 
Let $(z_1, \cdots, z_n)$ be a local holomorphic 
coordinate system of $X$ at $p$ such that 
$D$ is defined by $z_1\cdots z_r=0$. 
In this case, we can write 
\begin{equation*} 
\varphi_i=\sum _{b=1}^r \alpha_{ib} \frac{dz_b}{z_b}+\widetilde \varphi_i 
\end{equation*} 
where 
$\alpha_{ib}\in \mathbb C$ and 
$\widetilde \varphi_i$ is a holomorphic   
$1$-form for every $i$ around $p$ 
(see the proof of Lemma \ref{p-lem3.6}). 
Let $\delta_a$ be a circle around $D_a=(z_a=0)$ near $p$. 
Then $\iota_*\delta_a=0$. 
Therefore, we have 
\begin{equation*} 
\delta_a=\sum _j m_{ja}\eta_j+\widetilde \delta_a, 
\end{equation*}  
where $m_{ja}\in \mathbb Z$ and $\widetilde \delta_a$ is a torsion 
element. 
Thus we have 
\begin{equation*} 
\alpha_{ia}=\frac{1}{2\pi\sqrt{-1}}\int _{\delta_a} \varphi_i 
=\frac{1}{2\pi\sqrt{-1}}\sum _j m_{ja} 
\int _{\eta_j} \varphi_i=\frac{m_{ia}}{2\pi\sqrt{-1}}. 
\end{equation*}  
Without loss of generality, we may assume that $0\in V$ is near $p$. 
For a point $p'\in V$ near $p$, we have 
\begin{equation}\label{p-eq3.2}
\begin{split}
&\exp \left(2\pi\sqrt{-1}\int_{0}^{p'}
\varphi_i\right)\\ 
&=c\exp \left( \sum _b m_{ib}\log z_b(p')\right) \cdot 
\exp \left(2\pi\sqrt{-1}\int_{0}^{p'}\widetilde \varphi_i\right)
\\&=c\prod _b z_b(p')^{m_{ib}}\cdot 
\exp \left(2\pi\sqrt{-1}\int _{0}^{p'}\widetilde \varphi_i\right)
\end{split}
\end{equation}
for some constant $c$. 
We consider the following commutative diagram: 
\begin{equation*} 
\xymatrix{
V\ar[r]^{\alpha_V}\ar@{^{(}->}[d] & \widetilde {\mathcal A}_V\ar@{^{(}->}[d]\\
X\ar@{-->}[r]\ar[d]& Z'\ar[d]^{\pi_{Z'}}\\
B \ar@{=}[r]&B
}
\end{equation*} 
Note that $X\to B$  is nothing but the Albanese map of $X$. 
Let $U$ be a small open set of $B$ in the classical topology. 
Then 
\begin{equation*}
\pi^{-1}(U)\simeq U\times \mathbb C^\times 
\times \cdots \times \mathbb C^{\times}, 
\end{equation*}  
where $\pi\colon \widetilde {\mathcal A}_V\to B=\mathcal A_X$, 
and 
\begin{equation*}
\pi_{Z'}^{-1}(U)
\simeq U\times \mathbb P^1\times 
\cdots \times \mathbb P^1
\end{equation*} 
over $U$. 
Over $U$, it is easy to see that $\alpha_V$ can be extended to 
a meromorphic map $X\dashrightarrow Z'$ in the sense of Remmert 
by \eqref{p-eq3.2} 
(see \cite[Chapter 10, \S6, 3.~Graph of 
a Finite System of Meromorphic Functions]{grauert-remmert}). 
For the definition of meromorphic mappings in the 
sense of Remmert, see, for example, \cite[Definition 2.2]{ueno}. 
Therefore, $\alpha_V$ can be extended to a meromorphic map 
$\beta_X$ from 
$X$ to $Z'$ in the sense of Remmert. 
By Serre's GAGA principle (see, for example, 
\cite[Expos\'e XII]{sga1}), 
a meromorphic map 
$\beta_X\colon X\dashrightarrow Z'$ is 
a rational map between smooth complete algebraic varieties. 
\end{proof}
Thus we obtain that $\alpha_V$ in Lemma \ref{p-lem3.9} is algebraic. 
\end{proof}

\begin{lem}\label{p-lem3.11}
We have that 
\begin{equation*}
(\alpha_V)_*\colon 
H_1(V, \mathbb Z)\to H_1(\widetilde {\mathcal A}_V, \mathbb Z)
\end{equation*}  
is surjective. Moreover, we have 
\begin{equation*}
\mathrm{Ker}(\alpha_V)_*=H_1(V, \mathbb Z)_{\mathrm{tor}}, 
\end{equation*} 
where $H_1(V, \mathbb Z)_{\mathrm{tor}}$ is the torsion part of 
$H_1(V, \mathbb Z)$. 
\end{lem}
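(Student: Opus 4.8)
The plan is to reduce both assertions to an explicit description of $(\alpha_V)_*$ in terms of period integrals, after which everything follows from the definitions of $L$, $A_i$, $B_j$ in \ref{p-say3.3} together with Lemma \ref{p-lem3.7}. The first step is to identify the target homology group. By Lemma \ref{p-lem2.16} we have $\widetilde{\mathcal A}_V=\mathbb C^{\overline q}/L$ with contractible universal cover $\mathbb C^{\overline q}$ and deck group $L$ acting by translations. Hence $\widetilde{\mathcal A}_V$ is aspherical, $\pi_1(\widetilde{\mathcal A}_V)=L$, and since $L$ is abelian the Hurewicz isomorphism gives $H_1(\widetilde{\mathcal A}_V,\mathbb Z)\cong L$; in particular this group is torsion-free, being a discrete subgroup of $\mathbb C^{\overline q}$.

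The key step is to compute $(\alpha_V)_*$ on a $1$-cycle. By Lemma \ref{p-lem3.9} the map $\alpha_V$ lifts to the multivalued map $p\mapsto (\int_0^p\omega_1,\dots,\int_0^p\omega_q,\int_0^p\varphi_1,\dots,\int_0^p\varphi_d)$ with values in the universal cover $\mathbb C^{\overline q}$. Consequently, for a loop $\gamma$ in $V$ the image $(\alpha_V)_*[\gamma]\in L$ is exactly the monodromy of this lift around $\gamma$, namely the period vector $(\int_\gamma\omega_1,\dots,\int_\gamma\omega_q,\int_\gamma\varphi_1,\dots,\int_\gamma\varphi_d)$. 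Since $H_1=\pi_1^{\mathrm{ab}}$ and integration is additive, this formula descends to $H_1(V,\mathbb Z)$, and by the very definitions of $A_i$ and $B_j$ in \ref{p-say3.3} we get $(\alpha_V)_*[\xi_i]=A_i$ for $1\le i\le 2q$ and $(\alpha_V)_*[\eta_j]=B_j$ for $1\le j\le d$, where $\{\xi_1,\dots,\xi_{2q},\eta_1,\dots,\eta_d\}$ is the chosen basis of the free part of $H_1(V,\mathbb Z)$.

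Granting this description, surjectivity is immediate: under the identification $H_1(\widetilde{\mathcal A}_V,\mathbb Z)=L$, the image of $(\alpha_V)_*$ contains all the generators $A_i$ and $B_j$ of $L=\sum_i\mathbb Z A_i+\sum_j\mathbb Z B_j$, hence equals $L$. For the kernel, first note that any torsion class maps to $0$ because the target $L$ is torsion-free, so $H_1(V,\mathbb Z)_{\mathrm{tor}}\subseteq \mathrm{Ker}(\alpha_V)_*$. Conversely, the restriction of $(\alpha_V)_*$ to the free part sends the basis $\{\xi_i,\eta_j\}$ to $\{A_i,B_j\}$, which are $\mathbb Z$-linearly independent by Lemma \ref{p-lem3.7} (the $\mathbb R$-$\mathbb C$ independence proved there specializes to $\mathbb Z$-independence). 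Thus $(\alpha_V)_*$ is injective on the free part; writing an arbitrary class as a free part plus a torsion part then forces its free part to vanish whenever it lies in the kernel, giving $\mathrm{Ker}(\alpha_V)_*=H_1(V,\mathbb Z)_{\mathrm{tor}}$.

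The only point requiring genuine care is the identification in the second paragraph, that the induced map on $H_1$ is the period vector. This is the standard principle that a map into a quotient $\mathbb C^N/L$ defined by integrating closed $1$-forms records, on fundamental groups (hence on $H_1=\pi_1^{\mathrm{ab}}$), the monodromy of its lift to the universal cover; once this is made precise using the explicit lift of $\alpha_V$, the surjectivity and kernel statements become formal consequences of the construction of $L$ and of Lemma \ref{p-lem3.7}.
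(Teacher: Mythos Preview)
Your proof is correct and follows essentially the same approach as the paper's own proof: both identify $H_1(\widetilde{\mathcal A}_V,\mathbb Z)$ with the lattice $L$ and then observe that, by the very construction of $\alpha_V$ and of $L$, the induced map on homology sends the basis $\{\xi_i,\eta_j\}$ of the free part to the generators $\{A_i,B_j\}$ of $L$. The paper's proof is terser (it simply says the surjectivity and kernel statements are ``obvious by construction''), whereas you spell out explicitly why $(\alpha_V)_*[\gamma]$ equals the period vector of $\gamma$ and invoke Lemma~\ref{p-lem3.7} for the $\mathbb Z$-independence of the $A_i,B_j$; this is exactly the content that the paper leaves implicit.
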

\begin{proof}
Let $H_1(V, \mathbb Z)_{\mathrm{free}}$ be the 
free part of $H_1(V, \mathbb Z)$. 
Note that $\widetilde {\mathcal A}_V=\mathbb C^{\overline q}/L$ by 
construction, where 
\begin{equation*}
\mathbb C^{\overline q} =
(H^0(X, \Omega^1_X(\log D)))^*
\end{equation*} 
and $L$ is an embedding 
of $H_1(V, \mathbb Z)_{\mathrm{free}}$ into $(H^0(X, \Omega^1_X(\log D)))^*$. 
On the other hand, 
\begin{equation*}
H_1(\widetilde {\mathcal A}_V, \mathbb Z)=\pi_1(\widetilde {\mathcal A}_V)=L 
\end{equation*}  
by construction. 
By the construction of the lattice $L$ 
and the quasi-Albanese map $\alpha_V\colon V\to 
\widetilde {\mathcal A}_V$, 
it is obvious that 
\begin{equation*} 
(\alpha_V)_*\colon 
H_1(V, \mathbb Z)\to H_1(\widetilde {\mathcal A}_V, \mathbb Z)
\end{equation*}  
is surjective and that 
\begin{equation*}
\mathrm{Ker}(\alpha_V)_*=H_1(V, \mathbb Z)_{\mathrm{tor}}. 
\end{equation*} 
This is the desired property. 
\end{proof}

\begin{lem}\label{p-lem3.12}
We have that 
\begin{equation*} 
{\alpha}_V^*\colon  T_1(\widetilde {\mathcal A}_V)\to 
T_1(V)
\end{equation*}  
is an isomorphism. 
\end{lem}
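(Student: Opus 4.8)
The plan is to identify both sides with spaces of dimension $\overline q$ and to show that $\alpha_V^*$ carries a natural basis of the source onto a basis of the target. On the target side, $T_1(V)=H^0(X,\Omega^1_X(\log D))$ has dimension $\overline q$ with the basis $\{\omega_1,\dots,\omega_q,\varphi_1,\dots,\varphi_d\}$ fixed in \ref{p-say3.3}. On the source side, I would work with the compactification $\pi\colon Z'\to \mathcal A_X$ of $A:=\widetilde{\mathcal A}_V$ from Lemma \ref{p-lem3.10}, a $(\mathbb P^1)^d$-bundle with boundary $\Delta=Z'\setminus A$, so that $T_1(A)=H^0(Z',\Omega^1_{Z'}(\log\Delta))$.

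First I would write down $\overline q$ explicit elements of $T_1(A)$. Since $A=\mathbb C^{\overline q}/L$, the translation-invariant $1$-forms $dz_1,\dots,dz_{\overline q}$ descend to closed holomorphic $1$-forms on $A$. The forms $dz_1,\dots,dz_q$ are $\pi^*$ of invariant forms on the complete base $\mathcal A_X$ and so extend holomorphically across $\Delta$, while each $dz_{q+j}$ equals, in the fiber coordinate $w_j=\exp(2\pi\sqrt{-1}\,z_{q+j})$ of the $j$-th $\mathbb P^1$, a nonzero constant multiple of $\frac{dw_j}{w_j}$, hence extends to a logarithmic $1$-form with poles along $\Delta$. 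Thus $dz_1,\dots,dz_{\overline q}$ are $\overline q$ linearly independent elements of $T_1(A)$, giving $\dim T_1(A)\geq \overline q$. By the construction of $\alpha_V$ in Lemma \ref{p-lem3.9}, the $i$-th coordinate of $\alpha_V$ is the multivalued primitive $\int_0^p\omega_i$ (for $1\leq i\leq q$) or $\int_0^p\varphi_j$ (for $i=q+j$), so differentiating gives $\alpha_V^*(dz_i)=\omega_i$ and $\alpha_V^*(dz_{q+j})=\varphi_j$ (the factor $2\pi\sqrt{-1}$ being absorbed into the normalization of the invariant forms). Hence $\alpha_V^*$ sends the $\overline q$ invariant forms onto the basis $\{\omega_1,\dots,\omega_q,\varphi_1,\dots,\varphi_d\}$ of $T_1(V)$, and in particular $\alpha_V^*$ is surjective.

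To upgrade surjectivity to an isomorphism I would show $\dim T_1(A)\leq \overline q$, which together with the lower bound forces equality and then bijectivity. For this I use the relative logarithmic differential sequence of $\pi\colon Z'\to\mathcal A_X$,
\[
0\to \pi^*\Omega^1_{\mathcal A_X}\to \Omega^1_{Z'}(\log\Delta)\to \Omega^1_{Z'/\mathcal A_X}(\log\Delta)\to 0.
\]
Since $\pi_*\mathcal O_{Z'}=\mathcal O_{\mathcal A_X}$, one has $H^0(Z',\pi^*\Omega^1_{\mathcal A_X})=H^0(\mathcal A_X,\Omega^1_{\mathcal A_X})$, of dimension $q$; and since the relative log cotangent sheaf is fiberwise spanned by the $\frac{dw_j}{w_j}$, its global sections contribute at most $d$. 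Taking global sections of the sequence yields $\dim T_1(A)\leq q+d=\overline q$. Combined with $\dim T_1(A)\geq\overline q$ and $\dim T_1(V)=\overline q$, the surjection $\alpha_V^*$ is an isomorphism.

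The delicate point, and the one I expect to be the main obstacle, is the upper bound $\dim T_1(A)\leq\overline q$, i.e.\ that $T_1(A)$ consists only of the invariant $1$-forms; everything else is either definitional or the direct computation above. I would also remark that, to see $\alpha_V^*$ is defined on all of $T_1(A)$ rather than merely on the spanning invariant forms, one uses that $\alpha_V$ maps $V$ into $A=Z'\setminus\Delta$, so the rational extension $\beta_X$ of Lemma \ref{p-lem3.10} pulls poles along $\Delta$ back into $D$; together with the birational invariance of $T_1$ from Lemma \ref{p-lem2.3}, this makes $\alpha_V^*\colon T_1(A)\to T_1(V)$ a genuine pullback of logarithmic $1$-forms. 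In any case the explicit computation already exhibits $\alpha_V^*$ as an isomorphism once the two dimensions are known to agree.
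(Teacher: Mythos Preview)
Your proof is correct and takes a genuinely different route from the paper's. The paper's argument is essentially a one-liner via mixed Hodge theory: Lemma~\ref{p-lem3.11} shows that $(\alpha_V)_*\colon H_1(V,\mathbb Q)\to H_1(\widetilde{\mathcal A}_V,\mathbb Q)$ is an isomorphism, hence so is the dual $(\alpha_V)^*$ on $H^1(-,\mathbb Q)$; since this is a morphism of mixed Hodge structures by Deligne, it restricts to an isomorphism on the piece $T_1(-)$. Your approach instead computes both sides by hand: you exhibit the invariant forms $dz_1,\dots,dz_{\overline q}$ as a basis of $T_1(A)$ and then read off $\alpha_V^*(dz_i)$ directly from the integral definition of $\alpha_V$ in Lemma~\ref{p-lem3.9}. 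The upper bound $\dim T_1(A)\leq\overline q$ that you flag as the delicate step is exactly the computation the paper carries out later in Theorem~\ref{p-thm4.3}, where one sees $\Omega^1_{\overline G}(\log D)\simeq\mathcal O_{\overline G}^{\oplus n}$ for any quasi-abelian $G$. Your route is more elementary in that it avoids invoking strictness of morphisms of mixed Hodge structures, at the cost of a direct sheaf calculation; the paper's route is shorter but leans on \cite{deligne}. One small sharpening: your bound ``contribute at most $d$'' for $H^0(Z',\Omega^1_{Z'/\mathcal A_X}(\log\Delta))$ is in fact an equality, since the $\mathbb G_m^d$-transition functions of the bundle fix each $\frac{dw_j}{w_j}$, so the relative log cotangent sheaf is globally trivial of rank $d$.
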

\begin{proof} 
By Lemma \ref{p-lem3.11}, 
\begin{equation*} 
(\alpha_V)_*\colon 
H_1(V, \mathbb Q)\to H_1(\widetilde {\mathcal A}_V, \mathbb Q)
\end{equation*}  
is an isomorphism. 
Therefore, 
we obtain 
\begin{equation*} 
(\alpha_V)^*\colon 
H^1(\widetilde {\mathcal A}_V, \mathbb Q)\to H^1(V, \mathbb Q) 
\end{equation*}  
is also an isomorphism. 
Moreover, it is an isomorphism of mixed 
Hodge structures (see \cite{deligne}). 
Therefore, we have an isomorphism 
\begin{equation*} 
{\alpha}_V^*\colon  T_1(\widetilde {\mathcal A}_V)\to T_1(V) 
\end{equation*}  
by Deligne (see \cite{deligne}). 
\end{proof}

The following lemma is useful and important. 

\begin{lem}\label{p-lem3.13}
Let $W$ be a quasi-abelian variety. 
Then the quasi-Albanese map 
\begin{equation*} 
\alpha_W\colon  W\to \widetilde {\mathcal A}_W
\end{equation*}  
is an isomorphism. 
\end{lem}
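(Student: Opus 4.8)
The plan is to show that $\alpha_W$ is a homomorphism of complex Lie groups inducing an isomorphism on fundamental groups, and then to upgrade this to an isomorphism by a short dimension count. First I would record the structure of both sides. By Lemma \ref{p-lem2.16} we may write $W=\mathbb C^N/L_W$ with $N=\dim W$ and $L_W=\pi_1(W)=H_1(W,\mathbb Z)$; in particular $H_1(W,\mathbb Z)$ is torsion free. Hence Lemma \ref{p-lem3.11} (applied with $V=W$) shows that $(\alpha_W)_*\colon H_1(W,\mathbb Z)\to H_1(\widetilde{\mathcal A}_W,\mathbb Z)=L$ is an isomorphism of lattices, since its kernel, the torsion part, vanishes. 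Likewise $\widetilde{\mathcal A}_W=\mathbb C^{\overline q}/L$ with $\overline q=\overline q(W)$.

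The heart of the argument is to prove that $\alpha_W$ is a group homomorphism. Choosing the base point $0\in W$ to be the identity, we have $\alpha_W(0)=0$, and by the additivity of the integral $\alpha_W(p+p')=\alpha_W(p)+\alpha_W(p')$ holds for all $p,p'$ provided the forms $\omega_1,\dots,\omega_q,\varphi_1,\dots,\varphi_d$ defining $\alpha_W$ in Lemma \ref{p-lem3.9} are translation invariant on $W$. Thus it suffices to show that \emph{every} element of $T_1(W)$ is a translation-invariant $1$-form. I would prove this by pulling back such a form to the universal cover $\mathbb C^N$: it becomes a closed holomorphic $L_W$-invariant $1$-form $\sum_i f_i(z)\,dz_i$ with $f_i$ entire and $L_W$-periodic. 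Using the $\mathbb{G}_m^d$-equivariant compactification $Z$ of Lemma \ref{p-lem3.8}, in which the torus directions contribute the logarithmic forms of type $\frac{dw_b}{w_b}$, the condition of having at worst logarithmic poles along the boundary $\Delta$, together with the completeness of the abelian quotient $\mathcal A$, forces each coefficient $f_i$ to be constant. This local computation is where the real work lies and is the main obstacle; it is the precise analogue, for logarithmic forms on quasi-abelian varieties, of the fact that every holomorphic $1$-form on an abelian variety is invariant.

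Granting this, $\alpha_W$ lifts to a $\mathbb C$-linear map $\widetilde\alpha\colon \mathbb C^N\to\mathbb C^{\overline q}$ satisfying $\widetilde\alpha(L_W)=L$, with $\widetilde\alpha|_{L_W}$ an isomorphism onto $L$ by the first paragraph. Since $W$ and $\widetilde{\mathcal A}_W$ are quasi-abelian, neither contains $\mathbb G_a$ (Remark \ref{p-rem2.12}), so the lattices $L_W$ and $L$ span $\mathbb C^N$ and $\mathbb C^{\overline q}$ over $\mathbb C$ respectively. Consequently $\widetilde\alpha(\mathbb C^N)$ contains the $\mathbb C$-span of $L$, namely all of $\mathbb C^{\overline q}$, so $\widetilde\alpha$ is surjective and $N\geq\overline q$. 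On the other hand the invariant forms $dz_1,\dots,dz_N$, which extend to logarithmic forms on the compactification, give $N$ linearly independent elements of $T_1(W)$, whence $\overline q\geq N$. Therefore $N=\overline q$, and $\widetilde\alpha\colon \mathbb C^N\to\mathbb C^N$ is a linear isomorphism carrying $L_W$ bijectively onto $L$. It thus descends to a biholomorphism $\alpha_W\colon W\to\widetilde{\mathcal A}_W$, and since $\alpha_W$ is algebraic by Lemma \ref{p-lem3.10}, it is an isomorphism of algebraic varieties.
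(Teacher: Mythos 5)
Your proof is correct in outline, and at the decisive point it takes a genuinely different route from the paper. The paper's own proof combines Lemma \ref{p-lem3.11} with Lemma \ref{p-lem3.12} --- the statement, resting on Deligne's functoriality of mixed Hodge structures, that $\alpha_W^*\colon T_1(\widetilde{\mathcal A}_W)\to T_1(W)$ is an isomorphism --- and then invokes exponential maps to conclude that $\alpha_W$ is an isomorphism of complex Lie groups, hence of varieties since it is algebraic. You never use Lemma \ref{p-lem3.12}. Instead you substitute an explicit geometric fact: every element of $T_1(W)$ is a translation-invariant $1$-form. This gives simultaneously that the lift $\widetilde\alpha$ of $\alpha_W$ to universal covers is $\mathbb C$-linear and, together with the lattice isomorphism from Lemma \ref{p-lem3.11}, the equality $\dim W=\overline q(W)$. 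What this buys is a more elementary argument (no Hodge-theoretic input beyond what went into constructing $\alpha_W$) and an explicit justification of the homomorphism/linearity property that the paper's phrase ``by considering the exponential maps'' leaves implicit. What it costs is that you must actually prove the invariance statement, which you flag as ``the main obstacle'' and do not carry out. Note that this is precisely the computation the paper performs later in the proof of Theorem \ref{p-thm4.3}: on the compactification $\overline W$ constructed in Lemma \ref{p-lem3.8}, one checks in local trivializations that $dz_1,\dots,dz_q$ together with $\frac{dw_1}{w_1},\dots,\frac{dw_d}{w_d}$ form a frame, so that $\Omega^1_{\overline W}(\log D)\simeq \bigoplus_i \mathcal O_{\overline W}\,dz_i$; the coefficients of any logarithmic form in the invariant frame are then global holomorphic functions on the complete variety $\overline W$, hence constants. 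So your deferred step is true and routine, but it should be written out (or cited as Theorem \ref{p-thm4.3}, whose proof is independent of Lemma \ref{p-lem3.13}, so there is no circularity) rather than left as an acknowledged gap.

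Two smaller points. First, your appeal to Remark \ref{p-rem2.12} to obtain that the lattices span $\mathbb C^N$ and $\mathbb C^{\overline q}$ over $\mathbb C$ is not the right citation: ``contains no $\mathbb G_a$ as an algebraic subgroup'' does not directly control the analytic lattice (the passage between analytic and algebraic subgroups is delicate, cf.\ Example \ref{p-ex2.17}). Moreover, only the spanning of $L$, the lattice of $\widetilde{\mathcal A}_W$, is actually used in your surjectivity argument, and that follows from the explicit construction in Section \ref{p-sec3}: after normalization the vectors $B_j$ span the torus directions over $\mathbb C$, while the $A_i$ project onto a full lattice of the Albanese part (for $W$ itself one can use the description in \ref{p-say4.1}). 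Second, once invariance is established, $T_1(W)$ equals the span of $dz_1,\dots,dz_N$, so $\overline q(W)=N$ follows at once and part of your dimension bookkeeping can be compressed. Neither point affects the correctness of your plan.
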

\begin{proof} 
By translation, we may assume that 
$\alpha_W(0)=0$. 
By Lemma \ref{p-lem3.11}, 
\begin{equation}\label{p-eq3.3}
(\alpha_W)_*\colon 
H_1(W, \mathbb Z)\to H_1(\widetilde {\mathcal A}_W, \mathbb Z)
\end{equation}
is an isomorphism. 
By Lemma \ref{p-lem3.12}, 
\begin{equation}\label{p-eq3.4}
{\alpha}_W^*\colon  T_1(\widetilde {\mathcal A}_W)\to 
T_1(W)
\end{equation}
is an isomorphism. 
Then $\alpha_W$ induces an isomorphism 
of complex vector spaces 
\begin{equation*}
(\alpha_W)_*\colon  T_{W, 0}\to T_{\widetilde {\mathcal A}_W, 0}, 
\end{equation*} 
where $T_{W, 0}$ is the tangent space of $W$ at $0$ and 
$T_{\widetilde {\mathcal A}_W, 0}$ is the tangent space of $\widetilde 
{\mathcal A}_W$ at $0$. 
By considering the exponential maps, 
we can recover $\alpha_W$ by 
\eqref{p-eq3.3} and \eqref{p-eq3.4}. 
By the isomorphisms in \eqref{p-eq3.3} and \eqref{p-eq3.4}, 
$\alpha_W$ is an isomorphism of complex Lie groups. 
Note that $\alpha_W$ is algebraic. 
Therefore, $\alpha_W$ is an isomorphism 
between 
smooth algebraic varieties. 
\end{proof}

\begin{lem}\label{p-lem3.14}
Let $f\colon V\to T$ be a morphism to a quasi-abelian variety $T$. 
Then there exists a unique algebraic morphism 
$\widetilde f\colon  \widetilde {\mathcal A}_V\to T$ such that 
$f=\widetilde f\circ \alpha_V$ 
\begin{equation*} 
\xymatrix{
V\ar[r]^f\ar[d]_{\alpha_V}&T\\
\widetilde {\mathcal A}_V\ar[ur]_{\widetilde f}&
}
\end{equation*} 
where 
$\alpha_V\colon 
V\to \widetilde {\mathcal A}_V$ is a quasi-Albanese map of 
$V$. 
\end{lem}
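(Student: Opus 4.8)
The plan is to establish the universal property of the quasi-Albanese map $\alpha_V \colon V \to \widetilde{\mathcal A}_V$ when the target of the given morphism $f\colon V\to T$ is itself a quasi-abelian variety. The natural strategy is to pass through the quasi-Albanese variety $\widetilde{\mathcal A}_T$ of $T$, using the fact that $T$ is already its own quasi-Albanese variety by Lemma \ref{p-lem3.13}. First I would compose with the quasi-Albanese map of $T$: the map $f$ induces a pullback $f^*\colon T_1(T)\to T_1(V)$ on logarithmic one-forms, hence a pullback on first cohomology and a pushforward $f_*$ on $H_1(-,\mathbb Z)$. Because $\widetilde{\mathcal A}_V$ and $T$ are both built as $\mathbb C^{\overline q}/L$ from exactly these cohomological data, the functoriality of the whole construction should produce $\widetilde f$ essentially for free.

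\medskip

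\noindent\textbf{Key steps.} Concretely, I would proceed as follows. By Lemma \ref{p-lem3.13} the quasi-Albanese map $\alpha_T\colon T\to \widetilde{\mathcal A}_T$ is an isomorphism of algebraic groups, so it suffices to produce a homomorphism $\widetilde{\mathcal A}_V\to \widetilde{\mathcal A}_T$ compatible with the $\alpha$'s and then conjugate by $\alpha_T^{-1}$. The morphism $f$ gives $f_*\colon H_1(V,\mathbb Z)\to H_1(T,\mathbb Z)$, which by Lemma \ref{p-lem3.11} descends to a homomorphism of the lattices $L_V\to L_T$ (the torsion is killed on both sides), and dually $f^*\colon T_1(T)\to T_1(V)$ gives a linear map of the universal covers $\mathbb C^{\overline q(V)}\to \mathbb C^{\overline q(T)}$ carrying $L_V$ into $L_T$. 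These two pieces of data are exactly what defines a homomorphism of complex Lie groups $F\colon \widetilde{\mathcal A}_V\to \widetilde{\mathcal A}_T$, and the explicit integral description of $\alpha_V$ in Lemma \ref{p-lem3.9} shows $F\circ\alpha_V = \alpha_T\circ f$. I would then set $\widetilde f = \alpha_T^{-1}\circ F$, giving $f = \widetilde f\circ \alpha_V$.

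\medskip

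\noindent\textbf{Algebraicity and uniqueness.} Two points then remain. For algebraicity of $\widetilde f$, I would note that $F$ is a homomorphism of complex Lie groups between quasi-abelian varieties that is compatible with the Chevalley decompositions \eqref{p-eq3.1}; such a homomorphism is automatically algebraic by the same GAGA-style argument used in Lemma \ref{p-lem3.8} (extend across a $\mathbb G_m^d$-equivariant compactification and invoke Serre's GAGA, using that the indeterminacy locus has codimension at least two), and $\alpha_T^{-1}$ is algebraic by Lemma \ref{p-lem3.13}. For uniqueness, if $\widetilde f_1,\widetilde f_2$ both satisfy $f=\widetilde f_i\circ\alpha_V$, then they agree on the image $\alpha_V(V)$; since $\alpha_V(V)$ generates $\widetilde{\mathcal A}_V$ as a group (the image is not contained in any proper subtorus, because $(\alpha_V)_*$ is surjective on $H_1$ by Lemma \ref{p-lem3.11} and $\alpha_V^*$ is an isomorphism on $T_1$ by Lemma \ref{p-lem3.12}), the two homomorphisms coincide.

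\medskip

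\noindent\textbf{Main obstacle.} I expect the genuine work to lie in justifying that $\widetilde f$ is a \emph{morphism of varieties} rather than merely a holomorphic homomorphism, i.e.\ the algebraicity step. The construction of $F$ on universal covers and lattices is essentially formal linear algebra packaging the functoriality of Deligne's mixed Hodge structure, but translating the resulting analytic homomorphism into an algebraic one requires care; the cleanest route is to reduce to Lemma \ref{p-lem3.8}'s compactification technique. A secondary subtlety is checking that $f^*$ really does carry $L_V$ into $L_T$ compatibly with the lattice map coming from $f_*$, which amounts to the naturality of the period pairing $H_1\times T_1\to\mathbb C$ under $f$; this should follow directly from $\int_\gamma f^*\omega = \int_{f_*\gamma}\omega$.
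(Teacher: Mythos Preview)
Your construction of the holomorphic map $\widetilde f$ via the dual of $f^*\colon T_1(T)\to T_1(V)$ and the identification $T\simeq\widetilde{\mathcal A}_T$ from Lemma~\ref{p-lem3.13} is exactly what the paper does; the uniqueness argument is likewise essentially the same in spirit.

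The real divergence---and the gap in your proposal---is the algebraicity step. You assert that a holomorphic homomorphism between quasi-abelian varieties ``compatible with the Chevalley decompositions'' is automatically algebraic by the technique of Lemma~\ref{p-lem3.8}. Two things are missing. First, you do not justify why your $F$ respects the Chevalley decompositions; this is true, but it requires the fact that $f^*$ on $H^1$ is a morphism of mixed Hodge structures and hence preserves the weight filtration, so that $H^0(\overline T,\Omega^1_{\overline T})$ lands in $H^0(\overline X,\Omega^1_{\overline X})$. Second, and more seriously, the claimed GAGA argument is not automatic: a holomorphic homomorphism of quasi-abelian varieties need not be algebraic in general. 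For instance the natural surjection $\mathbb G_m=\mathbb C/\mathbb Z\to\mathbb C/(\mathbb Z+\mathbb Z\tau)=E$ is a holomorphic group homomorphism onto an elliptic curve which cannot be algebraic, since any morphism from an affine variety to an abelian variety is constant (compare also Example~\ref{p-ex2.17}). One can rescue your approach by using Chevalley compatibility to reduce to a map of $\mathbb G_m^{d_T}$-torsors over $\mathcal A_X$ and then invoking GAGA for $\mathrm{Pic}(\mathcal A_X)$, but this is substantially more work than ``the same argument as Lemma~\ref{p-lem3.8}''.

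The paper sidesteps all of this with a direct graph argument: one shows that the Zariski closures $F_n=\overline{\mathrm{Im}\,\alpha_n}$ of the sum--difference maps $\alpha_n\colon V^{2n}\to\widetilde{\mathcal A}_V$ stabilise to a quasi-abelian subvariety which, by the universal property already established, must be all of $\widetilde{\mathcal A}_V$. Then the Zariski closure of $\{(\alpha_{n_0}(x),f_{n_0}(x))\mid x\in V^{2n_0}\}$ inside $\widetilde{\mathcal A}_V\times T$ is an algebraic subvariety equal to the graph of $\widetilde f$, which forces $\widetilde f$ to be algebraic. This uses only the algebraicity of $\alpha_V$ and $f$ and avoids any analysis of the analytic structure of $\widetilde f$.
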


\begin{proof} 
We take a point $0\in V$. 
By translations, we may assume that 
$\alpha_V(0)=0$ and $f(0)=0$. 
Let $\{u_1, \cdots, u_k\}$ be a basis of $T_1(T)$. 
We may assume that 
\begin{equation*}
f^*u_1, \cdots, f^*u_l
\end{equation*}  
are linearly independent, where 
\begin{equation*} 
l=\dim _\mathbb C\langle f^*u_1, \cdots, f^*u_k\rangle. 
\end{equation*}  
We take $v_1, \cdots, v_m\in T_1(V)$ such that 
\begin{equation*} 
\{v_1, \cdots, v_m, f^*u_1, \cdots, f^*u_l\}
\end{equation*}  
is a basis of $T_1(V)$. 
Since 
$f_*\colon 
H_1(V, \mathbb Z)\to H_1(T, \mathbb Z)$, 
by using the basis 
$\{v_1, \cdots, v_m, f^*u_1, \cdots, f^*u_l\}$ of $T_1(V)$, 
we can easily construct a holomorphic map 
\begin{equation*} 
\xymatrix{
\widetilde f\colon  \widetilde {\mathcal A}_V\ar[r]& 
\widetilde {\mathcal A}_T
\ar[r]^{\sim}_{\alpha^{-1}_T}&
T
}
\end{equation*}  
(see Lemma \ref{p-lem3.13}) satisfying 
$f=\widetilde f\circ \alpha_V$. 
Therefore, there is a commutative diagram: 
\begin{equation*} 
\xymatrix{
T_1(V)&T_1(T)\ar[l]_{f^*}\ar[ld]^{\widetilde f^*}\\
T_1(\widetilde {\mathcal A}_V)\ar[u]^{\alpha^*_V}&
}
\end{equation*} 
which 
determines $\widetilde f^*$ uniquely. 
This is because 
$\alpha^*_V$ is an isomorphism (see Lemma \ref{p-lem3.12}). 
As in the proof of Lemma \ref{p-lem3.13}, 
by considering the exponential maps, 
we see that $\widetilde f$ can be uniquely 
recovered by $\widetilde f^*$. 
Thus, $\widetilde f$ is unique. 
Therefore, all we have to do is to prove that $\widetilde f$ is algebraic. 
It is sufficient to prove that the graph 
\begin{equation*}
\Gamma =\{(x, {\widetilde f}(x))\, |\, x\in \widetilde {\mathcal A}_V\}
\subset \widetilde {\mathcal A}_V\times T
\end{equation*}  
is an algebraic variety. 
We consider the map 
\begin{equation*} 
\alpha_n\colon  V^{2n}=V\times \cdots \times V\to \widetilde {\mathcal A}_V
\end{equation*}  
given by 
\begin{equation*} 
\alpha_n(z_1, \cdots, z_{2n})=\alpha_V(z_1)+\cdots 
+\alpha_V(z_n)-\alpha_V(z_{n+1})-\cdots -\alpha_V(z_{2n}). 
\end{equation*}  
We put 
\begin{equation*} 
F_n =\overline {\mathrm{Im}\alpha_n}, 
\end{equation*}  
that is, the Zariski closure of $\mathrm{Im}\alpha_n$. 
Then $F_n$ is an irreducible algebraic subvariety of $\widetilde 
{\mathcal A}_V$ for every $n$ such that 
\begin{equation*} 
F_1\subset F_2\subset \cdots 
\subset F_k \subset \cdots. 
\end{equation*}  
Therefore, there is a positive integer $n_0$ such that 
\begin{equation*} 
F_{n_0}=F_{n_0+1}=\cdots . 
\end{equation*}  
Note that $F_{n_0}$ is a quasi-abelian 
subvariety of $\widetilde {\mathcal A}_V$ because it is closed under the 
group law of $\widetilde {\mathcal A}_V$. 
Moreover, by the universality of $\widetilde {\mathcal A}_V$ 
proved above, $F_{n_0}$ is not contained 
in a quasi-abelian proper subvariety of $\widetilde {\mathcal A}_V$. 
This implies that $F_{n_0}=\widetilde {\mathcal A}_V$. 
Note that 
$\widetilde f$ is a homomorphism 
of complex Lie groups. 
We consider the following commutative diagram: 
\begin{equation*} 
\xymatrix{
V^{2n_0}\ar[d]_{\alpha_{n_0}}\ar[r]^{f_{n_0}}&T\\
\widetilde {\mathcal A}_V\ar[ur]_{\widetilde f}&
}
\end{equation*} 
where 
\begin{equation*} 
f_{n_0}(z_1, \cdots, z_{2n_0})=f(z_1)+\cdots 
+f(z_{n_0})-f(z_{n_0+1})-\cdots -f(z_{2n_0}). 
\end{equation*}  
We consider the Zariski closure of 
\begin{equation*} 
\{(\alpha_{n_0}(x), f_{n_0}(x))\, |\, x\in V^{2n_0}\}\subset 
\Gamma \subset \widetilde {\mathcal A}_V
\times T. 
\end{equation*} 
Then it is an algebraic subvariety 
of $\widetilde {\mathcal A}_V\times T$ and 
coincides with the graph $\Gamma$. 
This implies that $\widetilde f$ 
is algebraic. 
\end{proof}

\begin{lem}\label{p-lem3.15}
Let $f\colon V_1\to V_2$ be a morphism between smooth algebraic varieties. 
Then $f$ induces an algebraic 
morphism $f_*\colon  \widetilde {\mathcal A}_{V_1}\to 
\widetilde {\mathcal A}_{V_2}$ which satisfies the following commutative 
diagram. 
\begin{equation*} 
\xymatrix{
V_1\ar[r]^f\ar[d]_{\alpha_{V_1}}
&V_2\ar[d]^{\alpha_{V_2}}\\
\widetilde {\mathcal A}_{V_1}\ar[r]_{f_*}&\widetilde {\mathcal A}_{V_2}
}
\end{equation*} 
Moreover, $f_*$ is unique. 
\end{lem}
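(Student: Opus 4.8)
The plan is to derive this lemma as a purely formal consequence of the universal property already established in Lemma \ref{p-lem3.14}, applied to a suitable composite. The key preliminary observation is that the target quasi-Albanese variety $\widetilde{\mathcal A}_{V_2}$ is itself a quasi-abelian variety by Lemma \ref{p-lem3.8}. Hence the composite
\begin{equation*}
g:=\alpha_{V_2}\circ f\colon V_1\to \widetilde{\mathcal A}_{V_2}
\end{equation*}
is a morphism from the smooth variety $V_1$ to a quasi-abelian variety, which is exactly the situation covered by Lemma \ref{p-lem3.14}.

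First I would invoke Lemma \ref{p-lem3.14} with $V=V_1$ and $T=\widetilde{\mathcal A}_{V_2}$. This produces a \emph{unique} algebraic morphism $\widetilde g\colon \widetilde{\mathcal A}_{V_1}\to \widetilde{\mathcal A}_{V_2}$ satisfying $g=\widetilde g\circ \alpha_{V_1}$. I would then simply set $f_*:=\widetilde g$. Unwinding the definition of $g$, the relation $g=\widetilde g\circ \alpha_{V_1}$ reads $\alpha_{V_2}\circ f=f_*\circ \alpha_{V_1}$, which is precisely the asserted commutativity of the square; and $f_*$ is algebraic because Lemma \ref{p-lem3.14} already delivers $\widetilde g$ as an algebraic morphism.

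For uniqueness I would argue as follows: any morphism $h\colon \widetilde{\mathcal A}_{V_1}\to \widetilde{\mathcal A}_{V_2}$ fitting into the same commutative square satisfies $h\circ \alpha_{V_1}=\alpha_{V_2}\circ f=g$, so $h$ is a factorization of $g$ through $\alpha_{V_1}$. The uniqueness clause of Lemma \ref{p-lem3.14} then forces $h=\widetilde g=f_*$, as desired.

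In truth there is no genuine obstacle here: the entire substance of the statement has already been discharged in Lemmas \ref{p-lem3.8} and \ref{p-lem3.14}, and the present lemma merely records the resulting functoriality of the assignment $V\mapsto \widetilde{\mathcal A}_V$ on morphisms. The only point deserving a moment's attention is checking that the codomain of the composite is again quasi-abelian, which is exactly the content of Lemma \ref{p-lem3.8}; with that in hand, the construction of $f_*$ and its uniqueness follow immediately from the universal property of the quasi-Albanese variety.
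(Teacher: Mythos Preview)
Your proposal is correct and follows essentially the same approach as the paper: apply Lemma \ref{p-lem3.14} to the composite $\alpha_{V_2}\circ f\colon V_1\to \widetilde{\mathcal A}_{V_2}$ to obtain the desired $f_*$ together with its uniqueness. The paper's proof is more terse but identical in substance; your explicit mention of Lemma \ref{p-lem3.8} to justify that the target is quasi-abelian is a nice clarification of what the paper leaves implicit.
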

\begin{proof} 
It is almost obvious by Lemma \ref{p-lem3.14}. 
We apply Lemma \ref{p-lem3.14} to the map 
$\alpha_{V_2}\circ f\colon 
V_1\to \widetilde {\mathcal A}_{V_2}$. 
Then we obtain the desired map $f_*\colon  \widetilde {\mathcal A}_{V_1}\to 
\widetilde {\mathcal A}_{V_2}$ uniquely. 
\end{proof}
\end{say}

We summarize: 

\begin{thm}[Iitaka's quasi-Albanese varieties and maps]\label{p-thm3.16}
Let $V$ be a smooth algebraic variety. 
Then there exists a 
quasi-abelian variety $\widetilde {\mathcal A}_V$ and a morphism 
$\alpha_V\colon  V\to \widetilde {\mathcal A}_V$ with the following property: 
\begin{quote}
for any quasi-abelian variety $T$ and any morphism $f\colon V\to T$, 
there exists a unique morphism 
$\widetilde f\colon  \widetilde{\mathcal A}_V\to T$ such 
that $\widetilde f\circ \alpha_V=f$. 
\begin{equation*} 
\xymatrix{V\ar[r]^f\ar[d]_{\alpha_V}& T\\
\widetilde {\mathcal A}_V\ar[ru]_{\widetilde f}&
}
\end{equation*} 
\end{quote}
The quasi-abelian variety $\widetilde {\mathcal A}_V$, 
determined up to isomorphism by this 
condition, is called the {\em{quasi-Albanese variety}} of $V$. 
The map $\alpha_V\colon  V\to \widetilde {\mathcal A}_V$ 
is called the {\em{quasi-Albanese map}} of $V$. 
By the construction of $\widetilde {\mathcal A}_V$, 
$\widetilde {\mathcal A}_V$ is nothing but 
the Albanese variety of $V$ when $V$ is complete. 
\end{thm}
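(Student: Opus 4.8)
The plan is to observe that Theorem \ref{p-thm3.16} is the formal summary of the explicit construction in \ref{p-say3.3} together with the lemmas established there, so the proof is essentially an assembly. Concretely, I would take $\widetilde{\mathcal A}_V=\mathbb C^{\overline q}/L$ as defined in \ref{p-say3.3}, with $\overline q=\overline q(V)$ and $L=\sum_i\mathbb Z A_i+\sum_j\mathbb Z B_j$, and I would take $\alpha_V\colon V\to\widetilde{\mathcal A}_V$ to be the map constructed in Lemma \ref{p-lem3.9}.

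First I would check that $\widetilde{\mathcal A}_V$ is a well-defined quasi-abelian variety. Lemma \ref{p-lem3.7} guarantees that the period vectors $A_1,\dots,A_{2q},B_1,\dots,B_d$ are $\mathbb R$-$\mathbb C$ linearly independent, which is exactly what is needed for $L$ to be a discrete subgroup of $\mathbb C^{\overline q}$ of the expected rank, so that $\widetilde{\mathcal A}_V$ is a commutative complex Lie group fitting into the short exact sequence \eqref{p-eq3.1} with torus part $\mathbb C^d/L_0\cong\mathbb G^d_m$ and abelian part $\mathbb C^q/L_1=\mathcal A_X$. Lemma \ref{p-lem3.8} then upgrades this to the assertion that $\widetilde{\mathcal A}_V$ is a quasi-abelian variety, with \eqref{p-eq3.1} as its Chevalley decomposition. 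Next, Lemmas \ref{p-lem3.9} and \ref{p-lem3.10} produce $\alpha_V$ as a well-defined holomorphic map that is moreover algebraic.

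With $\widetilde{\mathcal A}_V$ and $\alpha_V$ in hand, the universal property is precisely the content of Lemma \ref{p-lem3.14}: for any quasi-abelian variety $T$ and any morphism $f\colon V\to T$ there is a unique algebraic morphism $\widetilde f\colon\widetilde{\mathcal A}_V\to T$ with $f=\widetilde f\circ\alpha_V$. Uniqueness of $\widetilde{\mathcal A}_V$ up to isomorphism then follows by the standard initial-object argument: given another pair $(\mathcal B,\beta)$ with the same universal property, applying the property of each to the distinguished map of the other yields morphisms $u\colon\widetilde{\mathcal A}_V\to\mathcal B$ and $w\colon\mathcal B\to\widetilde{\mathcal A}_V$ with $\beta=u\circ\alpha_V$ and $\alpha_V=w\circ\beta$; then $w\circ u\circ\alpha_V=\alpha_V$, so by the uniqueness clause applied to $\alpha_V$ itself we get $w\circ u=\mathrm{id}$, and symmetrically $u\circ w=\mathrm{id}$. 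Finally, when $V$ is complete we may take $D=\emptyset$, so $d=\overline q(V)-q(X)=0$ and $\widetilde{\mathcal A}_V=\mathbb C^q/L_1=\mathcal A_X$ is the Albanese variety of $V$, which gives the last assertion.

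Since the three algebraicity statements (Lemmas \ref{p-lem3.8}, \ref{p-lem3.10}, and \ref{p-lem3.14}) are already available, the assembly above is essentially bookkeeping, and the only content beyond citing lemmas is the formal uniqueness argument and the completeness reduction. If one instead had to argue from scratch, I expect the main obstacle to be the algebraicity of $\widetilde f$ inside Lemma \ref{p-lem3.14}: one must show its graph is Zariski closed, which rests on the ascending chain $F_1\subset F_2\subset\cdots$ of images $F_n=\overline{\mathrm{Im}\alpha_n}$ stabilizing to a quasi-abelian subvariety that, by the universality of $\widetilde{\mathcal A}_V$, must be all of $\widetilde{\mathcal A}_V$. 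The delicate point is this self-referential interplay, where universality is invoked to prove the very algebraicity on which the construction depends, while the remaining input is the Hodge-theoretic identification already packaged in Lemmas \ref{p-lem3.2}, \ref{p-lem3.11}, and \ref{p-lem3.12}.
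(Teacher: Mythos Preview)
Your proposal is correct and matches the paper's approach exactly: the paper presents Theorem \ref{p-thm3.16} with the heading ``We summarize:'' and gives no separate proof, precisely because it is the assembly of Lemmas \ref{p-lem3.7}--\ref{p-lem3.14} you cite. Your added remarks on the formal uniqueness argument and the completeness reduction ($D=\emptyset$, $d=0$) make explicit what the paper leaves implicit, but the substance is identical.
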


Anyway, Theorem \ref{p-thm3.16} is a generalization of 
the theory of Albanese maps and varieties 
for non-compact smooth 
complex algebraic varieties. 
We close this section with an easy corollary of Theorem \ref{p-thm3.16}. 

\begin{cor}[{cf.~Remark \ref{p-rem2.12}}]\label{p-cor3.17}
Let $f\colon \mathbb A^1\to G$ be an algebraic morphism from $\mathbb A^1$ to 
a quasi-abelian variety $G$. 
Then $f(\mathbb A^1)$ is a point. 
\end{cor}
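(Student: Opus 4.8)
The plan is to reduce the statement to the triviality of the quasi-Albanese variety of $\mathbb A^1$ via the universal property established above. First I would apply Theorem \ref{p-thm3.16} (equivalently Lemma \ref{p-lem3.14}) to the given morphism $f\colon \mathbb A^1\to G$: since $G$ is a quasi-abelian variety, $f$ factors uniquely as $f=\widetilde f\circ \alpha_{\mathbb A^1}$, where $\alpha_{\mathbb A^1}\colon \mathbb A^1\to \widetilde{\mathcal A}_{\mathbb A^1}$ is the quasi-Albanese map of $\mathbb A^1$ and $\widetilde f\colon \widetilde{\mathcal A}_{\mathbb A^1}\to G$ is a morphism. Hence it suffices to show that $\widetilde{\mathcal A}_{\mathbb A^1}$ is a single point, for then $\alpha_{\mathbb A^1}$ is constant and so is $f$.

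The key step is therefore the computation $\dim \widetilde{\mathcal A}_{\mathbb A^1}=0$. By the construction in \ref{p-say3.3} we have $\dim_{\mathbb C}\widetilde{\mathcal A}_{\mathbb A^1}=\overline q(\mathbb A^1)$, so I would compute the logarithmic irregularity of $\mathbb A^1$ using the compactification $\mathbb A^1\subset \mathbb P^1$ whose boundary $\overline D=\{\infty\}$ is a single (reduced, simple normal crossing) point. By Definition \ref{p-def2.2},
\[
\overline q(\mathbb A^1)=\dim_{\mathbb C}H^0\left(\mathbb P^1,\Omega^1_{\mathbb P^1}(\log \infty)\right).
\]
Since $\Omega^1_{\mathbb P^1}\cong \mathcal O_{\mathbb P^1}(-2)$, twisting by the single boundary point gives $\Omega^1_{\mathbb P^1}(\log \infty)\cong \mathcal O_{\mathbb P^1}(-1)$, a line bundle of negative degree, which has no nonzero global sections. (Alternatively, a logarithmic $1$-form $g(z)\,dz$ with $g$ a polynomial on $\mathbb A^1$ acquires at worst a logarithmic pole at $\infty$ only when $g\equiv 0$, as one checks by passing to the coordinate $w=1/z$.) Thus $\overline q(\mathbb A^1)=0$, and $\widetilde{\mathcal A}_{\mathbb A^1}=\mathbb C^{0}/L$ is a point.

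Combining the two steps, $\alpha_{\mathbb A^1}$ maps $\mathbb A^1$ to a point, whence $f=\widetilde f\circ \alpha_{\mathbb A^1}$ is constant and $f(\mathbb A^1)$ is a single point, as desired. I do not anticipate a real obstacle: the only substantive ingredient is the vanishing $H^0(\mathbb P^1,\Omega^1_{\mathbb P^1}(\log\infty))=0$, and everything else is a direct application of the universal property of the quasi-Albanese map. The hypothesis that the source is $\mathbb A^1$ rather than $\mathbb G_m$ is exactly what makes the boundary a single point and forces the logarithmic irregularity to vanish; this is the point where the argument would break down for $\mathbb G_m$, whose quasi-Albanese variety is $\mathbb G_m$ itself.
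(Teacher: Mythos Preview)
Your proof is correct and follows exactly the paper's approach: the paper simply notes that $T_1(\mathbb A^1)=0$ (equivalently $\overline q(\mathbb A^1)=0$), so the quasi-Albanese variety of $\mathbb A^1$ is a point, and then invokes Theorem \ref{p-thm3.16} to conclude. Your version just spells out the computation $H^0(\mathbb P^1,\Omega^1_{\mathbb P^1}(\log\infty))=0$ in more detail than the paper does.
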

\begin{proof}
Note that $T_1(\mathbb A^1)=0$. 
Thus the quasi-Albanese 
variety $\widetilde {\mathcal A}_{\mathbb A^1}$ is a point. 
Since $f$ factors through 
$\widetilde {\mathcal A}_{\mathbb A^1}$ by Theorem \ref{p-thm3.16}, 
$f(\mathbb A^1)$ is a point. 
\end{proof}

\section{Basic properties of quasi-abelian varieties}\label{p-sec4} 

In this section, we collect some basic properties of 
quasi-abelian varieties for the reader's convenience. 
We will use them in the proof of Theorem \ref{p-thm1.2}. 

\begin{say}\label{p-say4.1} 
Let $G$ be a quasi-abelian variety and let 
\begin{align}\label{p-eq4.1}
0\longrightarrow \mathcal G\longrightarrow G\longrightarrow \mathcal A
\longrightarrow 0
\end{align}
be the Chevalley decomposition such that 
$\mathcal G=\mathbb G^d_m$. 
We put $\dim \mathcal A=q$ and $n=\dim G=q+d$. 
Then there exists a $(2q+d)\times n$ matrix $M$ 
with 
\begin{equation*} 
M=\begin{pmatrix}
P &Q \\
0 &I_d
\end{pmatrix}
\end{equation*}  
where $P$ is a 
$2q\times q$ matrix 
and $I_d$ is the $d\times d$ unit matrix. 
The lattice spanned by the row vectors of $M$ 
(resp.~$P$) is denoted by $L$ (resp.~$L_1$). 
Then we have the short exact sequence of 
complex Lie groups:  
\begin{align}\label{p-eq4.2}
0\longrightarrow \mathbb G^d_m 
\longrightarrow \mathbb C^n/L\longrightarrow 
\mathbb C^q/L_1\longrightarrow 0. 
\end{align}
Note that 
\begin{equation*} 
\mathbb C^d/\mathbb Z^d\simeq \mathbb G^d_m
\end{equation*}  
by 
\begin{equation*}
(z_{q+1}, \cdots, z_n)\mapsto (\exp 2\pi\sqrt{-1} z_{q+1}, \cdots, \exp 
2\pi\sqrt{-1}z_n), 
\end{equation*}  
where $(z_1, \cdots, z_n)$ is the standard 
coordinate system of $\mathbb C^n$. 
By the descriptions in Section \ref{p-sec3}, 
the short exact sequence of complex Lie groups \eqref{p-eq4.2} 
is isomorphic to the short exact sequence \eqref{p-eq4.1}. 
The description $\mathbb C^n/L$ for $G$ is useful for 
various computations in the following theorems.  
\end{say}

We will repeatedly use the following theorem implicitly. 

\begin{thm}\label{p-thm4.2}
Let $G$ be a 
quasi-abelian variety. 
Assume that $\pi\colon G'\to G$ is a finite \'etale morphism 
from a variety $G'$. 
Then $G'$ is a quasi-abelian variety. 
\end{thm}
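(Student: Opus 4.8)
The plan is to realize $G'$ as an explicit quotient of $G$ by a finite subgroup and then invoke Lemma \ref{p-lem2.15}. First I would record the analytic picture. Since $G$ is smooth and $\pi$ is \'etale, $G'$ is a smooth variety, and being a variety it is irreducible, hence connected; moreover $\pi$ is surjective because its image is open (\'etaleness) and closed (finiteness) in the connected $G$. By Lemma \ref{p-lem2.16} write $G=\mathbb{C}^n/L$ with $n=\dim G$, universal cover $\mathbb{C}^n$ and $\pi_1(G)=L$. In the analytic topology $\pi$ is a connected finite covering, so after choosing $0'\in\pi^{-1}(0)$ the map on fundamental groups identifies $\pi_1(G',0')$ with a subgroup $L'\subseteq L$ of index $m=\deg\pi$; the universal cover of $G'$ is again $\mathbb{C}^n$ and $G'\cong\mathbb{C}^n/L'$ as complex manifolds. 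I then equip $G'$ with the structure of a connected commutative complex Lie group via $G'=\mathbb{C}^n/L'$ with origin $0'$, so that $\pi$ becomes the quotient homomorphism $\mathbb{C}^n/L'\to\mathbb{C}^n/L$.

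Next I would produce a finite algebraic subgroup of $G$ with quotient $G'$. Since $[L:L']=m$, the finite group $L/L'$ is killed by $m$, so $mL\subseteq L'\subseteq L$. Set
\[
K':=\tfrac1m L'/L\subseteq \tfrac1m L/L=:K\subseteq G .
\]
This is well defined since $mL\subseteq L'$ forces $L\subseteq\frac1m L'$, and $K'$ is a finite subgroup of $G$ of order $[L':mL]$; being finite it is Zariski closed, hence an algebraic subgroup of $G$. The linear map $z\mapsto mz$ on $\mathbb{C}^n$ then induces a surjective homomorphism $\psi\colon G=\mathbb{C}^n/L\to\mathbb{C}^n/L'=G'$ of complex Lie groups (well defined because $mL\subseteq L'$) with kernel exactly $K'$, so $G'\cong G/K'$ as complex Lie groups. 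By Lemma \ref{p-lem2.15}, $G/K'$ is a quasi-abelian variety.

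Finally I would upgrade this analytic isomorphism to an algebraic one. Both $\pi\colon G'\to G$ and the algebraic quotient $\rho\colon G/K'\to G$, obtained as $G/K'\to G/K$ followed by the canonical isomorphism $G/K=G/\ker[m]\xrightarrow{\sim}G$, are connected finite \'etale covers of $G$, and a direct computation on $\pi_1$ shows that both correspond to the same finite-index subgroup $L'\subseteq\pi_1(G)=L$ (for $\pi$ this is the inclusion $L'\hookrightarrow L$, while $\rho_*$ is $w\mapsto mw$ on $\frac1m L'$, again with image $L'$). By the equivalence between the analytic and algebraic categories of finite \'etale covers (Riemann's existence theorem; see \cite[Expos\'e XII]{sga1}) there is therefore an isomorphism of algebraic varieties $G'\xrightarrow{\sim}G/K'$ over $G$, compatible with the analytic identification above. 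Transporting the group structure of the quasi-abelian variety $G/K'$ through this isomorphism, $G'$ becomes a quasi-abelian variety.

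The main obstacle is precisely this last step: passing from the analytic isomorphism $G'\cong\mathbb{C}^n/L'\cong G/K'$ to an \emph{algebraic} one, i.e.\ knowing that the given variety $G'$ is algebraically, not merely analytically, the quotient $G/K'$. This is exactly the content of the equivalence of finite \'etale covers in the two topologies, and is the only point where one leaves the elementary Lie-theoretic bookkeeping; everything else reduces to identifying the lattices $mL\subseteq L'\subseteq L$ and applying Lemma \ref{p-lem2.15}.
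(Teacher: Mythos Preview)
Your argument is correct and takes a genuinely different route from the paper. The paper works directly from the lattice description in \ref{p-say4.1}: writing $G'=\mathbb{C}^n/L'$, it exhibits $G'$ as a principal $\mathbb{G}_m^d$-bundle over the abelian variety $\mathbb{C}^q/L'_1$ and then invokes the proof of Lemma~\ref{p-lem3.8} to conclude that the complex Lie group law on $G'$ is already algebraic. You instead realize $G'$ as an isogeny quotient $G/K'$ via multiplication by $m=[L:L']$ and use Riemann existence to identify the two finite \'etale covers of $G$ algebraically. The paper's route stays entirely within the toolkit built in Sections~\ref{p-sec3}--\ref{p-sec4} and reuses the algebraicity argument of Lemma~\ref{p-lem3.8}; your route is conceptually clean and makes the isogeny relationship $G\to G'$ explicit, at the price of invoking the analytic--algebraic comparison for finite \'etale covers from \cite[Expos\'e XII]{sga1}.

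One small caveat: Lemma~\ref{p-lem2.15} as stated concludes that \emph{both} $B$ and $A/B$ are quasi-abelian, and its proof runs through Chevalley decompositions of all three groups. Since a quasi-abelian variety is connected by definition, the lemma is really about connected subgroups and does not literally cover your finite $K'$. The conclusion you need---that $G/K'$ is quasi-abelian---is still true and immediate: the image of $\mathbb{G}_m^d$ in $G/K'$ is a torus, the quotient by it is a finite quotient of $\mathcal{A}$ and hence an abelian variety, and any connected affine subgroup of $G/K'$ maps trivially to that abelian quotient, so lies in the torus. You should insert this one-line check in place of the citation of Lemma~\ref{p-lem2.15}.
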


\begin{proof}
We use the notation in \ref{p-say4.1}. 
By \ref{p-say4.1}, 
$G=\mathbb C^n/L$. 
Then we have a 
sublattice $L'$ of $L$ such that 
$[L: L']<\infty$ and that $G'=\mathbb C^n/L'$. 
By a translation of $G$, we may assume that 
$\pi(0)=0$. 
Then we can easily construct a commutative diagram of 
complex Lie groups: 
\begin{equation*} 
\xymatrix{
0\ar[r]&\mathbb G^d_m\ar[r]\ar[d]^{\pi_2}&\mathbb C^n/L'
\ar[r]\ar[d]^{\pi}&\mathbb C^q/ 
L'_1 \ar[r]\ar[d]^{\pi_1}& 0\\
0\ar[r]&\mathbb G^d_m\ar[r]&\mathbb C^n/L\ar[r]&\mathbb C^q/ L_1 \ar[r]& 0
}
\end{equation*}  
such that $\pi$, $\pi_1$, and $\pi_2$ are finite. 
Since $\pi_1$ is finite, $\mathbb C^q/L'_1$ is an abelian variety. 
Note that $G'=\mathbb C^n/L'$ is a principal $\mathbb G^d_m$-bundle 
over $\mathbb C^q/L'_1$ as a complex 
manifold. By the proof of Lemma \ref{p-lem3.8}, 
the group law of $G'=\mathbb C^n/L'$ as a complex Lie group 
is algebraic. This means that 
$G'$ is a quasi-abelian variety and that $\pi\colon G'\to G$ is a 
group homomorphism between quasi-abelian varieties. 
\end{proof}

\begin{thm}[{\cite[10.]{iitaka1}}]\label{p-thm4.3}
Let $G$ be a quasi-abelian variety. 
Then we have 
$\overline \kappa (G)=0$ and $\overline q(G)=\dim G$. 
\end{thm}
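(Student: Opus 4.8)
The plan is to treat the two equalities separately, deducing $\overline q(G)=\dim G$ from the universality already established in Section \ref{p-sec3} and $\overline\kappa(G)=0$ from the triviality of the logarithmic canonical bundle of a suitable compactification.

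First I would dispose of $\overline q(G)=\dim G$. By Lemma \ref{p-lem3.13} the quasi-Albanese map $\alpha_G\colon G\to \widetilde{\mathcal A}_G$ is an isomorphism, so in particular $\dim G=\dim \widetilde{\mathcal A}_G$. On the other hand, by the construction in Section \ref{p-sec3} one has $\widetilde{\mathcal A}_G=\mathbb C^{\overline q(G)}/L$, whence $\dim \widetilde{\mathcal A}_G=\overline q(G)$. Combining the two gives $\overline q(G)=\dim G$ at once, with no further computation.

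For $\overline\kappa(G)=0$ I would work with an explicit equivariant compactification. Using the notation of \ref{p-say4.1}, write $G=\mathbb C^n/L$ with Chevalley decomposition $0\to \mathbb G_m^d\to G\to \mathcal A\to 0$ and $n=q+d$, and take the compactification $\pi\colon Z\to \mathcal A$ of Lemma \ref{p-lem3.8}, so that $Z$ is a smooth projective variety, $\Delta=Z\setminus G$ is a simple normal crossing divisor, and the fibers are toric compactifications $\mathbb P^d\supset \mathbb G_m^d$. The heart of the argument is to show that the $n$ translation-invariant $1$-forms $dz_1,\dots,dz_n$ on $G$ coming from the universal cover $\mathbb C^n$ extend to global logarithmic forms and trivialize the logarithmic cotangent bundle, i.e.
\[
\Omega^1_Z(\log \Delta)\cong \mathcal O_Z^{\oplus n}.
\]
Here $dz_1,\dots,dz_q$ descend from $\mathcal A$ and are holomorphic on all of $Z$, while $dz_{q+1},\dots,dz_n$ are the invariant forms $\tfrac{1}{2\pi\sqrt{-1}}\,d\log w_i$ in the fiber torus coordinates $w_i$ and acquire at worst logarithmic poles along $\Delta$. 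Granting the displayed isomorphism, taking determinants yields $\mathcal O_Z(K_Z+\Delta)=\det \Omega^1_Z(\log \Delta)\cong \mathcal O_Z$, so $K_Z+\Delta\sim 0$, and therefore $\overline\kappa(G)=\kappa(Z,K_Z+\Delta)=\kappa(Z,\mathcal O_Z)=0$.

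The main obstacle is the local frame statement: one must check that $dz_1,\dots,dz_n$ form an $\mathcal O_Z$-basis of $\Omega^1_Z(\log \Delta)$ at every point, including along the boundary. Away from $\Delta$ this is clear, since $\Omega^1_G\cong \mathcal O_G^{\oplus n}$ is already trivialized by these invariant forms. Along $\Delta$ I would argue fiberwise: in local coordinates adapted to the simple normal crossing divisor $\Delta$, the toric structure of the fiber shows that the $d\log w_i$ reduce to the expected logarithmic generators $\tfrac{du_j}{u_j}$, which together with the pullbacks of the $q$ holomorphic forms from $\mathcal A$ span $\Omega^1_Z(\log \Delta)$ locally. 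Care is required because the transition functions of the bundle twist the fiber coordinates by base-dependent units $c_i$, so that $d\log w_i$ transforms by a correction term $d\log c_i$; the point is that such correction terms are holomorphic $1$-forms pulled back from $\mathcal A$, hence absorbed by the span of $dz_1,\dots,dz_q$, which is why the globally defined invariant forms $dz_1,\dots,dz_n$ survive as a frame. Once this computation is in place the conclusion above follows, and as a byproduct $H^0(Z,\Omega^1_Z(\log \Delta))$ is spanned by $dz_1,\dots,dz_n$ and so has dimension $n$, giving an independent confirmation of $\overline q(G)=\dim G$.
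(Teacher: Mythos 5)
Your proposal is correct, and its core coincides with the paper's own proof: the paper also takes the $\mathbb P^d$-bundle compactification $\overline G\to \mathcal A$ from Lemma \ref{p-lem3.8}, puts $D=\overline G-G$, checks that the invariant forms $dz_1,\cdots, dz_n$ trivialize $\Omega^1_{\overline G}(\log D)\simeq \mathcal O_{\overline G}^{\oplus n}$, and reads off both conclusions from that single isomorphism, namely $\overline q(G)=\dim G$ by taking global sections and $K_{\overline G}+D\sim 0$ (hence $\overline \kappa (G)=\kappa(\overline G, K_{\overline G}+D)=0$) by taking determinants. The only point where you diverge is the first equality: instead of deducing $\overline q(G)=\dim G$ from the trivialization (which you do record as a byproduct at the end, exactly the paper's route), you derive it from Lemma \ref{p-lem3.13} together with the fact that, by the construction in Section \ref{p-sec3}, $\widetilde{\mathcal A}_G=\mathbb C^{\overline q(G)}/L$ has dimension $\overline q(G)$. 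This shortcut is legitimate and non-circular, since Lemma \ref{p-lem3.13} rests only on Lemmas \ref{p-lem3.11} and \ref{p-lem3.12} and not on Theorem \ref{p-thm4.3}; but it buys little here, because the compactification computation is needed anyway for $\overline \kappa (G)=0$. Finally, where the paper simply writes that one ``can easily check'' the isomorphism $\Omega^1_{\overline G}(\log D)\simeq \mathcal O_{\overline G}^{\oplus n}$, your fiberwise verification---that in adapted coordinates the forms reduce to the toric generators $d\log w_i$ up to correction terms $d\log c_i$ coming from the transition functions, which are absorbed into the local $\mathcal O$-span of the base forms $dz_1,\cdots, dz_q$---is precisely the check the paper elides, so your write-up is if anything more complete on this point.
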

\begin{proof}
Note that $G$ is a principal $\mathbb G^d_m$-bundle 
over an abelian variety $\mathcal A$ as a complex manifold. 
As in the proof of Lemma \ref{p-lem3.8}, 
we have a $\mathbb P^d$-bundle $\overline G$ over $\mathcal A$ such that 
$\overline G$ is a $\mathbb G^d_m$-equivariant embedding 
of $G$ over $\mathcal A$. 
We put $D=\overline G-G$. 
Then $D$ is a simple normal crossing divisor on $\overline G$. 
We can easily check that 
\begin{equation*} 
\Omega^1_{\overline G}(\log D)\simeq \oplus \mathcal O_{\overline G}. 
\end{equation*}  
More explicitly, 
$\Omega^1_{\overline G}(\log D)$ is isomorphic to 
$\oplus _{i=1}^n \mathcal O_{\overline G} d z_i$ in the notation of 
\ref{p-say4.1}. 
Therefore, we obtain that $\overline q(G)=\dim G$ and 
$K_{\overline G}+D\sim 0$. 
In particular, we have $\overline \kappa (G)=\kappa (\overline G, 
K_{\overline G}+D)=0$. 
\end{proof}

\begin{thm}[{cf.~\cite[Theorem 4.1]{iitaka1}}]\label{p-thm4.4}
Let $G$ be a quasi-abelian variety. 
Let $W$ be a closed subvariety of $G$. 
Then $\overline \kappa (W)\geq 0$. 
Moreover, $\overline \kappa (W)=0$ if and 
only if 
$W$ is a translation of a 
quasi-abelian subvariety of $G$.   
\end{thm}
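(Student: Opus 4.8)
The plan is to base the whole argument on the global logarithmic $1$-forms $dz_1,\dots,dz_n$ furnished by the isomorphism $\Omega^1_{\overline G}(\log D)\simeq \bigoplus_{i=1}^n\mathcal O_{\overline G}\,dz_i$ from the proof of Theorem \ref{p-thm4.3}, where $\overline G$ is the $\mathbb P^d$-bundle compactification of $G$ over $\mathcal A$ and $D=\overline G-G$. To prove $\overline\kappa(W)\geq 0$, let $\overline W$ be the closure of $W$ in $\overline G$ and choose a resolution $\mu\colon \widetilde W\to \overline W\subset \overline G$ with $\widetilde W$ smooth projective, $\mu^{-1}(W)$ smooth, and $\widetilde D_W:=\mu^{-1}(\overline W\cap D)$ a simple normal crossing divisor, so that $\overline\kappa(W)=\kappa(\widetilde W,K_{\widetilde W}+\widetilde D_W)$ by definition. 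Since $\mu$ sends the boundary into $D$, functoriality of logarithmic pullback (as in \eqref{p-eq2.1}) gives $\mu^*dz_i\in H^0(\widetilde W,\Omega^1_{\widetilde W}(\log \widetilde D_W))$. At a general point $w\in W$, which is smooth and lies in $G$, the conormal sequence shows that the $dz_i|_W$ span the $m$-dimensional cotangent space $\Omega^1_W|_w$, $m=\dim W$; hence some wedge $\mu^*(dz_{i_1}\wedge\dots\wedge dz_{i_m})$ is a nonzero section of $\Omega^m_{\widetilde W}(\log \widetilde D_W)=\mathcal O(K_{\widetilde W}+\widetilde D_W)$, forcing $\overline\kappa(W)\geq 0$.

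The ``if'' direction is immediate: a translation of $G$ is an isomorphism of varieties, so if $W$ is a translate of a quasi-abelian subvariety $H$ then $W\simeq H$, and $H$ is itself quasi-abelian by Lemma \ref{p-lem2.15}; thus $\overline\kappa(W)=\overline\kappa(H)=0$ by Theorem \ref{p-thm4.3}. For the ``only if'' direction I would organize the forms above into the logarithmic Gauss map. The $\binom{n}{m}$ wedges $\mu^*(dz_{i_1}\wedge\dots\wedge dz_{i_m})$ span a linear subsystem $V\subset |K_{\widetilde W}+\widetilde D_W|$ and hence define a rational map $\gamma\colon \widetilde W\dashrightarrow \mathbb P^{\binom{n}{m}-1}$, which records the translated tangent $m$-plane of $W$. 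Because $V$ is a subsystem of the complete log canonical system, $\gamma$ factors through $\Phi_{|K_{\widetilde W}+\widetilde D_W|}$, giving $\dim\gamma(\widetilde W)\leq \kappa(\widetilde W,K_{\widetilde W}+\widetilde D_W)=\overline\kappa(W)$.

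The decisive geometric input is that the general fiber of $\gamma$ maps under $\mu$ to a translate of the identity component $H$ of the stabilizer $\{g\in G: g+W=W\}$, so that $\dim\gamma(\widetilde W)=\dim W-\dim H$. Combining this with the previous inequality yields $\overline\kappa(W)\geq \dim W-\dim H$. When $\overline\kappa(W)=0$ this forces $\dim W=\dim H$ (note $\dim H\leq \dim W$ since $w+H\subset W$ for $w\in W$), and since $W$ is irreducible and invariant under the connected group $H$ of equal dimension, $W=w+H$ is a single coset, i.e.\ a translate of the quasi-abelian subvariety $H$, as required.

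The hard part will be the identification of the generic fiber of the logarithmic Gauss map with a translate of the stabilizer. The easy inclusion, that the translated tangent $m$-plane is constant along each $H$-coset so that fibers contain cosets, is a direct computation with the invariant frame $dz_1,\dots,dz_n$. The converse, that fibers are no larger than cosets, equivalently that a positive-dimensional family of points of $W$ sharing the same translated tangent plane produces additional stabilizer, is the classical subtle direction; I would adapt the argument known for subvarieties of abelian varieties to the quasi-abelian setting, working on $\overline G$ and exploiting that translations act transitively while preserving both the frame $dz_1,\dots,dz_n$ and the boundary $D$.
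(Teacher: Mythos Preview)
Your argument for $\overline\kappa(W)\geq 0$ and the ``if'' direction coincides with the paper's. For the ``only if'' direction you and the paper both work with the linear system spanned by the wedges $\mu^*(dz_{i_1}\wedge\cdots\wedge dz_{i_m})$, but you route the argument through the stabilizer $H=\{g\in G:g+W=W\}^0$ and the general identification of Gauss fibres with $H$-cosets. That identification is the step you yourself flag as ``the classical subtle direction'' and leave only sketched, and it is not needed here.

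The paper's path is shorter and bypasses that step entirely. Your own inequality $\dim\gamma(\widetilde W)\leq\overline\kappa(W)$ already gives, when $\overline\kappa(W)=0$, that the Gauss map is constant; equivalently, $h^0(\widetilde W,K_{\widetilde W}+\widetilde D_W)=1$ forces every wedge section to be a scalar multiple of a fixed one, say $\mu^*(dz_1\wedge\cdots\wedge dz_m)$. After a linear change of the $z_i$ so that $T_pW$ corresponds to $\{z_{m+1}=\cdots=z_n=0\}$, write $dz_j|_W=\sum_{i=1}^m A_{ji}\,dz_i|_W$ near a general smooth point $p$ with $A_{ji}(p)=0$. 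Comparing $\mu^*(dz_1\wedge\cdots\wedge\widehat{dz_i}\wedge\cdots\wedge dz_m\wedge dz_j)$ with its constant multiple of $\mu^*(dz_1\wedge\cdots\wedge dz_m)$ forces each $A_{ji}$ to be constant, hence zero. Thus $dz_{m+1}|_W=\cdots=dz_n|_W=0$, so the component of $\pi^{-1}(W)$ in the universal cover $\mathbb C^n$ is the linear subspace $\{z_{m+1}=\cdots=z_n=0\}$, and $W$ is a translate of a quasi-abelian subvariety.

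Your more elaborate framework would in fact yield the stronger Ueno-type inequality $\overline\kappa(W)\geq\dim W-\dim H$, which is interesting but overkill for this theorem; its hardest ingredient (Gauss fibres equal $H$-cosets generically) remains only an appeal to the abelian case in your write-up, whereas the paper's direct computation is complete and elementary.
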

\begin{proof}
We take a general point $p\in W$, around which we take a system of 
local analytic coordinates 
$(\zeta_1, \cdots, \zeta_n)$ such that 
\begin{equation*} 
W=(\zeta_{r+1}=\cdots =\zeta_n=0). 
\end{equation*}  
Let $\pi\colon \mathbb C^n\to G$ be the universal cover. 
We take $q\in \pi^{-1}(p)$ and assume that 
$z_1(q)=\cdots =z_n(q)=0$, where 
$(z_1, \cdots, z_n)$ is a system of 
global coordinates of $\mathbb C^n$. 
Note that $(\zeta_1, \cdots, \zeta_n)$ can be regarded as a system of 
local analytic coordinates around $q$. 
By taking a suitable linear transformation of $\mathbb C^n$, 
we have 
\begin{equation*} 
\zeta_j=z_j-\varphi_j(z_1, \cdots, z_n), 
\end{equation*}  
where $\varphi_j(0)=0$ and 
\begin{equation*} 
\frac{\partial \varphi_j}{\partial z_k}(0)=0
\end{equation*}  
for every $j$ and $k$ around $q$. 
The $dz_j$ defines a logarithmic $1$-form on $G$, that is, 
$dz_j\in T_1(G)$ for every $j$ (see the proof of Theorem \ref{p-thm4.3}). 
Let $f\colon V\to W$ be a resolution and let $\overline V$ be a smooth 
projective variety such that $\Delta=\overline V-V$ is a simple normal crossing 
divisor on $\overline V$. Without loss of generality, we may assume that 
$f$ is an isomorphism over a neighborhood of $p$.  
Thus, we see that $f^*(dz_j|_W)$ is an element of $T_1(W)$. 
Since $d\zeta_1, \cdots, d\zeta_r$ are linearly independent 
holomorphic 
$1$-forms on $W$ around $p$, 
$f^*(dz_1|_W), \cdots, f^*(dz_r|_W)$ are also 
linearly independent. Thus we have 
\begin{equation*} 
0\ne f^*(dz_1\wedge \cdots \wedge dz_r)|_W\in H^0(\overline V, 
\mathcal O_{\overline V}(K_{\overline V}+\Delta)). 
\end{equation*}  
This means that $\overline \kappa (W)\geq 0$. 
For $r+1\leq j\leq n$, we have 
\begin{equation*} 
dz_j|_{\pi^{-1}(W)}-
\sum _{k=1}^n \left.\frac{\partial \varphi_j}{\partial z_k}\right|_
{\pi^{-1}(W)} \cdot 
dz_k |_{\pi^{-1}(W)}=0 
\end{equation*} 
around $q$. 
Therefore, we obtain 
\begin{equation*} 
\sum _{k=r+1}^n\left(\delta_{jk}-
\left.\frac{\partial 
\varphi_j}{\partial z_k}\right|_{\pi^{-1}(W)}\right)dz_k|_{\pi^{-1}(W)}
=\sum _{i=1}^r 
\left.\frac{\partial \varphi_j}{\partial z_i}\right|_{\pi^{-1}(W)}
\cdot dz_i |_{\pi^{-1}(W)} 
\end{equation*}  
in a neighborhood of $q$. 
Thus, for $r+1\leq j\leq n$, 
we have 
\begin{equation*} 
dz_j|_W=\sum _{i=1}^rA_{ji}(\zeta_1, \cdots, \zeta_r)\cdot 
dz_i|_W 
\end{equation*}  
around $p$, where $A_{ji}$ is a holomorphic 
function for every $i$ and $j$ such that 
$A_{ji}(0)=0$. 
Note that 
\begin{equation*} 
f^*(dz_1|_W), \cdots, f^*(dz_r|_W) \in T_1(W), 
\end{equation*}  
which are linearly independent. 
Assume that $\overline \kappa (W)=0$. 
Then we have $\kappa (\overline V, K_{\overline V}+\Delta)=0$. 
We note that $f^*(dz_1\wedge \cdots \wedge dz_r)|_W$ is a nonzero 
element of $H^0(\overline V, \mathcal O_{\overline V}(K_{\overline V}+\Delta))$. 
Therefore, $H^0(\overline V, \mathcal O_{\overline V}(K_{\overline V}+\Delta))=
\mathbb C$ is spanned by $f^*(dz_1\wedge \cdots \wedge dz_r)|_W$. 
Thus, we obtain 
\begin{equation*} 
f^*(dz_2\wedge \cdots \wedge dz_r\wedge dz_j)|_W=\alpha_{1j} f^*
(dz_1\wedge \cdots \wedge dz_r)|_W, 
\end{equation*}  
where $\alpha_{1j}\in 
\mathbb C$ for every $j$.
On the other hand, 
\begin{equation*} 
f^*(dz_2\wedge \cdots \wedge dz_r\wedge dz_j)|_W=
\pm f^*\left(A_{j1}(dz_1\wedge \cdots \wedge dz_r)|_W\right)
\end{equation*}  
over a neighborhood of $p$. 
Hence we obtain $\pm f^*A_{j1}=\alpha _{1j}$ for every $j$. Note 
that $f$ is an isomorphism 
over a neighborhood of $p$. 
From this, $A_{j1}=0$ because $A_{j1}(0)=0$ for every $j$. 
By the same arguments, 
we get 
$A_{ji}=0$ for $1\leq i\leq r$ and every $j$. 
Thus, we obtain that 
\begin{equation*} 
dz_{r+1}|_W=\cdots =dz_n|_W=0 
\end{equation*}  
around $p$. 
This means that 
\begin{equation*} 
\pi^{-1}(W)\subset \{z_{r+1}=\cdots =z_n=0\} 
\end{equation*} 
near $q$. 
Note that $\{z_{r+1}=\cdots=z_n=0\}$ 
is of dimension $r$ and is irreducible. 
Thus 
\begin{equation*} 
\pi^{-1}(W)= \{z_{r+1}=\cdots =z_n=0\}.  
\end{equation*}  
Therefore, $W$ is a quasi-abelian subvariety of $G$. 
On the other hand, if $W$ is a translation of 
a quasi-abelian subvariety of $G$, then 
$\overline \kappa (W)=0$ by Theorem \ref{p-thm4.3}. 
\end{proof}

The following theorem is almost obvious by the description in 
\ref{p-say4.1}. 

\begin{thm}\label{p-thm4.5}
Let $G$ be a quasi-abelian variety. 
Then there are at most countably many quasi-abelian subvarieties 
of $G$. 
\end{thm}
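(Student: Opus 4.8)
The plan is to set up an injection from the set of quasi-abelian subvarieties of $G$ into the set of subgroups of the lattice $L$, and then to observe that $L$, being free of finite rank, has only countably many subgroups. Throughout I use the description $G=\mathbb C^n/L$ from \ref{p-say4.1}, with $\pi\colon \mathbb C^n\to G$ the universal cover and $\mathbb C^n=T_{G,0}$ the tangent space at the origin.

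First I would attach to each quasi-abelian subvariety $H\subset G$ its Lie algebra $U=T_{H,0}$, a complex vector subspace of $\mathbb C^n$. Since $H$ is a connected algebraic subgroup through $0$, its preimage $\pi^{-1}(H)$ is a closed subgroup of $\mathbb C^n$ whose identity component is $U$, so $H=\pi(U)$ and $H\cong U/L'$ as a complex Lie group, where $L'=U\cap L$ is a discrete subgroup of $U$. The assignment $H\mapsto L'\subset L$ is the map I want to prove injective.

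The key step, and the reason the statement is essentially immediate from \ref{p-say4.1}, is that $U$ can be recovered from $L'$ alone. Applying the description of \ref{p-say4.1} to the quasi-abelian variety $H$ itself, its period lattice $L'$ is spanned by the rows of a matrix of the shape $\left(\begin{smallmatrix} P & Q\\ 0 & I\end{smallmatrix}\right)$: the bottom rows $(0,I)$ span the torus factor over $\mathbb C$, while the top rows project onto the rows of $P$, which $\mathbb C$-span the abelian factor because they form a full lattice there. Hence the rows of this matrix $\mathbb C$-span $U$, so $U=\mathbb C\cdot L'$ is determined by $L'$, and therefore so is $H=\pi(U)$. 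This shows $H\mapsto L'$ is injective. I expect this reconstruction to be the only real content of the proof; I should take care to justify that a quasi-abelian variety is the quotient of its universal cover by a period lattice that spans it over $\mathbb C$, which is exactly the normal form recorded in \ref{p-say4.1}.

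Finally, since $L\cong \mathbb Z^{2q+d}$ is a free abelian group of finite rank, every subgroup of $L$ is finitely generated and hence generated by some finite subset of the countable set $L$; as there are only countably many finite subsets, $L$ has at most countably many subgroups. Composing with the injection $H\mapsto L'$ yields that there are at most countably many quasi-abelian subvarieties of $G$, which is the desired conclusion.
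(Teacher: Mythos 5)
Your proof is correct and follows essentially the same route as the paper: both assign to each quasi-abelian subvariety $H$ the subgroup $L'=U\cap L=\mathrm{Im}\bigl(H_1(H,\mathbb Z)\to H_1(G,\mathbb Z)\bigr)$ of the lattice and conclude by the countability of subgroups of a finitely generated abelian group. In fact, your reconstruction $U=\mathbb C\cdot L'$ supplies the justification for the injectivity step that the paper only asserts (``$\iota$ is determined by $\mathrm{Im}\,\iota_*$''), so it is a welcome elaboration rather than a deviation.
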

\begin{proof}
Let $H$ be a quasi-abelian subvariety of $G$. 
Then we obtain 
\begin{equation*} 
\iota\colon  H=\mathbb C^{\dim H}/H_1(H, \mathbb Z)
\hookrightarrow 
\mathbb C^{\dim G}/H_1(G, \mathbb Z), 
\end{equation*}  
where $\iota$ is the natural inclusion. 
Anyway, $\iota$ is determined by the subgroup 
$\mathrm{Im}\iota_*$ of $H_1(G, \mathbb Z)$, 
where $\iota_*\colon  H_1(H, \mathbb Z)\to H_1(G, \mathbb Z)$. 
Therefore, there are at most countably many quasi-abelian subvarieties of $G$. 
\end{proof}

\section{Characterizations of abelian varieties 
and complex tori}\label{p-sec5}

In this section, we quickly recall an important property of 
the Albanese map of varieties of the Kodaira dimension zero for 
the reader's convenience. 
It is well known that Kawamata established the following theorem 
in \cite{kawamata-abelian}, which is his doctoral thesis. 

\begin{thm}[{see \cite[Theorem 1]
{kawamata-abelian}}]\label{p-thm5.1}
Let $X$ be a smooth projective variety with $\kappa (X)=0$. 
Then the Albanese map 
\begin{equation*} 
\alpha\colon X\to A
\end{equation*}  
is surjective and has connected fibers. 
\end{thm}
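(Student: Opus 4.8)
The plan is to study the image and the Stein factorization of the Albanese map separately, treating surjectivity first (which is formal) and then connectedness of the fibers (which is the hard part). Write $\alpha\colon X\to A$ for the Albanese map and set $W=\alpha(X)\subseteq A$, a closed subvariety; after a translation we may assume $0\in W$, and by the standard property of the Albanese map $W$ generates $A$ as an algebraic group.

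For surjectivity, I would argue as follows. Since $\alpha\colon X\to W$ is dominant, Lemma \ref{p-lem2.6} (in its projective incarnation, where $\overline\kappa=\kappa$) gives
\begin{equation*}
0=\kappa(X)\geq \kappa(W).
\end{equation*}
On the other hand, $W$ is a closed subvariety of the abelian variety $A$, so Theorem \ref{p-thm4.4} applied to the complete case yields $\kappa(W)\geq 0$, whence $\kappa(W)=0$. Invoking the equality case of Theorem \ref{p-thm4.4} once more, $W$ must be a translate of an abelian subvariety $B\subseteq A$. Since $W$ generates $A$ and $W-W\subseteq B$, we get $B=A$, and therefore $W=A$. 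This settles surjectivity.

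For the connectedness of the general fibers, I would take the Stein factorization $\alpha=h\circ g$ with $g\colon X\to V$ having connected fibers and $h\colon V\to A$ finite surjective, $V$ normal projective. The goal is to prove $\deg h=1$. The key intermediate claim is that $h$ is \emph{\'etale}: since $K_A\sim 0$, the ramification formula gives $K_V\sim R$ with $R$ the (effective) ramification divisor, and I would show $R=0$ by feeding the fibration $g$ into a subadditivity/semipositivity statement for the Kodaira dimension over an abelian base, using $\kappa(X)=0$ to force both $\kappa(V)=0$ and the vanishing of the ramification contribution. Once $h$ is \'etale, Theorem \ref{p-thm4.2} shows that $V$ is again an abelian variety and $h$ is an isogeny. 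Finally, by the universal property of the Albanese map, the morphism $g\colon X\to V$ to the abelian variety $V$ factors as $g=\psi\circ\alpha$ for a unique homomorphism $\psi\colon A\to V$; then $h\circ\psi\circ\alpha=h\circ g=\alpha$ forces $h\circ\psi=\mathrm{id}_A$ by uniqueness, so $h$ admits a section and the isogeny $h$ has degree $1$. Hence $h$ is an isomorphism and the fibers of $\alpha$ coincide with those of $g$, which are connected.

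The main obstacle is precisely the claim that $h$ is \'etale. Everything else is formal manipulation of the universal property together with Theorems \ref{p-thm4.2} and \ref{p-thm4.4}, but ruling out ramification of the finite part is exactly the point where genuine positivity input is required: one needs the weak positivity of the direct images $\alpha_*\omega_{X/A}^{\otimes m}$ (or, equivalently, the subadditivity $C_{n,\dim A}$ of the Kodaira dimension over an abelian variety), which is the deep Hodge-theoretic ingredient that Kawamata's original argument supplies through variations of Hodge structure and which the semipositivity and vanishing theorems developed later in this paper are designed to replace.
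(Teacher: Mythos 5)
A preliminary remark: the paper itself contains no proof of Theorem \ref{p-thm5.1}; Section \ref{p-sec5} recalls it without proof, referring to Kawamata's original Hodge-theoretic argument in \cite{kawamata-abelian} and to the generic vanishing proof of Ein--Lazarsfeld \cite{ein-lazarsfeld}. So your proposal must stand on its own, and it has two genuine gaps, only one of which you flag. The unflagged one sits in the step you call formal, namely surjectivity. You invoke Lemma \ref{p-lem2.6} for $\alpha\colon X\to W$, $W=\alpha(X)$, to conclude $\kappa(W)\leq \kappa(X)=0$. But $\alpha\colon X\to W$ is in general a fibration with positive-dimensional fibers, whereas the proof of Lemma \ref{p-lem2.6} only works for generically finite dominant morphisms: deducing the effectivity of $K_{\overline X}+\Delta_{\overline X}-\overline f^*(K_{\overline Y}+\Delta_{\overline Y})$ from the inclusion $\overline f^*\Omega^1_{\overline Y}(\log \Delta_{\overline Y})\subset \Omega^1_{\overline X}(\log \Delta_{\overline X})$ requires $\dim X=\dim Y$. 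Taken literally, the unrestricted statement fails: for the projection $\mathbb P^1\times C\to C$, with $C$ a curve of genus at least $2$, it would assert $-\infty\geq 1$; and indeed the paper applies Lemma \ref{p-lem2.6} only to finite or generically finite morphisms. The inequality you actually need, $\kappa(X)\geq \kappa(\alpha(X))$, is a true theorem but a deep one: combined with Theorem \ref{p-thm4.4} and the fact that the Albanese image generates $A$, it already gives surjectivity, and its known proofs require subadditivity of the Kodaira dimension over a base of general type (Kawamata's semipositivity machinery) or generic vanishing. The same unjustified inequality is also hidden in your second half: obtaining $\kappa(V)=0$ (meaning $\kappa$ of a resolution of $V$) from $\kappa(X)=0$ needs it for the Stein fibration $X\to V$, which again is not generically finite.

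The gap you do flag, \'etaleness of $h\colon V\to A$, is exactly the Kawamata--Viehweg theorem, i.e., Theorem \ref{p-thm9.1} of the paper. But you cannot close your argument by citing Theorem \ref{p-thm9.1}: the paper proves that theorem \emph{using} Theorem \ref{p-thm5.1}, and Remark \ref{p-rem9.2} is devoted precisely to this point (Kawamata's original proof of Theorem \ref{p-thm5.1} used \cite{kawamata-viehweg}, and the paper can reverse the logical order only because Ein--Lazarsfeld prove Theorem \ref{p-thm5.1} independently). So completing your route requires proving the \'etaleness from scratch, as in \cite{kawamata-viehweg}, in addition to supplying the positivity input above; your sketch (ramification formula plus weak positivity of $\alpha_*\omega_{X/A}^{\otimes m}$) names the right ingredients but is not an argument. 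What is correct and worth keeping: the use of Theorem \ref{p-thm4.4} and generation once $\kappa(\alpha(X))\leq 0$ is granted, and your endgame, where \'etaleness of $h$ makes $V$ an abelian variety (Theorem \ref{p-thm4.2} together with completeness of $V$) and the universal property of the Albanese map produces a section of $h$, forcing $\deg h=1$; that part is clean and correct.
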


As an obvious corollary of Theorem \ref{p-thm5.1}, we obtain 
a birational characterization of abelian varieties. 

\begin{cor}\label{p-cor5.2}
Let $X$ be a smooth projective variety. 
Then $X$ is birationally equivalent to an abelian variety 
if and only if the Kodaira dimension 
$\kappa (X)=0$ and the irregularity $q(X)=\dim X$. 
\end{cor}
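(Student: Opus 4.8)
The plan is to deduce the corollary directly from Kawamata's Theorem \ref{p-thm5.1}, since the genuinely difficult input is already packaged there. First I would dispose of the easy implication. If $X$ is birationally equivalent to an abelian variety $A$, then because the Kodaira dimension and the irregularity are both birational invariants of smooth projective varieties, I get $\kappa(X)=\kappa(A)$ and $q(X)=q(A)$. An abelian variety has trivial canonical bundle, so $K_A\sim 0$ and hence $\kappa(A)=0$; moreover $H^0(A,\Omega^1_A)$ is spanned by the $\dim A$ translation-invariant holomorphic $1$-forms, so $q(A)=\dim A=\dim X$. This yields the two required equalities.

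For the converse I would assume $\kappa(X)=0$ and $q(X)=\dim X$, and let $\alpha\colon X\to A$ be the Albanese map, so that $A$ is an abelian variety with $\dim A=q(X)=\dim X$. By Theorem \ref{p-thm5.1}, $\alpha$ is surjective and has connected fibers. The key step is then a dimension count: since $\alpha$ is a surjective morphism between smooth projective varieties of the same dimension, it is generically finite, inducing a finite field extension $\mathbb C(A)\hookrightarrow \mathbb C(X)$ whose degree equals the number of points of a general fiber (we are in characteristic zero, so this fiber is generically reduced/\'etale over the generic point). Connectedness of the fibers forces this number to be $1$, whence $\alpha$ is birational onto $A$. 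Therefore $X$ is birationally equivalent to the abelian variety $A$, which completes the proof.

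The main obstacle is entirely absorbed into Theorem \ref{p-thm5.1}: proving that the Albanese map of a variety of Kodaira dimension zero is surjective with connected fibers is the deep part, and it is exactly the statement I am permitted to assume. Once it is granted, the remaining content is the elementary observation that a surjective morphism with connected fibers between projective varieties of equal dimension is birational, together with the routine birational invariance of $\kappa$ and $q$. Thus the only care needed is the dimension/degree bookkeeping sketched above; there is no further hard step.
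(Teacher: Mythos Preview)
Your proof is correct and follows exactly the approach the paper intends: the paper states Corollary~\ref{p-cor5.2} as an ``obvious corollary'' of Theorem~\ref{p-thm5.1} without giving an explicit proof, and the analogous Corollary~\ref{p-cor10.2} in the quasi-Albanese setting is proved in precisely the way you outline (the Albanese map is surjective with connected/irreducible fibers, $\dim A=q(X)=\dim X$, hence $\alpha$ is birational). There is nothing to add.
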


In this paper, we will use Theorem \ref{p-thm5.1} and Corollary \ref{p-cor5.2} 
for the proof of Theorem \ref{p-thm1.2}. 
For compact K\"ahler manifolds, we have: 

\begin{thm}[{see \cite[Theorem 24]
{kawamata-abelian}}]\label{p-thm5.3}
Let $X$ be a compact K\"ahler manifold with $\kappa (X)=0$. 
Then the Albanese map 
\begin{equation*} 
\alpha\colon X\to A
\end{equation*}  
is surjective and has connected fibers. 
\end{thm}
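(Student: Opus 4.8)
The plan is to mirror the proof of the projective case (Theorem \ref{p-thm5.1}), checking at each step that the ingredients survive in the compact Kähler setting, where $A$ is merely a complex torus rather than an abelian variety. Throughout I write $\alpha\colon X\to A$ for the Albanese map and $W=\alpha(X)$ for its image. Recall that $A$ carries the trivial canonical bundle $K_A\sim 0$, and that, by the defining universal property of the Albanese torus, $W$ generates $A$ as a group and hence is contained in no translate of a proper subtorus.

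First I would establish surjectivity. The key external input is the complex-analytic version of Ueno's structure theorem for subvarieties of a complex torus (the Kähler analogue of Theorem \ref{p-thm4.4}): there is a subtorus $T_0\subset A$ stabilizing $W$ under translation such that $W\to W/T_0=:W^\ast$ is a holomorphic fiber bundle whose fibers are translates of $T_0$, with $\kappa(W)=\dim W^\ast$. Since $\alpha\colon X\to W$ is surjective, the monotonicity of the Kodaira dimension under dominant maps (the compact-manifold analogue of Lemma \ref{p-lem2.6}) gives $\dim W^\ast=\kappa(W)\le\kappa(X)=0$, so $W^\ast$ is a point and $W$ is a translate of $T_0$. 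As $W$ lies in no translate of a proper subtorus, $T_0=A$ and hence $W=A$; that is, $\alpha$ is surjective.

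For connectedness I would pass to the Stein factorization $\alpha\colon X\xrightarrow{g}Z\xrightarrow{h}A$, with $g$ having connected fibers and $h$ finite and, by the previous step, surjective; the number of connected components of a general fiber of $\alpha$ then equals $\deg h$, so it suffices to show $\deg h=1$. Fix a resolution $\nu\colon\widetilde Z\to Z$. Since $X$ dominates $\widetilde Z$, we get $\kappa(\widetilde Z)\le\kappa(X)=0$, while the finite morphism $\widetilde Z\to A$ onto a torus with $K_A\sim 0$ gives $K_{\widetilde Z}=\nu^\ast R+E$ with $R\ge 0$ the ramification divisor and $E$ effective exceptional, so $\kappa(\widetilde Z)\ge 0$ and thus $\kappa(\widetilde Z)=0$. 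Now a nonzero effective divisor on a complex torus has Kodaira dimension at least one (again by Ueno's theory on $A$, applied to the branch divisor), so $\kappa(\widetilde Z)=0$ forces $R=0$; hence $h$ is unramified in codimension one and, by purity on the smooth $A$, étale. A connected étale cover of a complex torus is again a complex torus, so $Z$ is a complex torus. Applying the universal property of $\mathrm{Alb}(X)=A$ to $g\colon X\to Z$ (after translating so that base points go to $0$) produces $\phi\colon A\to Z$ with $\phi\circ\alpha=g$; then $h\circ\phi=\mathrm{id}_A$ because $\alpha(X)$ generates $A$, and since $g$ is surjective $\phi$ is surjective, forcing $h$ to be an isomorphism. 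Thus $\deg h=1$ and $\alpha$ has connected fibers.

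The hard part is precisely that $A$ need not be projective: the two facts imported from the projective world—Ueno's fibration theorem for subvarieties of a torus and the estimate $\kappa\ge 1$ for nonzero effective divisors on a torus—must be invoked in the complex-analytic category, and one must verify that the monotonicity and étale-invariance statements used (the compact analogues of Lemmas \ref{p-lem2.6} and \ref{p-lem2.7}, and of Theorem \ref{p-thm4.4}) hold for compact Kähler, not merely projective, manifolds. This is exactly where Kawamata's original approach relies on his semipositivity theorem, proved via variations of Hodge structure. Granting these analytic inputs, the argument is formally identical to the projective case of Theorem \ref{p-thm5.1}.
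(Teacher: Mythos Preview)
The paper does not give its own proof of Theorem \ref{p-thm5.3}; Section \ref{p-sec5} only states it and points to Kawamata's original paper and to Ein--Lazarsfeld's generic vanishing approach. So there is no in-paper argument to compare against, and I will just assess your proof.

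Your surjectivity argument is fine: Ueno's structure theorem does hold for subvarieties of complex tori, and combined with the universal property it gives $\alpha(X)=A$ exactly as you say.

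The connectedness argument has a genuine gap. After Stein-factorizing $\alpha\colon X\to Z\to A$ and resolving to $\widetilde Z$, you correctly obtain $\kappa(\widetilde Z)=0$. You then claim that ``a nonzero effective divisor on a complex torus has Kodaira dimension at least one \dots\ so $\kappa(\widetilde Z)=0$ forces $R=0$.'' But the branch divisor $B$ lives on $A$, while $R$ lives on $Z$ (or $\widetilde Z$); the relation is $R\le h^*B$, which gives only $\kappa(\widetilde Z,K_{\widetilde Z})=\kappa(Z,R)\le\kappa(Z,h^*B)=\kappa(A,B)$, the wrong direction. There is no elementary passage from $\kappa(A,B)\ge 1$ to $\kappa(\widetilde Z)\ge 1$; indeed, unramified sheets over $B$ prevent any inequality of the form $mR\ge h^*(cB)$ with $c>0$. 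The implication you need---that a finite cover of a torus with $\kappa=0$ is \'etale---is exactly the K\"ahler analogue of Theorem \ref{p-thm9.1}, and that theorem is not a corollary of Ueno's divisor estimate. In the paper the projective version is proved \emph{by invoking} Theorem \ref{p-thm5.1} (via Corollary \ref{p-cor5.2}); you cannot mimic this, since it would be circular. So the real depth lies not in transporting Ueno's theory to the analytic category (which is routine), but in the \'etaleness step itself, and that is where Kawamata's semipositivity (or, alternatively, generic vanishing) is genuinely used.
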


Therefore, we have: 

\begin{cor}\label{p-cor5.4}
Let $X$ be a compact K\"ahler manifold. 
Then $X$ is bimeromorphic to a complex torus if and 
only if the Kodaira dimension $\kappa (X)=0$ and 
the irregularity $q(X)=\dim X$. 
\end{cor}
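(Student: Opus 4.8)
The plan is to deduce this as a direct corollary of Theorem \ref{p-thm5.3}, in exactly the same way that Corollary \ref{p-cor5.2} follows from Theorem \ref{p-thm5.1}.

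For the \emph{only if} direction, suppose $X$ is bimeromorphic to a complex torus $T$ with $\dim T=\dim X$. I would use that both $\kappa$ and $q$ are bimeromorphic invariants of compact K\"ahler manifolds: the Kodaira dimension is invariant because pluricanonical forms extend across the exceptional loci of a bimeromorphic map, and $q(X)=\dim H^0(X,\Omega^1_X)=\frac{1}{2}b_1(X)$ is invariant because holomorphic $1$-forms pull back and extend (and $b_1$ is topological). Since a complex torus $T=\mathbb C^n/L$ has trivial canonical bundle and trivial cotangent bundle, one has $\kappa(T)=0$ and $q(T)=\dim T=n$. Hence $\kappa(X)=0$ and $q(X)=\dim X$.

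For the \emph{if} direction, assume $\kappa(X)=0$ and $q(X)=\dim X$. Let $\alpha\colon X\to A$ be the Albanese map, where $A=\mathbb C^{q(X)}/L$ is a complex torus with $\dim A=q(X)=\dim X$. By Theorem \ref{p-thm5.3}, $\alpha$ is surjective and has connected fibers. Since $\alpha$ is surjective between spaces of the same dimension, its general fiber is $0$-dimensional; being also connected and, over a generic point of $A$, reduced by generic smoothness, the general fiber is a single reduced point. Therefore $\alpha$ is generically injective, hence bimeromorphic onto $A$, and $X$ is bimeromorphic to the complex torus $A$.

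The conceptual work is entirely carried by Theorem \ref{p-thm5.3}; once that is granted, the only points requiring care are the bimeromorphic invariance of $q$ (for the \emph{only if} direction) and the elementary dimension count showing that surjectivity together with connected fibers forces the general fiber to be a point. Neither presents a genuine obstacle, so the main difficulty of the statement is inherited from the proof of Theorem \ref{p-thm5.3}, which is assumed here.
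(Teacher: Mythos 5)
Your proposal is correct and follows exactly the route the paper intends: the paper states Corollary \ref{p-cor5.4} as an immediate consequence of Theorem \ref{p-thm5.3} (just as Corollary \ref{p-cor5.2} follows from Theorem \ref{p-thm5.1}), and your argument simply fills in the standard details --- bimeromorphic invariance of $\kappa$ and $q$ for the \emph{only if} direction, and the dimension count plus connectedness of fibers forcing the Albanese map to be bimeromorphic for the \emph{if} direction. No gaps; the substance is indeed carried entirely by Theorem \ref{p-thm5.3}.
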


Kawamata's original arguments in \cite{kawamata-abelian} 
heavily depend on the theory of variations of Hodge structure 
(see Section \ref{p-sec7} below). 
In \cite[Section 2]{ein-lazarsfeld},  
Ein and Lazarsfeld give a new proof of the above results. 
Their arguments are based 
on the generic vanishing theorem due to Green--Lazarsfeld. 
Anyway, the results in this section 
can be proved without using \cite{kawamata-abelian} 
now. Note that Theorem \ref{p-thm1.2} is a generalization of 
Theorem \ref{p-thm5.1}. We will give a 
detailed proof of Theorem \ref{p-thm1.2} in 
Section \ref{p-sec10} (see 
Theorem \ref{p-thm10.1}) following \cite{kawamata-abelian}. 
The author does not know any proofs of Theorem \ref{p-thm1.2} 
which are 
independent of Theorem \ref{p-thm6.1} below and only 
depend on the generic vanishing theorem due to Green--Lazarsfeld. 

\section{On subadditivity of 
the logarithmic Kodaira dimensions}\label{p-sec6}

In this section, we explain some known results on 
the subadditivity of the logarithmic Kodaira dimensions. 

\begin{thm}\label{p-thm6.1} 
Let $f\colon X\to Y$ be a dominant 
morphism between smooth varieties with 
irreducible general fibers. 
Assume that the logarithmic Kodaira 
dimension $\overline \kappa (Y)=\dim Y$. 
Then we have 
\begin{align*}
\overline \kappa (X)&=\overline \kappa (F)+\overline \kappa (Y)
\\&=\overline \kappa (F)
+\dim Y
\end{align*}
where $F$ is a sufficiently general fiber of $f\colon X\to Y$. 
\end{thm}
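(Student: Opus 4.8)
The plan is to prove the asserted equality by squeezing $\overline{\kappa}(X)$ between two inequalities, so that the hypothesis $\overline{\kappa}(Y)=\dim Y$ forces them to coincide. Concretely, I would establish
\begin{equation*}
\overline{\kappa}(F)+\overline{\kappa}(Y)\leq \overline{\kappa}(X)\leq \overline{\kappa}(F)+\dim Y,
\end{equation*}
and then substitute $\overline{\kappa}(Y)=\dim Y$ to collapse the two outer terms. The right-hand inequality is Iitaka's easy addition theorem, and the left-hand one is the logarithmic subadditivity $\overline{C}_{n,m}$ in the special case where the base is of log general type.

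First I would dispose of the upper bound $\overline{\kappa}(X)\leq \overline{\kappa}(F)+\dim Y$. This is elementary and needs no positivity input: after passing to log-smooth compactifications $(\overline{X},D_X)$ and $(\overline{Y},D_Y)$ compactifying $f$, sections of $m(K_{\overline{X}}+D_X)$ restrict to sections of $m(K_{\overline{F}}+D_F)$ on a general fiber, and the standard easy-addition estimate bounds the growth of the log-plurigenera of $\overline{X}$ by the product of the fiberwise growth and the dimension of the base. I would simply cite Iitaka's easy addition in its logarithmic form here.

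The substance is the lower bound $\overline{\kappa}(X)\geq \overline{\kappa}(F)+\overline{\kappa}(Y)$, which I would prove by Viehweg's weak-positivity method adapted to the logarithmic setting. After replacing $f$ by a suitable log-smooth model (and, if necessary, performing a base change and resolution so that the family is semistable in codimension one), I would work with the relative log-pluricanonical sheaf $\mathcal{F}_m:=f_*\mathcal{O}_{\overline{X}}(m(K_{\overline{X}}+D_X-f^*(K_{\overline{Y}}+D_Y)))$, whose rank on a general fiber is the log-plurigenus $h^0(\overline{F},m(K_{\overline{F}}+D_F))$. The key input is that $\mathcal{F}_m$ is weakly positive over a nonempty open subset of $\overline{Y}$; this is exactly the kind of semipositivity/weak-positivity theorem discussed in Sections \ref{p-sec7} and \ref{p-sec8} (see \cite{fujino-higher}, \cite{fujino-fujisawa}, and \cite{ffs}). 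Combining weak positivity of $\mathcal{F}_m$ with the bigness of $K_{\overline{Y}}+D_Y$ (which is precisely the hypothesis $\overline{\kappa}(Y)=\dim Y$), I would show that $\hat{S}^k(\mathcal{F}_m\otimes\mathcal{O}_{\overline{Y}}(m(K_{\overline{Y}}+D_Y)))$ acquires a number of global sections growing like $k^{\dim Y}\cdot \rank\mathcal{F}_m$; pulling these back and using the identity $K_{\overline{X}}+D_X=(K_{\overline{X}}+D_X-f^*(K_{\overline{Y}}+D_Y))+f^*(K_{\overline{Y}}+D_Y)$ converts them into sections of $km(K_{\overline{X}}+D_X)$. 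Letting $m$ and $k$ grow and comparing with the fiberwise growth $h^0(\overline{F},m(K_{\overline{F}}+D_F))\sim m^{\overline{\kappa}(F)}$ then yields $\overline{\kappa}(X)\geq \overline{\kappa}(F)+\dim Y$.

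The main obstacle is the weak-positivity step and its bookkeeping. Two points require care. First, establishing weak positivity of $\mathcal{F}_m$ in the logarithmic category demands a clean log-smooth (ideally unipotent, or at least semistable-in-codimension-one) model, so I would invest effort in arranging the family before invoking the semipositivity theorem. Second, the passage from symmetric powers $\hat{S}^k\mathcal{F}_m$ to the direct image of genuine tensor powers $f_*\mathcal{O}_{\overline{X}}(km(\cdots))$ is not a formal identity; I would handle it by Viehweg's fiber-product trick, comparing $\hat{S}^k\mathcal{F}_m$ with the direct image on the $k$-fold fiber product of $\overline{X}$ over $\overline{Y}$ so as to transfer the positivity correctly. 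Once these are in place, the final dimension count combining the fiberwise $m^{\overline{\kappa}(F)}$ growth with the base-directional $k^{\dim Y}$ growth is routine. As a practical matter, since this is a section recalling known material, I would also note that the lower bound may simply be quoted from the logarithmic subadditivity theorem for bases of log general type (cf.\ \cite{iitaka-conjecture}), the weak-positivity argument above being precisely what underlies that reference.
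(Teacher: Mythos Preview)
Your proposal is correct and follows essentially the same route that the paper adopts: the paper does not give a self-contained proof of Theorem \ref{p-thm6.1} in the text but refers to \cite[Theorem 1.9]{fujino-weak} and explains in Sections \ref{p-sec7} and \ref{p-sec8} that the key input is the weak positivity of $f_*\mathcal{O}_V(K_{V/W}+D)$ (Theorem \ref{p-thm8.4}) and its twisted refinement (Theorem \ref{p-thm8.3}), established via Viehweg's fiber-product trick together with the vanishing theorem of Lemma \ref{p-lem8.5}. Your sketch---easy addition for the upper bound, weak positivity of the relative log-pluricanonical sheaf combined with bigness of $K_{\overline Y}+D_Y$ for the lower bound, with the fiber-product trick mediating between symmetric powers and higher pluricanonical direct images---is exactly this argument.
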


\begin{rem}\label{p-rem6.2}
Theorem \ref{p-thm6.1} is a 
generalization of \cite[Theorem 30]{kawamata-abelian}. 
In \cite{kawamata-abelian}, 
Kawamata claimed Theorem \ref{p-thm6.1} 
under the extra assumption 
that $\overline \kappa (X)\geq 0$. 
Theorem \ref{p-thm6.1} was first 
obtained by Maehara (see \cite[Corollary 2]{maehara}). 
Note that the arguments in \cite{kawamata-abelian} and \cite{maehara} 
heavily depend on \cite[Theorem 32]{kawamata-abelian}. 
Since the author has been unable to follow \cite[Theorem 32]{kawamata-abelian}, 
he gave a proof of Theorem \ref{p-thm6.1} which is 
independent of \cite[Theorem 32]{kawamata-abelian}. 
For the details, see \cite[Theorem 1.9]{fujino-weak} 
(see also Section \ref{p-sec7}). 
\end{rem}

Theorem \ref{p-thm6.3} is the 
main theorem of \cite{kawamata-master} 
(see \cite[Theorem 1]{kawamata-master}). 
For the proof, we recommend the reader 
to see \cite[Chapter 5]{iitaka-conjecture}. 

\begin{thm}\label{p-thm6.3}
Let $f\colon X\to Y$ be a dominant 
morphism between smooth varieties whose 
general fibers are irreducible curves. 
Then we have 
\begin{equation*} 
\overline \kappa (X)\geq \overline \kappa (F)+\overline \kappa (Y) 
\end{equation*} 
where $F$ is a general fiber of $f\colon X\to Y$. 
\end{thm}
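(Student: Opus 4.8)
The plan is to stratify the argument according to the logarithmic Kodaira dimension of the general fiber, which for a curve takes only the values $-\infty$, $0$, or $1$. If $\overline F$ denotes a smooth compactification of the general fiber, with reduced boundary $D_F$ of degree $k$ and $g$ the genus of $\overline F$, then $\overline\kappa(F)$ is governed by the sign of $\deg(K_{\overline F}+D_F)=2g-2+k$: negative, zero, and positive degrees correspond to $\overline\kappa(F)=-\infty$, $0$, and $1$ respectively. Two of these regimes are immediate. When $\overline\kappa(F)=-\infty$ the right-hand side is $-\infty$ and there is nothing to prove, and when $\overline\kappa(F)=0$ the desired inequality reduces to $\overline\kappa(X)\ge\overline\kappa(Y)$, which holds by Lemma \ref{p-lem2.6} because $f$ is dominant. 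So the entire content is the case $\overline\kappa(F)=1$, where the general fiber is of logarithmic general type.

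For this case I would fix a good compactification: after Hironaka resolution and further blow-ups I may assume that $\overline f\colon\overline X\to\overline Y$ is a morphism of smooth projective varieties with $D_X=\overline X\setminus X$ and $D_Y=\overline Y\setminus Y$ simple normal crossing and $\overline f^{*}D_Y\subseteq D_X$. Writing $D_X=D_X^{h}+D_X^{v}$ for the horizontal and vertical parts of the boundary and introducing the relative logarithmic canonical divisor $K^{\log}_{\overline X/\overline Y}=K_{\overline X}+D_X-\overline f^{*}(K_{\overline Y}+D_Y)$, adjunction on a general fiber identifies $K^{\log}_{\overline X/\overline Y}|_{\overline F}$ with $K_{\overline F}+D_F$, since the vertical boundary and $\overline f^{*}(K_{\overline Y}+D_Y)$ do not meet a general fiber. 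The hypothesis $\overline\kappa(F)=1$ then says this restriction has positive degree, i.e.\ $K^{\log}_{\overline X/\overline Y}$ is relatively of log general type.

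The engine is weak positivity, precisely of the type developed in Section \ref{p-sec8} from a Koll\'ar-type vanishing theorem. The plan is to use that for $m$ sufficiently divisible the direct image $\overline f_{*}\mathcal O_{\overline X}(mK^{\log}_{\overline X/\overline Y})$ is weakly positive over a nonempty open subset of $\overline Y$; this is the logarithmic analogue of Viehweg's weak positivity theorem. Its rank equals $h^{0}(\overline F, m(K_{\overline F}+D_F))$, which grows linearly in $m$ precisely because the fiber is of log general type. I would then run Viehweg's argument in this setting: the splitting $K_{\overline X}+D_X=K^{\log}_{\overline X/\overline Y}+\overline f^{*}(K_{\overline Y}+D_Y)$ together with the projection-formula identity $H^{0}(\overline X,m(K_{\overline X}+D_X))=H^{0}(\overline Y,\overline f_{*}\mathcal O_{\overline X}(mK^{\log}_{\overline X/\overline Y})\otimes\mathcal O_{\overline Y}(m(K_{\overline Y}+D_Y)))$ let the weak positivity of the direct image combine the relative contribution $\overline\kappa(F)=1$ with the base contribution $\kappa(\overline Y,K_{\overline Y}+D_Y)=\overline\kappa(Y)$, producing sections on $\overline X$ growing like $m^{1+\overline\kappa(Y)}$ and hence $\overline\kappa(X)\ge 1+\overline\kappa(Y)$.

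The hard part will be establishing and correctly exploiting the weak positivity in the logarithmic setting, rather than the Viehweg combinatorics themselves. The delicate points are the treatment of the vertical boundary $D_X^{v}$ and of multiple fibers, where a semistable-type reduction or a base change becomes necessary, together with the behavior of the direct image sheaves under the branched covers in Viehweg's covering trick and the passage from weak positivity to an honest lower bound on $\kappa(\overline X,K_{\overline X}+D_X)$. It is worth emphasizing that because the relative dimension is one, the log general type case does \emph{not} require the further reduction through the relative Iitaka fibration that the higher-dimensional statement $\overline C_{n,m}$ would demand, so the whole difficulty is concentrated in the positivity of the relative log pluricanonical sheaf.
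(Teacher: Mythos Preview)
The paper does not give its own proof of Theorem \ref{p-thm6.3}. It records the statement as \cite[Theorem 1]{kawamata-master} and directs the reader to \cite[Chapter 5]{iitaka-conjecture} for a detailed treatment; within this paper the result is used as a black box (in the proofs of Theorems \ref{p-thm9.3} and \ref{p-thm1.3}). So there is no in-paper argument to compare your proposal against.

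That said, your outline is a sound modern route. The trichotomy on $\overline\kappa(F)\in\{-\infty,0,1\}$ and the disposal of the first two cases via Lemma \ref{p-lem2.6} are exactly right. For the remaining case $\overline\kappa(F)=1$, your plan to invoke weak positivity of $\overline f_*\mathcal O_{\overline X}\bigl(m(K_{\overline X}+D_X-\overline f^*(K_{\overline Y}+D_Y))\bigr)$ and then run Viehweg's argument is precisely the philosophy behind Section \ref{p-sec8} here, where Theorems \ref{p-thm8.3} and \ref{p-thm8.4} are set up to prove the companion result Theorem \ref{p-thm6.1}. You should note one small gap in your case analysis: when $\overline\kappa(Y)=-\infty$ the inequality is again vacuous, so in the $\overline\kappa(F)=1$ case you may freely assume $\overline\kappa(Y)\ge 0$, which is what makes the Viehweg combination actually produce sections. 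Kawamata's original 1977 argument in \cite{kawamata-master} predates the general weak-positivity formalism and is more direct, exploiting the one-dimensional fibers concretely; the account in \cite[Chapter 5]{iitaka-conjecture} that the paper recommends follows that line. Your weak-positivity approach is heavier machinery but has the advantage of fitting seamlessly with the rest of this paper's toolkit.
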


Theorems \ref{p-thm6.1} and \ref{p-thm6.3} 
will play a crucial role in 
Sections \ref{p-sec9}, \ref{p-sec10}, and 
\ref{p-sec11}. 
In general, we have: 

\begin{conj}\label{p-conj6.4}
Let $f\colon X\to Y$ be a dominant 
morphism between smooth 
varieties whose general fibers are irreducible. 
Then we have 
\begin{equation*}
\overline \kappa (X)\geq \overline \kappa (F)+\overline \kappa (Y)
\end{equation*} 
where $F$ is a sufficiently general fiber of $f\colon X\to Y$. 
\end{conj}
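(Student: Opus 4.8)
The plan is to reduce Conjecture~\ref{p-conj6.4} to the two cases that are already available, namely the case where the base is of log general type (Theorem~\ref{p-thm6.1}) and a residual case where the base has log Kodaira dimension zero; the latter is where I expect the genuine difficulty. First I would pass to the logarithmic Iitaka fibration of $Y$: after a suitable birational modification there is a morphism $g\colon Y\to Z$ onto a smooth variety $Z$ with $\dim Z=\overline\kappa(Y)$, with $Z$ of log general type (so that $\overline\kappa(Z)=\dim Z$), and with $\overline\kappa(G)=0$ for a sufficiently general fiber $G$ of $g$. Composing with $f$ and resolving, I obtain a dominant morphism $h\colon X\to Z$; choosing the modifications appropriately, its sufficiently general fiber $X_Z$ is irreducible and dominates $G$ with general fiber $F$, where $F$ is also a sufficiently general fiber of the original $f\colon X\to Y$.

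Since $\overline\kappa(Z)=\dim Z$, Theorem~\ref{p-thm6.1} applies to $h$ and yields
\begin{equation*}
\overline\kappa(X)=\overline\kappa(X_Z)+\dim Z=\overline\kappa(X_Z)+\overline\kappa(Y).
\end{equation*}
Thus it only remains to bound $\overline\kappa(X_Z)$ from below by $\overline\kappa(F)$. But $X_Z\to G$ is a dominant morphism with irreducible general fiber $F$ onto a base $G$ with $\overline\kappa(G)=0$, and the inequality $\overline\kappa(X_Z)\geq \overline\kappa(F)=\overline\kappa(F)+\overline\kappa(G)$ is precisely Conjecture~\ref{p-conj6.4} for the morphism $X_Z\to G$. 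Hence the whole conjecture reduces to the single case $\overline\kappa(Y)=0$, and combining the two displayed relations then gives $\overline\kappa(X)\geq \overline\kappa(F)+\overline\kappa(Y)$ as desired.

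This residual case is exactly where the theory developed in this paper should enter. When $\overline\kappa(Y)=0$, Theorem~\ref{p-thm1.2} tells us that the quasi-Albanese map $\alpha\colon Y\to A$ is dominant with irreducible general fibers, so $Y$ carries a very rigid semi-abelian structure. The plan would then be to pull back along finite \'etale covers of $A$ (using Theorem~\ref{p-thm4.2}, and Lemma~\ref{p-lem2.7} to preserve $\overline\kappa$), to spread out the family $X\to Y$ over this semi-abelian base, and to extract positivity of the direct images of the relative logarithmic pluricanonical sheaves. A weak positivity statement over a base with trivial log canonical class, combined with the structure forced by the quasi-Albanese map, is what one would hope to convert into the inequality $\overline\kappa(X_Z)\geq \overline\kappa(F)$.

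The main obstacle is precisely this last step: proving subadditivity over a base of log Kodaira dimension zero. The relative dimension one case is known unconditionally (Theorem~\ref{p-thm6.3}), but inducting on the relative dimension is not straightforward, since one cannot in general factor $f$ into fibrations of relative dimension one while retaining control of the logarithmic Kodaira dimensions of the intermediate fibers. For this reason the argument above does not close the conjecture; it only shows that Conjecture~\ref{p-conj6.4} is equivalent to its own special case $\overline\kappa(Y)=0$, whose resolution would require a genuinely new positivity input going beyond Theorems~\ref{p-thm6.1} and~\ref{p-thm6.3}.
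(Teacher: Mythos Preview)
The statement you are attempting to prove is labeled \emph{Conjecture}~\ref{p-conj6.4} in the paper, and the paper does \emph{not} give a proof of it. The only comment the paper makes is the sentence immediately following the statement: by \cite{fujino-subadditivity} and \cite{fujino-corrigendum}, Conjecture~\ref{p-conj6.4} is known to follow from the minimal model program together with the abundance conjecture. In other words, the paper records it as an open problem conditional on standard conjectures in birational geometry, and offers no self-contained argument.

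Your proposal is therefore not competing against a proof in the paper; it is an outline of a possible attack. As such, your reduction step is sound: passing to the logarithmic Iitaka fibration of $Y$ and applying Theorem~\ref{p-thm6.1} to the composite $X\to Z$ does reduce Conjecture~\ref{p-conj6.4} to the special case $\overline\kappa(Y)=0$, and you correctly flag that this residual case is where the genuine content lies. You are also honest that the argument does not close: the hoped-for use of Theorem~\ref{p-thm1.2} and the quasi-Albanese structure to extract positivity over a base with $\overline\kappa=0$ is only a sketch, and no such unconditional argument is currently known.

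Compared with the paper's remark, your reduction is orthogonal to the route via MMP and abundance: the latter proceeds by running a relative minimal model program on a compactified log pair and invoking abundance to control the log canonical ring, rather than by first splitting off the Iitaka fibration of the base. Neither approach yields an unconditional proof at present; your reduction isolates a clean special case, while the paper's cited route makes the dependence on the deep conjectures of the MMP explicit.
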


By \cite{fujino-subadditivity} and \cite{fujino-corrigendum}, 
we see that Conjecture \ref{p-conj6.4} follows from the minimal model 
program and the abundance conjecture. 
For the details, see \cite{fujino-subadditivity}, 
\cite{fujino-corrigendum}, and \cite{fujino-mixed-omega}. 

\section{Remarks on semipositivity theorems}\label{p-sec7}

In this section, 
we make some comments on the semipositivity 
theorems in \cite{kawamata-abelian} for the reader's 
convenience. We recommend the reader to skip this section 
if he is only interested in Theorem \ref{p-thm1.2}. 
The arguments in Section \ref{p-sec8} are sufficient for the proof of 
Theorem \ref{p-thm1.2} and are more elementary. 
Let us recall Kawamata's famous result in \cite{kawamata-abelian}. 
It is one of the main ingredients of Kawamata's proof of 
Theorem \ref{p-thm5.1}. 

\begin{thm}[{\cite[Theorem 5=Main Lemma]{kawamata-abelian}}]\label{p-thm7.1}
Let $f\colon X\to Y$ be a surjective morphism between smooth projective 
varieties with connected fibers which 
satisfies the following conditions: 
\begin{itemize}
\item[(i)] There is a Zariski open dense subset $Y_0$ of $Y$ such that 
$\Sigma=Y-Y_0$ is a simple normal crossing divisor on $Y$. 
\item[(ii)] Put $X_0=f^{-1}(Y_0)$ and $f_0=f|_{X_0}$. 
Then f$_0$ is smooth. 
\item[(iii)] The local monodromies of $R^nf_{0*}\mathbb C_{X_0}$ around 
$\Sigma$ are unipotent, where $n=\dim X-\dim Y$. 
\end{itemize}
Then $f_*\mathcal O_X(K_{X/Y})$ is a locally free sheaf and 
semipositive, where $K_{X/Y}=K_X-f^*K_Y$. 
\end{thm}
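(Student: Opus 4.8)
The plan is to prove the statement through the theory of variations of Hodge structure, identifying $f_*\mathcal O_X(K_{X/Y})$ with the canonical extension of the bottom Hodge bundle of the variation $R^nf_{0*}\mathbb C_{X_0}$. Over $Y_0$, where $f_0$ is smooth and projective, the relative dualizing sheaf satisfies
\begin{equation*}
f_{0*}\mathcal O_{X_0}(K_{X_0/Y_0})\simeq f_{0*}\Omega^n_{X_0/Y_0}=F^n\mathcal H,
\end{equation*}
the lowest nonzero piece of the Hodge filtration $F^\bullet$ on the flat bundle $\mathcal H=(R^nf_{0*}\mathbb C_{X_0})\otimes_{\mathbb C}\mathcal O_{Y_0}$. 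Here I use the $E_1$-degeneration of the relative Hodge--de Rham spectral sequence together with the constancy of $h^{n,0}$ of the fibers, which also yields local freeness of $F^n\mathcal H$ over $Y_0$. The polarization of the variation induces a Hodge metric on each $F^p\mathcal H$, and this metric is the source of the positivity.

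For local freeness across $\Sigma$, I would invoke the unipotent monodromy hypothesis (iii). By Deligne's canonical extension, $\mathcal H$ extends to a locally free sheaf $\overline{\mathcal H}$ on $Y$ carrying a connection with logarithmic poles and nilpotent residues along $\Sigma$, and the Hodge subbundles extend to subbundles $\overline{F^p}$. The crucial identification is $f_*\mathcal O_X(K_{X/Y})\simeq\overline{F^n}$, the canonical extension of the bottom Hodge bundle; this is verified by a local computation near $\Sigma$ in the local normal form of the degeneration, where unipotence of the monodromy guarantees that the pushforward of the relative canonical sheaf carries no extra twist and matches Deligne's extension. In particular $f_*\mathcal O_X(K_{X/Y})$ is locally free.

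The semipositivity is the heart of the matter. Over $Y_0$, Griffiths' curvature computation expresses the Chern curvature of the Hodge metric on the bottom Hodge bundle $F^n$ through its second fundamental form, namely the cup-product/Kodaira--Spencer map $\sigma\colon F^n\to(\mathcal H/F^n)\otimes\Omega^1_{Y_0}$, and exhibits it as Griffiths-semipositive, since the contribution of $\sigma$ enters with the favorable sign for the lowest step of the filtration. To promote this to numerical semipositivity of $\overline{F^n}$ on all of $Y$, I would use Schmid's nilpotent orbit theorem and the associated norm estimates to control the degeneration of the Hodge metric along $\Sigma$: under the unipotent hypothesis the Hodge norms grow at most like powers of $\log|t|$, so the Hodge metric extends over $\overline{F^n}$ as a singular Hermitian metric whose curvature current remains semipositive.

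Finally I would conclude by the curve test: a locally free sheaf is semipositive precisely when, for every morphism $g\colon C\to Y$ from a smooth projective curve, every quotient line bundle of $g^*f_*\mathcal O_X(K_{X/Y})$ has nonnegative degree. Choosing $C$ with $g(C)\not\subset\Sigma$ and pulling back the singular Hodge metric, the semipositivity of the curvature current together with the logarithmic growth bound at the finitely many points of $g^{-1}(\Sigma)$ forces the degree of any such quotient to be nonnegative. The main obstacle is precisely this boundary analysis: establishing the sharp asymptotics of the Hodge metric near $\Sigma$ via Schmid's $SL_2$-orbit theorem and confirming the identification $f_*\mathcal O_X(K_{X/Y})\simeq\overline{F^n}$ under the unipotent monodromy assumption. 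This is exactly the deep input from the theory of variations of Hodge structure that the paper later circumvents by a vanishing theorem in Section \ref{p-sec8}.
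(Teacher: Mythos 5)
Your overall strategy is the same one this paper attributes to Kawamata: note that the paper itself gives no proof of Theorem \ref{p-thm7.1}, but records in Remarks \ref{p-rem7.2} and \ref{p-rem7.3} that it splits into (1) the identification of $f_*\mathcal O_X(K_{X/Y})$ with the canonical extension of the bottom Hodge filtration (Kawamata's \S 4 Semi-positivity (1), generalized by Nakayama and Koll\'ar), and (2) the semipositivity of that canonical extension (Kawamata's \S 4 Semi-positivity (2), now properly established in \cite{fujino-fujisawa} and \cite{ffs}). Your treatment of (1) --- $E_1$-degeneration over $Y_0$, Deligne's canonical extension under the unipotence hypothesis, and the local matching near $\Sigma$ --- is fine in outline and is exactly the content of Remark \ref{p-rem7.2}.

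Your execution of (2), however, has a genuine gap, and it is precisely the gap the paper warns about in Remark \ref{p-rem7.3}. Semipositivity here means nefness: for \emph{every} morphism $g\colon C\to Y$ from a smooth projective curve, including those with $g(C)\subset\Sigma$, every quotient line bundle of $g^*f_*\mathcal O_X(K_{X/Y})$ must have nonnegative degree. Your curve test explicitly restricts to curves with $g(C)\not\subset\Sigma$. The singular-Hodge-metric argument with logarithmic growth can only control such curves: a semipositive curvature current yields pseudoeffectivity of the tautological line bundle plus nonnegativity on curves meeting $Y_0$, not nefness. Nonnegativity on curves avoiding $\Sigma$ does not imply nonnegativity on curves inside $\Sigma$; for instance, if $\Sigma$ is a $(-1)$-curve on a surface $Y$, then $\mathcal O_Y(\Sigma)$ has nonnegative degree on every curve not contained in $\Sigma$ yet is not nef. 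Handling curves in the boundary requires restricting the degenerating Hodge-theoretic data to the strata of $\Sigma$ (limiting mixed Hodge structures, admissible variations) and an induction on strata; moreover, near the crossing points of $\Sigma$ the one-variable nilpotent orbit and $SL_2$-orbit theorems of Schmid that you invoke are not sufficient, and one needs several-variable asymptotics or the arguments of \cite{fujino-fujisawa} and \cite{ffs}. This missing boundary analysis is exactly why the paper states that Kawamata's original \S 4 Semi-positivity (2) ``seems to be insufficient'' and defers to those references rather than reproducing the argument you sketch.
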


\begin{rem}\label{p-rem7.2}
In \cite[\S4.~Semi-positivity (1)]{kawamata-abelian}, 
Kawamata proved that $f_*\mathcal O_X(K_{X/Y})$ coincides 
with the canonical extension of the bottom Hodge filtration $\mathcal F$. 
This part was generalized by Nakayama and Koll\'ar independently 
(see \cite[Theorem 1]{nakayama} and \cite[Theorem 2.6]{kollar}). 
They proved that $R^if_*\mathcal O_{X}(K_{X/Y})$ is locally free and 
can be characterized as the (upper) canonical extension of 
the bottom Hodge filtration of 
a suitable variation of Hodge structure. 
\end{rem}

\begin{rem}\label{p-rem7.3}
In \cite[\S4.~Semi-positivity (2)]{kawamata-abelian}, Kawamata proved that 
the canonical extension of 
the bottom Hodge filtration $\mathcal F$ is semipositive. 
This part is not so easy to follow. 
Kawamata's proof seems to be insufficient. 
Note that Kawamata could and did 
use only \cite{deligne}, \cite{griffiths}, and \cite{schmid} 
for the Hodge theory when \cite{kawamata-abelian} was written 
around 1980. 
Fortunately, \cite[Theorem 1.3]{fujino-fujisawa} and 
\cite[Theorem 3]{ffs} completely 
generalize \cite[\S4.~Semi-positivity (2)]{kawamata-abelian} 
for admissible variations of mixed Hodge structure and 
clarify Kawamata's proof simultaneously 
(see also \cite[Theorem 1.1 and Corollary 1.2]{fujino-fujisawa-mrl}). 
For Morihiko Saito's comments on Kawamata's arguments in 
\cite[\S4.~Semi-positivity (2)]{kawamata-abelian}, 
see \cite[4.6.~Remarks]{ffs}. 
\end{rem}

\begin{rem}\label{p-rem7.4} 
An approach to the semipositivity of $R^if_*\mathcal O_X(K_{X/Y})$ which 
dose not use \cite[\S4.~Semi-positivity (2)]{kawamata-abelian} 
can be found in \cite[Section 4]{fujino-remarks}. 
\end{rem}

Anyway, \cite[Theorem 5]{kawamata-abelian} is now clearly understood. 
Let us go to a mixed generalization of Theorem \ref{p-thm7.1}, 
which was used in the proof of Theorem \ref{p-thm6.1} in \cite{fujino-weak}. 
In \cite{fujino-higher}, we obtain:   

\begin{thm}[{see 
\cite[Theorems 3.1, 3.4, and 3.9]{fujino-higher}}]\label{p-thm7.5}
Let $f\colon X\to Y$ be a surjective morphism 
between smooth projective varieties and let $D$ be 
a simple normal crossing divisor on $X$ such that 
every stratum of $D$ is dominant onto $Y$. 
Let $\Sigma$ be a simple normal crossing divisor 
on $Y$. 
If $f$ is smooth and $D$ is relatively normal crossing 
over $Y_0=Y\setminus \Sigma$ and 
the local monodromies 
of $R^{n+i}f_{0*}\mathbb C_{X_0\setminus D_0}$ 
around $\Sigma$ are unipotent, where 
$X_0=f^{-1}(Y_0)$, $D_0=D|_{X_0}$, $f_0=f|_{X_0}$, and 
$n=\dim X-\dim Y$, 
then $R^if_*\mathcal O_X(K_{X/Y}+D)$ is locally free and 
semipositive. 
\end{thm}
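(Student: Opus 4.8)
The plan is to realize $R^if_*\mathcal{O}_X(K_{X/Y}+D)$ as the canonical extension of the bottom piece $F^n$ of the Hodge filtration of an admissible variation of mixed Hodge structure, and then to invoke the abstract semipositivity of such extensions. First I would work over $Y_0=Y\setminus\Sigma$. Put $U_0=X_0\setminus D_0$; since $f_0$ is smooth and $D_0$ is relatively normal crossing over $Y_0$, the morphism $U_0\to Y_0$ is a smooth family of quasi-projective varieties, and by Deligne's mixed Hodge theory in families $R^{n+i}f_{0*}\mathbb{C}_{U_0}$ underlies an admissible variation of mixed Hodge structure on $Y_0$, with $n=\dim X-\dim Y$. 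The Hodge filtration $F^\bullet$ is computed fiberwise by the relative logarithmic de Rham complex $\Omega^\bullet_{X_0/Y_0}(\log D_0)$, so that the bottom piece satisfies $\mathrm{Gr}^n_F H^{n+i}(U_y)=H^i(X_y,\Omega^n_{X_y}(\log D_y))=H^i(X_y,\mathcal{O}_{X_y}(K_{X_y}+D_y))$ because $\dim X_y=n$. Thus over $Y_0$ the sheaf in question is the bottom Hodge bundle $F^n=\mathrm{Gr}^n_F$ of this variation, namely $R^if_{0*}\mathcal{O}_{X_0}(K_{X_0/Y_0}+D_0)$; the hypothesis that every stratum of $D$ is dominant onto $Y$ is what guarantees that $D_0$ meets the fibers in relative normal crossing and that these logarithmic de Rham computations are uniform in the family.

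Second, I would extend across $\Sigma$. The unipotent monodromy hypothesis on $R^{n+i}f_{0*}\mathbb{C}_{U_0}$ around $\Sigma$ makes the Deligne canonical extension of the underlying flat bundle unambiguous, and the Hodge subbundle $F^n$ extends to a subbundle $\overline{F}^n$ of the upper canonical extension. Following the method of Koll\'ar and Nakayama in the logarithmic setting (see \cite{kollar}, \cite{nakayama}, and the mixed version in \cite{fujino-higher}), I would prove that $R^if_*\mathcal{O}_X(K_{X/Y}+D)$ is locally free and coincides with $\overline{F}^n$. Local freeness is obtained by combining cohomology and base change with the generalized Koll\'ar-type vanishing theorem of \cite{fujino-higher}, which controls the higher direct images and kills the obstructions to base change; the identification with the canonical extension is then a local statement near $\Sigma$ read off from the logarithmic de Rham resolution.

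Finally, I would invoke the abstract semipositivity theorem for admissible variations of mixed Hodge structure. Near $\Sigma$ the Hodge metric on $\overline{F}^n$ has at worst logarithmic singularities and Griffiths-semipositive curvature, by the nilpotent orbit theorem and the norm estimates of Schmid and Cattani--Kaplan--Schmid; in the pure case this is exactly Kawamata's argument behind Theorem \ref{p-thm7.1}, and in the admissible mixed case it is supplied by \cite[Theorem 1.3]{fujino-fujisawa} and \cite[Theorem 3]{ffs}. Applying this to $\overline{F}^n$ yields that $R^if_*\mathcal{O}_X(K_{X/Y}+D)$ is a semipositive locally free sheaf, as desired.

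I expect the second step to be the main obstacle. The purely analytic semipositivity of the third step is deep but quotable from \cite{fujino-fujisawa} and \cite{ffs}, whereas the logarithmic Koll\'ar--Nakayama identification requires simultaneously controlling the boundary $\Sigma$ on the base, the divisor $D$ on the total space, and the degenerations of the family; it is precisely here that both the stratum-dominance hypothesis and the unipotent monodromy hypothesis are indispensable, and where a delicate local analysis along $\Sigma$ combined with the vanishing theorem of \cite{fujino-higher} is needed to guarantee local freeness and the correct extension.
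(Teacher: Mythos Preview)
Your proposal is correct and matches the paper's own account. The paper does not give a self-contained proof of Theorem~\ref{p-thm7.5} but cites \cite{fujino-higher} and explains in Remark~\ref{p-rem7.6} that the argument there identifies $R^if_*\mathcal O_X(K_{X/Y}+D)$ with the canonical extension of the bottom Hodge filtration of a suitable admissible variation of mixed Hodge structure, and then obtains semipositivity from \cite[Theorem 1.3]{fujino-fujisawa} or \cite[Theorem 3]{ffs}; this is precisely your three-step outline.
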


Theorem \ref{p-thm7.5} is obviously a generalization of Theorem \ref{p-thm7.1}. 

\begin{rem}\label{p-rem7.6} 
In \cite{fujino-higher}, we characterize 
$R^if_*\mathcal O_X(K_{X/Y}+D)$ as the canonical 
extension of the bottom Hodge 
filtration of a suitable variation of mixed Hodge structure. 
The proof of the semipositivity of $R^if_*\mathcal O_X(K_{X/Y}+D)$ 
in \cite{fujino-higher} used \cite[\S4.~Semi-positivity (2)]{kawamata-abelian}. 
Now we can use \cite[Theorem 1.3]{fujino-fujisawa} 
or \cite[Theorem 3]{ffs} for the semipositivity of $R^if_*\mathcal O_X(K_{X/Y}
+D)$ in place of \cite[\S4.~Semi-positivity (2)]{kawamata-abelian} 
(see also \cite[Theorem 1.1 and Corollary 1.2]{fujino-fujisawa-mrl}). 
\end{rem}

\begin{rem}\label{p-rem7.7}  
As we pointed out in \cite[Remark 6.5]{fujino-weak}, 
Kawamata seems to misuse Schmid's nilpotent orbit theorem in 
\cite{kawamata-semi} and \cite{kawamata-hodge}. 
Therefore, we do not use the papers \cite{kawamata-semi} and 
\cite{kawamata-hodge}. 
Moreover, the main theorem of \cite{kawamata-semi} (see 
\cite[Theorem 1.1]{kawamata-semi}) is weaker than 
\cite[Theorem 1.1]{fujino-fujisawa}. 
\end{rem}

\begin{rem}\label{p-rem7.8} 
The main theorem in \cite{kawamata-semi} (see 
\cite[Theorem 1.1]{kawamata-semi}) 
does not cover Theorem \ref{p-thm7.5} nor 
\cite[Theorem 32]{kawamata-abelian}. 
We also note that 
\cite{kawamata-mixed-hodge} does not cover Theorem \ref{p-thm7.5} nor 
\cite[Theorem 32]
{kawamata-abelian}. In \cite{kawamata-mixed-hodge}, 
Kawamata treats {\em{well prepared fiber spaces}}. 
For the details, see \cite{kawamata-mixed-hodge}. 
The author did not find 
any proofs of \cite[Theorem 32]{kawamata-abelian} in the literature 
except the original one in \cite{kawamata-abelian}. 
\end{rem}

\section{Weak positivity theorems revisited}\label{p-sec8}

In this section, we explain how to avoid using the 
theory of variations of mixed Hodge structure for the proof of 
Theorem \ref{p-thm6.1}. Let us recall the definition of 
weakly positive sheaves. 
Note that the theory of weakly positive sheaves is due to Viehweg. 
Roughly speaking, Viehweg treated only the {\em{pure}} case. 
For the details of the 
{\em{mixed}} case, 
we recommend the reader to see \cite{fujino-weak}. 
For a slightly different approach, see \cite{fujino-mixed-omega}. 

\begin{defn}[Weak positivity]\label{p-def8.1}
Let $W$ be a smooth projective variety and let $\mathcal F$ be a torsion-free 
coherent sheaf on $W$. 
We call $\mathcal F$ {\em{weakly positive}}, 
if for every ample line bundle $\mathcal H$ on $W$ and every positive integer 
$\alpha$ there exists some positive integer $\beta$ such that 
$\widehat S^{\alpha\beta}(\mathcal F)\otimes \mathcal H^{\otimes \beta}$ is 
generically generated by global sections. 
This means that the natural map 
\begin{equation*} 
H^0(W, \widehat S^{\alpha\beta}(\mathcal F)\otimes \mathcal H^{\otimes \beta})
\otimes \mathcal O_W\to 
\widehat S^{\alpha\beta}(\mathcal F)\otimes \mathcal H^{\otimes 
\beta}
\end{equation*}  
is generically surjective. 
\end{defn}

\begin{rem}\label{p-rem8.2}
In Definition \ref{p-def8.1}, 
let $\widehat W$ be the largest Zariski open subset 
of $W$ such that 
$\mathcal F|_{\widehat W}$ is locally free. 
Then we put 
\begin{equation*} 
\widehat S^k(\mathcal F)=i_*S^k(i^*\mathcal F) 
\end{equation*} 
where $i\colon \widehat W\to W$ is the natural open immersion and 
$S^k$ denotes the $k$-th symmetric product. 
Note that 
$\codim_W(W\setminus \widehat W)\geq 2$ since $\mathcal F$ is 
torsion-free. 
\end{rem}

The following theorem, which is due to Viehweg, Campana, and 
others, is useful and is very important. 
 
\begin{thm}[{Twisted weak positivity, see \cite[Theorem 1.1]{fujino-weak}}]
\label{p-thm8.3} 
Let $X$ be a normal projective variety and let $\Delta$ be an effective 
$\mathbb Q$-divisor 
on $X$ such that 
$(X, \Delta)$ is log canonical. 
Let $f\colon X\to Y$ be a surjective 
morphism onto a smooth 
projective variety $Y$ with connected fibers. 
Assume that 
$k(K_X+\Delta)$ is Cartier. 
Then, for every positive integer 
$m$, 
\begin{equation*} 
f_*\mathcal O_X(mk(K_{X/Y}+\Delta))
\end{equation*} 
is weakly positive. 
\end{thm}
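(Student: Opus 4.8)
The plan is to reduce the statement to a Hodge-theoretic semipositivity input — precisely Theorem \ref{p-thm7.5} — and then to propagate positivity from a single ``base'' sheaf to all powers by Viehweg's fiber product trick. First I would set up a good model: by Hironaka's resolution together with Viehweg's covering and flattening constructions, I replace $f\colon X\to Y$ by a morphism of smooth projective varieties equipped with a reduced simple normal crossing divisor $D$ on $X$ whose strata dominate $Y$, with $D$ relatively normal crossing over a dense open $Y_0\subseteq Y$ over which $f$ is smooth, in such a way that the sheaf $f_*\mathcal{O}_X(mk(K_{X/Y}+\Delta))$ is recovered — up to the positivity we care about — as a direct image of the form $R^0 g_*\mathcal{O}_{X'}(K_{X'/Y}+D')$ on a suitable cyclic cover $g\colon X'\to Y$. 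The log canonical hypothesis is exactly what guarantees, after this normalization, that the boundary coefficients lie in $[0,1]$, so that the cyclic covering trick produces only a reduced snc boundary $D'$ to which the semipositivity theorem can be applied.

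Second, I would establish the base case. Using Kawamata's covering lemma I pass to a generically finite base change $Y_1\to Y$ that makes the local monodromies of the relevant local system $R^{n+i}f_{0*}\mathbb{C}$ around $\Sigma=Y\setminus Y_0$ unipotent; after a further log resolution this places the situation squarely inside the hypotheses of Theorem \ref{p-thm7.5}, which then yields that the relevant direct image sheaf $R^i g_*\mathcal{O}_{X'}(K_{X'/Y_1}+D')$ is locally free and semipositive, in particular nef and hence weakly positive. I would then descend this semipositivity back to $Y$, using that weak positivity is preserved under pushforward by finite surjections and is detectable after a generically finite base change, giving weak positivity of the base sheaf $f_*\mathcal{O}_X(k(K_{X/Y}+\Delta))$.

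Third comes the bootstrap to all $m$. Here I use Viehweg's fiber product trick: for the $s$-fold fiber product $f^{(s)}\colon X^{(s)}=X\times_Y\cdots\times_Y X\to Y$ there is a base-change identification relating $\widehat{S}^{s}\!\left(f_*\mathcal{O}_X(mk(K_{X/Y}+\Delta))\right)$ to the direct image of the corresponding log canonical sheaf on a resolution of $X^{(s)}$. Combining this with the generic global generation encoded in Definition \ref{p-def8.1} and with Koll\'ar-type torsion-freeness for the higher direct images, weak positivity of the base sheaf propagates to weak positivity of $f_*\mathcal{O}_X(mk(K_{X/Y}+\Delta))$ for every $m$. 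The arithmetic of the index $k$ and the power $m$ is handled by choosing the cyclic cover in the first step to have degree divisible by $mk$, so that the twisted log canonical sheaf of interest appears as an eigensheaf summand.

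The main obstacle, I expect, is the base case rather than the formal bootstrap: one must genuinely produce the mixed Hodge-theoretic semipositivity of Theorem \ref{p-thm7.5} in the presence of the boundary $D$ and of log canonical singularities, which forces both the unipotent-reduction base change and a careful check that, after resolving the covers, the boundary remains reduced snc with all strata dominating $Y$, so that the hypotheses of Theorem \ref{p-thm7.5} are literally met. The delicate flatness and base-change compatibilities needed to run Viehweg's trick over the locus where $f$ may fail to be flat or smooth are the secondary technical difficulty; these are precisely the points where passing from Viehweg's \emph{pure} case to the \emph{mixed} case, as developed in \cite{fujino-weak}, requires genuine care.
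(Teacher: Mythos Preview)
Your outline is essentially sound and follows the route of \cite[Sections 7--8]{fujino-weak}: reduce via resolution and cyclic covers to a direct image of the shape $f_*\mathcal O_V(K_{V/W}+D)$ with $D$ reduced simple normal crossing (this is Theorem~\ref{p-thm8.4}), establish the base case, and run Viehweg's fiber product trick. The paper does the same reduction --- it simply cites \cite[Section 8]{fujino-weak} for the passage from Theorem~\ref{p-thm8.3} to Theorem~\ref{p-thm8.4} --- but the crucial difference is in the base case. You invoke Theorem~\ref{p-thm7.5}, which requires unipotent reduction via Kawamata's covering lemma and the full machinery of variations of mixed Hodge structure. The paper's point in Section~\ref{p-sec8} is precisely to \emph{avoid} this: instead of semipositivity, it uses Lemma~\ref{p-lem8.5}, a simple consequence of a Koll\'ar-type vanishing theorem plus Castelnuovo--Mumford regularity, to get that $f^{(s)}_*\mathcal O_{V^{(s)}}(K_{V^{(s)}/W}+D^{(s)})\otimes \mathcal H^{\otimes m}$ is globally generated for $m$ large and \emph{independent of $s$}. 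This feeds directly into the fiber product identification over the flat locus and yields weak positivity without ever touching Hodge theory. Your approach buys a conceptually uniform framework (everything flows from one semipositivity theorem) but at the cost of heavy Hodge-theoretic input; the paper's approach is more elementary and makes Theorem~\ref{p-thm1.2} independent of the theory of variations of mixed Hodge structure, which is the stated goal of the section.

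One minor muddle in your Step~3: the fiber product trick is not a bootstrap from $m=1$ to general $m$; rather, for each fixed $m$ it controls the symmetric powers $\widehat S^s$ needed in Definition~\ref{p-def8.1}. The dependence on $m$ is already absorbed in the cyclic-cover reduction of your Step~1, as you note at the end of that paragraph.
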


Once we establish Theorem \ref{p-thm8.3}, we can 
prove Theorem \ref{p-thm6.1} without any difficulties. 
For the details, see \cite[Section 10]{fujino-weak}. 
Theorem \ref{p-thm8.3} is 
sufficient for \cite[Sections 9 and 10]{fujino-weak}. 
A key ingredient of Theorem \ref{p-thm8.3} is the following result. 

\begin{thm}[{see \cite[Corollary 7.11]{fujino-weak}}]\label{p-thm8.4} 
Let $f\colon V\to W$ be a surjective 
morphism between smooth 
projective varieties. 
Let $D$ be a simple normal crossing divisor on $V$. 
Then 
\begin{equation*} 
f_*\mathcal O_V(K_{V/W}+D)
\end{equation*}  
is weakly positive. 
\end{thm}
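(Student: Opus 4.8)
The plan is to follow Viehweg's method for weak positivity, but with the Hodge-theoretic input replaced by the generalization of Koll\'ar's vanishing theorem from \cite{fujino-higher}; in particular no unipotent-reduction covering is needed. First I would unwind Definition \ref{p-def8.1}: writing $\mathcal{F}=f_*\mathcal{O}_V(K_{V/W}+D)$, I fix an ample line bundle $\mathcal{H}$ on $W$ and a positive integer $\alpha$, and must produce some $\beta$ for which $\widehat S^{\alpha\beta}(\mathcal{F})\otimes\mathcal{H}^{\otimes\beta}$ is generically generated by global sections. Since $\mathcal{F}$ is torsion-free, Remark \ref{p-rem8.2} lets me work over the largest open set $\widehat W$ on which $\mathcal{F}$ is locally free and discard closed subsets of codimension $\geq 2$ freely throughout.

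The main device is the fiber-product trick. For a positive integer $m$ I would form the $m$-fold fiber product $V\times_W\cdots\times_W V$, choose a resolution $f^{(m)}\colon V^{(m)}\to W$, and equip $V^{(m)}$ with a simple normal crossing divisor $D^{(m)}$ built from the pullbacks $\sum_i \mathrm{pr}_i^*D$ together with the exceptional locus. Over the open set where $f$ is smooth and $D$ is relatively normal crossing, the relative dualizing sheaf of a fiber product is the external tensor product of the factors, so there is a canonical identification relating $\bigotimes^m\mathcal{F}$ to $\mathcal{G}_m=f^{(m)}_*\mathcal{O}_{V^{(m)}}(K_{V^{(m)}/W}+D^{(m)})$; taking $\mathfrak S_m$-invariants then exhibits $\widehat S^m(\mathcal{F})$ as a direct summand, since we are in characteristic zero. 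This reduces the control of symmetric powers of $\mathcal{F}$ to the control of a single pushforward $\mathcal{G}_m$ of exactly the same type, now for the morphism $f^{(m)}$.

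Next I would establish the generation statement for such a single pushforward. Twisting by a sufficiently high power of $\mathcal{H}$, the generalized Koll\'ar vanishing theorem of \cite{fujino-higher} gives $H^i(W, \mathcal{G}_m\otimes\mathcal{H}^{\otimes k})=0$ for all $i>0$ and all $k\gg 0$, and Castelnuovo--Mumford regularity then forces $\mathcal{G}_m\otimes\mathcal{H}^{\otimes k}$ to be generated by global sections over $\widehat W$. Feeding this back through the fiber-product identification of the previous step and bookkeeping the twists produces, for a suitable $\beta$, the generic global generation of $\widehat S^{\alpha\beta}(\mathcal{F})\otimes\mathcal{H}^{\otimes\beta}$, which is precisely weak positivity.

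The step I expect to be the main obstacle is the second one: promoting the clean identification $\mathcal{G}_m\cong\bigotimes^m\mathcal{F}$ from the good open locus across the degeneration of $f$, and, most delicately, controlling the exceptional divisors introduced when resolving the singular and possibly non-reduced fiber product. One must verify that $K_{V^{(m)}/W}+D^{(m)}$ dominates $\sum_i\mathrm{pr}_i^*(K_{V/W}+D)$ in such a way that no global sections are lost, while the discrepancy is an effective $f^{(m)}$-exceptional divisor so that no extra positivity is manufactured. This divisor bookkeeping, rather than the vanishing input, is the technical heart of the argument, and it is here that torsion-freeness and the codimension $\geq 2$ reductions of Remark \ref{p-rem8.2} get used repeatedly.
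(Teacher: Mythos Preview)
Your outline follows the same route as the paper---Viehweg's fiber-product trick plus the Koll\'ar-type vanishing of \cite{fujino-higher} and Castelnuovo--Mumford regularity (this is exactly Lemma~\ref{p-lem8.5} and Step~\ref{p-8.4-step2})---but there is one genuine gap and one reversed inequality.

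The gap is uniformity. You write that vanishing gives global generation of $\mathcal{G}_m\otimes\mathcal{H}^{\otimes k}$ ``for all $k\gg 0$'', but this is true of any coherent sheaf and by itself says nothing. What makes the argument produce weak positivity is that the bound on $k$ can be taken \emph{independent of $m$}: Lemma~\ref{p-lem8.5} shows one may take $k\geq b(\dim W+1)+a$, where $a$ makes $\mathcal{O}_W(-K_W)\otimes\mathcal{H}^{\otimes a}$ nef and $b$ makes $|\mathcal{H}^{\otimes b}|$ free---quantities depending only on $(W,\mathcal{H})$, not on $V^{(m)}$ or $m$. With this uniform constant $c$ one sets $\beta=c$ and $s=\alpha\beta$ and weak positivity follows; without it, ``bookkeeping the twists'' yields nothing. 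You should state this explicitly rather than hide it.

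The reversed inequality is in your divisor comparison. The paper takes $D^{(s)}$ to be the horizontal part of the resolved pullback, so that $D^{(s)}\le\pi^*D^s$; combined with $\pi_*\omega_{V^{(s)}}\subset\omega_{V^s}$ over the flat Gorenstein locus $W^\dag$, this gives an inclusion $\mathcal{G}_s\hookrightarrow\bigl(\bigotimes^s\mathcal{F}\bigr)^{**}$ which is an isomorphism over $W_0$. Global generation of the left side then gives generic global generation of the right side, hence of its quotient $\widehat S^s(\mathcal{F})$; no $\mathfrak{S}_s$-splitting is needed. Your claim that $K_{V^{(m)}/W}+D^{(m)}$ \emph{dominates} $\sum_i\mathrm{pr}_i^*(K_{V/W}+D)$ points the inclusion the wrong way, and with that direction global generation of $\mathcal{G}_m$ does not descend. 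Finally, the paper's Step~\ref{p-8.4-step1} first arranges that every stratum of $D$ dominates $W$ and that the discriminant on $W$ is simple normal crossing; this is what permits the resolution $V^{(s)}$ to be chosen with the listed properties, and it is not the same as merely ``working over the good open locus''.
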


By Theorem \ref{p-thm8.4}, 
the arguments in \cite[Section 8]{fujino-weak} 
work without any modifications and produce Theorem \ref{p-thm8.3}. 
We recommend the reader to see \cite[Section 8]{fujino-weak}. 
In \cite[Section 7]{fujino-weak}, we give a proof of Theorem \ref{p-thm8.4} 
based on the theory of variations of mixed Hodge structure 
(cf.~Theorem \ref{p-thm7.5}). 
Here, we give a more elementary proof 
based on the following easy observation. 

\begin{lem}\label{p-lem8.5}
Let $f\colon X\to Y$ be a 
surjective morphism 
from a smooth projective variety $X$ to a projective 
variety $Y$ and let $D$ be a simple normal crossing 
divisor on $X$. 
Let $\mathcal A$ be an ample 
line bundle on $Y$ such 
that $|\mathcal A|$ is free and let $\mathcal B$ be a line bundle 
on $Y$ such that $\mathcal A^{\otimes a}\otimes \mathcal B$ is nef for 
some positive integer $a$. 
Then 
\begin{equation*}
R^if_*\mathcal O_X(K_X+D)
\otimes \mathcal B\otimes \mathcal A^{\otimes m}
\end{equation*}  
is generated by global sections 
for every $i$ and every positive integer 
$m\geq \dim Y+1+a$.  
\end{lem}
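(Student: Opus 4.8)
The plan is to deduce global generation from a Castelnuovo--Mumford regularity argument, feeding in a Koll\'ar-type vanishing theorem as the only nontrivial input. Write $\mathcal G=R^if_*\mathcal O_X(K_X+D)$ and set $\mathcal F=\mathcal G\otimes \mathcal B\otimes \mathcal A^{\otimes m}$. Since $|\mathcal A|$ is free, $\mathcal A$ is a globally generated ample line bundle, so Mumford's regularity criterion applies: to conclude that $\mathcal F$ is generated by global sections it suffices to show that $\mathcal F$ is $0$-regular with respect to $\mathcal A$, that is,
\[
H^j(Y, \mathcal F\otimes \mathcal A^{\otimes(-j)})=H^j\bigl(Y, \mathcal G\otimes \mathcal B\otimes \mathcal A^{\otimes(m-j)}\bigr)=0
\]
for every $j>0$.

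For $j>\dim Y$ this vanishing is automatic, since $\mathcal G\otimes \mathcal B\otimes \mathcal A^{\otimes(m-j)}$ is a coherent sheaf on the $(\dim Y)$-dimensional variety $Y$. Hence I only have to treat the range $1\le j\le \dim Y$. In this range the hypothesis $m\ge \dim Y+1+a$ forces $m-j-a\ge 1$, so I can factor
\[
\mathcal B\otimes \mathcal A^{\otimes(m-j)}=\mathcal A^{\otimes(m-j-a)}\otimes\bigl(\mathcal A^{\otimes a}\otimes \mathcal B\bigr).
\]
Here $\mathcal A^{\otimes(m-j-a)}$ is ample because $m-j-a\ge 1$, while $\mathcal A^{\otimes a}\otimes \mathcal B$ is nef by assumption; since the tensor product of an ample line bundle with a nef one is again ample, $\mathcal B\otimes \mathcal A^{\otimes(m-j)}$ is an ample line bundle on $Y$.

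It then remains to invoke the generalization of the Koll\'ar vanishing theorem recorded in \cite{fujino-higher}: for the surjective morphism $f\colon X\to Y$ with $X$ smooth projective, $D$ a simple normal crossing divisor, and $\mathcal H$ any ample line bundle on $Y$, one has $H^j(Y, R^if_*\mathcal O_X(K_X+D)\otimes \mathcal H)=0$ for all $j>0$. Applying this with $\mathcal H=\mathcal B\otimes \mathcal A^{\otimes(m-j)}$ yields the required vanishing in the range $1\le j\le \dim Y$, which completes the verification of $0$-regularity and hence the proof.

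The only substantive ingredient is this vanishing theorem; everything else is the formal regularity machinery together with the elementary facts that $H^j$ vanishes above the dimension and that ample tensor nef is ample. Accordingly, I expect the point to check most carefully is simply that the Koll\'ar-type vanishing is available in the stated generality, namely with $X$ smooth, $D$ a simple normal crossing divisor, and $Y$ only assumed projective rather than smooth; this is precisely the content of the generalized statement in \cite{fujino-higher}, so no serious obstacle should arise.
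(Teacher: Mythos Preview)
Your proof is correct and follows exactly the paper's approach: the paper also applies the Koll\'ar-type vanishing from \cite{fujino-higher} to obtain $H^p(Y, R^if_*\mathcal O_X(K_X+D)\otimes \mathcal B\otimes \mathcal A^{\otimes a}\otimes \mathcal A^{m-a-p})=0$ for $p>0$, and then invokes Castelnuovo--Mumford regularity. Your write-up merely spells out the factorization and the range $j>\dim Y$ more explicitly than the paper does.
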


\begin{proof}[Proof of Lemma \ref{p-lem8.5}] 
By \cite[Theorem 2.6]{fujino-higher} (see also \cite[Theorem 6.3 (ii)]
{fujino-fundamental}), 
we obtain that 
\begin{equation*} 
H^p(Y, R^if_*\mathcal O_X(K_X+D)\otimes \mathcal B\otimes \mathcal A^{\otimes a}
\otimes \mathcal A^{m-a-p})=0
\end{equation*}  
for $p>0$. By Castelnuovo--Mumford regularity, we see 
that 
$R^if_*\mathcal O_X(K_X+D)\otimes 
\mathcal B\otimes \mathcal A^{\otimes m}$ 
is generated by global sections for 
every $i$ and $m\geq \dim Y+1+a$. 
\end{proof}

Let us start the proof of Theorem \ref{p-thm8.4}. 

\begin{proof}[Proof of Theorem \ref{p-thm8.4}] 
In Step \ref{p-8.4-step1}, we reduce the problem to a simpler case. 
In Step \ref{p-8.4-step2}, we use Viehweg's clever trick and 
obtain the desired weak positivity. 

\begin{step}\label{p-8.4-step1}
By replacing $D$ with its horizontal part, we may assume that 
every irreducible component of $D$ is dominant 
onto $W$ (see \cite[Lemma 7.7]{fujino-weak}). 
If there is a log canonical center $C$ of $(V, D)$ such that 
$f(C)\subsetneq W$, then we take the blow-up 
$h\colon V'\to V$ along $C$. 
We put 
\begin{equation*}
K_{V'}+D'=h^*(K_V+D). 
\end{equation*}  
Then $D'$ is a simple normal crossing divisor on $V'$ and 
\begin{equation*} 
f_*\mathcal O_V(K_{V/W}+D)
\simeq (f\circ h)_*\mathcal O_{V'}(K_{V'/W}+D').
\end{equation*}  
Therefore, we can replace $(V, D)$ with $(V', D')$. 
Then we replace $D$ with its horizontal part (see 
\cite[Lemma 7.7]{fujino-weak}). 
By repeating this process finitely many times, 
we may assume that every stratum of $D$ is dominant onto $W$. 
Now we take a closed subset $\Sigma$ of $W$ such that 
$f$ is smooth over $W\setminus \Sigma$ and that $D$ is relatively 
normal crossing over $W\setminus \Sigma$. 
Let $g\colon W'\to W$ be a birational morphism from 
a smooth projective variety $W'$ such that 
$\Sigma'=g^{-1}(\Sigma)$ is a simple 
normal crossing divisor. By taking some suitable 
blow-ups of $V$ in $f^{-1}(\Sigma)$ and 
replacing $D$ with its strict transform, 
we may further assume the following 
conditions: 
\begin{itemize}
\item[(i)] $f'=g^{-1}\circ f\colon V\to W'$ 
is a morphism, 
\item[(ii)] $f'$ is smooth over $W'\setminus \Sigma'$ 
and $D$ is relatively 
normal crossing over $W'\setminus \Sigma'$, and 
\item[(iii)] every irreducible 
component of $D$ is dominant 
onto $W$ and $\Supp (f'^*\Sigma'+D)$ 
is a simple normal crossing divisor 
on $V$. 
\end{itemize}
\begin{equation*}
\xymatrix{
V\ar[d]_{f'} \ar[dr]^{f}& \\
   W' \ar[r]_{g} & W
} 
\end{equation*} 
Here we used Szab\'o's resolution lemma. 
We assume that 
$f'_*\mathcal O_V(K_{V/W'}+D)$ is weakly positive. 
Note that 
\begin{equation*} 
f'_*\mathcal O_V(K_{V/W}+D)\simeq f'_*\mathcal 
O_V(K_{V/W'}+D)\otimes \mathcal O_{W'}(E)
\end{equation*}  
where $E$ is a $g$-exceptional effective divisor such that $K_{W'}
=g^*K_W+E$.
Thus $f'_*\mathcal O_V(K_{V/W}+D)$ is weakly positive. 
We note that 
\begin{equation*}
g_*f'_*\mathcal O_{V}(K_{V/W}+D)\simeq f_*\mathcal O_V(K_{V/W}+D). 
\end{equation*}  
We can take an effective $g$-exceptional divisor $F$ on $W'$ such that 
$-F$ is $g$-ample. 
Let $H$ be an ample Cartier divisor 
on $W$. 
Then there exists a positive integer $k$ such that 
$kg^*H-F$ is ample. 
Let $\alpha$ be a positive integer. 
Since $f'_*\mathcal O_V(K_{V/W}+D)$ is weakly positive, 
\begin{equation*} 
\widehat{S}^{k\alpha\beta}(f'_*\mathcal O_V(K_{V/W}+D))\otimes 
\mathcal O_{W'}(\beta(kg^*H-F))
\end{equation*}  
is generically generated by 
global sections for some positive integer $\beta$. 
By taking $g_*$, 
\begin{equation*} 
\widehat {S}^{\alpha k \beta}(f_*\mathcal O_V(K_{V/W}+D))\otimes 
\mathcal O_{W}(k\beta H)
\end{equation*}  
is generically generated by global sections. 
This means that 
$f_*\mathcal O_V(K_{V/W}+D)$ is weakly positive. 
Therefore, all we have to do is to prove that 
$f'_*\mathcal O_V(K_{V/W'}+D)$ is weakly positive. 
\end{step}
\begin{step}\label{p-8.4-step2}
By replacing $W$ with $W'$, we may assume that 
$W'=W$. 
Note that $f_*\omega_{V/W}$ and 
$f_*(\omega_{V/W}\otimes \mathcal O_V(D))$ are 
locally free on $W_0=W\setminus \Sigma$. 
Let $s$ be an arbitrary positive integer. 
We take the $s$-fold fiber product 
\begin{equation*} 
V^s=V\times _WV\times _W\cdots \times _WV. 
\end{equation*}  
We put $f^s\colon V^s\to W$. 
Let $p_i\colon V^s\to V$ be the $i$-th projection for $1\leq i\leq s$. 
Let $W^\dag$ be a Zariski open set of $W$ such that 
$f$ is flat over $W^\dag$ and that 
$\codim_W(W\setminus W^\dag)\geq 2$. 
We may assume that 
$W_0\subset W^\dag\subset W$. 
We put $V^\dag=f^{-1}(W^\dag)$. 
We may further assume that $f_*\omega_{V^\dag/ W^\dag}$ and 
$f_*(\omega_{V^\dag/W^\dag}\otimes \mathcal O_{V^\dag}(D))$ are locally 
free. 
By the flat base change theorem (see, for example, 
\cite[Section 4]{mori} and \cite[Section 3.1]{iitaka-conjecture}), 
we obtain an isomorphism 
\begin{equation*} 
f^s_*\omega_{{V^\dag}^s/W^\dag}
\simeq \bigotimes _{i=1}^sf_*\omega_{V^\dag/W^\dag}, 
\end{equation*}  
where ${V^\dag}^s$ is the $s$-fold 
fiber product 
\begin{equation*} 
V^\dag\times _{W^\dag}\cdots \times _{W^\dag}V^\dag. 
\end{equation*} 
We put $D^s=\sum _{i=1}^sp_i^*D$. 
Then, by the same argument, 
we have an isomorphism 
\begin{equation}\label{p-eq8.1}
f^s_*(\omega_{{V^\dag}^s/W^\dag}\otimes \mathcal O_{{V^\dag}^s}(D^s))
\simeq 
\bigotimes_{i=1}^sf_*(\omega_{V^\dag/W^\dag}
\otimes \mathcal O_{V^\dag}(D)). 
\end{equation} 
Let $\pi\colon V^{(s)}\to V^s$ be a resolution such that 
$\pi$ is an isomorphism over $(f^s)^{-1}(W_0)$ with the 
following properties: 
\begin{itemize}
\item[(i)] $V^{(s)}$ is a smooth projective variety, 
\item[(ii)] $f^{(s)}=f^s\circ \pi\colon  V^{(s)}\to W$ is smooth over $W_0$, 
\item[(iii)] $D^{(s)}$ is a simple normal crossing divisor on $V^{(s)}$, 
\item[(iv)] $\Supp (D^{(s)}+(f^{(s)})^*\Sigma)$ is a simple normal crossing 
divisor on $V^{(s)}$, 
\item[(v)] every irreducible component of $D^{(s)}$ is dominant onto 
$W$, and 
\item[(vi)] $D^{(s)}$ coincides with $D^s$ over $W_0$. 
\end{itemize}
Note that ${V^\dag}^s$ is Gorenstein. 
We have 
\begin{equation*} 
\pi_*\mathcal O_{{V^\dag}^{(s)}}(K_{{V^\dag}^{(s)}})\subset 
\omega _{{V^\dag}^s}, 
\end{equation*}  
where ${V^\dag}^{(s)}=\pi^{-1}({V^\dag}^s)$. 
Therefore, we obtain 
\begin{equation}\label{p-eq8.2}
\pi_*\mathcal O_{{V^\dag}^{(s)}}(K_{{V^\dag}^{(s)}}+D^{(s)}-\pi^*D^s)\subset 
\omega _{{V^\dag}^s}
\end{equation} 
since $D^{(s)}-\pi^*D^s\leq 0$. 
Thus we have 
a natural inclusion 
\begin{align*}
f^{(s)}_*\mathcal O_{V^{(s)}}(K_{V^{(s)}/W}+D^{(s)})
\hookrightarrow 
\left(\bigotimes _{i=1}^sf_*\mathcal O_V(K_{V/W}+D)\right)^{**}
\end{align*}
which is an isomorphism over $W_0$ by \eqref{p-eq8.1} and \eqref{p-eq8.2}. 
Let $\mathcal H$ be an ample line bundle on $W$. 
Then 
\begin{equation*} 
f^{(s)}_*\mathcal O_{V^{(s)}}(K_{V^{(s)}/W}+D^{(s)})\otimes 
\mathcal H^{\otimes m}
\end{equation*} 
is generated by global sections for every positive integer $s$ 
and for every $m\geq b(\dim W+1)+a$, where 
$a$ is a positive integer such that 
$\mathcal O_W(-K_W)\otimes \mathcal H^{\otimes a}$ is nef and 
$b$ is a positive integer such that $|\mathcal H^{\otimes b}|$ is free by 
Lemma \ref{p-lem8.5}. 
Therefore, we obtain that 
\begin{equation*} 
\left(\bigotimes _{i=1}^sf_*\mathcal O_V(K_{V/W}+D)\right)^{**}
\otimes \mathcal H^{\otimes m}
\end{equation*}  
is generated by global sections over $W_0$, where 
$s$ and $m$ are as above. 
This means that 
\begin{equation*}
\widehat{S}^{\alpha \beta}(f_*\mathcal O_V(K_{V/W}+D))
\otimes \mathcal H^{\otimes \beta} 
\end{equation*}  
is generated by global sections over $W_0$ for 
every $\alpha \geq 1$ and $\beta\geq b(\dim W+1)+a$. 
Therefore, $f_*\mathcal O_V(K_{V/W}+D)$ is weakly positive. 
\end{step}
We complete the proof of Theorem \ref{p-thm8.4}. 
\end{proof}

\begin{rem}\label{p-rem8.6}
Note that $f_*\mathcal O_V(K_{V/W}+D)$ is locally free 
in Step \ref{p-8.4-step2} in the 
proof of Theorem \ref{p-thm8.4}. 
This is because $f_*\mathcal O_V(K_{V/W}+D)$ is the 
upper canonical extension of the bottom Hodge filtration of a suitable 
variation of mixed Hodge structure (cf.~Theorem \ref{p-thm7.5}). 
\end{rem}

Anyway, by this section, Theorem \ref{p-thm6.1} is now released from the 
deep results of the theory of variations of mixed Hodge structure. 
This means that Theorem \ref{p-thm1.2} is also independent of 
the theory of variations of mixed Hodge structure. 

\section{Finite covers of quasi-abelian varieties}\label{p-sec9}

In this section, we discuss finite covers of abelian and 
quasi-abelian varieties. Let us start with the following well-known 
theorem due to Kawamata--Viehweg. 

\begin{thm}[{see \cite[Main Theorem]{kawamata-viehweg} and 
\cite[Theorem 4]{kawamata-abelian}}]\label{p-thm9.1} 
Let $f\colon X\to A$ be a finite surjective morphism 
from a normal complete variety $X$ to an abelian 
variety $A$. 
Assume that the Kodaira dimension $\kappa (X)$ of $X$ is zero. 
Then $f$ is an \'etale morphism. 
\end{thm}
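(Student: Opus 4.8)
The plan is to reduce the \'etaleness of $f$ to the structure theorem for the Albanese map of varieties of Kodaira dimension zero, namely Theorem \ref{p-thm5.1}. Since $f$ is finite and $A$ is projective, $X$ is projective; passing to a resolution $\mu\colon \widetilde X\to X$, we obtain a smooth projective variety $\widetilde X$ with $\kappa(\widetilde X)=\kappa(X)=0$. The composite $g:=f\circ\mu\colon \widetilde X\to A$ is a morphism to an abelian variety, hence, after a harmless translation, factors through the Albanese map as $g=h\circ a$, where $a\colon \widetilde X\to \operatorname{Alb}(\widetilde X)$ is the Albanese map and $h\colon \operatorname{Alb}(\widetilde X)\to A$ is a homomorphism of abelian varieties.

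The heart of the argument is a dimension count. By Theorem \ref{p-thm5.1}, the hypothesis $\kappa(\widetilde X)=0$ forces $a$ to be surjective with connected fibers; in particular $q:=\dim \operatorname{Alb}(\widetilde X)\le \dim \widetilde X$. On the other hand $g$ is surjective, so $h$ is surjective and $q\ge \dim A$. Since $f$ is finite and surjective we have $\dim \widetilde X=\dim X=\dim A$, and these inequalities collapse to $q=\dim A=\dim \widetilde X$. Two consequences follow at once: $h$ is a surjective homomorphism of abelian varieties of equal dimension, hence an isogeny and therefore \'etale; and $a$ is a surjective morphism between varieties of the same dimension with connected general fibers, hence birational. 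Writing $\psi:=a\circ \mu^{-1}\colon X\dashrightarrow \operatorname{Alb}(\widetilde X)$, we get a birational map with $h\circ\psi=f$, so that $f$ is realized, at the level of rational maps, as the composite of a birational map with the \'etale isogeny $h$.

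It remains to promote $\psi$ to an isomorphism, and this is the step I expect to be the main obstacle, precisely because $X$ is only assumed normal. The plan is a descent through the fibre product $Y:=X\times_A\operatorname{Alb}(\widetilde X)$ formed from $f$ and $h$. Since $h$ is \'etale, the projection $\mathrm{pr}_X\colon Y\to X$ is \'etale, so $Y$ is normal and is the disjoint union of its irreducible components. The relation $h\circ\psi=f$ yields a rational section $\sigma\colon X\dashrightarrow Y$, $x\mapsto(x,\psi(x))$, whose image closure is an irreducible component $Y_0$ of $Y$; the restriction $\mathrm{pr}_X|_{Y_0}\colon Y_0\to X$ is then finite \'etale and admits $\sigma$ as a rational inverse, so it has degree one and hence is an isomorphism. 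Consequently $\psi=\mathrm{pr}_{\operatorname{Alb}}\circ\sigma\colon X\to \operatorname{Alb}(\widetilde X)$ is a finite birational morphism onto a smooth, hence normal, target, so by Zariski's main theorem it is an isomorphism. Then $f=h\circ\psi$ is the composite of an isomorphism and an \'etale isogeny, and is therefore \'etale.

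Two routine points must be checked along the way: that $\kappa$ is computed on a resolution, so the hypothesis is read as $\kappa(\widetilde X)=0$ and Theorem \ref{p-thm5.1} applies, and that $\deg f=\deg h$, which follows from $\psi$ being birational and underlies the degree-one conclusion for $\mathrm{pr}_X|_{Y_0}$. As a sanity check consistent with this picture, the ramification formula gives $K_X\sim f^*K_A+R=R\ge 0$, so the vanishing of the ramification divisor $R$ is exactly the codimension-one \'etaleness produced by the isomorphism $X\cong\operatorname{Alb}(\widetilde X)$.
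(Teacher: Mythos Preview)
Your proposal is correct and follows the same overall route as the paper: resolve $X$, apply Theorem~\ref{p-thm5.1} to see that the Albanese map $a\colon \widetilde X\to \operatorname{Alb}(\widetilde X)$ is birational and that the induced $h\colon \operatorname{Alb}(\widetilde X)\to A$ is an isogeny, hence \'etale, and then identify $X$ with $\operatorname{Alb}(\widetilde X)$ over $A$.

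The only difference is in that last identification. You construct the isomorphism $X\cong\operatorname{Alb}(\widetilde X)$ by forming the fibre product $Y=X\times_A\operatorname{Alb}(\widetilde X)$, singling out the component $Y_0$ containing the graph of $\psi$, and checking that $\mathrm{pr}_X|_{Y_0}$ has degree one. This works, but the paper dispatches the step in one line: both $X$ and $\operatorname{Alb}(\widetilde X)$ are normal, finite over $A$, and have function field $\mathbb C(\widetilde X)$ (since $\mu$ and $a$ are birational), so each is \emph{the} normalization of $A$ in $\mathbb C(\widetilde X)$, and they are therefore canonically isomorphic over $A$. Your fibre-product argument is essentially an unwound version of this uniqueness-of-normalization statement; invoking it directly is the cleaner endgame and removes the need for the component-and-section bookkeeping.
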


\begin{proof}
Let $\pi\colon \widetilde X\to X$ be a resolution of singularities from a smooth 
projective variety $\widetilde X$. 
Then $q(\widetilde X)\geq \dim \widetilde X$ since 
$f\circ \pi\colon \widetilde X\to A$ is surjective. 
Therefore, $\widetilde X$ is birationally equivalent to 
an abelian variety by Theorem \ref{p-thm5.1} and 
Corollary \ref{p-cor5.2} since $\kappa (\widetilde X)=0$. 
We consider the following commutative diagram
\begin{equation*}
\xymatrix{
\widetilde X\ar[d]_{\alpha_{\widetilde X}}\ar[r]^\alpha&A\\
\mathcal A_{\widetilde X}\ar[ur]_g&
}
\end{equation*} 
where $\alpha_{\widetilde X}\colon 
\widetilde X\to \mathcal A_{\widetilde X}$ is 
the Albanese map of $\widetilde X$. 
Of course, $\alpha_{\widetilde X}$ is birational and $g$ is a finite 
\'etale morphism between abelian varieties. 
Note that both $X$ and $\mathcal A_{\widetilde X}$ are 
the normalization of $A$ in $\mathbb C(\widetilde X)$. 
Therefore, $X$ is isomorphic to $\mathcal A_{\widetilde X}$ over 
$A$. 
This means that $f\colon X\to A$ is an \'etale morphism. 
\end{proof}

\begin{rem}\label{p-rem9.2}
Kawamata's original 
proof of Theorem \ref{p-thm5.1}, 
which is \cite[Theorem 1]{kawamata-abelian}, 
in \cite{kawamata-abelian} uses Theorem \ref{p-thm9.1} 
(see \cite[Theorem 4]{kawamata-abelian} and 
\cite[Main Theorem]{kawamata-viehweg}). 
However, Ein--Lazarsfeld's approach in \cite[Section 2]{ein-lazarsfeld} 
does not need Theorem \ref{p-thm9.1} (see 
\cite[Theorem 4]{kawamata-abelian} 
and \cite[Main Theorem]{kawamata-viehweg}) 
for the 
proof of Theorem \ref{p-thm5.1} (see 
\cite[Theorem 1]{kawamata-abelian}). 
Therefore, 
there are no problems if 
we use Theorem \ref{p-thm5.1} 
for the proof of Theorem \ref{p-thm9.1}. 
\end{rem}

We can generalize Theorem \ref{p-thm9.1} as follows. 
Theorem \ref{p-thm9.3} is nothing but 
\cite[Theorem 26]{kawamata-abelian}. 

\begin{thm}[Finite covers of 
quasi-abelian varieties]\label{p-thm9.3}
Let $f\colon X\to A$ be a 
finite surjective morphism from 
a normal variety $X$ to 
a quasi-abelian variety $A$. 
Assume that the logarithmic 
Kodaira dimension $\overline \kappa (X)$ of 
$X$ is zero. 
Then 
$f$ is an \'etale morphism. 
\end{thm}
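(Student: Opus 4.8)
The plan is to prove that $f$ is unramified in codimension one; since $A$ is smooth and $X$ is normal, purity of the branch locus (Zariski--Nagata) then forces $f$ to be \'etale (and $X$ smooth). Thus the whole problem reduces to showing that the branch divisor $B\subset A$ of $f$ is empty. Throughout I pass to a resolution of $X$ whenever smoothness is needed, noting that $\overline\kappa$ is unaffected (Lemma \ref{p-lem2.3}), and I record two easy facts. First, if $g\colon W\to G$ is finite surjective from a normal variety onto a quasi-abelian variety $G$, then $\overline\kappa(W)\geq 0$: by Theorem \ref{p-thm4.3} the sheaf $\Omega^1_{\overline G}(\log \partial G)$ is free, so pulling back a generator of $\mathcal O_{\overline G}(K_{\overline G}+\partial G)\cong \mathcal O_{\overline G}$ along a resolution of $g$ gives a nonzero section of $K+D$ (the pullback of a nonzero top log form under a generically finite map is nonzero). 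Second, a finite surjective cover $C\to \mathbb G_m$ from a normal curve with $\overline\kappa(C)=0$ is \'etale: compactifying, $\overline\kappa(C)=0$ forces $2g(\overline C)-2+\#\overline D=0$ with $\#\overline D\geq 2$, so $\overline C\cong \mathbb P^1$ and the cover is $z\mapsto z^m$, which is \'etale over $\mathbb G_m$.

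I then attack the branch components according to the Chevalley decomposition $0\to T\to A\to A_0\to 0$ with $T=\mathbb G^d_m$. Let $B_0\subset A$ be a prime branch divisor. If $B_0$ is \emph{not} $T$-invariant, I choose one generic one-dimensional subtorus $T'\cong \mathbb G_m\subset T$ not contained in the identity component of $\operatorname{Stab}_T(B_0)$ (avoiding the finitely many proper subtori attached to the non-invariant components at once), and form $\pi\colon A\to A'':=A/T'$, again quasi-abelian (Lemma \ref{p-lem2.15}) of dimension $n-1$ with $\mathbb G_m$-fibers. Taking the Stein factorization $X\xrightarrow{\tau}Z\xrightarrow{\psi}A''$ of $\pi\circ f$, the map $\psi$ is finite, $Z$ is normal, and $\tau$ has irreducible one-dimensional general fibers $G$, each mapping finitely onto a fiber $\mathbb G_m$ of $\pi$. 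Since $Z\to A''$ is a finite cover of a quasi-abelian variety, $\overline\kappa(Z)\geq 0$ by the first fact, while $\overline\kappa(Z)\leq \overline\kappa(X)=0$ by Lemma \ref{p-lem2.6}, so $\overline\kappa(Z)=0$. Now Theorem \ref{p-thm6.3}, applied to the relative-dimension-one fibration $\tau$, gives $0=\overline\kappa(X)\geq \overline\kappa(G)+\overline\kappa(Z)=\overline\kappa(G)$, hence $\overline\kappa(G)=0$, and by the second fact each general $G\to \mathbb G_m$ is \'etale. But a horizontal $B_0$ meets the general fiber of $\pi$, forcing some general $G\to\mathbb G_m$ to ramify, a contradiction. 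So no non-$T$-invariant branch component exists.

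It remains to exclude $T$-invariant components, i.e. $B_0=\rho^{-1}(C)$ with $\rho\colon A\to A_0$ the projection and $C\subset A_0$ a prime divisor. Here I take the Stein factorization $X\to Y\xrightarrow{\psi_0}A_0$ of $\rho\circ f$; the complete normal variety $Y$ is a finite cover of the abelian variety $A_0$ with $\overline\kappa(Y)=\kappa(Y)=0$ (again $\geq 0$ by the first fact, $\leq 0$ by Lemma \ref{p-lem2.6}), so Theorem \ref{p-thm9.1} shows $\psi_0$ is \'etale. The key is that ramification of $f$ along the fiber-direction divisor $\rho^{-1}(C)$ is purely transverse to the torus fibers, hence is detected by $\psi_0$: localizing at the generic point of $C$ and using that $A\to A_0$ is smooth with irreducible fibers, the ramification of $\mathbb C(X)/\mathbb C(A)$ along $\rho^{-1}(C)$ is computed by that of $\mathbb C(Y)/\mathbb C(A_0)$ along $C$, which vanishes. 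So $B_0$ cannot occur, and combining both cases $B=\emptyset$, whence $f$ is \'etale.

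The main obstacle I expect is precisely this last transversality/descent step for $T$-invariant components: one must argue rigorously that ramification in directions tangent to $A_0$ is faithfully recorded by the abelian quotient cover $Y\to A_0$, so that Theorem \ref{p-thm9.1} can be applied. A secondary technical point, easy to overlook, is the verification that the general fibers of $\tau$ are genuinely \emph{irreducible reduced} curves surjecting onto $\mathbb G_m$, together with the compatible choice of smooth models required to invoke Theorem \ref{p-thm6.3}. Conceptually the argument is organized so that the ``toric direction'' is governed by the relative-dimension-one subadditivity of Theorem \ref{p-thm6.3} and the ``abelian direction'' by Theorem \ref{p-thm9.1}; this is exactly what lets us bypass the general (open) logarithmic subadditivity and the variation-of-mixed-Hodge-structure machinery of Kawamata's original treatment.
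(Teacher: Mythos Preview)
Your case 1 is essentially the same mechanism the paper uses (quotient by a one-dimensional subtorus, Stein-factorize, apply Theorem \ref{p-thm6.3} to force the general fiber curve over $\mathbb G_m$ to be \'etale), and it is fine. The genuine gap is exactly where you flag it: the ``transversality/descent'' step in case 2 is not true as stated, and your argument uses $\overline\kappa(X)=0$ only to make $\psi_0\colon Y\to A_0$ \'etale, not to exclude vertical ramification once that is known.

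Here is a concrete obstruction. Take $A=\mathbb G_m\times E$ with coordinate $t$ on $\mathbb G_m$, pick a rational function $g\in\mathbb C(E)^\times$ with two simple zeros, and let $X$ be the normalization of $A$ in $\mathbb C(A)[\sqrt{g\,t}]$. Then $f\colon X\to A$ is a double cover branched exactly along the two $T$-invariant divisors $\rho^{-1}(\{g=0\})$. But writing $u=\sqrt{g\,t}$ one has $t=u^2/g$, so $\mathbb C(X)=\mathbb C(E)(u)$ is \emph{purely transcendental} over $\mathbb C(E)$; hence the Stein factorization of $X\to E$ gives $Y=E$ and $\psi_0=\mathrm{id}$, which is \'etale. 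Thus \'etaleness of $\psi_0$ does \emph{not} force $f$ to be unramified along $\rho^{-1}(C)$; the vertical ramification can be absorbed into the transcendental direction and is invisible in $\mathbb C(Y)/\mathbb C(A_0)$. Of course this particular $X$ has $\overline\kappa(X)>0$, consistently with the theorem, but your case 2 never invokes $\overline\kappa(X)=0$ beyond proving $\psi_0$ \'etale, so it cannot distinguish this situation. Equivalently: after base-changing by the \'etale $Y\to A_0$ you reduce to $Y=A_0$, and then your case 2 is vacuous while vertical branching may persist.

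The paper closes this gap by arguing inductively on $d=\dim T$ rather than projecting all the way to $A_0$ at once. One peels off a single $\mathbb G_m$, gets $f_1\colon X_1\to A_1$ \'etale by the induction hypothesis, base-changes so that $X_1=A_1$, and then \emph{explicitly identifies} $X$ with a constructed \'etale $\mathbb G_m$-cover $A'\to A$: using the $\mathbb P^1$-compactification $\overline A\to A_1$ one finds an open $U\subset A_1$ over which $f$ is $(a,b)\mapsto(a^e,b)$ on $\mathbb G_m\times U$, matches this with the model $A'\to A$, and concludes $X\simeq A'$ by normality and finiteness. This constructive step is what replaces your missing descent argument.
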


\begin{proof}
Let 
\begin{equation*} 
0\longrightarrow \mathcal G_A\longrightarrow 
A\longrightarrow \mathcal A_A\longrightarrow  0
\end{equation*}  
be the Chevalley decomposition. 
We will prove that $f$ is \'etale by 
induction on $d=\dim \mathcal G_A$. 
If $d=0$, then it is Theorem \ref{p-thm9.1}. 
So, we assume that $d>0$. 
We take a subgroup 
\begin{equation*} 
G_1=\mathbb G_m\times \{1\}\times \cdots \times 
\{1\}\subset \mathbb G^d_m=\mathcal G_A. 
\end{equation*}  
We consider 
\begin{equation*} 
0\longrightarrow G_1\longrightarrow A
\overset {\pi_1}\longrightarrow A_1\longrightarrow 
0. 
\end{equation*}  
Note that $A$ is a principal $G_1$-bundle 
over $A_1$ as an algebraic variety in the Zariski topology 
(see, for example, \cite[Theorems 4.4.1 and 4.4.2]{bcm}). 
As in the proof of Lemma \ref{p-lem3.10}, we can construct 
a $\mathbb P^1$-bundle 
$\overline {\pi_1}\colon 
\overline A\to A_1$ which is a 
partial compactification of 
$\pi_1\colon  A\to A_1$. 
Let $\overline X$ be the normalization 
of $\overline A$ in $\mathbb C(X)$ and 
$\overline f\colon  \overline X\to \overline A$ is the natural map. 
Let $\overline X\to X_1\to A_1$ be the Stein 
factorization of 
$\overline {\pi_1}\circ \overline f\colon  \overline X\to 
A_1$. 
Then we have the following commutative diagram: 
\begin{equation*} 
\xymatrix{
X\ar[r]^{p_1}\ar[d]_{f}&X_1\ar[d]^{f_1}\\
A\ar[r]_{\pi_1}&A_1, 
}
\end{equation*}  
where $f_1\colon  X_1\to A_1$ is a finite morphism from a normal variety $X_1$. 
Since $f_1$ is finite and $A_1$ is a quasi-abelian variety, 
we have $\overline \kappa (X_1)\geq 0$. 
On the other hand, by Theorem \ref{p-thm6.3}, 
we have 
\begin{equation*} 
0=\overline \kappa (X)\geq \overline \kappa (F)+\overline \kappa (X_1), 
\end{equation*}  
where $F$ is a general fiber of $p_1$. 
Note that $\overline \kappa (F)\geq 0$ since $\overline 
\kappa (X)=0$. 
Therefore, we obtain $\overline \kappa (X_1)=\overline \kappa (F)=0$.  
By induction on $d$, $f_1$ is \'etale. 
By replacing $A_1$ (resp.~$A$) with 
$X_1$ (resp.~$A\times _{A_1}X_1$), 
we may assume that 
$f_1$ is the identity. 
\begin{equation*} 
\xymatrix{
X\ar[r]^{p_1}\ar[d]_{f}&A_1\ar@{=}[d]\\
A\ar[r]_{\pi_1}&A_1, 
}
\end{equation*}  
Let $x$ be a general point of $A_1$. 
Then 
\begin{equation*} 
f|_{X_x}\colon  X_x\simeq \mathbb G_m \to A_x\simeq \mathbb G_m
\end{equation*}  
is \'etale. We put $e=\deg f$. 
By construction, there are 
prime divisors $H_1$ and $H_2$ on $\overline A$ 
such that $H_1, H_2\subset \overline A- A$, 
$H_1\sim _{\overline {\pi_1}}H_2$, 
$H_1\ne H_2$, and $H_i$ is a section of 
$\overline {\pi_1}$ for 
$i=1, 2$. 
We can take a nonempty Zariski 
open set $U$ of $A_1$ such that 
\begin{itemize}
\item[(i)] $\overline {p_1}\colon 
\overline X\to A_1$ is smooth 
over $U$. 
\item[(ii)] every fiber of $\overline {p_1}$ is $\mathbb P^1$ over 
$U$. 
\item[(iii)] there are prime 
divisors $D_1$ and $D_2$ on 
$\overline X$ such that $D_1, D_2\subset \overline X -X$, 
$D_1\sim D_2$ over 
$U$, 
$D_1\ne D_2$, and $D_i$ is a section of $\overline {p_1}\colon 
\overline X\to A_1$ over $U$ for $i=1, 2$. 
\item[(iv)] ${\overline f}^*H_i=eD_i$ over $U$ for $i=1, 2$. 
\end{itemize}  
Therefore, we see that $f\colon X\to A$ is 
\begin{equation*} 
\mathbb G_m \times U\to \mathbb G_m \times U
\end{equation*}  
given by 
\begin{equation*} 
(a, b)\mapsto (a^e, b) 
\end{equation*}  
over $U$. 
On the other hand, 
we can construct a 
quasi-abelian variety $A'$ such that 
\begin{equation*} 
\xymatrix{
A'\ar[r]\ar[d]_{h}&A_1\ar@{=}[d]\\
A\ar[r]_{\pi_1}&A_1, 
}
\end{equation*} 
where $h$ is \'etale with 
$\deg h=e$ (see the description of 
quasi-abelian varieties in \ref{p-say4.1}) and that 
$h\colon A'\to A$ is 
\begin{equation*} 
\mathbb G_m \times U\to \mathbb G_m \times U
\end{equation*}  
given by 
\begin{equation*} 
(a, b)\mapsto (a^e, b)  
\end{equation*}  
over $U$, that is, $h$ 
coincides with $f$ over $U$. 
Note that $X$ is normal 
and both $f$ and $h$ are finite. 
Thus $X$ is isomorphic to $A'$ over $A$. 
Hence, we obtain that $f\colon  X\to A$ is \'etale. 
\end{proof}

We will use Theorem \ref{p-thm9.3} 
in the proof of Theorem \ref{p-thm1.2} 
(see the proof of Theorem \ref{p-thm10.1} 
in Section \ref{p-sec10}). 

\section{Quasi-Albanese maps for 
varieties with {$\overline \kappa =0$}}\label{p-sec10}

In this section, 
we give a detailed proof 
of Kawamata's theorem on 
quasi-Albanese maps for varieties 
with $\overline \kappa=0$. 

\begin{thm}[{see \cite[Theorem 28]{kawamata-abelian}}]\label{p-thm10.1}
Let $X$ be a smooth variety such 
that the logarithmic Kodaira 
dimension $\overline \kappa (X)$ 
of $X$ is zero. 
Then the quasi-Albanese map $\alpha\colon X\to A$ 
is dominant and has 
irreducible general fibers. 
\end{thm}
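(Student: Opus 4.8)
The plan is to prove dominance and irreducibility separately, leaning on the characterization of $\overline\kappa=0$ subvarieties (Theorem \ref{p-thm4.4}), the finite-cover result (Theorem \ref{p-thm9.3}), the fact that finite \'etale covers of quasi-abelian varieties are quasi-abelian (Theorem \ref{p-thm4.2}), and the universal property of $\alpha$ (Theorem \ref{p-thm3.16}).

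First I would establish dominance. Let $B=\overline{\alpha(X)}\subseteq A$ be the reduced irreducible closure of the image; since the quasi-Albanese map sends the base point to the origin, we have $0\in B$. Because $\alpha\colon X\to B$ is dominant, Lemma \ref{p-lem2.6} gives $\overline\kappa(B)\le\overline\kappa(X)=0$, while Theorem \ref{p-thm4.4} gives $\overline\kappa(B)\ge 0$; hence $\overline\kappa(B)=0$. By the equality case of Theorem \ref{p-thm4.4}, $B$ is a translate of a quasi-abelian subvariety of $A$, and since $0\in B$ it is a quasi-abelian subvariety through the origin. On the other hand, the universal property forces $B$ to generate $A$: if $B$ were contained in a quasi-abelian subvariety $A'\subsetneq A$, then $\alpha$ factors as $X\xrightarrow{\alpha'}A'\hookrightarrow A$ through the inclusion $\iota\colon A'\hookrightarrow A$, and Theorem \ref{p-thm3.16} applied to $\alpha'$ produces $\varphi\colon A\to A'$ with $\alpha'=\varphi\circ\alpha$; then $(\iota\circ\varphi)\circ\alpha=\alpha=\mathrm{id}_A\circ\alpha$, so the uniqueness clause gives $\iota\circ\varphi=\mathrm{id}_A$ and $A'=A$. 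Thus the quasi-abelian subvariety $B$ lies in no proper quasi-abelian subvariety, forcing $B=A$, i.e.\ $\alpha$ is dominant.

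Next I would treat the general fibers. Form the Stein factorization $X\xrightarrow{g}Z\xrightarrow{h}A$, where $Z$ is the normalization of $A$ in the algebraic closure $K$ of $\mathbb C(A)$ in $\mathbb C(X)$; then $h$ is finite and surjective, $Z$ is a normal irreducible variety, and the general fibers of $g$ are irreducible because $K$ is algebraically closed in $\mathbb C(X)$. Since $Z\to A$ and $X\to Z$ are both dominant, Lemma \ref{p-lem2.6} and Theorem \ref{p-thm4.3} sandwich $0=\overline\kappa(A)\le\overline\kappa(Z)\le\overline\kappa(X)=0$, so $\overline\kappa(Z)=0$. Now Theorem \ref{p-thm9.3} applies to the finite surjective morphism $h\colon Z\to A$ and shows $h$ is \'etale, and then Theorem \ref{p-thm4.2} shows that $Z$ is itself a quasi-abelian variety.

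Finally I would close the loop using universality once more. As $Z$ is quasi-abelian and $g\colon X\to Z$ is a morphism, Theorem \ref{p-thm3.16} gives $\widetilde g\colon A\to Z$ with $g=\widetilde g\circ\alpha$. Since $\alpha=h\circ g=(h\circ\widetilde g)\circ\alpha=\mathrm{id}_A\circ\alpha$, the uniqueness clause forces $h\circ\widetilde g=\mathrm{id}_A$, so $h$ is a finite \'etale morphism admitting a section; as $\widetilde g$ is then dominant, comparing degrees gives $\deg h=1$, whence $h$ is an isomorphism. Consequently the general fibers of $\alpha=h\circ g$ coincide with those of $g$, which are irreducible (indeed smooth and connected, by generic smoothness), completing the proof. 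I expect the main obstacle to be the careful setup of the Stein factorization in the non-proper setting together with the verification that $Z$ is genuinely a quasi-abelian variety, since the argument bootstraps by feeding $Z$ back into the universal property; the deeper analytic inputs (Theorems \ref{p-thm4.4} and \ref{p-thm9.3}) already encapsulate the hard subadditivity and Hodge-theoretic content.
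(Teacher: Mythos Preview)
Your proof is correct and takes a genuinely simpler route than the paper's. Both arguments form the Stein factorization $X\to Z\to A$, show that $Z$ is quasi-abelian, and then invoke the universal property to force the finite part to be an isomorphism. The difference lies in how $\overline\kappa(Z)=0$ is obtained. You first prove dominance directly (via Theorem \ref{p-thm4.4} applied to $\overline{\alpha(X)}$ together with the universal property), which makes $h$ surjective, and then sandwich $\overline\kappa(Z)$ between $\overline\kappa(A)=0$ and $\overline\kappa(X)=0$ using only Lemma \ref{p-lem2.6}. The paper, following Kawamata's original strategy, instead assumes $\overline\kappa(Z)>0$ and invokes the structure result Lemma \ref{p-lem10.4} for finite covers of quasi-abelian varieties together with the subadditivity Theorem \ref{p-thm6.1} to derive a contradiction; dominance then falls out at the very end. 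Your approach thus bypasses Lemma \ref{p-lem10.4} and Theorem \ref{p-thm6.1} entirely, leaving Theorems \ref{p-thm4.4} and \ref{p-thm9.3} (the latter still encoding Theorem \ref{p-thm6.3} and Theorem \ref{p-thm5.1}) as the only substantial inputs; the paper's route has the collateral benefit that Lemma \ref{p-lem10.4} is of independent interest. One minor wording issue: at the end, rather than ``comparing degrees'', it is cleaner to say that a section of a finite \'etale morphism is an open and closed immersion, hence an isomorphism onto $Z$ since $Z$ is irreducible.
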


As an easy consequence of 
Theorem \ref{p-thm10.1}, we have: 

\begin{cor}[{see \cite[Corollary 29]{kawamata-abelian} 
and \cite[Theorem I]{iitaka3}}]\label{p-cor10.2}
Let $X$ be a smooth variety such that 
the logarithmic Kodaira dimension $\overline \kappa(X)$ of $X$ is 
zero. 
Then we have $\overline q(X)\leq \dim X$, where 
$\overline q(X)$ is the logarithmic irregularity of $X$. 
Moreover, the equality holds if and only if 
the quasi-Albanese map $\alpha\colon X\to A$ is birational. 
\end{cor}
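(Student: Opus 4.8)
The plan is to read everything off from Theorem~\ref{p-thm10.1} together with the explicit construction of the quasi-Albanese variety in Section~\ref{p-sec3}. The first point I would record is a dimension count: by the very construction of $\widetilde{\mathcal A}_X$ as $\mathbb C^{\overline q}/L$ with $\overline q=\overline q(X)$, the quasi-Albanese variety $A=\widetilde{\mathcal A}_X$ satisfies $\dim A=\overline q(X)$. With this in hand, the inequality is immediate: by Theorem~\ref{p-thm10.1} the quasi-Albanese map $\alpha\colon X\to A$ is dominant, so $\dim X\geq \dim A=\overline q(X)$, which is exactly $\overline q(X)\leq \dim X$.

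For the characterization of equality, I would argue that $\overline q(X)=\dim X$ holds if and only if $\dim A=\dim X$, i.e.\ if and only if the dominant morphism $\alpha$ is generically finite. This is where the irreducibility assertion of Theorem~\ref{p-thm10.1} becomes essential. When $\alpha$ is generically finite, there is a dense open subset of $A$ over which $\alpha$ is finite and, since we work in characteristic zero, étale by generic smoothness; hence a general fiber is a finite reduced scheme whose number of points equals $\deg\alpha=[\mathbb C(X):\mathbb C(A)]$. Because Theorem~\ref{p-thm10.1} guarantees that the general fiber is irreducible, it must consist of a single reduced point, so $\deg\alpha=1$ and $\alpha$ is birational. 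Conversely, if $\alpha$ is birational then $\dim A=\dim X$, whence $\overline q(X)=\dim A=\dim X$; this converse direction needs nothing beyond the dimension count.

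The only step that requires genuine care, rather than being a formal consequence of the dimension bookkeeping, is the passage from equality to birationality: one must invoke generic smoothness in characteristic zero to conclude that an irreducible general fiber of a generically finite dominant morphism forces $\deg\alpha=1$. Everything else—the inequality and the easy converse—follows directly from $\dim A=\overline q(X)$ and the dominance asserted in Theorem~\ref{p-thm10.1}.
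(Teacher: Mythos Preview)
Your argument is correct and follows essentially the same route as the paper: use $\dim A=\overline q(X)$ together with the dominance and irreducibility of general fibers from Theorem~\ref{p-thm10.1}. The paper's proof is terser and does not spell out the generic-smoothness step you give for the implication ``irreducible general fiber $+$ generically finite $\Rightarrow$ birational,'' but your expansion of this point is accurate.
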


For the proof of Theorem \ref{p-thm10.1}, we start 
with a useful lemma. 

\begin{lem}\label{p-lem10.3}
Let $A$ be a quasi-abelian variety and let $B$ be a quasi-abelian 
subvariety of $A$. 
Let $B^\dag$ be a variety and let 
$B^{\dag}\to B$ be a finite \'etale cover. 
Then we can construct a finite \'etale cover $A^{\dag}\to A$ such that 
$B^{\dag}$ is a quasi-abelian subvariety of  $A^{\dag}$ satisfying 
\begin{equation*} 
\xymatrix{
B^{\dag}\ar[d]\ar@{^{(}->}[r]&A^{\dag}\ar[d]\\
B\ar@{^{(}->}[r]&A. 
}
\end{equation*} 
\end{lem}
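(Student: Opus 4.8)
The plan is to pass to universal covers and reduce the statement to an elementary fact about lattices. By Lemma \ref{p-lem2.16} and \ref{p-say4.1} I would write $A=\mathbb C^n/L$ with $n=\dim A$ and $L=\pi_1(A)$ a lattice. Since $B$ is a quasi-abelian subvariety, its Lie algebra is a complex linear subspace $W\subset \mathbb C^n$ and $B=W/L_B$ with $L_B:=L\cap W=\pi_1(B)$. The structural observation that drives everything is that $L_B$ is saturated in $L$: if $m\ell\in W$ for some $\ell\in L$ and $0\neq m\in\mathbb Z$, then $\ell\in W$ because $W$ is a linear subspace, so $\ell\in L_B$; hence $L/L_B$ is torsion-free.

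Next I would describe the given cover in these terms. The universal cover of $B$ is the contractible space $W$, and $B^{\dag}$ is connected (it is a variety), so the cover $B^{\dag}\to B$ is isomorphic over $B$ to $W/L_B^{\dag}\to W/L_B$ for a uniquely determined finite-index sublattice $L_B^{\dag}\subseteq L_B$. I therefore identify $B^{\dag}=W/L_B^{\dag}$, the covering map being the natural projection; by Theorem \ref{p-thm4.2}, $B^{\dag}$ is again quasi-abelian.

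The heart of the argument is the lattice extension. Because $L_B=L\cap W$ is saturated, $L/L_B$ is free, so I may choose a $\mathbb Z$-basis $e_1,\dots,e_r$ of $L$ with $e_1,\dots,e_s$ a basis of $L_B$, where $s=\operatorname{rank}L_B$ and $r=\operatorname{rank}L$. Writing $L'=\bigoplus_{i=s+1}^{r}\mathbb Z e_i$ gives a splitting $L=L_B\oplus L'$, and I set
\[
L^{\dag}:=L_B^{\dag}\oplus L'\subseteq L .
\]
Then $[L:L^{\dag}]=[L_B:L_B^{\dag}]<\infty$, and a direct check gives $L^{\dag}\cap W=L^{\dag}\cap L_B=L_B^{\dag}$, which is exactly the compatibility required.

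Finally I would assemble the cover. Put $A^{\dag}:=\mathbb C^n/L^{\dag}$; the natural projection $\pi\colon A^{\dag}\to A$ is a finite cover of degree $[L:L^{\dag}]$, and by Theorem \ref{p-thm4.2} the total space $A^{\dag}$ is a quasi-abelian variety. Since $L^{\dag}\cap W=L_B^{\dag}$, the inclusion $W\hookrightarrow\mathbb C^n$ descends to an injection $B^{\dag}=W/L_B^{\dag}\hookrightarrow A^{\dag}$, and $B^{\dag}$ is exactly the identity component of $\pi^{-1}(B)=(W+L)/L^{\dag}$; as the identity component of the preimage of an algebraic subgroup under an isogeny it is an algebraic subgroup, hence a quasi-abelian subvariety of $A^{\dag}$ by Lemma \ref{p-lem2.15}, and the required square commutes by construction. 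The one delicate point will be to confirm that the analytically defined isogeny $\pi$ is genuinely a finite \'etale \emph{morphism} of algebraic varieties, so that Theorem \ref{p-thm4.2} applies and $B^{\dag}$ sits inside $A^{\dag}$ algebraically; this can be supplied either by the Riemann existence theorem or by the explicit equivariant compactification used in the proof of Lemma \ref{p-lem3.8}, whereas the lattice bookkeeping is routine once saturation of $L_B$ is recorded.
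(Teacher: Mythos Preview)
Your argument is correct; the lattice bookkeeping goes through cleanly once you record that $L_B=L\cap W$ is saturated in $L$. (A small sharpening: with your splitting $L^{\dag}=L_B^{\dag}\oplus L'$ one has $W+L=W+L'=W+L^{\dag}$, so in fact $\pi^{-1}(B)=B^{\dag}$ is already connected, not merely that $B^{\dag}$ is its identity component.) The one point you flag---algebraicity of $\pi$---is indeed handled by the Riemann existence theorem, and then the identification of the given $B^{\dag}$ with the algebraic subgroup $\pi^{-1}(B)^{\circ}\subset A^{\dag}$ follows from uniqueness of the finite \'etale cover of $B$ with monodromy subgroup $L_B^{\dag}$.

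The paper takes a different route: it works through the Chevalley decompositions of $A$, $B$, and $B^{\dag}$, first applying Poincar\'e reducibility on the abelian quotients to produce an isogeny $\mathcal A_{B^{\dag}}\times\mathcal A'\to\mathcal A_A$, pulling $A$ back along it to get a first cover $A_1\to A$, and then adjusting the torus lattice of $A_1$ by hand to accommodate $\mathcal G_{B^{\dag}}\hookrightarrow\mathcal G_{A_1}$. That approach stays inside the algebraic category throughout and never needs Riemann existence, at the price of invoking Poincar\'e reducibility and splitting the construction into an abelian step and a torus step. Your approach is more uniform and elementary---a single lattice extension---but purchases this by deferring the algebraicity check to the end.
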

\begin{proof}[Proof of Lemma \ref{p-lem10.3}] 
By Theorem \ref{p-thm4.2}, $B^\dag$ is a 
quasi-abelian variety. 
We consider the Chevalley decompositions: 
\begin{equation*} 
\xymatrix{
0 \ar[r]&\mathcal G_B\ar[r]\ar[d]& B\ar[r]\ar[d]&\mathcal A_B\ar[r]\ar[d]&0 \\
0 \ar[r]&\mathcal G_A\ar[r]& A\ar[r]&\mathcal A_A\ar[r]&0 
}
\end{equation*}  
and 
\begin{equation*} 
\xymatrix{
0 \ar[r]&\mathcal G_{B^\dag}\ar[r]& B^{\dag}\ar[r]&\mathcal A_{B^\dag}\ar[r]&0. 
}
\end{equation*}  
By Poincar\'e reducibility (see, 
for example, \cite{mumford}), we have an \'etale morphism 
\begin{equation*} 
a\colon  \mathcal A_{B^\dag}\times \mathcal A'\to \mathcal A_A 
\end{equation*}  
for some abelian variety $\mathcal A'$. 
By taking the base change 
of $A\to \mathcal A_A$ by $a$, we obtain an \'etale 
cover $A_1\to A$ and 
\begin{equation*} 
\xymatrix{
B^{\dag}\ar[d]\ar[r]&A_1\ar[d]\\
B\ar@{^{(}->}[r]&A. 
}
\end{equation*}  
We note the Chevalley decompositions: 
\begin{equation*} 
\xymatrix{
0 \ar[r]&\mathcal G_{B^\dag}\ar[r]\ar[d]& B^\dag
\ar[r]\ar[d]&\mathcal A_{B^\dag}\ar[r]\ar[d]&0 \\
0 \ar[r]&\mathcal G_{A_1}\ar[r]& A_1\ar[r]&\mathcal A_{B^\dag}\times 
\mathcal A'\ar[r]&0.  
}
\end{equation*}  
By replacing the lattice corresponding to $\mathcal G_{A_1}$ with 
a suitable sublattice, we can construct a finite \'etale morphism 
\begin{equation*} 
A^\dag\to A_1
\end{equation*}  
over $\mathcal A_{B^\dag}\times \mathcal A'$ such that 
\begin{equation*} 
\xymatrix{
B^{\dag}\ar[d]\ar@{^{(}->}[r]&A^{\dag}\ar[d]\\
B\ar@{^{(}->}[r]&A
}
\end{equation*} 
(see the description of quasi-abelian varieties in \ref{p-say4.1}). 
We obtain a desired finite \'etale cover $A^\dag\to A$.  
\end{proof}

Before we prove Theorem \ref{p-thm10.1}, we have to prove 
the following important lemma. 

\begin{lem}[{\cite[Theorem 27]{kawamata-abelian}}]\label{p-lem10.4}
Let $X$ be a normal algebraic variety, let $A$ be a quasi-abelian 
variety, and let $f\colon X\to A$ be a finite morphism. 
Then $\overline \kappa (X)\geq 0$ and 
there are a quasi-abelian subvariety $B$ of $A$, finite 
\'etale 
covers $\widetilde X$ and $\widetilde B$ of $X$ and $B$ 
respectively, and a normal 
algebraic variety $\widetilde Y$ such that: 
\begin{itemize}
\item[(i)] $\widetilde Y$ is finite over $A/B$. 
\item[(ii)] $\widetilde X$ is a principal $\widetilde B$-bundle 
over $\widetilde Y$ as a complex manifold. 
\item[(iii)] $\overline 
\kappa (\widetilde Y)=\dim \widetilde Y=\overline
\kappa (X)$. 
\end{itemize} 
\end{lem}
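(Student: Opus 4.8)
The plan is to treat the inequality $\overline\kappa(X)\ge 0$ first, and then to build the fibration by choosing a quasi-abelian subvariety $B\subseteq A$ of the correct dimension and proving that the base of the induced fibration is of logarithmic general type. For $\overline\kappa(X)\ge 0$ I would compactify $f$ to a morphism $\overline f\colon \overline X\to \overline A$ between smooth complete varieties whose boundaries $\overline D=\overline X\setminus X$ and $D_A=\overline A\setminus A$ are simple normal crossing and satisfy $\overline f^{-1}(D_A)\subseteq \overline D$. The pullback inclusion $\overline f^{*}\Omega^1_{\overline A}(\log D_A)\hookrightarrow \Omega^1_{\overline X}(\log \overline D)$ (as in the proof of Lemma \ref{p-lem2.6}), combined with $\Omega^1_{\overline A}(\log D_A)\simeq \bigoplus\mathcal O_{\overline A}$ and $K_{\overline A}+D_A\sim 0$ from Theorem \ref{p-thm4.3}, gives $K_{\overline X}+\overline D=\overline f^{*}(K_{\overline A}+D_A)+E$ with $E$ effective. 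As $f$ is finite surjective, hence generically \'etale, the pullback of the nowhere-vanishing section of $\mathcal O_{\overline A}(K_{\overline A}+D_A)$ is a nonzero section of $\mathcal O_{\overline X}(K_{\overline X}+\overline D)$, so $\overline\kappa(X)\ge 0$. The same argument applies to any normal variety finite over a quasi-abelian variety, a fact I reuse for the bases below.

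Next, for each quasi-abelian subvariety $B\subseteq A$ I would compose $f$ with $\pi_B\colon A\to A/B$ and, after passing to the compactifications above, take the Stein factorization $X\overset{g_B}{\to} Y_B\overset{h_B}{\to} A/B$, with $Y_B$ normal, $h_B$ finite, and $g_B$ of connected general fibre $F_B$. Since $F_B$ is finite surjective onto a coset of $B$ and $\overline\kappa(F_B)\ge 0$ by the previous step, I call $B$ \emph{good} when $\overline\kappa(F_B)=0$; for such $B$ Theorem \ref{p-thm9.3} makes $F_B\to B$ \'etale, so $F_B$ is quasi-abelian by Theorem \ref{p-thm4.2}. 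To produce a good $B$ of the right size I would use the logarithmic Iitaka fibration $X\dashrightarrow Z$, whose general fibre $F$ has $\overline\kappa(F)=0$ and $\dim F=\dim X-\overline\kappa(X)$. Then $f(F)$ is a positive-dimensional closed subvariety of $A$ dominated by the finite map $F\to f(F)$, so $0\le \overline\kappa(f(F))\le \overline\kappa(F)=0$ by Lemma \ref{p-lem2.6} and Theorem \ref{p-thm4.4}; hence $f(F)$ is a translate of a quasi-abelian subvariety $B$ with $\dim B=\dim F=\dim A-\overline\kappa(X)$. This $B$ is good (its fibre $F_B$ coincides with the irreducible Iitaka fibre $F$, as both are irreducible of dimension $\dim B$), and by the easy addition theorem $\overline\kappa(X)\le \dim(A/B)$ forces every good subvariety to have dimension at most $\dim A-\overline\kappa(X)$. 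Thus I take $B$ good of this maximal dimension, so that $\dim Y_B=\dim(A/B)=\overline\kappa(X)$.

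The key step is to show $\overline\kappa(Y_B)=\dim Y_B$. I would argue by induction on $\dim A$, legitimate since $B\ne 0$ whenever $\overline\kappa(X)<\dim X$ (the case $\overline\kappa(X)=\dim X$ being settled by $B=0$, $\widetilde Y=X$). Suppose $\overline\kappa(Y_B)<\dim Y_B$. Applying the lemma inductively to $Y_B\to A/B$ yields a nontrivial good quasi-abelian subvariety $C\subseteq A/B$; set $B'=\pi_B^{-1}(C)$, so $\dim B'=\dim B+\dim C>\dim B$. The general fibre $F_{B'}$ of $X\to Y_{B'}$ fibres over the fibre $C$ of $Y_B\to (A/B)/C$ (with $\overline\kappa=0$) with fibre $F_B$ (with $\overline\kappa=0$), and is finite over a coset of $B'$. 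After replacing $A$, $X$ and $Y_B$ by the common finite \'etale covers furnished by Lemma \ref{p-lem10.3} — which simultaneously trivialize the \'etale covers $F_B\to B$ and the cover attached to $C$ — the map $F_{B'}\to B'$ becomes \'etale, being \'etale on fibres over an \'etale base; hence $F_{B'}$ is quasi-abelian with $\overline\kappa(F_{B'})=0$, so $B'$ is good, contradicting maximality of $\dim B$. Therefore $\overline\kappa(Y_B)=\dim Y_B$. I expect this \'etale-propagation for the enlarged fibre $F_{B'}$, together with the bookkeeping needed to trivialize all the covers at once, to be the principal technical obstacle of the proof.

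Finally I would assemble the bundle. Since $Y_B$ is irreducible and, by Theorem \ref{p-thm4.5}, there are only countably many quasi-abelian subvarieties of $A$, the isomorphism class of the \'etale cover $F_B\to B$ is constant along $g_B$, equal to a fixed $\widetilde B\to B$. Using Lemma \ref{p-lem10.3} I extend $\widetilde B\to B$ to a finite \'etale cover $\widetilde A\to A$ containing $\widetilde B$ as a quasi-abelian subvariety; letting $\widetilde X$ be the normalization of $X\times_A\widetilde A$ and $\widetilde Y$ the corresponding base, the fibres of $\widetilde X\to \widetilde Y$ become isomorphic to $\widetilde B$, and $\widetilde X$ is identified with the fibre product $\widetilde A\times_{A/B}\widetilde Y$, inheriting the principal $\widetilde B$-bundle structure of the quotient $\widetilde A\to A/B$. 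This gives (i) and (ii), with $\widetilde Y$ finite over $A/B$. For (iii), $\overline\kappa(\widetilde X)=\overline\kappa(X)$ by Lemma \ref{p-lem2.7}, and since $\widetilde Y$ is of logarithmic general type with irreducible general fibres $\widetilde B$ of logarithmic Kodaira dimension zero, Theorem \ref{p-thm6.1} yields $\overline\kappa(X)=\overline\kappa(\widetilde Y)=\dim\widetilde Y$.
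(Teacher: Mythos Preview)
Your overall architecture matches the paper's: find $B$ via the logarithmic Iitaka fibration and Theorem~\ref{p-thm4.4}, pass to an \'etale cover using Lemma~\ref{p-lem10.3}, and obtain (iii) by a contradiction that feeds $\widetilde Y\to A/B$ back into the machine. The gap is exactly where you flag it, and it is genuine rather than merely technical.

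In your maximality argument you must show that the enlarged subvariety $B'=\pi_B^{-1}(C)$ is again good, i.e.\ that the general fibre $F_{B'}$ of $X\to Y_{B'}$ has $\overline\kappa(F_{B'})=0$. Your justification is that after simultaneous \'etale base change the map $F_{B'}\to B'$ is ``\'etale on fibres over an \'etale base''. But the fibres in question are the fibres of $X'\to Y'_B$, and you only know that the \emph{general} such fibre is isomorphic to $B$; over special points of the copy of $C$ sitting inside $Y'_B$ the fibre of $X'\to Y'_B$ can degenerate, because you have not yet shown that $X'\to Y'_B$ is a principal bundle. So the fibrewise \'etaleness is only generic, and you cannot conclude that $F_{B'}\to B'$ is \'etale, nor that $\overline\kappa(F_{B'})=0$. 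This is not a bookkeeping issue: the missing input is precisely the bundle structure you postpone to the last paragraph.

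The paper resolves this by reversing your order. It first (Step~2) establishes that an \'etale cover $\widetilde X$ is a principal $\widetilde B$-bundle over $\widetilde Y$, by redoing the Iitaka fibration on the cover $\widetilde Z$, comparing it with the original one, and then identifying $\widetilde X$ with $\widetilde A\times_{\widetilde A/\widetilde B}\widetilde Y$ via the ``normal $+$ finite over $\widetilde A$ $+$ birational $\Rightarrow$ isomorphic'' trick. Only afterwards (Step~3) does it run the contradiction: applying Steps~1--2 to $\widetilde Y\to A/B$ produces $\widetilde Y'\to W$, and now the fibre of $\widetilde X'=\widetilde X\times_{\widetilde Y}\widetilde Y'\to W$ is a principal $\widetilde B$-bundle over a quasi-abelian variety because the bundle structure is already in hand; hence its $\overline\kappa$ vanishes and easy addition gives $\overline\kappa(X)=\overline\kappa(\widetilde X')\le \dim W<\dim\widetilde Y=\overline\kappa(X)$. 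Your final paragraph also needs this care: the identification of $\widetilde X$ with a fibre product (over $\widetilde A/\widetilde B$, not $A/B$) is not automatic and in the paper requires the birationality argument coming from the Iitaka fibration on $\widetilde Z$. In short, establish (ii) before (iii); the order is what makes the proof close.
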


In \cite{kawamata-abelian}, 
Kawamata claims this statement 
without proof. 
Hence we give a detailed proof for 
the sake of completeness. 
When we treat Iitaka fibrations in the proof of 
Lemma \ref{p-lem10.4}, we have to take care of the 
difference between general fibers and sufficiently general fibers. 

\begin{rem}[Sufficiently general fibers and 
points]\label{p-rem10.5}
Let $f\colon V\to W$ be a morphism 
between varieties. 
Then a {\em{sufficiently general fiber}} (resp.~{\em{general fiber}}) 
$F$ of $f\colon V\to W$ 
means that $F=f^{-1}(w)$, 
where $w$ is any closed point contained in a countable intersection 
of nonempty Zariski open sets (resp.~a nonempty Zariski open set) 
of $W$. 
A sufficiently general fiber is sometimes called a {\em{very 
general fiber}} in the literature. 
A {\em{sufficiently general point}} (resp.~{\em{general point}}) 
$w$ of $W$ is any 
closed point contained in a countable intersection of nonempty 
Zariski open sets (resp.~a nonempty Zariski open set) of $W$. 
\end{rem}

Let us prove Lemma \ref{p-lem10.4}. 

\begin{proof}[{Proof of Lemma \ref{p-lem10.4}}] 
We divide the proof into several steps. 

\setcounter{step}{0}
\begin{step}\label{p-10.4-step1}
Let 
\begin{equation*} 
\xymatrix{
Z\ar[d]_{g}\ar[r]^{\Phi}& Y\\
X\ar@{-->}[ru]&
}
\end{equation*}  
be the logarithmic Iitaka fibration of $X$, that is, 
we take a smooth complete variety $\overline X$ such that 
$D=\overline X-X$ is a simple normal crossing divisor, 
$\overline {X}
\dashrightarrow Y$ is a dominant rational map to a normal projective 
variety $Y$ associated to $|m(K_{\overline X}+D)|$ for 
a sufficiently large and divisible positive integer 
$m$, and $\overline g\colon \overline Z\to \overline X$ is an 
elimination of indeterminacy of 
$\overline X\dashrightarrow Y$ such that 
$Z:=\overline g^{-1}(X)$, $\overline Z- Z$ is a simple 
normal crossing divisor on $\overline Z$, and $g:=\overline g|_Z$.    
Let $y$ be a sufficiently general point of $Y$. 
Then we have $\overline \kappa (Z_y)=0$ by construction, 
where $Z_y:=\Phi^{-1}(y)$. 
Since $f\circ g\colon  Z_y\to f\circ g(Z_y)$ is proper and 
generically finite and 
$f\circ g(Z_y)\subset A$, 
$B_y:=f\circ g(Z_y)$ is a translation of a quasi-abelian 
subvariety of $A$ by Theorem \ref{p-thm4.4}. 
By Theorem \ref{p-thm9.3}, $Z'_y$ is a quasi-abelian 
variety where $Z_y\to Z'_y\to B_y$ is the Stein factorization. 
In particular, the proper birational morphism 
$Z_y\to Z'_y$ is the quasi-Albanese map 
for sufficiently general $y\in Y$. 
Let $y$ be a sufficiently general point of $Y$. 
Note that 
\begin{equation*} 
H_1(Z_y, \mathbb Z)\to 
H_1(A, \mathbb Z)
\end{equation*} 
does not 
depend on $y$ by discreteness. 
Therefore, the image of $Z_y$ by $f\circ g$ in $A$ 
does not depend on $y$ up to translation. 
Hence 
we obtain a quasi-abelian subvariety $B$ of $A$ such that 
$B_y$ is a translation of $B$ for sufficiently general $y\in Y$. 
Without loss of generality, we may 
assume that 
\begin{equation*}
\xymatrix{
Z\ar[r]^-{f\circ g} & A \ar[r]& A/B
}
\end{equation*}
extends 
to 
\begin{equation*}
\xymatrix{
\overline Z\ar[r]^-{\overline {f\circ g}} & \overline A \ar[r]& \overline {A/B}, 
}
\end{equation*}
where $\overline A$ and $\overline {A/B}$ are smooth compactifications 
of $A$ and $A/B$, respectively. 
By applying the rigidity lemma (see, for example, 
\cite[Lemma 4.2.13.~(Rigidity Lemma)]{bs}) to $\overline Z\to Y$ and 
$\overline Z\to \overline {A/B}$, 
we see that 
$B_y=f\circ g(Z_y)\subset A$ is a translation of $B$ for general 
$y\in Y$. For general $y\in Y$, the image of 
\begin{equation*}
H_1(Z_y, \mathbb Z)\to H_1(B_y, \mathbb Z)\simeq 
H_1(B, \mathbb Z)
\end{equation*} 
does not depend on $y$ by discreteness. 
Thus, we have an \'etale cover $\widetilde B$ of $B$ such that 
\begin{equation*}
\xymatrix{
f\circ g\colon Z_y\ar[r]^-{\alpha_{Z_y}}
& \mathcal A_{Z_y}=\widetilde B\ar[r]& B_y, 
} 
\end{equation*}
where $\alpha_{Z_y}\colon Z_y\to \mathcal A_{Z_y}$ is 
the quasi-Albanese map of $Z_y$, for general $y\in Y$ 
and that $\alpha_{Z_y}\colon 
Z_y\to \mathcal A_{Z_y}$ is a proper birational 
morphism for general $y\in Y$. In particular, 
we have $\overline \kappa (Z_y)=0$ for general $y\in Y$. 
\end{step}
\begin{step}\label{p-10.4-step2}
In this step, we will construct $\widetilde X$, 
$\widetilde Y$, and $\widetilde B$ satisfying (i) and (ii). 

By Lemma \ref{p-lem10.3}, 
we take a finite \'etale cover $\widetilde A\to A$ such that 
\begin{equation*} 
\xymatrix{
\widetilde B\ar[d]\ar@{^{(}->}[r]&\widetilde A\ar[d]\\
B\ar@{^{(}->}[r]&A. 
}
\end{equation*}  
By taking base changes, we obtain $\widetilde X$, $\widetilde Z$, and 
the following commutative diagram: 
\begin{equation*} 
\xymatrix{
\widetilde B\ar[d]\ar@{^{(}->}[r]&
\widetilde A\ar[d]&\widetilde X\ar[l]_{\widetilde f}\ar[d]&\widetilde Z
\ar[l]_{\widetilde g}\ar[d]\ar[d]^-p\\
B\ar@{^{(}->}[r]&A&X\ar[l]^{f}&Z. 
\ar[l]^{g}
}
\end{equation*} 
We first assume that $\widetilde X$ is irreducible. 
We can construct a 
logarithmic Iitaka fibration 
$\widetilde \Phi\colon \widetilde Z\to Y'$ as follows. 
Without loss of generality, we may assume that 
there  are a smooth projective variety $\overline Z$ such that 
$\overline Z-Z$ is a simple normal crossing divisor on $\overline Z$ and 
a morphism $a\colon \overline Z\to Y$ such that 
$\Phi=a|_Z\colon  Z\to Y$ in Step \ref{p-10.4-step1}. 
Let $Z^\dag$ be the normalization of 
$\overline Z$ in $\mathbb C(\widetilde Z)$ and let $b\colon 
Z^\dag\to \overline Z$ 
be the natural map. 
Let $H$ be a very ample Cartier divisor on $Y$. 
We consider $\Phi_{|mb^*a^*H|}\colon  Z^\dag\to Y'$ for a sufficiently 
large and divisible positive integer $m$. 
We put $\widetilde \Phi:=\Phi_{|mb^*a^*H|}|_{\widetilde Z}\colon  
\widetilde Z\to Y'$. 
As we saw in Step \ref{p-10.4-step1}, 
there exists a quasi-abelian subvariety $B'$ of $\widetilde A$ 
such that $\widetilde f\circ \widetilde g(\widetilde Z_{y'})$ is a translation 
of $B'$ for general $y'\in Y'$. 
On the other hand, for general $y\in Y$, 
we can take $V\subset \widetilde Z$ such that 
$p\colon V\to Z_y$ is an isomorphism 
since there exists $\alpha_{Z_y}\colon Z_y\to \mathcal A_{Z_y}=\widetilde B$. 
This implies that $B'=\widetilde B$ and 
$\widetilde Z_{y'}\to \widetilde f\circ \widetilde g(\widetilde Z_{y'})$ 
is proper birational for general $y'\in Y'$. 
Therefore, there is a 
rational map $Y'\dashrightarrow \widetilde A/\widetilde B$ such that 
\begin{equation*} 
\xymatrix{
\widetilde Z\ar[r]^{\widetilde \Phi}
\ar[d]_{\widetilde f\circ \widetilde g}&Y'\ar@{-->}[d]\\
\widetilde A\ar[r]&\widetilde A/\widetilde B 
}
\end{equation*}  
(see, for example, \cite[Lemma 14]{kawamata-abelian}). 
Let $\widetilde Y$ be the normalization of 
$\widetilde A/\widetilde B$ in $\mathbb C(Y')$. 
We put $X'=\widetilde A\times _{\widetilde A/\widetilde B}\widetilde Y$. 
Then $X'$ is normal and is birationally equivalent to 
$\widetilde X$. 
We note that $\widetilde X$ and $X'$ are both finite over $\widetilde A$. 
Thus $X'$ is isomorphic to $\widetilde X$ over $\widetilde A$. 
We also note that $\widetilde Y$ is 
finite over $A/B$ since $\widetilde A/\widetilde B$ 
is finite over $A/B$. 
By construction, $\widetilde X$ 
is a principal $\widetilde B$-bundle over $\widetilde Y$. 
When $\widetilde X$ is reducible, 
we replace $\widetilde X$ with a suitable 
irreducible component of $\widetilde X$. 
Then the above argument works. 
Anyway, we can construct $\widetilde X$, $\widetilde Y$, 
and $\widetilde B$ satisfying (i) and (ii). 
\end{step}

\begin{step}\label{p-10.4-step3} 
All we have to show is $\overline 
\kappa (\widetilde Y)=\dim \widetilde Y=\overline 
\kappa (X)$. 
Since $\widetilde Y$ is finite over $\widetilde A/\widetilde B$, we have 
$\overline \kappa (
\widetilde Y)\geq 0$. 
We assume that $\overline \kappa (Y)<\dim \widetilde Y$. 
By applying the results obtained in Steps \ref{p-10.4-step1} 
and \ref{p-10.4-step2} to 
$\widetilde Y\to A/B$, 
we obtain an \'etale cover $\widetilde Y'$ with the following commutative 
diagram: 
\begin{equation*} 
\xymatrix{
\widetilde X'=\widetilde X\times _{\widetilde Y}\widetilde Y'\ar[d]\ar[r]
& \widetilde Y'\ar[d]\ar[r] & 
W\ar[dd]\\
\widetilde X \ar[d]\ar[r]&\widetilde Y \ar[d]&\\
A \ar[r]&A/B\ar[r]&A/C,  
} 
\end{equation*}  
where $C$ is a quasi-abelian subvariety of $A$ such that 
$B\subset C$. 
Note that $W$ is finite over $A/C$ and that $\dim W=\overline 
\kappa (Y)$. 
We can easily see that every fiber of $\widetilde X'\to W$ is 
a quasi-abelian variety and 
\begin{equation*} 
\overline \kappa (\widetilde X')=\overline \kappa (\widetilde X)=\overline 
\kappa (X). 
\end{equation*}  
By the easy addition formula, 
we obtain 
\begin{equation*} 
\overline \kappa (\widetilde X')\leq \dim W<\dim Y=\overline \kappa (X). 
\end{equation*}  
This is a contradiction. 
Therefore, we have $\dim \widetilde Y=\overline \kappa (\widetilde Y)$. 
\end{step}
We have desired $\widetilde X$, $\widetilde B$, and $\widetilde Y$. 
We finish the proof of Lemma \ref{p-lem10.4}. 
\end{proof}

Let us start the proof of Theorem \ref{p-thm10.1}. 

\begin{proof}[Proof of Theorem \ref{p-thm10.1}]
By using the Stein factorization, we obtain 
\begin{equation*} 
\alpha\colon  X\overset{q}\longrightarrow Z\overset{p}\longrightarrow A
\end{equation*} 
where $q$ is dominant, $q$ has irreducible 
general fibers, $p$ is finite, and $Z$ is normal. 
It is sufficient to prove that $p$ is an isomorphism. 
We assume that $\overline \kappa (Z)>0$. 
Then, by Lemma \ref{p-lem10.4}, 
we obtain an \'etale cover $\widetilde Z\to Z$ such that 
$\widetilde Z\to W$ is a principal $G$-bundle 
for some quasi-abelian variety $G$ with $\overline 
\kappa (W)=\dim W=\overline \kappa (Z)>0$. 
We consider $r\colon 
\widetilde X=X\times _Z\widetilde Z\to \widetilde Z\to W$. 
Since $\overline \kappa (\widetilde X)=\overline \kappa 
(X)=0$, 
$\overline \kappa (F)\geq 0$ for a sufficiently 
general fiber $F$ of $r$. 
By Theorem \ref{p-thm6.1}, 
we obtain 
\begin{equation*} 
0=\overline \kappa (X)=\overline \kappa (\widetilde X)\geq \overline \kappa (W)
+\overline \kappa (F)\geq \overline \kappa (Z)>0. 
\end{equation*}  
This is a contradiction. 
Therefore, we obtain $\overline \kappa (Z)=0$. 
By Theorem \ref{p-thm4.4}, 
$\overline \kappa(p(Z))=0$ and $p(Z)$ is a quasi-abelian 
variety. 
By Theorem \ref{p-thm9.3}, 
we obtain that $p\colon Z\to p(Z)$ is \'etale. 
In particular, $Z$ is a quasi-abelian variety (see Theorem \ref{p-thm4.2}). 
This means that $p$ is an isomorphism since $\alpha\colon X\to A$ is 
a quasi-Albanese map of $X$. 
Thus, we obtain that 
$\alpha\colon X\to A$ is dominant and has irreducible 
general fibers. 
\end{proof}

We close this section with the proof of Corollary \ref{p-cor10.2}. 

\begin{proof}[Proof of Corollary \ref{p-cor10.2}]
Let $\alpha\colon X\to A$ be a quasi-Albanese map. 
By Theorem \ref{p-thm10.1}, $\alpha$ is dominant. 
Note that $\dim A=\overline q(X)$. 
Therefore, we have $\overline q(X)\leq \dim X$. 
By Theorem \ref{p-thm10.1}, the general fibers of $\alpha$ are irreducible. 
Thus, $\alpha$ is birational if and only if $\dim X=\dim A=\overline q(X)$.  
\end{proof}

\section{Proof of Theorem \ref{p-thm1.3} and 
Corollaries \ref{p-cor1.4} and \ref{p-cor1.5}}\label{p-sec11}

In this final section, we give a proof of Theorem 
\ref{p-thm1.3} following \cite{fmpt}. Then we prove 
Corollaries \ref{p-cor1.4} and \ref{p-cor1.5} as easy applications. 

\begin{proof}[Proof of Theorem \ref{p-thm1.3}]
Let $\alpha\colon X\to A$ be the quasi-Albanese 
map (see Theorem \ref{p-thm1.1}). By Corollary \ref{p-cor10.2}, 
we see that $\alpha$ is birational. 
\setcounter{step}{0}
\begin{step}\label{p-1.3-step1}
Let 
\begin{equation*}
\xymatrix{
0\ar[r]& \mathbb G^d_m\ar[r] &A\ar[r]^-\pi&  B\ar[r]& 0 
}
\end{equation*}
be the Chevalley decomposition as in Definition \ref{p-def2.8}. 
Then $A$ is a principal $\mathbb G^d_m$-bundle 
over an abelian variety $B$ in the Zariski topology 
(see, for example, 
\cite[Theorems 4.4.1 and 4.4.2]{bcm})
and there is a natural 
completion $\overline \pi\colon \overline A\to B$ of $\pi\colon A\to B$
where $\overline A$ is a 
${\mathbb P}^d$-bundle over $B$ (see the proof of 
Lemma \ref{p-lem3.8}). We set 
$\Delta_{\overline A}:=\overline A -A$. 
Then $\Delta_{\overline A}$ is a simple 
normal crossing divisor on 
$\overline A$. In particular,  
$(\overline A, \Delta_{\overline A})$ is a log canonical pair. 
Let $\overline \alpha\colon \overline X\to 
\overline A$ be a compactification of 
$\alpha \colon X\to A$, that is, 
$\overline X$ is a smooth complete 
algebraic variety containing $X$, 
$\Delta_{\overline X}:=
\overline X-X$ is a simple 
normal crossing divisor 
on $\overline X$, and $\overline\alpha$ is a morphism extending $\alpha$.
\begin{claim}\label{p-1.3-claim}
Let $D$ be an irreducible component of 
$\Delta_{\overline X}$ such that 
$\overline \alpha(D)$ is a divisor. 
Then $\overline \pi\colon \overline \alpha (D)\to B$ 
is dominant.  
\end{claim}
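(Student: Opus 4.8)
The plan is to argue by contradiction, playing the hypothesis $\overline\kappa(X)=0$ against the structure of $\overline\pi\colon\overline A\to B$ as a $\mathbb P^d$-bundle. First I would record two structural facts. Since $\alpha$ is birational (Corollary \ref{p-cor10.2}) and $\overline\alpha\colon\overline X\to\overline A$ is a proper birational morphism onto the \emph{smooth} variety $\overline A$, the valuative criterion shows that $\overline\alpha$ is an isomorphism over an open set $\overline U\subseteq\overline A$ with $\codim_{\overline A}(\overline A\setminus\overline U)\geq 2$. Secondly, as $\overline A$ is a $\mathbb P^d$-bundle over $B$ and $\Delta_{\overline A}$ is, fibrewise, the union of the coordinate hyperplanes of $\mathbb P^d\supseteq\mathbb G^d_m$, each irreducible component of $\Delta_{\overline A}$ is a $\mathbb P^{d-1}$-subbundle of $\overline\pi$ and hence surjects onto $B$.

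With these in hand I would split into two cases. If $\overline\alpha(D)\subseteq\Delta_{\overline A}$, then $\overline\alpha(D)$ is an irreducible divisor contained in the simple normal crossing divisor $\Delta_{\overline A}$, so it coincides with one of its components and therefore dominates $B$ by the second fact. This settles the case in which $D$ maps to infinity.

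The substance is the remaining case $\overline\alpha(D)\not\subseteq\Delta_{\overline A}$, so that the generic point of $D':=\overline\alpha(D)$ lies in $A$. Suppose for contradiction that $S:=\overline\pi(D')\subsetneq B$. A dimension count ($\dim D'=n-1$, fibres of $\overline\pi$ of dimension $d$) forces $\dim S=q-1$ and $D'=\overline\pi^{-1}(S)=\overline\pi^*S$, where $S$ is a nonzero effective (irreducible) divisor on the abelian variety $B$. The key numerical input is that a nonzero effective divisor on an abelian variety has positive Iitaka dimension, $\kappa(B,\mathcal O_B(S))\geq 1$ (a numerically trivial effective divisor would be zero). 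Combining this with Theorem \ref{p-thm4.3}, which gives $K_{\overline A}+\Delta_{\overline A}\sim 0$, I would compute
\begin{equation*}
\overline\kappa\bigl(\pi^{-1}(B\setminus S)\bigr)
=\kappa\bigl(\overline A,\,K_{\overline A}+\Delta_{\overline A}+\overline\pi^*S\bigr)
=\kappa(\overline A,\overline\pi^*S)=\kappa(B,S)\geq 1 .
\end{equation*}
It then remains to identify $\overline\kappa(X)$ with this number. Over $\overline U$ the isomorphism $\overline\alpha^{-1}$ matches $\Delta_{\overline X}$ with $\Delta_{\overline A}+D'$, so $X$ and $\pi^{-1}(B\setminus S)=A\setminus D'$ are isomorphic in codimension one; the residual discrepancy between the two boundaries is supported on $\overline\alpha$-exceptional divisors, whose centres lie in codimension $\geq 2$ in $\overline A$, and which do not affect the logarithmic Kodaira dimension. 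Hence $\overline\kappa(X)=\overline\kappa(\pi^{-1}(B\setminus S))\geq 1$, contradicting $\overline\kappa(X)=0$. Therefore $S=B$, i.e. $\overline\pi(\overline\alpha(D))=B$.

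The step I expect to be the main obstacle is the last one: the codimension-one bookkeeping that legitimately replaces $X$ by $\pi^{-1}(B\setminus S)$. The delicate point is controlling the $\alpha$-exceptional (contracted) divisors — both boundary components of $\overline X$ contracted by $\overline\alpha$ and interior divisors of $X$ contracted into codimension $\geq 2$ in $A$ — and verifying that the genuine boundary $\Delta_{\overline X}$ of $X$ differs from the pullback boundary of $\pi^{-1}(B\setminus S)$ only along such exceptional divisors, so that equality of Iitaka dimensions follows from the crepant (log canonical) comparison together with Lemma \ref{p-lem2.5}. Crucially this has to be carried out without invoking Theorem \ref{p-thm1.3}, since the present claim is an ingredient in its proof.
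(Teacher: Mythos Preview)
Your overall strategy coincides with the paper's: assume $D_1:=\overline\alpha(D)$ does not dominate $B$, write $D_1=\overline\pi^*D_2$ for a prime divisor $D_2$ on the abelian variety $B$, and derive $\overline\kappa(X)\geq\kappa(B,D_2)>0$ using $K_{\overline A}+\Delta_{\overline A}\sim 0$. Your case split is harmless but unnecessary; the paper simply observes that every log canonical centre of $(\overline A,\Delta_{\overline A})$ dominates $B$, so a non-dominant $D_1$ is automatically not a component of $\Delta_{\overline A}$.

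The step you flagged as the main obstacle is indeed a genuine gap, and your proposed resolution does not work as written. The assertion that $X$ and $A\setminus D_1$ are isomorphic in codimension one fails in one direction: the complement $X\setminus\overline\alpha^{-1}(\overline U)$ can have codimension one in $X$, precisely because of $\overline\alpha$-exceptional divisors meeting $X$. Removing such a divisor from $X$ can only \emph{increase} $\overline\kappa$ (Lemma \ref{p-lem2.4}), so this route gives the inequality in the wrong direction; Lemma \ref{p-lem2.5} does not apply. The slogan that exceptional divisors ``do not affect the logarithmic Kodaira dimension'' is only valid once you know the relevant log discrepancies have the right sign, and that is exactly what remains to be checked.

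The paper closes this gap by a direct discrepancy computation rather than a codimension-one identification. The key point you are missing is that all log canonical centres of $(\overline A,\Delta_{\overline A})$ dominate $B$, so $D_1=\overline\pi^*D_2$ contains none of them. Using this together with $\Supp\overline\alpha^*\Delta_{\overline A}\subset\Supp\Delta_{\overline X}$ and the integrality of the Cartier divisor $K_{\overline A}+\Delta_{\overline A}$, one checks that $\mathrm{mult}_D\bigl(K_{\overline X}+\Delta_{\overline X}-\overline\alpha^*(K_{\overline A}+\Delta_{\overline A})\bigr)=1$ and that the multiplicity along every $\overline\alpha$-exceptional divisor whose image is not an lc centre is at least $1$. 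Hence
\[
K_{\overline X}+\Delta_{\overline X}-\overline\alpha^*(K_{\overline A}+\Delta_{\overline A})\ \geq\ \varepsilon\,\overline\alpha^*D_1
\]
for some $0<\varepsilon\ll 1$, which immediately yields $\overline\kappa(X)\geq\kappa(\overline A,D_1)=\kappa(B,D_2)>0$. This replaces your codimension-one bookkeeping entirely and handles both interior and boundary exceptional divisors uniformly.
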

\begin{proof}[Proof of Claim] 
We set $D_1:=\overline \alpha (D)$. 
If $\overline \pi\colon D_1\to B$ is not dominant, 
then we can write $D_1={\overline \pi}^*D_2$ 
for some prime divisor $D_2$ on $B$. 
Since every log canonical center 
of $(\overline A, \Delta_{\overline A})$ 
dominates $B$, $D_1$ does not 
contain any log canonical centers. Hence, in particular, 
it is not a component of $\Delta_{\overline A}$. 
Thus we have 
\begin{equation*}
\mathrm{mult}_D\left(K_{\overline X}+\Delta_{\overline X} 
-\overline \alpha^*(K_{\overline A}+\Delta_{\overline A})\right)=1.  
\end{equation*}
Let $E$ be any $\overline \alpha$-exceptional divisor on $\overline 
X$ such that $\overline \alpha (E)$ is not a log 
canonical center of $(\overline A, \Delta_{\overline A})$. 
Then 
\begin{equation*}
\mathrm{mult}_E\left(K_{\overline X}+\Delta_{\overline X}-\overline 
\alpha^*(K_{\overline A}+\Delta_{\overline A})\right)\geq 1
\end{equation*}
holds since $K_{\overline A}+\Delta_{\overline A}$ is Cartier and $\Supp 
\overline \alpha^*\Delta_{\overline A}\subset 
\Supp \Delta_{\overline X}$. 
Therefore,  
\begin{equation*}
K_{\overline X}+\Delta_{\overline X} 
-\overline \alpha^*(K_{\overline A}+\Delta_{\overline A}) 
\geq \varepsilon \overline \alpha^*D_1 
\end{equation*} 
holds for some $0<\varepsilon \ll 1$ 
since the support of 
$D_1$ does not contain any log canonical centers of $(\overline A, 
\Delta_{\overline A})$. 
By construction, we have $K_{\overline A}+\Delta_{\overline A}
\sim 0$ (see the proof of Lemma \ref{p-lem3.8}). 
Hence we obtain  
\begin{equation*}
0=\overline \kappa (X)=\kappa(\overline X, K_{\overline X}
+\Delta_{\overline X})\geq \kappa (\overline X, 
{\overline \alpha}^*D_1)=\kappa (\overline A, D_1)
=\kappa (B, D_2)>0, 
\end{equation*} 
where the last inequality follows from the fact that $D_2$ 
is a nonzero effective divisor on the abelian variety $B$.
This  contradiction proves the claim.
\end{proof}
\end{step}

\begin{step}\label{p-1.3-step2}
We assume that there exists an irreducible component 
$D$ of $\Delta_{\overline X}$ such that 
$\overline \alpha (D)$ is a divisor 
with $\overline \alpha(D)\not 
\subset \overline A-A$. 
We set 
$D':=\overline \alpha (D)\cap A$. 
By Claim in Step \ref{p-1.3-step1}, $D'$ dominates $B$. 
Therefore,  
we can find a subgroup $\mathbb G_m$ of $A$ 
such that $\varphi|_{D'}\colon D'\to A_1$ is dominant, where 
\begin{equation*}
\xymatrix{
0\ar[r]& \mathbb G_m\ar[r] &A\ar[r]^-\varphi&  A_1\ar[r]& 0. 
} 
\end{equation*}  
Note that $A$ is a principal $\mathbb G_m$-bundle over $A_1$ in 
the Zariski topology (see, for example, 
\cite[Theorems 4.4.1 and 4.4.2]{bcm}). 
We take a compactification 
\begin{equation*}
\xymatrix{
f^\dag
\colon X^\dag \ar[r]^-{\alpha^\dag}& 
A^\dag \ar[r]^-{\varphi^\dag}& A^\dag_1
}
\end{equation*}
of 
\begin{equation*}
\xymatrix{
f
\colon X \ar[r]^-{\alpha}& A\ar[r]^-{\varphi}& A_1, 
}
\end{equation*}
where $X^\dag$, $A^\dag$, and $A^\dag_1$ are smooth complete 
algebraic varieties such that 
$X^\dag- X$, $A^\dag- A$, 
and $A^\dag_1- A_1$ are 
simple normal crossing divisors. 
The general fiber of $f^\dag$ is obviously $\mathbb P^1$ by construction. 
Let $F$ be a general fiber of $f$. 
Since $\varphi|_{D'}\colon D'\to A_1$ is 
dominant, we have $\# \left(\mathbb P^1\setminus F\right)\geq 3$. 
This implies $\overline \kappa (F)=1$. 
Note that $A_1$ is a quasi-abelian variety. 
Hence we have $\overline \kappa (A_1)=0$. 
By Theorem \ref{p-thm6.3}, 
we obtain  
\begin{equation*}
0=\overline \kappa (X)\geq \overline \kappa (F)
+\overline \kappa (A_1)=1. 
\end{equation*} 
This is a contradiction. Thus, 
every irreducible component of $\Delta_{\overline X}$ 
which is not contracted by $\overline \alpha$ is mapped to 
$\Delta_{\overline A}$. 
\end{step}
\begin{step}\label{p-1.3-step3}
Let $\Delta'$ be the union of 
$\Delta_{\overline X}$ and the exceptional locus 
$\Exc(\overline \alpha)$ of $\overline 
\alpha\colon \overline X\to \overline A$. 
Note that $\Exc(\overline \alpha)$ 
is of pure codimension one by \cite[Chapter 2, Section 4.4, 
Theorem 2.16]{shafarevich}. 
Therefore, $\Delta'$ is a divisor on $\overline X$ with 
$\Delta'\geq \Delta_{\overline X}$. 
We put $Z:=\overline \alpha(\Delta')\cap A$. 
By Step \ref{p-1.3-step2}, 
we see that $Z$ is a closed subset of $A$ with 
$\codim_A Z\geq 2$. By definition, 
$\overline \alpha\colon \overline X\to \overline A$ 
is an isomorphism over $A\setminus Z$. 
Over $A$, we can easily check that 
$\overline \alpha^{-1}(Z)=\Exc(\overline \alpha)=\Delta'$ 
holds by 
\cite[Chapter 2, Section 4.4, 
Theorem 2.16]{shafarevich}. 
Hence, $\alpha\colon X\setminus \alpha^{-1}(Z)\to A\setminus Z$ 
is an isomorphism and $\alpha^{-1}(Z)$ is 
of pure codimension one. 
This is what we wanted. 
\end{step}
We finish the proof of Theorem \ref{p-thm1.3}.
\end{proof}

\begin{rem}\label{p-rem11.1}
In the proof of Theorem \ref{p-thm1.3}, 
$\overline A$ never coincides with $A^\dag$ when 
$A_1\ne B$. If $A=\mathbb G^2_m$, then 
$\overline A=\mathbb P^2$ and $A^\dag_1=\mathbb P^1$. 
\end{rem}

Corollary \ref{p-cor1.4} easily follows from 
Theorem \ref{p-thm1.3}. 

\begin{proof}[Proof of Corollary \ref{p-cor1.4}] 
If $X\simeq \mathbb G^n_m$, then 
$\overline \kappa (X)=0$ and $\overline q(X)=n$ hold 
by Theorem \ref{p-thm4.3}. 
Hence it is sufficient to prove that $X\simeq 
\mathbb G^n_m$ holds under the assumption 
that $\overline \kappa (X)=0$ and $\overline q(X)=n$. 
Let $\alpha \colon X\to A$ be the quasi-Albanese map. 
By Theorem \ref{p-thm1.3}, we can take 
a closed subset $Z$ of $A$ such that 
$\codim_A Z\geq 2$, $\alpha^{-1}(Z)$ is 
of pure codimension one, and $\alpha\colon X\setminus 
\alpha^{-1}(Z)\to A\setminus Z$ is an isomorphism. 
We note that $X\setminus \alpha^{-1}(Z)$ is affine 
since $X$ is a smooth affine variety and 
$\alpha^{-1}(Z)$ is of pure codimension one. 
This implies that $A\setminus Z$ is also affine. 
Thus we obtain $Z=\emptyset$ since $\codim_A Z\geq 2$ 
(see, for example, \cite[Lemma 6]{iitaka2}). 
Hence $\alpha\colon X\to A$ is an isomorphism. 
In particular, $X\simeq A\simeq \mathbb G^n_m$. 
Note that $A$ is quasi-abelian and affine. 
This is what we wanted. 
\end{proof}

Although Corollary \ref{p-cor1.5} is almost obvious, 
we prove it for the sake of completeness. 
 
\begin{proof}[Proof of Corollary \ref{p-cor1.5}] 
If $\codim _A(A\setminus X)\geq 2$, then we have $\overline 
\kappa (X)=0$ by Lemma \ref{p-lem2.5}. 
We assume that $\overline \kappa (X)=0$ holds. 
Then $X\hookrightarrow A$ is nothing but the quasi-Albanese 
map and $\codim _A(A\setminus X)\geq 2$ by 
Theorem \ref{p-thm1.3}. 
We finish the proof of Corollary \ref{p-cor1.5}. 
\end{proof}
%%%%%%%%%%%%%%%

\end{document}